\definecolor{olive}{rgb}{0.3, 0.4, .1}
\definecolor{fore}{RGB}{249,242,215}
\definecolor{back}{RGB}{51,51,51}
\definecolor{title}{RGB}{255,0,90}
\definecolor{dgreen}{rgb}{0.,0.6,0.}
\definecolor{gold}{rgb}{1.,0.84,0.}
\definecolor{JungleGreen}{cmyk}{0.99,0,0.52,0}
\definecolor{BlueGreen}{cmyk}{0.85,0,0.33,0}
\definecolor{RawSienna}{cmyk}{0,0.72,1,0.45}
\definecolor{Magenta}{cmyk}{0,1,0,0}
\newtheorem{proposition}{Proposition}[section]
\newtheorem{theorem}{Theorem}[section]
\newtheorem{corollary}{Corollary}[section]
\newtheorem{lemma}{Lemma}[section]
\newtheorem{remark}{Remark}[section]
\newtheorem{example}{Example}[section]
\numberwithin{equation}{section}
\title[The isometry of symmetric-Stratonovich integral]{The isometry of symmetric-Stratonovich integrals w.r.t. Fractional Brownian motion $H< \frac{1}{2}$}
\author{Alberto OHASHI$^1$}
\author{Francesco RUSSO$^2$}
\author{Frederi Viens$^3$}
\address{$1$ Departamento de Matem\'atica, Universidade de Bras\'ilia, 13560-970, Bras\'ilia - Distrito Federal, Brazil}\email{amfohashi@gmail.com}
\address{$2$ ENSTA ParisTech, Institut Polytechnique de Paris,
 Unit\'e de Math\'ematiques appliqu\'ees, 828, boulevard des Mar\'echaux, F-91120 Palaiseau, France}
 \email{francesco.russo@ensta-paris.fr}
\address{$3$ Department of Statistics, Rice University, 77251-1892, Houston - Texas, USA}\email{frederi.viens@rice.edu }
\begin{document}

\begin{abstract}
In this work, we present a detailed analysis on the exact expression of the $L^2$-norm of the symmetric-Stratonovich stochastic integral driven by a multi-dimensional fractional Brownian motion $B$ with parameter $\frac{1}{4} < H < \frac{1}{2}$. Our main result is a complete description of a Hilbert space of integrand processes which realizes the $L^2$-isometry where none regularity condition in the sense of Malliavin calculus is imposed. The main idea is to exploit the regularity of the conditional expectation of the tensor product of the increments $B_{t-\delta,t+\delta}\otimes B_{s-\epsilon,s+\epsilon}$ onto the Gaussian space generated by $(B_s,B_t)$ as $(\delta,\epsilon)\downarrow 0$. The Hilbert space is characterized in terms of a random Radon $\sigma$-finite measure on $[0,T]^2$ off diagonal which can be characterized as a product of a non-Markovian version of the stochastic Nelson derivatives. As a by-product, we present the exact explicit expression of the $L^2$-norm of the pathwise rough integral in the sense of Gubinelli. 
%The main idea is to exploit the regularity of the Gaussian linear regression of the integrand increments which allows us to avoid   
%associated with the underlying Gaussian measure of $B$. In contrast to previous literature, our approach 

%separable Hilbert space $\mathcal{H}$
\end{abstract}

%In this work, we present a detailed analysis on the existence and the $L^2$-norm of the symmetric-Stratonovich stochastic integral $\int_0^T g(t,B_t)d^0B_t$ driven by a multi-dimensional fractional Brownian motion $B$ with parameter $\frac{1}{4} < H < \frac{1}{2}$. We provide a concrete separable Hilbert space $\mathcal{H}$ associated with the covariance structure of $B$ such that
%$$
%\mathbb{E}\Bigg| \int_0^T g(t,B_t)d^0B_t\Bigg|^2 = \| g (\cdot,B)\|^2_{\mathcal{H}},
%$$
%for a large class of functions $g$ satisfying a Besov-type regularity which goes beyond the classical H\"older modulus of continuity. The Hilbert space $\mathcal{H}$ is characterized in terms of a random Radon $\sigma$-finite measure on $[0,T]^2$ off diagonal and whose the density w.r.t Lebesgue is a two-parameter process taking values at the second-Wiener chaos.  
%associated with the underlying Gaussian measure of $B$. In contrast to previous literature, our approach 

%separable Hilbert space $\mathcal{H}$

\maketitle

%{\bf Key words and phrases.} SDEs with distributional drift;
%Bessel processes; path-dependent stochastic differential equations.

%{\bf 2010 MSC}. 60G20; 60H30; 60J60. TO BE SPECIFIE
\tableofcontents

%\addcontentsline
\section{Introduction}

Let $B = (B^{(1)}, \ldots, B^{(d)})$ be a $d$-dimensional Fractional Brownian motion (henceforth abbreviated by FBM) with exponent $\frac{1}{4}< H < \frac{1}{2}$ on a probability space $(\Omega,\mathcal{F},\mathbb{P})$ over the positive line $\mathbb{R}_+$. That is, $B$ is a $d$-dimensional Gaussian process with covariance

$$\mathbb{E}\big[ B^{(i)}_s B^{(j)}_t \big]= \frac{1}{2}\{t^{2H}+ s^{2H}-|t-s|^{2H}\}\delta_{ij},$$
for $1\le i,j\le d$. We set 

$$R(s,t):= \frac{1}{2}\{t^{2H}+ s^{2H}-|t-s|^{2H}\}$$
for $(s,t) \in \mathbb{R}_+^2$ and $\gamma(s) := R(s,s)$. In the sequel, increments of paths $t\mapsto f_t$ will be denoted by $f_{s,t}:= f_t-f_s$. 

For a given positive constant $T$, let $g:[0,T]\times \mathbb{R}^d\rightarrow \mathbb{R}^d$ be a Borel function, where $\mathbb{R}^d$ is equipped with the usual inner product $\langle \cdot, \cdot \rangle$. For a given $\epsilon>0$, let us denote

\begin{equation}\label{Ioapp}
I^0(\epsilon,g,dB):= \frac{1}{2\epsilon}\int_0^T \langle g(s,B_s), B_{s-\epsilon, s+\epsilon}  \rangle ds,
\end{equation}
where we set $B_r:=0$ for $r\le 0$. In this paper, we study the existence of the symmetric-Stratonovich integral in the sense of stochastic calculus of regularization (see e.g. \cite{russo1993forward}):
\begin{equation}\label{epsilonconv}
\int_0^T g(t,B_t)d^0B_t:=\lim_{\epsilon\downarrow 0} I^0(\epsilon,g,dB),
\end{equation}
when the limit exists in $L^2(\mathbb{P})$. Significantly, we present a concrete, explicitly defined separable Hilbert space $\mathcal{L}_R(\mathbb{R}^d)$ associated with the covariance structure $R$ such that

$$
\mathbb{E}\Bigg| \int_0^T g(t,B_t)d^0B_t\Bigg|^2 = \| g (\cdot,B)\|^2_{\mathcal{L}_R(\mathbb{R}^d)},
$$
for a large class of functions $g$ admitting only integrability conditions w.r.t. $R$. We choose to work with the $R$ defined above, which is specific to FBM, though our methodology could be easily adapted to many other examples of Gaussian processes with H\"older-continuous paths, particularly those which satisfy a form of self-similarity. For the sake of simplicity, in this paper, we will not discuss the technical aspects of this point further.

%very irregular possibly discontinuous functions of paths admitting Besov and Triebel-Lizorkin-type regularity.   

%The Hilbert space isometry structure of stochastic integrals has been one of the fundamental pillars in Probability theory since the pioneering works of K. It\^o and Kunita and Watanabe. The isometry is constructed via Dol\'ean's measures based on the concept of quadratic variation and it gives a fundamental tool for the study of strong solutions of stochastic differential equations (SDEs) for semimartingale driving processes. The combination of the isometry with the regularity of the Dol\'ean's measure provides the minimal regularity conditions for well-posedness of SDEs.  

The Hilbert space isometry structure of stochastic integrals has been one of the fundamental pillars in probability theory since the pioneering works of It\^o \cite{ito} and Kunita and Watanabe \cite{KW}. As a reminder to the reader, in the case of integration against martingales, the isometry is constructed via the fundamental and classical concepts of filtration $\mathbb{F} = (\mathcal{F}_t)_{t\ge 0}$ and quadratic variation $[X]$ as 

\begin{equation}\label{SMiso}
\mathbb{E}\Bigg| \int_0^T Y_tdX_t \Bigg|^2 = \mathbb{E}\int_0^T |Y_t|^2 d[X]_t,
\end{equation}
where the driving noise $X$ is an $\mathbb{F}$-square integrable continuous martingale and $Y$ is an $\mathbb{F}$-progressively measurable adapted process such that the right-hand side of (\ref{SMiso}) is finite. The integral on the left-hand side of (\ref{SMiso}) is the stochastic integral in the sense of It\^o \cite{ito} and Kunita and Watanabe \cite{KW}. In this case, the underlying Hilbert space is $L^2(\Omega\times [0,T], \mu_X)$ w.r.t. the measure

$$\mu_X (A) := \mathbb{E}\int_0^T \mathds{1}_A(s)d[X]_s,$$
for a measurable subset $A \subset \Omega\times [0,T]$. The isometry (\ref{SMiso}) gives a fundamental tool for the study of strong solutions of stochastic differential equations (SDEs) for semimartingale driving processes. The combination of the isometry with the regularity of the measure $\mu_X$ provide the minimal regularity conditions for well-posedness of SDEs. One notes in (\ref{SMiso}) how the existence of a finite quadratic variation is essential to the isometry, and equivalently, is essential to defining the associated Hilbert space and its component measure $\mu_X$ defined above.

Beyond the semimartingale case, the stochastic integration is much more involved, not least because processes which are more irregular than Martingales typically have no quadratic variation. Many techniques have been employed over the last decades to circumvent this issue, sometimes proposing alternative Hilbert-space representations which might intuitively be appropriate for expressing some simplified form of isometry. Let $X$ be a continuous
%ALBERTO. Why non degenerated?
Gaussian process with positive definite covariance kernel $K$ and let $H(K)$ be the Reproducing Kernel Hilbert space (henceforth abbreviated by RKHS) of $X$ (see e.g. Chapter 3 in \cite{adler}). In this work, with a slight abuse of notation, any Hilbert space which is isometrically isomorphic to $H(K)$ will be called as RKHS.

Starting with a covariance kernel $K$, one can construct a canonical Hilbert space $\mathcal{H}_X$ defined as the closure of the set of all stepwise constant functions via the scalar product 

$$\langle \mathds{1}_{[0,t]},\mathds{1}_{[0,s]} \rangle_{\mathcal{H}_X} = K(s,t),$$ 
for every $0\le s,t\le T$. The Hilbert space $\mathcal{H}_X$ is isometrically isomorphic to $H(K)$. One should observe that $\mathcal{H}_X$ is the fundamental object in the study of the stochastic calculus for Gaussian processes. Indeed, any stochastic integral should satisfy

\begin{equation}\label{gele}
\int_0^T f_sdX_s = \sum_{i=1}^n a_i X_{t_i, t_{i+1}},
\end{equation}
whenever $f = \sum_{i=1}^na_i \mathds{1}_{(t_i,t_{i+1}]}$ is a stepwise constant deterministic function. More importantly, a stochastic integral of a deterministic $f$ w.r.t. $X$ is the limit in $L^2(\mathbb{P})$ for a sequence of the form (\ref{gele}) if, and only if, $f \in \mathcal{H}_X$. It is in that narrow sense, applicable to integrals with deterministic integrands, that the RKHS captures a form of isometry for integrating against FBM and other Gaussian processes.

There is, however, one important and well-known case of a Gaussian process where $\mathcal{H}_X$ provides the exact class of integrands much beyond the deterministic ones. That case is the classical Brownian motion, and in that case, the exact class is the space $L^2(\Omega; \mathcal{H}_X)$ of the $\mathcal{H}_X$-valued square-integrable random elements. In other words, in reference to the measure $\mu_X$ discussed earlier, for Brownian motion, we have $L^2(\Omega\times [0,T], \mu_X) = L^2(\Omega; \mathcal{H}_X)$, where $\mathcal{H}_X = L^2([0,T])$. Beyond the Brownian motion case, the Malliavin calculus has been employed towards a Hilbert space theory of stochastic integrals for Gaussian processes (in particular FBM with $H \neq \frac{1}{2}$) in terms of the so-called divergence operator $(\boldsymbol{\delta}, \text{dom}~\boldsymbol{\delta})$ which, when restricted to deterministic integrands in $\mathcal{H}_X\subset \text{dom}~\boldsymbol{\delta}$, is the so-called Paley-Wiener integral which realizes  

$$\|\boldsymbol{\delta}(f)\|^2_{L^2(\mathbb{P})} = \| f\|^2_{\mathcal{H}_X},$$
for every $f \in \mathcal{H}_X$. By using the multiplication rule of smooth random variables with the divergence operator (see e.g. Prop 1.3.3 in \cite{nualart2006}) and/or interpreting $\boldsymbol{\delta}$ as an integral (inspired by the Brownian motion case), many authors have developed a stochastic calculus for Gaussian driving noises for stochastic processes in the Malliavin-Watanabe space $\mathbb{D}^{1,2}(\mathcal{H}_X)$ (see e.g. \cite{nualart2006} for the definition). The drawback of this approach relies on the fact that $\mathbb{D}^{1,2}(\mathcal{H}_X)$ is much smaller than $L^2(\Omega; \mathcal{H}_X)$ and it requires severe regularity conditions which prevents the solvability of multi-dimensional non-linear SDEs. 

One also notes that the coincidence between the divergence operator and the Ito integral for adapted integrands, which is a motivation for interpreting $\boldsymbol{\delta}(f)$ as an integral, clashes with the fact that it is the adjoint of a derivative operator, which is a motivation for thinking of $\boldsymbol{\delta}(f)$ as a gradient, rather than an integral. This has not prevented a number of authors from working in the direction of interpreting $\boldsymbol{\delta}(f)$ as an integral, see e.g. \cite{decreusefond}, \cite{alos2001stochastic}, \cite{carmona2003}, \cite{viensSK}, \cite{cheridito2005}, \cite{cass2019}, \cite{tindelsong} and other references therein, though it does provide an indication of why this interpretation is fraught with technical difficulties. 

Stochastic calculus for real-valued FBM for $\frac{1}{6} < H< \frac{1}{2}$ in the sense via regularization was studied by \cite{gnrv}, \cite{gradno}, \cite{grv} and \cite{cheridito2005} for smooth integrands in the one-dimensional case. In this approach, the symmetric-Stratonovich integral exists if, and only if, $\frac{1}{6} < H < \frac{1}{2}$.
%%% Alberto. The class of integrands considered here is not clear
Connections with the divergence operator was established by \cite{alosleon}. See also the monographs \cite{nualart2006}, \cite{biaginibook} and \cite{Russo_Vallois_Book} and other references therein. Defining stochastic integrals using this class of approaches is closer to an intuitive notion of integral than is the notion of divergence in the Malliavin calculus, and is arguably a precursor to fully pathwise methods.

%{(\bf ALBERTO. Are you sure that our future work does this ? \textcolor{magenta}{Sorry it was the wrong reference}}

A pathwise integration w.r.t.~paths of Gaussian processes (including FBM with $H < \frac{1}{2}$) can be implemented in the sense of \cite{gubinelli2004} and a complete theory of differential equations in the sense of rough paths is now well-understood. See e.g \cite{friz}, \cite{hairerbook} and other references therein. Recently, a mixed theory of rough paths based on L\^e's so-called stochastic sewing lemma \cite{Le} has been developed by \cite{friz2020rough}, \cite{friz2021}, \cite{gerencser} and \cite{matsuda}. In particular, based on an extension of \cite{Le}, Matsuda and Perkowski \cite{matsuda} prove $L^2(\mathbb{P})$-convergence of the Stratonovich integral for $\frac{1}{4}< H < \frac{1}{2}$ (see Prop. 3.5 in \cite{matsuda}) for the class of integrands of the form $g(B)$, where $g$ is any bounded and globally $\gamma$-H\"older continuous function with $\gamma > \frac{1}{2H}-1$. The authors also provide $L^2(\mathbb{P})$-convergence in the regime $\frac{1}{6} < H \le \frac{1}{4}$ under the assumptions that $\nabla g$ is $\gamma-1$-H\"older continuous (note that $\gamma$ is larger than 1 in this case) and $\nabla g$ is symmetric.

Our work in the present paper takes a rather different strategy compared to the previous literature. The main goal is the description of the exact Hilbert space structure of the symmetric-Stratonovich integral solely based on the underlying RKHS, without relying on Malliavin-Watanabe spaces, and therefore, without imposing unnecessary regularity assumptions on the level of the underlying Wiener space. Our philosophy is close in spirit to the theory developed by It\^o in the classical Brownian motion case, in that the Hilbert space structure is expressed via an isometry relation w.r.t.~the underlying RKHS. It is well-known that the forward stochastic integral
$\int_0^\cdot B d^-B$
does not exist
for $H < \frac{1}{2}$, see e.g.~\cite{alosleon} and Lemma 6.1
of \cite{Russo_Vallois_Book}. Therefore, we choose to focus our article on the symmetric-Stratonovich integral (\ref{epsilonconv}), which is known to exist (see e.g.~\cite{gnrv}, \cite{cheridito2005}), and which has a natural interpretation as a true integral. By deriving a fully explicit isometry formula for the symmetric-Stratonovich, our work is the necessary starting point for a program which develops the stochastic calculus for this integral w.r.t.~Gaussian processes along a path (no pun intended) which attempts to mirror the original path of It\^o for Brownian motion.  In order to illustrate our results in the simplest possible way, avoiding unnecessary technicalities, we choose to work with FBM, but as mentioned earlier, our methodology will certainly hold for a larger class of Gaussian processes.

There already exist some characterizations of the RKHS of FBM for $0 < H < \frac{1}{2}$ described in terms of fractional integrals. In this direction, see e.g \cite{nualart2006}, \cite{biaginibook} and other references therein. In this approach, FBM is viewed as a functional of a standard Brownian motion via a singular kernel. In fact, scholars working in this direction know that much work can be achieved using so-called Wiener chaos calculus, by converting multiple Wiener integrals w.r.t.~FBM into multiple Wiener integrals w.r.t.~the ordinary Brownian motion. This does not, however, address the nature of integration w.r.t.~FBM in its ``native'' sense. 

In the present work, we adopt instead the intrinsic characterization of the RKHS in terms of the covariance kernel $R$ as described in \cite{krukrusso}. In this approach, the RKHS (henceforth denoted by $L_R(\mathbb{R}^d))$ is equipped with the norm 
%$$\frac{\partial^2 R}{\partial t \partial s}(s,t) = c_H|t-s|^{2H-2},$$ 
\begin{equation}\label{rkhsnorm}
\| f\|^2_{L_R(\mathbb{R}^d)} = \int_0^T |f_t|^2 m(dt) +\frac{1}{2}\int_{[0,T]^2}|f_{s,t}|^2|\mu|(dsdt),
\end{equation}
where 
\begin{equation}\label{measures}
m(dt):= \frac{\partial R}{\partial t}(T,t)dt,\quad |\mu|(dsdt) := \Big|\frac{\partial^2 R}{\partial t \partial s}(s,t)\Big|dsdt,
\end{equation}
and $\frac{\partial^2 R}{\partial t \partial s}(s,t) <0$ and $\frac{\partial R}{\partial t}(T,t)>0$ for $s\neq t$. See \cite{krukrusso}, \cite{OhashiRusso} and Section \ref{Isosection} herein for further details on the space $L_R(\mathbb{R}^d)$ and the nature of its typical elements. Since $m$ is a positive finite measure, the leading term in the $L^2$ norm in (\ref{rkhsnorm}), which constrains the pathwise singularity of the elements $f$ in $L_R(\mathbb{R}^d)$, is the term involving the $\sigma$-finite measure $|\mu|$ since the increments of those functions $f$ must be integrable against it. To be precise, the action of this term on functions $f$ in (\ref{rkhsnorm}) 
can be identified as a Schwartz distribution on $[0,T]^2$ but it is also a $\sigma$-finite negative measure on $[0,T]^2$ off the diagonal, and is absolutely continuous w.r.t.~Lebesgue measure. See \cite{krukrusso} and \cite{OhashiRusso} for details. In this sense, the Hilbert space $L^2(|\mu|)$ of functions in $[0,T]^2$ fully describes $L_R(\mathbb{R}^d)$ modulo an integrability condition w.r.t.~$m$. One can check (see Prop. 6.9 in \cite{krukrusso}) that the class of continuous functions over $[0,T]$ with modulus of continuity $V$ such that

\begin{equation}\label{rkhintr}
\int_{[0,T]^2} V^2(|t-s|) |\mu|(dsdt)< \infty
\end{equation}
is contained in $L_R(\mathbb{R}^d)$. A typical modulus of continuity $V$ satisfying (\ref{rkhintr}) has the form $V(y)  = y^{\frac{1}{2}-H} \psi(y)$ for a strictly increasing function $\psi$ in a neighborhood of zero such that

\begin{equation}\label{fCONDINTR}
\int_0^\eta \frac{1}{y}\psi^2(y)dy < \infty,
\end{equation}
for some $0 < \eta <1$. Thus, $L_R(\mathbb{R}^d)$ contains a rather general class of continuous functions $f$ with modulus of continuity ranging from $\gamma$-H\"older-type with $\gamma > \frac{1}{2}-H$ to more elaborated ones satisfying (\ref{fCONDINTR}), such as cases where $\psi$ is a sufficiently integrable slowly varying function (see e.g. Chapter 1 in \cite{bingham}) but $f$ is not $\gamma$-H\"older-continuous for any $\gamma > \frac{1}{2}-H$. See the examples and remarks in Section \ref{examplesection} for details.

As mentioned, this article presents a natural isometry property (\ref{isoINTR}) associated with the underlying RKHS in the spirit of It\^o's classical Hilbert space theory of stochastic integration. Our results show that the $L^2(\mathbb{P})$-norm of the Stratonovich integral w.r.t.~FBM for $\frac{1}{4} < H< \frac{1}{2}$ is described by a well-defined set of functions whose regularity is largely described by the regularity of elements in the RKHS $L_R(\mathbb{R}^d)$ which we described broadly in the previous two paragraphs. In this sense, our isometry shares a similar structure to the classical Brownian motion case $H=\frac{1}{2}$. This article indicate that the role played by the RKHS in the stochastic calculus for Gaussian processes is most likely universal for the whole class of FBM with $\frac{1}{4} < H < \frac{1}{2}$ since the isometry formula (\ref{isoINTR}) fully describes a large and precise class of integrands for the stochastic Stratonovich integral. 

The remainder of this introduction presents quantitative details of the claims in the above paragraph. The technique used in the present work is the introduction of a polarization scheme to analyze the limits in (\ref{Ioapp}) and (\ref{epsilonconv}); this requires a careful analysis of the existence and characterization of the limits  
$$\Lambda(s,t):=\lim_{(\epsilon,\delta)\downarrow 0}\frac{1}{4\epsilon\delta}\mathbb{E}\Big[B_{s-\epsilon,s+\epsilon}\otimes  B_{t-\delta,t+\delta}\big| B_s,B_t\Big]$$
and
\begin{equation}\label{limiINTR}
\Big\| \int_0^T g(t,B_t)d^0B_t\Big\|^2_{L^2(\mathbb{P})} = \lim_{(\epsilon,\delta)\downarrow 0}\langle I^0(\epsilon,g,dB), I^0(\delta,g,dB)\rangle_{L^2(\mathbb{P})},
\end{equation}
where $\otimes$ denotes the tensor on $\mathbb{R}^d$ and the pair $(\epsilon,\delta)\downarrow 0$ goes \textit{simultaneously} to zero. 
Theorems \ref{mainTH1} and \ref{mainTH2} 
%and definition (\ref{rkhsnorm}) 
show the $L^2(\mathbb{P})$-norm of the stochastic Stratonovich integral defined in (\ref{Ioapp}) is described by two matrix-valued terms
\begin{equation}\label{capPsi}
\Psi_t:=\frac{\partial R}{\partial t}(t,T) I_{d\times d} + \mathcal{M}_t
\end{equation}
and 
\begin{equation}\label{capLambda}
\Lambda(s,t)= \frac{\partial^2 R}{\partial t\partial s}(s,t) I_{d\times d} + \mathcal{W}(s,t),
\end{equation}
for $(s,t) \in [0,T]^2$ with $s\neq t$, where $I_{d\times d}$ is the identity matrix and where $\mathcal{M}$ and $\mathcal{W}$ are explicit stochastic processes which live in the second Wiener chaos of the underlying FBM (see their definitions in the statement of Theorem \ref{mainTH1}). A comparison with the expression for the RKHS's norm (\ref{rkhsnorm}) shows that the two terms $\frac{\partial R}{\partial t}(t,T)$ and $ \frac{\partial^2 R}{\partial t\partial s}(s,t)$ which appear in (\ref{capPsi}) and (\ref{capLambda}) are identical to the two terms which fully define the RKHS's topology via the the $L^2(\mathbb{P})$-isometry for the Paley-Wiener integral, i,e. when integrands are deterministic. The case of stochastic integrands which is the object of study in the current paper, is for integrands of the form $Y_\cdot = g(\cdot,B_\cdot)$. In this paper, we defined a Hilbert space  $\mathcal{L}_R(\mathbb{R}^d)$ of such stochastic integrands for which the integral in (\ref{Ioapp}) exists (see Theorem (\ref{epsilonconv})), and we show that the $L^p(\mathbb{P})$-norms of the stochastic components $\mathcal{M}$ and $\mathcal{W}$ in each of the two fundamental quantities (\ref{capPsi})) and (\ref{capLambda}) which appear in our isometry formula are in fact dominated by their associated deterministic counterparts $\frac{\partial R}{\partial t}(t,T)$ and $\frac{\partial^2 R}{\partial t\partial s}(s,t)$, respectively.

%To be precise, we show $\mathcal{W}^{ij}(s,t)$ can be expressed as the quadratic form associated with a matrix $\mathbb{W}_{s,t}(i,j)$ applied to the Gaussian vector $(B^{(i)}_s,  B^{(i)}_{s,t},  B^{(j)}_s ,  B^{(j)}_{s,t})$ equipped with the covariance matrix $\overline{\Sigma}_{s,t}(i,j)$ for $s< t$. Lemmas \ref{sharpW} and \ref{West} show that the two fundamental stochastic processes $\mathcal{M}$ and $\mathcal{W}$ (their expressions are in (\ref{Mproc}) and (\ref{Wproc})) have the exact expressions

%c_2(d) \Big|\frac{\partial^2 R}{\partial t \partial s}(s,t)\Big| \theta(s,t)
%\begin{equation}\label{MWest1}
%\| \mathcal{M}_t\|_2 = c(d) \frac{R(t,T)}{\gamma(t)} \frac{d\gamma}{dt}(t),\quad \| \mathcal{W}(s,t)\|_2  = \sqrt{2\sum_{i,j=1}^d \text{Tr} \big[ (\mathbb{W}_{s,t}(i,j) \overline{\Sigma}_{s,t}(i,j))^2 \big]},
%\end{equation}
%for a constant $c(d)$ which depends on the dimension $d$, $\text{Tr}$ denotes the trace of a matrix and $\|\cdot\|_p$ denotes the standard $L^p(\mathbb{P})$-norm for $1\le p < \infty$. 

We show there exists an explicit deterministic Lebesgue $q$-integrable kernel $\kappa$ over $[0,T]^2$ for every $1 < q < \frac{1}{1-H}$ such that 

\begin{equation}\label{MWest2}
\| \mathcal{M}_t\|_2\lesssim \frac{\partial R}{\partial t}(t,T),\quad \| \mathcal{W}(s,t)\|_2\lesssim \Big|\frac{\partial^2 R}{\partial t \partial s}(s,t)\Big| + \kappa(s,t)
\end{equation}
for $(s,t) \in [0,T]^2$ with $s \neq t$ and $\|\cdot\|_p$ denotes the standard $L^p(\mathbb{P})$-norm for $1\le p < \infty$.  By the equivalence of $\|\cdot\|_p$-norms on the second Wiener chaos, we have  

%For FBM, we observe $\frac{R(t,T)}{\gamma(t)} \frac{d\gamma}{dt}(t)\lesssim \frac{\partial R}{\partial t}(t,T)$ so that (\ref{rkhintr}), (\ref{MWest1}), and (\ref{MWest2}) actually show that the $L^p(\mathbb{P})$-norms of $\Psi$ and $\Lambda$ are dominated by the RKHS's topology in the sense that 

\begin{equation}\label{domin1}
\|\Psi_t\|_p \approxeq  \frac{\partial R}{\partial t}(t,T),
\end{equation}
\begin{equation}\label{domin2}
\Big|\frac{\partial^2 R}{\partial t \partial s}(s,t)\Big| \lesssim \| \Lambda(s,t)\|_p \lesssim \Big|\frac{\partial^2 R}{\partial t \partial s}(s,t)\Big| + \kappa(s,t) 
\end{equation}
for every $s\neq t$ and $1 < p < \infty$. 
%, where the inequality in (\ref{domin2}) should be understood modulo the addition of a regular finite measure on $[0,T]^2$ on the right-hand side of (\ref{domin2}).  
Observe 
$$L^2(\Omega; L_R(\mathbb{R}^d)) = \{Y:\Omega\rightarrow L_R(\mathbb{R}^d); \| Y_\cdot\|_2 \in L^2(m)~\text{and}~\|Y_{\cdot,\cdot\cdot}\|_2 \in L^2(|\mu|) \}.$$ 
This article concentrates on the subset $\mathbb{L}^2(\Omega; L_R(\mathbb{R}^d))$ of processes $Y \in L^2(\Omega; L_R(\mathbb{R}^d))$ such that $Y_\cdot = g(\cdot, B_\cdot)$ for some $g$. The fact that $m(dt)$ is a positive finite measure imposes almost none restrictions on $\| Y_\cdot \|_2$ and the main constraint imposed on $\mathbb{L}^2(\Omega; L_R(\mathbb{R}^d))$ comes from the condition $\|Y_{\cdot,\cdot\cdot}\|^2_2 \in L^2(|\mu|)$, i.e., 

\begin{equation}\label{constRKHS}
\int_{[0,T]^2} \| Y_{s,t} \|^2_2 |\mu|(dsdt) < \infty. 
\end{equation}
The nontrivial question to be answered is on the validity of the inclusion 
$$\mathbb{L}^2(\Omega,L_R(\mathbb{R}^d)) \subset \mathcal{L}_R(\mathbb{R}^d).$$ 
The main results of this article demonstrate this is almost the case. Theorems \ref{mainTH1}, \ref{mainTH2}, (\ref{domin1}) and (\ref{domin2}) show that typical stochastic integrands $Y \in \mathbb{L}^2(\Omega, L_R(\mathbb{R}^d))$ for which there exists $p>1$ such that 

\begin{equation}\label{Certainly_condition}
\int_{[0,T]^2} \| Y_{s,t} \|^2_{2p} |\mu|(dsdt) < \infty~\text{and}~\sup_{0\le s,t\le T}\| Y_{s,t}\|_{2p} < \infty
\end{equation}
belong to the domain $\mathcal{L}_R(\mathbb{R}^d)$ whose topology realizes the isometry (\ref{isoINTR}). The reader should observe (\ref{Certainly_condition}) is only slightly stronger than the regularity condition (\ref{constRKHS}) which fully describes $\mathbb{L}^2(\Omega, L_R(\mathbb{R}^d))$.  

In any case, in our chosen state-dependent framework where the integrand $Y_t  = g(t,B_t)$, for this $Y$ to belong to $\mathcal{L}_R(\mathbb{R}^d)$, thanks to (\ref{domin1}) and (\ref{domin2}), the space regularity of the function $g$ is essentially inherited from that of functions in the RKHS, where the \textit{marginal} $t \mapsto Y_t$ and \textit{increment} $(s,t) \mapsto Y_{s,t}$ stochastic dependence of the integrands are inherited from the deterministic terms $\frac{\partial R}{\partial t}(t,T)$ and $\frac{\partial^2 R}{\partial t \partial s}(s,t)$, respectively, in the RKHS. The reader will consult (\ref{MARINsplit}) and (\ref{split}) in section \ref{strategy}, as well as Lemmas \ref{sharpW}, \ref{West} and \ref{almRK}, for all technical details on this near complete coincidence between the regularity of the deterministic functions in the RKHS and the regularity of their stochastic counterparts in $\mathcal{L}_R(\mathbb{R}^d)$. These details include the regularity of the aforementioned kernel $\kappa$, and the need to work in an $L^{2p}(\mathbb{P})$ space with $p>1$ because of the randomness of $\mathcal{M}$ and $\mathcal{W}$.

We stress that the present article does not rely on $L^p(\mathbb{P})$-bounds ($p\ge 2$) on the 3-increments of a two-parameter process commonly used in the rough path philosophy in the sense of Gubinelli \cite{gubinelli2004} or in the recent stochastic sewing lemma approach (see e.g \cite{Le} and \cite{matsuda}). As a result, we are able to implement a finer analysis than that of the $\gamma$-H\"older scale of regularity imposed on the integrand functions for $\gamma > \frac{1}{2H}-1$. Certainly, our integrands are still constrained by the integrability condition (\ref{Certainly_condition}) inherited from the RKHS $L_R(\mathbb{R}^d)$, for some $p>1$, as seen in Theorem \ref{mainTH2}. However, as seen via the examples given in Section \ref{examplesection}, the condition (\ref{Certainly_condition}) is satisfied for integrands of the form $Y = g(B)$, where $g$ is not $\gamma$-H\"older for any $\gamma > \frac{1}{2H}-1$, because their modulus of continuity is equal to $r \mapsto r^{\frac{1}{2H}-1} L(r)$ where $L$ is a slowly-varying function (for definition see e.g Chapter 1 of \cite{bingham}). 

One of the important consequences of Theorem \ref{mainTH1} is given by Corollary \ref{roughCOR} which presents the exact $L^2(\mathbb{P})$-norm of the pathwise rough integral in the sense of \cite{gubinelli2004} only as a function of the marginals (0-increments) and first-order increments of the integrands and \emph{not} on Gubinelli's derivative. This is possible due to the recent investigations by Ohashi and Russo \cite{OhashiRusso} and Liu, Selk and Tindel \cite{tindelliu}, where the authors connect symmetric - Stratonovich calculus with rough integrals in the sense of Gubinelli\cite{gubinelli2004}. 

To the best of our knowledge, our paper is the first to provide an exact isometry formula for the $L^2(\mathbb{P})$-norm of the symmetric-Stratonovich stochastic integral w.r.t.~a Gaussian process, which only involves integrability conditions of the integrand in the spirit of the the underlying RKHS. We advocate for the development of a natural stochastic calculus for Gaussian processes, in that it should remain only on the level of the RKHS and the covariance function. This paper reveals that this is possible for FBM in the regime $\frac{1}{4} < H < \frac{1}{2}$. Ours being the first foray in this direction, we have chosen to consider only state-dependent integrands, though we believe an extension of our Hilbert space $\mathcal{L}_R(\mathbb{R}^d)$ and our isometry formula to more general integrands should be possible.

We conclude this introduction with a note on the limits of our strategy of exploiting the Hilbert space structure inherited from the RKHS. In the case $0 < H \le \frac{1}{4}$, we observe that $B \not\in L^2(\Omega; L_R(\mathbb{R}^d))$ due to the fact that 
$$\int_{[0,T]^2}\mathbb{E}|B_{s,t}|^2 |t-s|^{2H-2}dsdt=+\infty.$$
Therefore, in the regime $0 < H \le \frac{1}{4}$, one should adopt a different strategy to obtain a meaningful isometry property. This is beyond the scope and the intent of this article. In particular, noting that some of the literature results quoted earlier in this introduction are able to handle the case $\frac{1}{6} < H \le \frac{1}{4}$, such as via the use of rough paths theory, stochastic sewing lemma or via the Malliavin calculus, we believe that no correspondence can be drawn between the techniques needed to descend into that range of $H$ and the natural extension of the stochastic calculus for Gaussian processes for which this paper represents an initial step.

\

\noindent \textbf{Notation}: In this article, $0 < T < \infty$ is a positive finite terminal time, 

$$[0,T]^2_\star  := \{(s,t) \in [0,T]^2; s\neq t\},$$

$$\Delta_T:=\{(s,t) \in [0,T]_\star; 0\le s < t\le T\},$$ 
and the increment of a one-parameter function $f$ is denoted by $f_{s,t}:= f_t - f_s$ for $(s,t) \in \mathbb{R}^2_+$ Throughout this article, we write $(\epsilon,\delta)\downarrow 0$ to denote $\epsilon \downarrow 0$ and $\delta \downarrow 0$ simultaneously. We further write $a\lesssim b$ for two positive quantities to express an estimate of the form $a \le C b$, where $C$ is a generic constant which may differ from line to line. The sign function is denoted by $\text{sgn}$. We also write $a\wedge b = \min\{a;b\}$ and $a\vee b = \max\{a;b\}$ for any real numbers $a,b$. Moreover, for any two natural numbers $(i,j)$, we write $\delta_{ij} = 1$ when $i=j$ and $\delta_{ij}=0$, otherwise. The identity matrix of size $d$ is denoted by $I_{d\times d}$.

If $a = (a^i)^d_{i=1}, b = (b^j)_{j=1}^d \in \mathbb{R}^d$, then $a\otimes b:=(a^ib^j)_{1\le i,j\le d}$ denotes the tensor product of $(a,b)$, $\langle \cdot, \cdot \rangle $ denotes the standard Frobenius inner product on the space of $n\times m$-matrices for $n,m\ge 1$. Whenever clear from the context, $|\cdot|$ denotes any norm of a finite-dimension vector space. For a given column vector $a$, $a^\top$ denotes the transpose of $a$. For a random variable $X$, we denote $\|X\|^p_p:= \mathbb{E}|X|^p$ for $1\le p < \infty$. If $\theta = (\theta_i)_{i=1}^d$ and $\xi = (\xi_j)_{j=1}^d$ are two column random vectors, then the joint covariance matrix of $(\theta,\xi)$ is denoted by 

$$\text{Cov} \big( \theta, \xi \big): = \text{cov}\big(\theta_i, \xi_j \big); 1\le i\le k,~1\le j\le \ell,$$
where $\text{cov}$ is the covariance operation. We will denote 

$$\varphi(s,t):= R(s,t) - \gamma(s),$$
for $(s,t) \in \Delta_T$, where $\gamma(s) = R(s,s)$.

\section{Main results}\label{MRdisc}

This section presents the main results of this paper. Fix $\frac{1}{4} < H < \frac{1}{2}$. In the sequel, we denote

\begin{equation}\label{dmetric}
\textbf{d}^{\bar{\gamma},\gamma}\big( (t,x);(s,y) \big):= |t-s|^{\bar{\gamma}} + |x-y|^\gamma,
\end{equation}
for $(t,x),(s,y) \in [0,T]\times \mathbb{R}^d$. For $0 < \gamma,\bar{\gamma} \le 1$, let $C^{\bar{\gamma},\gamma}([0,T]\times \mathbb{R}^d)$ be the space of continuous functions $g:[0,T]\times \mathbb{R}^d\rightarrow \mathbb{R}^d$ equipped with the seminorm  

$$\|g\|_{\bar{\gamma},\gamma}:=\sup_{(t,x)\neq (s,y)}\frac{|g(t,x) - g(s,y)|}{\textbf{d}^{\bar{\gamma},\gamma}\big( (t,x);(s,y)\big)} < \infty.$$

%In the sequel, if $a,b \in \mathbb{R}^d$, then $a\otimes b$ denotes the tensor product of $(a,b)$, $\big\langle \cdot, \cdot \big\rangle_{\mathbf{F}} $ denotes the standard Frobenius inner product on the space of $d\times d$-matrices and the first increment of any function $f$ is denoted by $f_{s,t} = f_t - f_s$ and for a random variable $X$, we denote $\|X\|^p_p:= \mathbb{E}|X|^p$ for $1\le p < \infty$. 

Let $\Theta_{s,t}$ be the determinant of the covariance matrix of the Gaussian vector $(B^{(1)}_s, B^{(1)}_t)$. In the sequel, we conveniently recall the definition of the Stratonovich integral $\int Yd^0B$ given by (\ref{epsilonconv}) as the $L^2(\mathbb{P})$-limit of the Riemann integrals $\lim_{\epsilon \downarrow 0}I^0(\epsilon, Y,dB)$ for integrands of the form $Y_t = g(t,B_t)$. 

\begin{theorem}\label{mainTH1}
Let $g \in C^{\bar{\gamma},\gamma}([0,T]\times \mathbb{R}^d)$ with $\gamma > \frac{1}{2H}-1$, $\bar{\gamma}> \frac{1}{2}-H$ and $Y_t=g(t,B_t)$. Then, $\lim_{\epsilon\downarrow 0}I^{0}(\epsilon,Y,B)$ exists in $L^2(\mathbb{P})$ and

\begin{equation}\label{isoFORMULA}
\mathbb{E}\Bigg| \int_0^T Y_td^{0}B_t\Bigg|^2 = \mathbb{E}\| Y\|^2_{L_R(\mathbb{R}^d)} -\frac{1}{2}\mathbb{E}\int_{[0,T]^2} \big\langle Y_{s,t}\otimes Y_{s,t}, \mathcal{W}(s,t)\big\rangle dsdt
\end{equation}
\begin{equation*}\label{preiso}
+\mathbb{E}\int_0^T \big \langle Y_t\otimes Y_t, \mathcal{M}_t\big\rangle dt,
\end{equation*}
where

\begin{equation}\label{Mproc}
\mathcal{M}^{ij}_t:=\Big\{ \frac{B^{(i)}_tB^{(j)}_t-\textit{cov}\big(B^{(i)}_t;B^{(j)}_t\big)}{t^{2H}}\Big\}\frac{R(T,t)}{t^{2H}}\frac{d\gamma}{dt}(t)
\end{equation}

%\begin{eqnarray}
%\nonumber\lim_{(\epsilon,\delta)\downarrow 0} \Lambda^{ij}(\epsilon,\delta;s,t)&=& \lambda_{11}(s,t) \\
% &+& \frac{\partial^2R}{\partial t\partial s}(s,t)
%\end{eqnarray}

\begin{eqnarray}
\label{Wproc}\mathcal{W}^{ij}(s,t)&:=& \lambda_{11}(s,t)\lambda_{21}(s,t)\big(B^{(i)}_sB^{(j)}_s-\text{cov}(B^{(i)}_s; B^{(j)}_s)\big) \\
\nonumber&+& \lambda_{11}(s,t)\lambda_{22}(s,t) \big( B^{(i)}_s B^{(j)}_t - \text{cov}(B^{(i)}_s; B^{(j)}_t) \big)\\
\nonumber&+& \lambda_{12}(s,t) \lambda_{21}(s,t) \big( B^{(i)}_t B^{(j)}_s - \text{cov}(B^{(i)}_t; B^{(j)}_s) \big)\\
\nonumber&+& \lambda_{12}(s,t)\lambda_{22}(s,t)\big(B^{(i)}_t B^{(j)}_t - \text{cov}(B^{(i)}_t; B^{(j)}_t) \big),
\end{eqnarray}
for $1\le i,j\le d$, where 
$$\lambda_{11}(s,t) := \frac{1}{\Theta_{s,t}} \Big\{\frac{1}{2}\gamma'(s) \gamma(t) - R(s,t)\frac{\partial R}{\partial s}
 (s,t) \Big\},$$
$$\lambda_{12}(s,t):= \frac{1}{\Theta_{s,t}} \Big\{\frac{\partial R}{\partial s}(s,t)
 \gamma(s) - R(s,t)\frac{1}{2}\gamma'(s) \Big\},$$
$$\lambda_{21}(s,t):= \frac{1}{\Theta_{s,t}} \Big\{\frac{\partial R}{\partial t}
(t,s) \gamma(t) - R(s,t)\frac{1}{2}\gamma'(t) \Big\},$$
$$\lambda_{22}(s,t):=\frac{1}{\Theta_{s,t}} \Big\{\frac{1}{2}\gamma'(t) \gamma(s) - R(s,t)\frac{\partial R}{\partial t}
 (t,s) \Big\},$$
for $(s,t) \in [0,T]^2_\star$. 

%Moreover, there exists a constant $C=C(T,\gamma,H)$ such that 
%$$\mathbb{E}\Bigg| \int_0^T Y_td^{0}B_t\Bigg|^2\le C \|g\|^2_\gamma.$$
\end{theorem}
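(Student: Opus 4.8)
Since $L^2(\mathbb{P})$ is complete, it is enough to prove that $\Phi(\epsilon,\delta):=\langle I^{0}(\epsilon,Y,B),I^{0}(\delta,Y,B)\rangle_{L^2(\mathbb{P})}$ admits a finite limit as $(\epsilon,\delta)\downarrow 0$: indeed $\|I^{0}(\epsilon,Y,B)-I^{0}(\delta,Y,B)\|^2_{L^2(\mathbb{P})}=\Phi(\epsilon,\epsilon)+\Phi(\delta,\delta)-2\Phi(\epsilon,\delta)\to 0$ then shows that $(I^{0}(\epsilon,Y,B))_\epsilon$ is Cauchy, so its $L^2(\mathbb{P})$-limit exists, and by (\ref{limiINTR}) the value of the limit equals $\mathbb{E}|\int_0^T Y_t d^0 B_t|^2$, which must be matched with the right-hand side of (\ref{isoFORMULA}). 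Starting from (\ref{Ioapp}), expanding the two Euclidean brackets coordinatewise, and conditioning on $\sigma(B_s,B_t)$ (which makes $g(s,B_s)$ and $g(t,B_t)$ measurable), I would rewrite
\[
\Phi(\epsilon,\delta)=\int_{[0,T]^2}\mathbb{E}\big[\big\langle g(s,B_s)\otimes g(t,B_t),\,\Lambda^{\epsilon,\delta}(s,t)\big\rangle\big]\,ds\,dt,\qquad\Lambda^{\epsilon,\delta}(s,t):=\tfrac{1}{4\epsilon\delta}\,\mathbb{E}\big[B_{s-\epsilon,s+\epsilon}\otimes B_{t-\delta,t+\delta}\mid B_s,B_t\big],
\]
so that the whole problem splits into (a) the pointwise limit of $\Lambda^{\epsilon,\delta}(s,t)$ off the diagonal, and (b) a uniform-integrability argument licensing the passage to the limit inside the double integral.

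\textbf{Step (a): the Gaussian computation.} Since $B^{(1)},\dots,B^{(d)}$ are independent, the off-diagonal entries of $\Lambda^{\epsilon,\delta}(s,t)$ factor into products of scalar conditional means and the diagonal entries are scalar conditional second moments; in all cases the Gaussian regression identity $\mathbb{E}[UV\mid Z]=\mathbb{E}[U\mid Z]\mathbb{E}[V\mid Z]+\mathrm{Cov}\big(U-\mathbb{E}[U\mid Z],V-\mathbb{E}[V\mid Z]\big)$ applies with $Z=(B^{(\cdot)}_s,B^{(\cdot)}_t)$. Using the explicit $R$, I would show that $\tfrac1{2\epsilon}$ times the regression of $B^{(\cdot)}_{s-\epsilon,s+\epsilon}$ onto $(B^{(\cdot)}_s,B^{(\cdot)}_t)$ converges to $\lambda_{11}(s,t)B^{(\cdot)}_s+\lambda_{12}(s,t)B^{(\cdot)}_t$, the coefficients being exactly the entries of $(\tfrac12\gamma'(s),\tfrac{\partial R}{\partial s}(s,t))\,\Sigma_{s,t}^{-1}$ with $\Sigma_{s,t}$ the covariance of $(B^{(1)}_s,B^{(1)}_t)$ and $\det\Sigma_{s,t}=\Theta_{s,t}$, and symmetrically that $\tfrac1{2\delta}$ times the regression of the $\delta$-window at $t$ converges to $\lambda_{21}B^{(\cdot)}_s+\lambda_{22}B^{(\cdot)}_t$; simultaneously $\tfrac1{4\epsilon\delta}\,\mathrm{Cov}(B_{s-\epsilon,s+\epsilon},B_{t-\delta,t+\delta})\to\tfrac{\partial^2 R}{\partial t\partial s}(s,t)$. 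Because the deterministic ``variance of the limiting regression'' terms cancel between the product-of-means term and the residual-covariance term, what survives is precisely the Wick-ordered expression (\ref{Wproc}); hence, for every $(s,t)\in[0,T]^2_\star$, $\Lambda^{\epsilon,\delta}(s,t)\to\Lambda(s,t)=\tfrac{\partial^2 R}{\partial t\partial s}(s,t)I_{d\times d}+\mathcal W(s,t)$, which is (\ref{capLambda}).

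\textbf{Step (b): splitting the diagonal and identifying the limit.} Since $\Lambda(s,t)$ blows up like $|t-s|^{2H-2}$, one cannot integrate the pointwise limit directly. The plan is to polarize $a\otimes b=\tfrac12 a\otimes a+\tfrac12 b\otimes b-\tfrac12(a-b)\otimes(a-b)+\tfrac12(a\otimes b-b\otimes a)$ with $a=g(s,B_s)$, $b=g(t,B_t)$, $a-b=-Y_{s,t}$. The increment piece $-\tfrac12\int_{[0,T]^2}\mathbb{E}[\langle Y_{s,t}\otimes Y_{s,t},\Lambda^{\epsilon,\delta}(s,t)\rangle]\,ds\,dt$ converges, by dominated convergence, to $-\tfrac12\int_{[0,T]^2}\mathbb{E}[\langle Y_{s,t}\otimes Y_{s,t},\Lambda(s,t)\rangle]\,ds\,dt$ which, using $|\mu|(ds\,dt)=-\tfrac{\partial^2R}{\partial t\partial s}(s,t)\,ds\,dt$ and splitting $\Lambda$, equals the $|\mu|$-component of $\mathbb{E}\|Y\|^2_{L_R(\mathbb{R}^d)}$ plus the $\mathcal W$-term of (\ref{isoFORMULA}). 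The two marginal pieces $\tfrac12\int\int\mathbb{E}[\langle g(s,B_s)\otimes g(s,B_s),\Lambda^{\epsilon,\delta}(s,t)\rangle]$ and its $s\leftrightarrow t$ mirror I would handle by integrating out the free variable first via the telescoping limit $\tfrac{1}{2\delta}\int_0^T B_{t-\delta,t+\delta}\,dt\to B_T$ in $L^2(\mathbb{P})$, then conditioning only on $B_s$ and letting $\epsilon\downarrow 0$; the limit of $\tfrac{1}{4\epsilon\delta}\int_0^T\mathbb{E}[B_{s-\epsilon,s+\epsilon}\otimes B_{t-\delta,t+\delta}\mid B_s]\,dt$ then has a deterministic part equal to the density of $m$ and a second-chaos part which, contracted against $g(s,B_s)\otimes g(s,B_s)$ and combined with its mirror, reconstitutes the $m$-component of $\mathbb{E}\|Y\|^2_{L_R(\mathbb{R}^d)}$ together with the $\mathcal M$-term of (\ref{isoFORMULA}) (the precise constant in front of $\mathcal M$ coming from this step must be tracked carefully against (\ref{Mproc})). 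Summing the increment and marginal contributions produces (\ref{isoFORMULA}).

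\textbf{Main obstacle.} The hard part will be the uniform-integrability step (b) justifying the interchange of limit and integral near the diagonal, and this is where the hypotheses on $g$ enter sharply. One has to prove that the $|t-s|^{-2}$-order singularities of the products $\lambda_{ab}\lambda_{cd}$ cancel inside $\mathcal W(s,t)$, leaving $\|\Lambda(s,t)\|_2\lesssim|t-s|^{2H-2}$, together with the uniform bound $\|\Lambda^{\epsilon,\delta}(s,t)\|_2\lesssim\epsilon^{H-1}\delta^{H-1}$ on the overlap strip $\{|t-s|\le\epsilon+\delta\}$ and an off-strip estimate matching $\|\Lambda(s,t)\|_2$; combined with $\|Y_{s,t}\|_2^2\lesssim|t-s|^{2\bar\gamma}+|t-s|^{2H\gamma}$ (from the seminorm of $g$ together with Gaussian moment equivalence), the standing assumptions $\bar\gamma>\tfrac12-H$ and $\gamma>\tfrac1{2H}-1$ are exactly what makes $\|Y_{s,t}\|_2^2\,\|\Lambda(s,t)\|_2\lesssim|t-s|^{-1+\theta}$ integrable on $[0,T]^2$ and makes the strip contributions $o(1)$. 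Because $(\epsilon,\delta)\downarrow 0$ jointly rather than iteratively, all these estimates must be uniform in the pair, and controlling the randomness of $\mathcal M$ and $\mathcal W$ will force the use of moments slightly higher than $L^2$ of $Y_{s,t}$. Everything else — the Gaussian regressions, the cancellations producing $\mathcal W$ and $\mathcal M$, and the matching with the RKHS densities $\tfrac{\partial R}{\partial t}(T,t)$ and $\tfrac{\partial^2R}{\partial t\partial s}(s,t)$ of (\ref{rkhsnorm})–(\ref{measures}) — is essentially careful bookkeeping once these near-diagonal estimates are in place.
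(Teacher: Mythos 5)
Your plan coincides in all essentials with the paper's: reduce to convergence of $\langle I^0(\epsilon,Y,dB),I^0(\delta,Y,dB)\rangle_{L^2(\mathbb{P})}$, polarize $Y_s\otimes Y_t$ into marginal, increment and anti-symmetric pieces, compute $\Lambda^0(\epsilon,\delta;s,t)$ by Gaussian linear regression to obtain the pointwise limit $\frac{\partial^2R}{\partial t\partial s}(s,t)I_{d\times d}+\mathcal{W}(s,t)$, telescope the free variable to $B_T$ for the marginal piece, and justify the limit of the increment piece by uniform near-diagonal estimates. Two concrete gaps remain, and they are precisely where the paper spends most of its effort.

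First, you introduce the anti-symmetric term $\tfrac12(a\otimes b-b\otimes a)$ in your polarization and then never dispose of it; your concluding sentence tacitly assumes it contributes nothing. It does vanish, but this requires an argument: Lemma \ref{antsymLemma} shows that $\Lambda^{0,ij}-\Lambda^{0,ji}$ is proportional to $B^{(i)}_sB^{(j)}_t-B^{(j)}_sB^{(i)}_t$ and then uses the equality in law of $(B^{(i)}_s,B^{(j)}_t)$ and $(B^{(j)}_s,B^{(i)}_t)$ for $i\neq j$, together with conditioning on that pair, to conclude $\mathbb{E}[Y^{(i)}_sY^{(j)}_tB^{(i)}_sB^{(j)}_t]=\mathbb{E}[Y^{(i)}_sY^{(j)}_tB^{(j)}_sB^{(i)}_t]$, hence $I_{ant}(\epsilon,\delta)=0$. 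Without this, the off-diagonal cross terms are unaccounted for.

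Second, the quantitative heart of your Step (b) is mis-stated. The bound $\|\Lambda(s,t)\|_2\lesssim|t-s|^{2H-2}$ is not available: each coefficient $\lambda_{ab}$ carries a factor $\Theta_{s,t}^{-1}$, and by Lemma \ref{fhlemma} one has $\Theta_{s,t}=|t-s|^{2H}A(s,t)$ with $A(s,t)\asymp s^{2H}\wedge t^{2H}$, so $\mathcal{W}$ also degenerates on the axes; the actual estimate (Lemma \ref{West}) is $\|\mathcal{W}(s,t)\|_q\lesssim|t-s|^{2H-2}+(s\wedge t)^{H-1}|t-s|^{H-1}+|t-s|^{2H-1}(s\vee t)^{-1}$. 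More importantly, the singularities of the products $\lambda_{ab}\lambda_{cd}$ do \emph{not} cancel in the coordinates $(B_s,B_t)$: each second-chaos summand of $\mathcal{W}$ is individually non-integrable even after multiplying by $|t-s|^{2H}$ (Remark \ref{renor}). The mechanism is instead to rewrite $\mathcal{W}$ over each simplex in the variables $(B_s,B_{s,t})$ via the $\eta$-coefficients of (\ref{INTREP1})--(\ref{INTREP2}), so that the most singular coefficient products are paired with increments whose variance $|t-s|^{2H}$ compensates them; this, plus the splitting of $\Delta_T$ into an interior region and boundary strips near the axes and the diagonal (Lemmas \ref{LemmaF1}--\ref{LemmaF5}), is where the hypotheses $\gamma>\frac{1}{2H}-1$, $\bar\gamma>\frac12-H$ and the $L^{2p}$-moments with $p>1$ are actually consumed. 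As written, your sketch asserts the conclusion of that analysis rather than supplying it.
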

The pointwise differentiability of $(s,t)\mapsto R(s,t)$ for $s\neq t$ plays an important role in the proof of Theorem \ref{mainTH1}. It provides the pointwise existence of $\mathcal{W}(s,t)$ which might be interpreted as a non-Markovian version of product of stochastic symmetric derivatives in the sense of Nelson \cite{nelson} w.r.t.~the sigma-algebra generated by the ``present'' of the FBM driving noise. See Remark \ref{nelsondisc} for further details.

%We stress that $\mathcal{W}(s,t)dsdt$ is only a $\sigma$-finite measure over $[0,T]^2_\star$.  and, for this reason, we should require some regularity condition on the level of the increments of the integrand process which turns out to be the same as required by $L^2(\Omega, L_R(\mathbb{R}^d))$
%We stress the isometry (\ref{isoFORMULA}) allows to cover the exact regularity conditions imposed by the RKHS beyond the H\"older scale. See Theorem \ref{mainTH2} below.

As far as the H\"older-modulus of continuity is concerned, the conditions $\gamma > \frac{1}{2H}-1$ and $\bar{\gamma}> \frac{1}{2}-H$ in Theorem \ref{mainTH1} are sharp for the existence of (\ref{epsilonconv}) in the class of H\"older continuous functions. Indeed, we observe $\gamma H> \frac{1}{2}-H$ for every $\gamma > \frac{1}{2H}-1$, $C^\alpha \subset L_R(\mathbb{R}^d)$ for every $\alpha > \frac{1}{2}-H$ but this inclusion fails for $\alpha = \frac{1}{2}-H$ (see Prop 6.9 in \cite{krukrusso} and Chapter 5 in \cite{nualart2006}). We stress that $\mathcal{W}(s,t)dsdt$ is only a random $\sigma$-finite measure over $[0,T]^2$ and, for this reason, the existence of (\ref{epsilonconv}) requires a regularity condition on the integrand's increments which is dictated by $\Lambda$ via (\ref{domin2}). We choose to work with 

$$C_H^{+}:= \bigcup_{\bar{\gamma}> \frac{1}{2}-H, \gamma> \frac{1}{2H}-1 } C^{\bar{\gamma},\gamma} ([0,T]\times \mathbb{R}^d )$$
as the initial set where we construct the domain of the extension of the Stratonovich integral (\ref{epsilonconv}). 
%In strong contrast, in the regular case $H> \frac{1}{2}$, the random field $\mathcal{W}$ is $q$-integrable for $1\le q < \min\{\frac{1}{H}; \frac{1}{2-2H} \}$ which allows us to integrate a rather large class of integrands for the stochastic forward integral studied in \cite{OhashiRusso1}. 

In \cite{OhashiRusso}, the authors have proved equivalence of the rough integral  
$$(Y,Y')\mapsto \big(\int Y d\mathbf{X},Y\big)$$ 
in the sense of Gubinelli \cite{gubinelli2004} with the symmetric-Stratonovich integral in a suitable class of stochastic controlled processes $(Y,Y')$ (see \cite{GOR}) admitting some regularity conditions in the sense of Malliavin calculus and $\mathbf{X} = (B,\mathbb{X})$ is given in the Stratonovich form (\ref{secin}). Theorem 1.1 in \cite{OhashiRusso} covers the case $Y_t = g(B_t)$ and $Y'_t = \nabla g(B_t)$ if $g \in C^{2}_b(\mathbb{R}^d)$. Therefore, a direct implication of Theorem \ref{mainTH1} is the following result. 

\begin{corollary}\label{roughCOR}
Let $\mathbf{X} = (B,\mathbb{X})$ be the H\"older rough path associated with the FBM with $\frac{1}{3} < H < \frac{1}{2},$ where the area process $\mathbb{X}$ is given by

\begin{equation}\label{secin}
\mathbb{X}_{u,v} = \int_u^v (B_r-B_u)\otimes d^0B_r; ~0\le u\le v\le T.
\end{equation}
Suppose that $g \in C^{2}_b(\mathbb{R}^d)$ and let $Y=g(B)$. Then 
$$ \mathbb{E}\Bigg|\int_0^T Y_s d\mathbf{X}_s\Bigg|^2 = \mathbb{E}\| Y\|^2_{L_R(\mathbb{R}^d)} -\frac{1}{2}\mathbb{E}\int_{[0,T]^2} \big\langle Y_{s,t}\otimes Y_{s,t}, \mathcal{W}(s,t)\big\rangle dsdt
$$
$$+\mathbb{E}\int_0^T \big \langle Y_t\otimes Y_t, \mathcal{M}_t\big\rangle dt.$$

\end{corollary}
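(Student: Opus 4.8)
The plan is to derive Corollary \ref{roughCOR} by combining the isometry of Theorem \ref{mainTH1} with the identification, proved in \cite{OhashiRusso}, of the Gubinelli rough integral with the symmetric-Stratonovich integral. First I would check that the hypotheses of Theorem \ref{mainTH1} are met for $Y_t = g(B_t)$. Although $g$ here is state-dependent only, we may regard it as the time-independent function $\tilde{g}(t,x) := g(x)$ on $[0,T]\times\mathbb{R}^d$; since $g \in C^2_b(\mathbb{R}^d)$ is bounded and globally Lipschitz, $|g(x)-g(y)| \le \max\{\|\nabla g\|_\infty, 2\|g\|_\infty\}\,|x-y|^\gamma$ for all $x,y$ and all $\gamma \in (0,1]$, so $\|\tilde g\|_{\bar\gamma,\gamma} < \infty$ for every $\bar\gamma,\gamma \in (0,1]$, i.e. $\tilde g \in C^{\bar\gamma,\gamma}([0,T]\times\mathbb{R}^d)$. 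Because $H > \frac13 > \frac14$ we have $\frac{1}{2H}-1 < \frac12 < 1$ and $\frac12 - H < \frac16 < 1$, so one can choose $\gamma \in (\frac{1}{2H}-1,1]$ and $\bar\gamma \in (\frac12-H,1]$; hence Theorem \ref{mainTH1} applies and yields both the existence in $L^2(\mathbb{P})$ of $\int_0^T Y_t d^0 B_t := \lim_{\epsilon\downarrow 0} I^0(\epsilon,Y,B)$ and the isometry formula (\ref{isoFORMULA}) for this $Y$.

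Next I would invoke Theorem 1.1 of \cite{OhashiRusso}. For $\frac13 < H < \frac12$ and $g \in C^2_b(\mathbb{R}^d)$, that result says the pair $(Y,Y') = (g(B),\nabla g(B))$ is an admissible stochastic controlled path and that the rough integral $\int_0^T Y_s\,d\mathbf{X}_s$, taken against the Hölder rough path $\mathbf{X} = (B,\mathbb{X})$ whose second-level component is exactly the Stratonovich area (\ref{secin}), agrees as a random variable (almost surely, hence in $L^2(\mathbb{P})$) with the symmetric-Stratonovich integral $\int_0^T Y_t\,d^0 B_t$ constructed in the previous paragraph. Substituting this identity into the left-hand side of the isometry formula (\ref{isoFORMULA}) gives precisely the claimed expression for $\mathbb{E}|\int_0^T Y_s\,d\mathbf{X}_s|^2$, which finishes the proof.

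I do not expect a real obstacle, since the corollary is essentially a translation result: its content is the conjunction of (i) the equivalence of the two integrals quoted from \cite{OhashiRusso} and (ii) the applicability of Theorem \ref{mainTH1}. The only points needing care are bookkeeping: matching the normalization of the Stratonovich area (\ref{secin}) with the one used in \cite{OhashiRusso}, confirming that $(g(B),\nabla g(B))$ belongs to the class of controlled paths (with the Malliavin-type regularity) treated there — which is exactly where $g \in C^2_b$ enters — and observing that the range $\frac13 < H < \frac12$ in the statement is forced as the intersection of $H > \frac14$ (needed for Theorem \ref{mainTH1}) with $H > \frac13$ (needed for $\mathbf{X}$ to be a genuine Hölder rough path and for \cite{OhashiRusso} to apply). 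Once these are verified, the transfer of the isometry formula is immediate because the two integrals coincide as random variables.
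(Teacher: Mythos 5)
Your proposal is correct and follows exactly the route the paper intends: verify that $g\in C^2_b(\mathbb{R}^d)$ (being bounded and Lipschitz, hence in $C^{\bar\gamma,\gamma}$ for all admissible exponents) puts $Y=g(B)$ within the scope of Theorem \ref{mainTH1}, then use Theorem 1.1 of \cite{OhashiRusso} to identify the Gubinelli rough integral against $\mathbf{X}=(B,\mathbb{X})$ with the symmetric-Stratonovich integral and transfer the isometry formula. The paper itself presents the corollary as a direct implication of these two ingredients, so there is nothing to add.
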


As discussed above, Theorem \ref{mainTH1} allows us to extend the notion of the Stratonovich integral (\ref{epsilonconv}) beyond the set $C^+_H$. Let $\mathcal{E}^0$ be the set of all processes of the form $g(t,B_t)$ where $g:[0,T]\times \mathbb{R}^d\rightarrow \mathbb{R}^d \in C^+_H$. We equip this vector space with the positive semi-inner product inherited from the right-hand side of (\ref{isoFORMULA}). Let $\mathcal{L}_R(\mathbb{R}^d)$ be the (Hilbert space) completion of $\big(\mathcal{E}^0, \langle \cdot, \cdot \rangle_{\mathcal{E}^0}\big)$. Then, we can extend the domain of the symmetric-Stratonovich stochastic integral from $\mathcal{E}^0$ to $\mathcal{L}_R(\mathbb{R}^d)$ via (\ref{isoFORMULA}). In the sequel, we denote this extension by

$$\int_0^T Y_t \hat{d}^0B_t,$$
for $Y \in \mathcal{L}_R(\mathbb{R}^d)$. We stress that $\int_0^T Y_t \hat{d}^0B_t = \int_0^T Y_t d^0B_t$, whenever $Y_\cdot = g(\cdot,B_\cdot) \in \mathcal{E}^0$. Then, we arrive at the isometry result.

\begin{proposition}\label{isoPROP}
The set $\mathcal{L}_R(\mathbb{R}^d)$ is a separable Hilbert space which realizes 

\begin{equation}\label{isoINTR}
\mathbb{E}\Bigg| \int_0^T Y_t \hat{d}^0B_t\Bigg|^2 = \| Y\|^2_{\mathcal{L}_R(\mathbb{R}^d)},
\end{equation}
for every $Y \in \mathcal{L}_R(\mathbb{R}^d)$.
\end{proposition}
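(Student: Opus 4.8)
The plan is to recognize that, once Theorem \ref{mainTH1} is in hand, Proposition \ref{isoPROP} follows from the standard completion procedure for an isometry into a complete space. The only points that need care are that the bilinear form on $\mathcal{E}^0$ is genuinely a positive semi-inner product (so that its completion is a Hilbert space), that the extension $\hat d^0$ coincides with the canonical isometric extension, and the separability claim.

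First I would fix the linear map $\mathcal{I}\colon\mathcal{E}^0\to L^2(\mathbb{P})$, $\mathcal{I}(Y):=\int_0^T Y_t\,d^0B_t=\lim_{\epsilon\downarrow 0}I^0(\epsilon,Y,dB)$. This is well defined on $\mathcal{E}^0$ by Theorem \ref{mainTH1}, and linear because $Y\mapsto I^0(\epsilon,Y,dB)$ is linear by \eqref{Ioapp} and $L^2(\mathbb{P})$-limits preserve linearity; note also that $\mathcal{I}(Y)$ and the right-hand side of \eqref{isoFORMULA} depend only on the process $Y$ up to modification, not on the particular representative $g$, so no ambiguity arises. Since $\mathcal{I}$ is linear, $Q(Y):=\|\mathcal{I}(Y)\|^2_{L^2(\mathbb{P})}$ is a quadratic form (it satisfies $Q(\lambda Y)=\lambda^2Q(Y)$ and the parallelogram law, inherited from the Hilbert space norm of $L^2(\mathbb{P})$), and by Theorem \ref{mainTH1} it equals the right-hand side of \eqref{isoFORMULA}. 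Consequently the form $\langle Y,Z\rangle_{\mathcal{E}^0}:=\tfrac14\big(Q(Y+Z)-Q(Y-Z)\big)$ obtained by polarizing \eqref{isoFORMULA} is a well-defined, symmetric, bilinear form on $\mathcal{E}^0$, and $\langle Y,Y\rangle_{\mathcal{E}^0}=Q(Y)\ge 0$; hence $(\mathcal{E}^0,\langle\cdot,\cdot\rangle_{\mathcal{E}^0})$ is a pre-Hilbert space and $\mathcal{I}$ is an isometry from it into $L^2(\mathbb{P})$.

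Next I would invoke the elementary fact that an isometry from a semi-inner-product space into a Hilbert space extends uniquely to an isometry on the completion. Since $\|\mathcal{I}(Y)\|_{L^2(\mathbb{P})}=\|Y\|_{\mathcal{E}^0}$, the null space $\mathcal{N}=\{Y:\,\|Y\|_{\mathcal{E}^0}=0\}$ equals $\ker\mathcal{I}$, so $\mathcal{I}$ descends to an injective isometry $\mathcal{E}^0/\mathcal{N}\hookrightarrow L^2(\mathbb{P})$, and, $L^2(\mathbb{P})$ being complete, this extends by uniform continuity to an isometric embedding $\hat{\mathcal{I}}\colon\mathcal{L}_R(\mathbb{R}^d)\to L^2(\mathbb{P})$ of the completion $\mathcal{L}_R(\mathbb{R}^d)$. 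By construction $\mathcal{L}_R(\mathbb{R}^d)$ is a Hilbert space, and $\hat{\mathcal{I}}$ agrees with $\mathcal{I}$ on $\mathcal{E}^0$ and is the unique continuous extension, hence coincides with $Y\mapsto\int_0^T Y_t\,\hat d^0B_t$ as defined in the text. Therefore, for every $Y\in\mathcal{L}_R(\mathbb{R}^d)$,
\[
\mathbb{E}\Big|\int_0^T Y_t\,\hat d^0B_t\Big|^2=\|\hat{\mathcal{I}}(Y)\|^2_{L^2(\mathbb{P})}=\|Y\|^2_{\mathcal{L}_R(\mathbb{R}^d)},
\]
which is exactly \eqref{isoINTR}. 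For separability, I would use $\hat{\mathcal{I}}$: the image $\hat{\mathcal{I}}(\mathcal{L}_R(\mathbb{R}^d))$ is a subspace of $L^2(\Omega,\sigma(B),\mathbb{P})$, because each $I^0(\epsilon,Y,dB)$ with $Y=g(\cdot,B_\cdot)\in\mathcal{E}^0$ is $\sigma(B)$-measurable and so are its $L^2$-limits and, by continuity of $\hat{\mathcal{I}}$, their $L^2$-limits. Since $B$ has continuous paths, $\sigma(B)$ is generated up to $\mathbb{P}$-null sets by the countable family $\{B_q:\,q\in\mathbb{Q}\cap[0,T]\}$, so $L^2(\Omega,\sigma(B),\mathbb{P})$ is separable, and a subspace of a separable metric space is separable; hence $\mathcal{L}_R(\mathbb{R}^d)$ is separable. (Alternatively one could exhibit a countable dense subset of $\mathcal{E}^0$ directly, e.g. images of polynomials in $(t,x)$ with rational coefficients, dense by a localization plus Stone--Weierstrass argument, but the $L^2$-embedding route is shorter.)

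The proof is essentially bookkeeping on top of Theorem \ref{mainTH1}; the one genuine subtlety, and the step I would be most careful with, is confirming that the right-hand side of \eqref{isoFORMULA} is finite and defines a bona fide positive semi-inner product on \emph{all} of $\mathcal{E}^0$ (i.e. for every $g\in C^+_H$), which is precisely the content packaged by Theorem \ref{mainTH1} together with the $L^p$-estimates \eqref{MWest2} that guarantee each of the three terms on the right-hand side of \eqref{isoFORMULA} is individually finite.
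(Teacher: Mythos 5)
Your proposal is correct and follows essentially the same route as the paper: Section \ref{Isosection} defines the semi-inner product (\ref{sesq}) on $\mathcal{E}^0$ from the right-hand side of (\ref{isoFORMULA}), observes positivity via $\langle Y,Y\rangle_{\mathcal{E}^0}=\|\int_0^T Y_t\,d^0B_t\|_2^2$, quotients by the null space, completes, and extends the integral by continuity exactly as you do. The only addition on your side is the explicit separability argument via the isometric embedding into the separable space $L^2(\Omega,\sigma(B),\mathbb{P})$, which the paper asserts but leaves implicit; your argument for it is valid.
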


%The domain $\mathcal{L}_R(\mathbb{R}^d)$ of the Stratonovich integral requires regularity conditions totally dictated by $L_R(\mathbb{R}^d)$ via the action of $\Psi$ and $\Lambda$ defined in (\ref{capPsi}) and (\ref{capLambda}), respectively. 

In the sequel, we set 

\begin{equation}\label{rhoex}
\rho(dsdt):= |\mu|(dsdt) + \kappa(s,t)dsdt
\end{equation} 
for the  $q$-integrable kernel $\kappa$ as described in (\ref{MWest2}). For the precise expression of $\kappa$, we refer the reader to (\ref{kappaexp}) in Section \ref{Isosection}. 

\begin{theorem}\label{mainTH2}
If $g$ is a bounded function and for $Y_\cdot=g(\cdot,B_\cdot)$, there exists $p>1$ such that $(s,t) \ \mapsto \| Y_{s,t}\|_{2p}\in L^2(|\mu|)$, i.e.,  
\begin{equation}\label{uppernorm1}
\int_{[0,T]^2} \|Y_{s,t}\|^2_{2p} |\mu|(dsdt)<\infty, 
\end{equation}
then $Y \in \mathcal{L}_R(\mathbb{R}^d)$ and 
\begin{eqnarray}
\nonumber\|Y\|^2_{\mathcal{L}_R(\mathbb{R}^d)}&\lesssim& \int_0^T \|Y_t\|^{2}_{2p}m(dt) \\
\label{comIne}&+& \frac{1}{2}\int_{[0,T]^2} \|Y_{s,t}\|^2_{2p} \rho(dsdt).
\end{eqnarray}
\end{theorem}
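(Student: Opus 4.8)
The plan is to prove this in two stages: first obtain the estimate \eqref{comIne} for integrands already lying in $\mathcal{E}^0$, i.e.\ $Y_\cdot=g(\cdot,B_\cdot)$ with $g\in C^+_H$; and then, since the right-hand side of \eqref{comIne} is a quantity that is stable under passage to the $\mathcal{L}_R(\mathbb{R}^d)$-limit, extend the bound (and the membership $Y\in\mathcal{L}_R(\mathbb{R}^d)$) to an arbitrary bounded $g$ satisfying \eqref{uppernorm1} by approximating such a $g$ by elements of $C^+_H$.

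\textbf{Step 1: the bound on $\mathcal{E}^0$.} For $Y\in\mathcal{E}^0$, the definition of $\langle\cdot,\cdot\rangle_{\mathcal{E}^0}$ together with Theorem~\ref{mainTH1} identifies $\|Y\|^2_{\mathcal{L}_R(\mathbb{R}^d)}$ with the right-hand side of \eqref{isoFORMULA}, so I would bound its three terms separately. By \eqref{rkhsnorm} and Tonelli, $\mathbb{E}\|Y\|^2_{L_R(\mathbb{R}^d)}=\int_0^T\|Y_t\|_2^2\,m(dt)+\tfrac12\int_{[0,T]^2}\|Y_{s,t}\|_2^2\,|\mu|(dsdt)$, and $\|\cdot\|_2\le\|\cdot\|_{2p}$ by Jensen. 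For the other two, the Frobenius Cauchy--Schwarz inequality gives $|\langle a\otimes a,M\rangle|\le|a|^2|M|$, and Hölder in $\mathbb{P}$ with conjugate exponents $p$ and $p'$ gives $\mathbb{E}|\langle Y_{s,t}\otimes Y_{s,t},\mathcal{W}(s,t)\rangle|\le\|Y_{s,t}\|_{2p}^2\,\|\mathcal{W}(s,t)\|_{p'}$ and $\mathbb{E}|\langle Y_t\otimes Y_t,\mathcal{M}_t\rangle|\le\|Y_t\|_{2p}^2\,\|\mathcal{M}_t\|_{p'}$. Since every entry of $\mathcal{M}_t$ and $\mathcal{W}(s,t)$ lies in the second Wiener chaos, the equivalence of $L^{p'}$- and $L^2$-norms there combined with \eqref{MWest2} yields $\|\mathcal{M}_t\|_{p'}\lesssim\frac{\partial R}{\partial t}(t,T)$ and $\|\mathcal{W}(s,t)\|_{p'}\lesssim\big|\frac{\partial^2R}{\partial t\partial s}(s,t)\big|+\kappa(s,t)$. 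Summing, and recalling $m(dt)=\frac{\partial R}{\partial t}(T,t)dt$, $\rho=|\mu|+\kappa\,dsdt$ and $|\mu|\le\rho$, gives \eqref{comIne} for $Y\in\mathcal{E}^0$; its right-hand side is finite there because $g\in C^{\bar\gamma,\gamma}$ forces $\sup_{s,t}\|Y_{s,t}\|_{2p}<\infty$ and $\|Y_{s,t}\|_{2p}\lesssim|t-s|^{\bar\gamma\wedge\gamma H}$ with $\bar\gamma\wedge\gamma H>\tfrac12-H$, so that $\|Y_{s,t}\|_{2p}\in L^2(|\mu|)$.

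\textbf{Step 2: approximation.} For bounded measurable $g$ with \eqref{uppernorm1} I would take a space--time mollification $g_n=g*\phi_n$ (after extending $g$ harmlessly off $[0,T]$ in time), so that $g_n\in C^\infty$ is bounded by $\|g\|_\infty$, is globally Lipschitz in $(t,x)$ hence — using boundedness to handle large $|x|$ — lies in $C^+_H$, and $g_n\to g$ Lebesgue-a.e.; put $Y^n:=g_n(\cdot,B_\cdot)\in\mathcal{E}^0$. Applying Step~1 to $g_n-g_m\in C^+_H$ bounds $\|Y^n-Y^m\|^2_{\mathcal{L}_R(\mathbb{R}^d)}$ by $\int_0^T\|Y^n_t-Y^m_t\|_{2p}^2\,m(dt)+\tfrac12\int_{[0,T]^2}\|Y^n_{s,t}-Y^m_{s,t}\|_{2p}^2\,\rho(dsdt)$. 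Because the law of $B_r$ is absolutely continuous, $g_n(r,B_r)\to g(r,B_r)$ in $L^{2p}(\mathbb{P})$ for a.e.\ $r$; hence the marginal integral, and the $\kappa(s,t)dsdt$-part of the increment integral, tend to $0$ by dominated convergence (the integrands being dominated by $(2\|g\|_\infty)^2$, and $m$ and $\kappa\,dsdt$ finite). Granting that the $|\mu|$-part of the increment integral also tends to $0$, $(Y^n)$ is Cauchy in $\mathcal{L}_R(\mathbb{R}^d)$; its limit is then, by definition, an element of $\mathcal{L}_R(\mathbb{R}^d)$, which one identifies with $Y$ (the identification being independent of the approximating sequence, again by Step~1 applied to differences, and consistent with $\int Y^n d^0B\to\int Y\hat d^0B$). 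Passing to the limit in \eqref{comIne} for $Y^n$ — using $\|Y^n_{s,t}\|_{2p}\to\|Y_{s,t}\|_{2p}$ in $L^2(|\mu|)$, which is the same granted fact with $g_m$ replaced by $g$ — then yields \eqref{comIne} for $Y$.

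\textbf{The main obstacle.} The genuine difficulty is the step I granted: $\int_{[0,T]^2}\|(g_n-g)(t,B_t)-(g_n-g)(s,B_s)\|_{2p}^2\,|\mu|(dsdt)\to0$. This is nontrivial because $|\mu|(dsdt)\asymp|t-s|^{2H-2}dsdt$ assigns infinite mass to every neighbourhood of the diagonal, so the convergence cannot come from a crude $\|g\|_\infty$-bound and must exploit the near-diagonal decay of $\|Y_{s,t}\|_{2p}$ built into \eqref{uppernorm1} — this is exactly where boundedness of $g$ is essential. I would attack it by writing, via Minkowski's integral inequality and Jensen, $\|(g_n-g)(t,B_t)-(g_n-g)(s,B_s)\|_{2p}^2\le 2\int\phi_n(a,b)\,\|[g(t\!-\!a,B_t\!-\!b)-g(s\!-\!a,B_s\!-\!b)]-[g(t,B_t)-g(s,B_s)]\|_{2p}^2\,da\,db$, integrating against $|\mu|$ and using Fubini; the matter then reduces to (i) a bound on $\int_{[0,T]^2}\|g(t-a,B_t-b)-g(s-a,B_s-b)\|_{2p}^2\,|\mu|(dsdt)$ that is $\lesssim\int_{[0,T]^2}\|Y_{s,t}\|_{2p}^2|\mu|(dsdt)$ plus a term vanishing as $(a,b)\to0$, proven by explicitly comparing the joint Gaussian laws of $(B_{s-a},B_{t-a})$ and $(B_s-b,B_t-b)$ with that of $(B_s,B_t)$ — using the self-similarity of $R$, the stationarity of the increments of $B$, and boundedness of $g$ to absorb the non-diagonal part — and (ii) the pointwise vanishing, for a.e.\ $(s,t)$, of the mollified increment error as $(a,b)\to0$, plus dominated convergence. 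Making (i) hold uniformly all the way down to the diagonal is the part I expect to be delicate. An alternative route that bypasses the abstract completion is to rerun the polarization \eqref{limiINTR} and the proof of Theorem~\ref{mainTH1} directly under hypothesis \eqref{uppernorm1}, which should deliver both the $L^2(\mathbb{P})$-convergence of $I^0(\epsilon,Y,dB)$ and the estimate \eqref{comIne} in one stroke.
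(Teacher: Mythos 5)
Your proposal follows essentially the same route as the paper: Step~1 is the content of Lemma~\ref{almRK} (combined with Lemmas~\ref{sharpW} and~\ref{West}, which give $\|\mathcal{M}_t\|_2\lesssim \frac{\partial R}{\partial t}(t,T)$ and $\|\mathcal{W}(s,t)\|_q\lesssim |\frac{\partial^2R}{\partial t\partial s}(s,t)|+\kappa(s,t)$ directly for any $q>1$, so you do not even need to route through hypercontractivity), and Step~2 is exactly the paper's argument in Theorem~\ref{mainTH3}: mollify $g$, note that boundedness makes $g_n$ globally Lipschitz hence $Y^n\in\mathcal{E}^0$, kill the $m(dt)$-part by bounded convergence, and pass to the limit in the $\rho$-part.

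The one point worth making precise is the step you ``grant'', namely the convergence of the $|\mu|$-weighted increment integral. Your instinct that this is the genuine obstacle is correct, and you should know that the paper does \emph{not} close it under hypothesis (\ref{uppernorm1}) alone. Instead, Theorem~\ref{mainTH3} imposes the additional hypothesis (\ref{fundASS2}), which is precisely a uniform-in-$n$ integrability bound on the mollified increments $\|g(t-\tfrac{r}{n},B_t-\tfrac{z}{n})-g(s-\tfrac{r}{n},B_s-\tfrac{z}{n})\|_{2p}$ against $\rho$; this supplies the dominating function $G\in L^1(\rho)$ needed for dominated convergence after the Jensen/Fubini reduction you describe (which is verbatim the paper's reduction). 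The paper is explicit that Theorem~\ref{mainTH2} as stated is only a ``loose summary'' of Theorem~\ref{mainTH3} up to this technical condition, and Remark~\ref{UNcont} notes that (\ref{fundASS2}) collapses to (\ref{fundASS1}) when $g$ is uniformly continuous. So your proposed attack (i) --- comparing the joint Gaussian laws of the shifted vectors with that of $(B_s,B_t)$ uniformly down to the diagonal --- is an attempt to prove something the paper does not prove; if you could carry it out you would strengthen the published result, but as written it remains a sketch, and the honest resolution consistent with the paper is to add the uniform integrability of the shifted increments as a hypothesis, exactly as (\ref{fundASS2}) does.
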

We highlight the boundedness condition imposed in Theorem \ref{mainTH2} is just a technical restriction and it is not essential. One can make use of localization arguments as long as $(s,t)\mapsto \|Y_{s,t}\|_{2p} \in L^2(|\mu|)$ and $\sup_{t \in [0,T]}\| Y_t\|_{2p} < \infty $ for $p>1$ and, for conciseness, we restrict the analysis under this boundedness condition for the present paper. Theorem \ref{mainTH2} is the loose summary of Theorem \ref{mainTH3} up to a technical condition which resembles (\ref{uppernorm1}) shifted by constants in a small neighborhood of the origin at $\mathbb{R}_+\times \mathbb{R}^d$. For further details, see (\ref{fundASS2}) in Theorem \ref{mainTH3} and Remark \ref{UNcont}.

\subsection{Strategy of the proof}\label{strategy}
We consider the natural idea of exploring the underlying RKHS described in (\ref{rkhsnorm})  based on a polarization scheme and projecting $B_{s,t}\otimes B_{u,v}$ via conditional expectation onto the Gaussian space generated by $B_{c}, B_d$ where $s < c< t < u < d < v$. For a given integrand of the form $Y_t = g(t,B_t)$ and $(\epsilon,\delta) \in (0,1)^2$, we shall write 

%We observe

%$$ g^{ij}_{sy}(s,t)  = \frac{1}{2}\Big\{ g^{(i)}(s)g^{(j)}(s)+ g^{(i)}(t)g^{(j)}(t)\Big\}$$
%$$- \frac{1}{2}\big[ g^{(i)}(s) - g^{(i)}(t)\big]%%\big[ g^{(j)}(s) - g^{(j)}(t) \big]$$

$$
\Big \langle I^0(\epsilon,Y,dB), I^0(\delta,Y,dB) \Big\rangle_{L^2(\mathbb{P})} = \mathbb{E} \int_{[0,T]^2} \Big\langle \text{sym} \big( Y_s\otimes Y_t\big), \frac{1}{4\epsilon\delta}B_{s-\epsilon,s+\epsilon}\otimes  B_{t-\delta,t+\delta} \Big\rangle dsdt
$$
$$+ \mathbb{E} \int_{[0,T]^2} \Big\langle \text{ant} \big( Y_s\otimes Y_t\big), \frac{1}{4\epsilon\delta}B_{s-\epsilon,s+\epsilon}\otimes  B_{t-\delta,t+\delta} \Big\rangle dsdt:= I_{sym}(\epsilon,\delta) + I_{ant}(\epsilon,\delta),
$$
where $\text{ant} \big( Y_s\otimes Y_t\big)$ and $\text{sym} \big( Y_s\otimes Y_t\big)$ denotes the anti-symmetric and symmetric part of the matrix-valued process $Y_s\otimes Y_t$, respectively. The philosophy is to concentrate the analysis on the symmetric component (see Lemma \ref{antsymLemma}), where $\text{sym} \big( Y_s\otimes Y_t\big)$ can be represented (by polarization) as  
 
\begin{equation}\label{MARINsplit}
\text{sym} \big( Y_s\otimes Y_t\big)=\frac{1}{2} \Big\{ \underbrace{Y_s \otimes Y_s}_{\mathrm{marginal}} + \underbrace{Y_t\otimes Y_t}_{\mathrm{marginal}} -\underbrace{Y_{s,t}\otimes Y_{s,t}}_{\mathrm{1-increment}} \Big\}.
\end{equation}
Then, we can represent  
\begin{eqnarray}
\label{split}I_{sym}(\epsilon,\delta)&=& \frac{1}{2} \mathbb{E}\int_{[0,T]^2}  \Big\langle Y_s\otimes Y_s + Y_t\otimes Y_t , \frac{1}{4\epsilon\delta}B_{s-\epsilon,s+\epsilon}\otimes  B_{t-\delta,t+\delta} \Big\rangle dsdt\\
\nonumber&-&\frac{1}{2} \mathbb{E}\int_{[0,T]^2}  \Big\langle Y_{s,t}\otimes Y_{s,t} , \Lambda^0(\epsilon,\delta;s,t) \Big\rangle dsdt,
\end{eqnarray}
where 
$$\Lambda^0(\epsilon,\delta;s,t):=\frac{1}{4\epsilon\delta}\mathbb{E}\Big[B_{s-\epsilon,s+\epsilon}\otimes  B_{t-\delta,t+\delta}\big| B_s,B_t\Big].$$
The limit (as $(\epsilon,\delta)\downarrow 0$) of the marginal component in (\ref{split}) does not require too much regularity from $Y$ essentially because $\frac{\partial R}{\partial t}(T,t)dt$ is a finite positive measure. In the limit, the marginal component in (\ref{split}) gives rise to the one-parameter matrix-valued process

$$
\Psi_t= \mathcal{M}_t + \frac{\partial R}{\partial t}(T,t)I_{d\times d}. 
$$
defined in (\ref{capPsi}). The analysis of the marginal component in (\ref{split}) is investigated in Section \ref{secin}. The analysis of the increment component in (\ref{split}) is much more involved essentially because $\frac{\partial^2 R}{\partial t \partial s}(s,t)$ is a Schwartz distribution over $[0,T]^2$. We will also use the important fact it is a $\sigma$-finite measure restricted to $[0,T]^2_\star$ combined with the projection operator $\Lambda(\epsilon,\delta)$. The limit (as $(\epsilon,\delta)\downarrow 0$) of the increment component in (\ref{split}) requires regularity from the increments of $Y$ roughly of the form
$$\int_{[0,T]^2}\| Y_{s,t}\|^{2p}_{2p}\rho(dsdt) < \infty,$$
for some $p>1$, where $\rho$ is given by (\ref{rhoex}). In the limit, the increment component gives rise to the two-parameter matrix-valued process
\begin{eqnarray*}
\Lambda(s,t)&=&\lim_{(\epsilon,\delta)\downarrow 0}\Lambda^0(\epsilon,\delta;s,t)
\\
&=& \mathcal{W}(s,t) + \frac{\partial^2 R}{\partial t \partial s}(s,t)I_{d\times d }
\end{eqnarray*}
almost sure for each $s\neq t$ (see Lemma \ref{Lambdalimit}). Sections \ref{secsection} and \ref{mainsection} are devoted to the analysis of the limit

$$\lim_{(\epsilon,\delta)\downarrow 0} \Big\{ I_{sym}(\epsilon,\delta) + I_{ant}(\epsilon,\delta)\Big\}.$$
Lemma \ref{antsymLemma} reveals that $I_{ant}(\epsilon,\delta)$ will play no role in the isometry expression. The proof of Theorem \ref{mainTH2} is given in Section \ref{Isosection} and it is based on Theorem \ref{mainTH1} and Lemmas \ref{West} and \ref{almRK}.

\section{Preliminaries}\label{prelimi}
In this section, we introduce the basic objects for the proof of Theorem \ref{mainTH1}. 
\subsection{The projection operator}
In this section, we will describe the conditional expectation

\begin{eqnarray}\label{LambdaOP}
\Lambda^0(\epsilon,\delta; s,t)&=& \frac{1}{4 \epsilon\delta}\mathbb{E}\Big[ B_{s-\epsilon,s+\epsilon}\otimes B_{t-\delta,t+\delta}| B_s,B_t\Big]\\
\nonumber&=&\frac{1}{4 \epsilon\delta}\mathbb{E}\Big[ B^{(i)}_{s-\epsilon,s+\epsilon}B^{(j)}_{t-\delta,t+\delta}| B_s,B_t\Big];~1\le i,j\le d,
\end{eqnarray}
for $(s,t) \in [0,T]^2$. 
The projection operator $\Lambda^0(\epsilon,\delta; \cdot)$ onto the sigma-algebra generated by $(B_s,B_t)$ can be completely characterized by Gaussian linear regression which we now describe in detail. Let us denote

$$\mathbb{B}^{(i)}_{s,t}: = \left(
                                                                                        \begin{array}{c}
                                                                                          B^{(i)}_s \\
                                                                                          B^{(i)}_{t} \\
                                                                                        \end{array}
                                                                                      \right)
\quad \mathbf{B}_{s,t} = \left(
                                 \begin{array}{c}
                                   \mathbb{B}^{(1)}_{s,t} \\
                                   \vdots \\
                                   \mathbb{B}^{(d)}_{s,t} \\
                                 \end{array}
                               \right),
$$
for $(s,t) \in [0,T]^2$ and $1\le i\le d$. Observe

$$
\Lambda^0(\epsilon,\delta; s,t)= \frac{1}{4 \epsilon\delta}\mathbb{E}\Big[ B_{s-\epsilon,s+\epsilon}\otimes B_{t-\delta,t+\delta}| \mathbf{B}_{s,t}\Big]
$$
for $(s,t) \in [0,T]^2$.

%$$=\sum_{i,j=1}^d \mathbb{E} \Bigg[\int_{[0,T]^2} g^{(i)} (s,B_s) g^{(j)}(t,B_t)\frac{1}{\epsilon\delta}\mathbb{E}\Big[ B^{(i)}_{s,s+\epsilon} B^{(j)}_{t,t+\delta}\big|\sigma(B_s,B_t) \Big]dsdt\Bigg].$$

For each $1\le i,j\le d$, we observe $\big( \mathbf{B}^\top_{s,t}, B^{(i)}_{s-\epsilon,s+\epsilon}, B^{(j)}_{t-\delta,t+\delta}\big)$ is a $2d+2$-dimensional Gaussian vector. Therefore, the classical linear regression analysis yields the following representation

\begin{equation}\label{pr}
\left(
  \begin{array}{c}
    B^{(i)}_{s-\epsilon,s+\epsilon} \\
    B^{(j)}_{t-\delta,t+\delta} \\
  \end{array}
\right)
 = \mathbb{E}\Bigg [ \left(
  \begin{array}{c}
    B^{(i)}_{s-\epsilon,s+\epsilon} \\
    B^{(j)}_{t-\delta,t+\delta} \\
  \end{array}
\right) \Big| \mathbf{B}_{s,t} \Bigg] + N_{s,t}(\epsilon,\delta),
\end{equation}
where $N_{s,t}(\epsilon,\delta)$ is a (zero-mean) 2-dimensional Gaussian vector independent from the sigma-algebra generated by $\mathbf{B}_{s,t}$. Moreover, we have the following representation

\begin{equation}\label{2dcexp1}
\mathbb{E}\Bigg [ \left(
  \begin{array}{c}
    B^{(i)}_{s-\epsilon,s+\epsilon} \\
    B^{(j)}_{t-\delta,t+\delta} \\
  \end{array}
\right) \Big| \mathbf{B}_{s,t} \Bigg] = \mathcal{N}^{\epsilon,\delta}_{s,t}(i,j) \Sigma^{-1}_{s,t} \mathbf{B}_{s,t},
\end{equation}
where
%\text{Cov}\Big((B_{s-\epsilon,s+\epsilon}, B_{t-\delta,t+\delta}); (B_s,B_{s,t})\Big)
%\text{Cov}^{-1} \Big(\mathbf{B}_{s,t};  \mathbf{B}_{s,t}\Big)

$$\mathcal{N}^{\epsilon,\delta}_{s,t}(i,j):=\text{Cov}\Big((B^{(i)}_{s-\epsilon,s+\epsilon}, B^{(j)}_{t-\delta,t+\delta})^\top; \mathbf{B}_{s,t}\Big), $$

$$\Sigma_{s,t}:=\text{Cov}\Big(\mathbf{B}_{s,t}; \mathbf{B}_{s,t}\Big).
$$
We can represent $\mathcal{N}^{\epsilon,\delta}_{s,t}(i,j)$ and $\Sigma_{s,t}^{-1}$ as follows: $\Sigma_{s,t}$ is a $2d\times 2d$-square matrix partitioned into a  block diagonal form

$$\Sigma_{s,t} = \left(
                   \begin{array}{cccc}
                     \Sigma_{s,t}(1,1) & 0 & \ldots & 0 \\
                     0 & \Sigma_{s,t}(2,2) & \ldots & 0 \\
                     \vdots & \vdots & \ddots & \vdots \\
                     0 & 0 & \ldots & \Sigma_{s,t}(d,d) \\
                   \end{array}
                 \right),
$$
where

$$\Sigma_{s,t}(i,i):= \text{Cov} \big( \mathbb{B}^{(i)}_{s,t}; \mathbb{B}^{(i)}_{s,t} \big),$$
for $(s,t) \in [0,T]^2$ and $1\le i\le d$. By the very definition, for $(s,t) \in [0,T]^2$, we have
$$\Sigma_{s,t}(i,i) = \left(
                   \begin{array}{cc}
                     \gamma(s) & R(s,t) \\
                     R(s,t) & \gamma(t) \\
                   \end{array}
                 \right),
$$

$$\text{det} (\Sigma_{s,t}(i,i)) = \gamma(s) \gamma(t) - R^2(s,t), $$

%we set
%$$\varphi(s,t):=\text{cov}(B^{(1)}_s; B^{(1)}_{s,t}) = \frac{1}{2} \{t^{2H}-s^{2H} - |t-s|^{2H}\}$$

$$\Sigma^{-1}_{s,t}(i,i) = \frac{1}{\Theta_{s,t}} \left(
                   \begin{array}{cc}
                     \gamma(t) & -R(s,t) \\
                     -R(s,t) & \gamma(s) \\
                   \end{array}
                 \right).
$$

Here, in order to keep notation simple, we denote 

$$\Theta_{s,t}:= \text{det}(\Sigma_{s,t}(i,i)),$$
for $(s,t) \in [0,T]^2$. By construction,

$$\Sigma^{-1}_{s,t} = \left(
                   \begin{array}{cccc}
                     \Sigma^{-1}_{s,t}(1,1) & 0 & \ldots & 0 \\
                     0 & \Sigma^{-1}_{s,t}(2,2) & \ldots & 0 \\
                     \vdots & \vdots & \ddots & \vdots \\
                     0 & 0 & \ldots & \Sigma^{-1}_{s,t}(d,d) \\
                   \end{array}
                 \right).
$$
By the independence between $B^{(i)}$ and $B^{(j)}$ for $i\neq j$, we observe we can represent $\mathcal{N}^{\epsilon,\delta}_{s,t}(i,j)$ as follows

%a^{(i,j)}_{11}(\epsilon,s,t)

$$
\mathcal{N}^{\epsilon,\delta}_{s,t}(i,j)=\left(
  \begin{array}{ccccc}
    \alpha^{i}_1(\epsilon,s,t) & \ldots  & \alpha^{i}_i(\epsilon,s,t) & \ldots  & \alpha^{i}_d(\epsilon,s,t)  \\
    \beta^{j}_1(\delta,s,t) & \ldots & \ldots \beta^{j}_j(\delta,s,t)  & \ldots & \beta^{j}_d(\delta,s,t) \\
  \end{array}
\right).
$$
Here, the first row is represented by $\alpha^{i}_\ell(\epsilon,s,t):=(0,0)$ for every $\ell\neq i$ and
%R(s,s+\epsilon) - R(s, (s-\epsilon)\vee 0)
%R(s+\epsilon,t) - R((s-\epsilon)\vee 0,t) - [R(s+\epsilon,s) - R((s-\epsilon)\vee 0,s)]
$$\alpha^{i}_i(\epsilon,s,t):= \Big(\text{cov}\big(B^{(1)}_{(s-\epsilon)\vee 0, s+\epsilon}; B^{(1)}_s\big), \text{cov}\big(B^{(1)}_{(s-\epsilon)\vee 0, s+\epsilon}; B^{(1)}_{t}\big)\Big).$$
The second row is represented by $ \beta^{j}_m(\delta,s,t) := (0,0)$ for every $m\neq j$ and

$$\beta^{j}_j(\delta,s,t):= \Big(\text{cov}\big(B^{(1)}_{(t-\delta)\vee 0, t+\delta}; B^{(1)}_s\big), \text{cov}\big(B^{(1)}_{(t-\delta)\vee 0, t+\delta}; B^{(1)}_{t}\big)\Big).$$
From $(\alpha^{i}_i(s,t),\beta^j_j(s,t)$, we can construct the following submatrix of $\mathcal{N}^{\epsilon,\delta}_{s,t}(i,j)$, namely:

$$
\mathcal{N}^{\epsilon,\delta}_{s,t}:=\left(
    \begin{array}{cc}
      \mathbf{n}_{11}(\epsilon,s) & \mathbf{n}_{12}(\epsilon,s,t) \\
      \mathbf{n}_{21}(\delta,s,t) &\mathbf{n}_{22}(\delta,s,t) \\
    \end{array}
  \right),
$$
where
%$$
%\mathcal{O}^{\epsilon,\delta}_{s,t}:=\left(
%    \begin{array}{cc}
%      \text{cov}\big(B^{(1)}_{(s-\epsilon)\vee 0, s+\epsilon}; B^{(1)}_s\big) & \text{cov}\big(B^{(1)}_{(s-\epsilon)\vee 0, s+\epsilon}; B^{(1)}_{s,t}\big) \\
%      \text{cov}\big(B^{(1)}_{(t-\delta)\vee 0, t+\delta}; B^{(1)}_s\big) & \text{cov}\big(B^{(1)}_{(t-\delta)\vee 0, t+\delta}; B^{(1)}_{s,t}\big) \\
%    \end{array}
%  \right).
%$$

$$\mathbf{n}_{11}(\epsilon,s):= R(s,s+\epsilon) - R(s,(s-\epsilon)\vee 0),
\quad \mathbf{n}_{12}(\epsilon,s,t):=R(s+\epsilon,t) - R((s-\epsilon)\vee 0,t),
$$
and 
$$\mathbf{n}_{21}(\delta,s,t): = R(s,t+\delta) - R(s,(t-\delta) \vee 0), \quad \mathbf{n}_{22}(\delta,s,t):=R(t+\delta,t) - R((t-\delta) \vee 0,t),
$$
for $(\epsilon,\delta) \in (0,1)^2$ and $(s,t) \in [0,T]^2$.

Let us denote

\begin{equation}\label{2dcexp2}
\left(
    \begin{array}{c}
      Z^{1;ij}_{s,t}(\epsilon) \\
      Z^{2;ij}_{s,t}(\delta) \\
    \end{array}
  \right)
:= \mathbb{E}\Bigg [ \left(
  \begin{array}{c}
    B^{(i)}_{s-\epsilon,s+\epsilon} \\
    B^{(j)}_{t-\delta,t+\delta} \\
  \end{array}
\right) \Big| \mathbf{B}_{s,t} \Bigg],
\end{equation}
for $(s,t) \in [0,T]^2$. From (\ref{2dcexp1}), the coordinates of the conditional expectation (\ref{2dcexp2}) are given by

\begin{equation}\label{Z1epsilon}
Z^{1;ij}_{s,t}(\epsilon) = \frac{1}{\Theta_{s,t}}\big\{\lambda_{11}(\epsilon,s,t)B^{(i)}_s + \lambda_{12}(\epsilon,s,t)B^{(i)}_{t}\big\}
 \end{equation}
and

\begin{equation}\label{Z2delta}
Z^{2;ij}_{s,t}(\delta)= \frac{1}{\Theta_{s,t}} \big\{\lambda_{21}(\delta,s,t)B^{(j)}_s
+ \lambda_{22}(\delta,s,t)B^{(j)}_{t}\big\},
\end{equation}
where we set

$$\lambda_{11}(\epsilon,s,t):=\mathbf{n}_{11}(\epsilon,s)\gamma(t) - \mathbf{n}_{12}(\epsilon,s,t)R(s,t),$$
$$\lambda_{12}(\epsilon,s,t):=\mathbf{n}_{12}(\epsilon,s,t) \gamma(s) - \mathbf{n}_{11}(\epsilon,s)R(s,t),$$
$$\lambda_{21}(\delta,s,t):=\mathbf{n}_{21}(\delta,s,t) \gamma(t) - \mathbf{n}_{22}(\delta,s,t)R(s,t),$$
$$\lambda_{22}(\delta,s,t):=\mathbf{n}_{22}(\delta,s,t)\gamma(s)  - \mathbf{n}_{21}(\delta,s,t)R(s,t),$$
for $(s,t) \in [0,T]^2_\star$. We shall represent $\Lambda^0(\epsilon,\delta; \cdot)$ as follows.

\begin{lemma}\label{Sprerepr}
For any $(\epsilon,\delta) \in (0,1)^2$,
\begin{eqnarray*}
\Lambda^{0,ij}(\epsilon,\delta; s,t)&=& \frac{1}{4\epsilon \delta}\Big[ Z^{1;ij}_{s,t}(\epsilon) Z^{2;ij}_{s,t}(\delta) - \mathbb{E}[
Z^{1;ij}_{s,t}(\epsilon) Z^{2;ij}_{s,t}(\delta)] \Big]\\
&+& \frac{1}{4\epsilon \delta}\mathbb{E}\big[B^{(i)}_{s-\epsilon,s+\epsilon} B^{(j)}_{t-\delta,t+\delta}\big],
\end{eqnarray*}
for $(s,t) \in [0,T]^2$ and $1\le i,j\le d$.
\end{lemma}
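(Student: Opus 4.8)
The plan is to start from the representation \eqref{pr} of the pair of increments as the sum of their conditional expectation given $\mathbf{B}_{s,t}$ plus an independent Gaussian noise $N_{s,t}(\epsilon,\delta)$, and then compute the product of the two scalar entries. Writing $B^{(i)}_{s-\epsilon,s+\epsilon} = Z^{1;ij}_{s,t}(\epsilon) + N^1_{s,t}(\epsilon,\delta)$ and $B^{(j)}_{t-\delta,t+\delta} = Z^{2;ij}_{s,t}(\delta) + N^2_{s,t}(\epsilon,\delta)$, where $(N^1,N^2) = N_{s,t}(\epsilon,\delta)$ is independent of $\mathbf{B}_{s,t}$, I would expand the product
$$
B^{(i)}_{s-\epsilon,s+\epsilon} B^{(j)}_{t-\delta,t+\delta} = Z^{1;ij}_{s,t}(\epsilon)Z^{2;ij}_{s,t}(\delta) + Z^{1;ij}_{s,t}(\epsilon)N^2 + N^1 Z^{2;ij}_{s,t}(\delta) + N^1 N^2.
$$
Taking the conditional expectation given $\mathbf{B}_{s,t}$ and using that $(N^1,N^2)$ is centered and independent of $\mathbf{B}_{s,t}$ (so the two cross terms vanish and $\mathbb{E}[N^1N^2\mid \mathbf{B}_{s,t}] = \mathbb{E}[N^1 N^2]$ is a deterministic constant), one gets
$$
\mathbb{E}\big[B^{(i)}_{s-\epsilon,s+\epsilon} B^{(j)}_{t-\delta,t+\delta}\mid \mathbf{B}_{s,t}\big] = Z^{1;ij}_{s,t}(\epsilon)Z^{2;ij}_{s,t}(\delta) + \mathbb{E}[N^1N^2].
$$

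Next I would eliminate the constant $\mathbb{E}[N^1 N^2]$ in favor of quantities appearing in the statement. Taking ordinary (unconditional) expectations in the same product expansion, and again using independence and centering of $N$, yields
$$
\mathbb{E}\big[B^{(i)}_{s-\epsilon,s+\epsilon} B^{(j)}_{t-\delta,t+\delta}\big] = \mathbb{E}\big[Z^{1;ij}_{s,t}(\epsilon)Z^{2;ij}_{s,t}(\delta)\big] + \mathbb{E}[N^1 N^2],
$$
so that $\mathbb{E}[N^1 N^2] = \mathbb{E}[B^{(i)}_{s-\epsilon,s+\epsilon} B^{(j)}_{t-\delta,t+\delta}] - \mathbb{E}[Z^{1;ij}_{s,t}(\epsilon)Z^{2;ij}_{s,t}(\delta)]$. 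Substituting this back into the conditional-expectation identity, dividing by $4\epsilon\delta$, and recalling $\Lambda^{0,ij}(\epsilon,\delta;s,t) = \tfrac{1}{4\epsilon\delta}\mathbb{E}[B^{(i)}_{s-\epsilon,s+\epsilon} B^{(j)}_{t-\delta,t+\delta}\mid \mathbf{B}_{s,t}]$ from \eqref{LambdaOP}, one obtains exactly the claimed formula
$$
\Lambda^{0,ij}(\epsilon,\delta;s,t) = \frac{1}{4\epsilon\delta}\Big[Z^{1;ij}_{s,t}(\epsilon)Z^{2;ij}_{s,t}(\delta) - \mathbb{E}[Z^{1;ij}_{s,t}(\epsilon)Z^{2;ij}_{s,t}(\delta)]\Big] + \frac{1}{4\epsilon\delta}\mathbb{E}\big[B^{(i)}_{s-\epsilon,s+\epsilon} B^{(j)}_{t-\delta,t+\delta}\big].
$$

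The only points requiring a little care, rather than genuine obstacles, are bookkeeping ones: first, that the regression noise $N_{s,t}(\epsilon,\delta)$ in \eqref{pr} really is \emph{jointly} independent of $\mathbf{B}_{s,t}$ (this is the standard fact that in a Gaussian vector the residual of the $L^2$-projection onto a subspace is independent of that subspace, already invoked right after \eqref{pr}); and second, that the scalar formulas \eqref{Z1epsilon}--\eqref{Z2delta} for $Z^{1;ij}_{s,t}(\epsilon)$ and $Z^{2;ij}_{s,t}(\delta)$ are indeed the two components of the vector identity \eqref{2dcexp1}--\eqref{2dcexp2}, which follows from the block-diagonal structure of $\Sigma_{s,t}$ and $\Sigma_{s,t}^{-1}$ together with the explicit form of $\mathcal{N}^{\epsilon,\delta}_{s,t}(i,j)$. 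With these in hand the computation is the short algebraic manipulation sketched above, so the main ``work'' of the lemma is really the set-up preceding it; there is no substantive difficulty in the proof itself.
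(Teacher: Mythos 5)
Your proposal is correct and follows essentially the same route as the paper's proof: decompose the increments via the Gaussian regression identity \eqref{pr}, use that the residual $N_{s,t}(\epsilon,\delta)$ is centered and independent of $\mathbf{B}_{s,t}$ so that the conditional expectation of the product equals $Z^{1;ij}_{s,t}(\epsilon)Z^{2;ij}_{s,t}(\delta)+\mathbb{E}[N^{(1)}N^{(2)}]$, and then identify the constant $\mathbb{E}[N^{(1)}N^{(2)}]$ by taking unconditional expectations. No gaps; the bookkeeping points you flag (joint independence of the residual and the block-diagonal reduction to the scalar formulas \eqref{Z1epsilon}--\eqref{Z2delta}) are exactly the set-up the paper relies on.
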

\begin{proof}
Fix $1\le i,j\le d$ and $(\epsilon,\delta) \in (0,1)^2$. Let $N_{s,t}(\epsilon,\delta) = \left(
                                  \begin{array}{c}
                                    N^{(1)}_{s,t}(\epsilon) \\
                                    N^{(2)}_{s,t}(\delta) \\
                                  \end{array}
                                \right)
$ be the Gaussian vector in (\ref{pr}). From (\ref{pr}), we have

$$B^{(i)}_{s-\epsilon,s+\epsilon} = Z^{1;ij}_{s,t}(\epsilon) + N^{(1)}_{s,t}(\epsilon),\quad B^{(j)}_{t-\delta,t+\delta} = Z^{2;ij}_{s,t}(\delta) + N^{(2)}_{s,t}(\delta),$$
for $(s,t) \in [0,T]^2$. By recalling that $N_{s,t}(\epsilon,\delta)$ is independent from the sigma-algebra generated by $\mathbf{B}_{s,t}$ and it has zero mean, we have

\begin{eqnarray}
\frac{1}{4\epsilon\delta}\mathbb{E}\Big[B^{(i)}_{s-\epsilon,s+\epsilon} B^{(j)}_{t-\delta,t+\delta}| \mathbf{B}_{s,t}\Big]&=& \label{Nexp}\frac{1}{4\epsilon\delta}Z^{1;ij}_{s,t}(\epsilon) Z^{2;ij}_{s,t}(\delta)\\
\nonumber&+& \frac{1}{4\epsilon\delta}\mathbb{E}\big[N^{(1)}_{s,t}(\epsilon) N^{(2)}_{s,t}(\delta)\big],
\end{eqnarray}
for $(s,t) \in [0,T]^2$. By taking expectation in the right-hand side of (\ref{Nexp}), we must have

$$\frac{1}{4\epsilon\delta}\mathbb{E}\big[N^{(1)}_{s,t}(\epsilon) N^{(2)}_{s,t}(\delta)\big] = \frac{1}{4\epsilon\delta}\mathbb{E}\big[B^{(i)}_{s-\epsilon,s+\epsilon} B^{(j)}_{t-\delta,t+\delta}\big] - \frac{1}{4\epsilon\delta}\mathbb{E}\big[ Z^{1;ij}_{s,t}(\epsilon) Z^{2;ij}_{s,t}(\delta) \big],$$
for $(s,t) \in [0,T]^2$. This concludes the proof.
\end{proof}

The following simple result shows that the anti-symmetric component $I_{ant}$ does not play any role for the construction of the symmetric-Stratonovich integral.
\begin{lemma}\label{antsymLemma}
The anti-symmetric component $I_{ant}(\epsilon,\delta)$ is null for every $(\epsilon,\delta) \in (0,1)^2$. Therefore,
$$
\Big \langle I^0(\epsilon,Y,dB), I^0(\delta,Y,dB) \Big\rangle_{L^2(\mathbb{P})} = I_{sym}(\epsilon,\delta),
$$
for every $(\epsilon,\delta) \in (0,1)^2$.
\end{lemma}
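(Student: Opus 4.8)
The plan is to prove that $I_{ant}(\epsilon,\delta)$ vanishes using only two structural facts: that $\mathrm{ant}(Y_s\otimes Y_t)$ is an antisymmetric matrix, and that the square $[0,T]^2$ is invariant under the flip $(s,t)\mapsto(t,s)$. The starting point is the elementary identity, valid for $a,b,u,v\in\mathbb{R}^d$, $\langle a\otimes b,u\otimes v\rangle=\langle a,u\rangle\langle b,v\rangle$ together with $\mathrm{ant}(a\otimes b)=\tfrac12(a\otimes b-b\otimes a)$; applying it pointwise with $u=B_{s-\epsilon,s+\epsilon}$, $v=B_{t-\delta,t+\delta}$ yields
$$\Big\langle\mathrm{ant}(Y_s\otimes Y_t),\tfrac{1}{4\epsilon\delta}B_{s-\epsilon,s+\epsilon}\otimes B_{t-\delta,t+\delta}\Big\rangle=\tfrac{1}{8\epsilon\delta}\Big(\langle Y_s,B_{s-\epsilon,s+\epsilon}\rangle\langle Y_t,B_{t-\delta,t+\delta}\rangle-\langle Y_s,B_{t-\delta,t+\delta}\rangle\langle Y_t,B_{s-\epsilon,s+\epsilon}\rangle\Big).$$

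Next I would integrate over $[0,T]^2$ and apply Fubini. The first term factorizes as $\tfrac{1}{8\epsilon\delta}\big(\int_0^T\langle Y_s,B_{s-\epsilon,s+\epsilon}\rangle\,ds\big)\big(\int_0^T\langle Y_t,B_{t-\delta,t+\delta}\rangle\,dt\big)=\tfrac12\,I^0(\epsilon,Y,dB)\,I^0(\delta,Y,dB)$ by the very definition of $I^0$. For the second term one relabels the dummy variables $s\leftrightarrow t$, which is legitimate since $[0,T]^2$ is flip-invariant, and exploits that the two scalar factors $\langle Y_\cdot,\cdot\rangle$ commute; the point of the argument is that the two integrals then cancel, so that $I_{ant}(\epsilon,\delta)=0$ after taking expectations. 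The asserted identity $\langle I^0(\epsilon,Y,dB),I^0(\delta,Y,dB)\rangle_{L^2(\mathbb{P})}=I_{sym}(\epsilon,\delta)$ is then immediate from the splitting $Y_s\otimes Y_t=\mathrm{sym}(Y_s\otimes Y_t)+\mathrm{ant}(Y_s\otimes Y_t)$ and the definitions of $I_{sym}$ and $I_{ant}$. An equivalent route, which dovetails with the preceding material, is to first condition $Y_s\otimes Y_t$ — which is $\mathbf{B}_{s,t}$-measurable — onto the Gaussian space generated by $(B_s,B_t)$, thereby replacing the rank-one tensor by the projection $\Lambda^0(\epsilon,\delta;s,t)$ of Lemma~\ref{Sprerepr}, and then apply the same flip symmetry of $[0,T]^2$ to this smoother object; note also that in dimension one $\mathrm{ant}(Y_s\otimes Y_t)$ is literally the zero matrix, so the statement is transparent there and the general case is a matter of organizing the cancellation correctly.

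The step that genuinely requires care — and which I expect to be essentially the only obstacle — is making the symmetrization of the second term precise: under $(s,t)\mapsto(t,s)$ the smoothing parameters $\epsilon$ and $\delta$ get attached to the opposite time variable, so the cancellation is not purely automatic and must be checked directly on the explicit bilinear expression; it is precisely the scalar (rank-one) nature of the factors $\langle Y_s,\cdot\rangle$, $\langle Y_t,\cdot\rangle$ that makes this go through. Apart from that, nothing is delicate: every quantity in sight is a degree-two polynomial in jointly Gaussian increments with finite moments of all orders, so the use of Fubini's theorem and the interchange of $\mathbb{E}$ with Lebesgue integration are justified, and all the manipulations above are purely algebraic.
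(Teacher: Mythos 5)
There is a genuine gap at the very step you flag as "the only obstacle": the cancellation of the two terms of the antisymmetric part does \emph{not} follow from the relabeling $(s,t)\mapsto(t,s)$. Your first term does factorize into $\tfrac12\,I^0(\epsilon,Y,dB)\,I^0(\delta,Y,dB)$, but after the flip your second term becomes
$$\tfrac{1}{8\epsilon\delta}\int_{[0,T]^2}\langle Y_s,B_{t-\epsilon,t+\epsilon}\rangle\,\langle Y_t,B_{s-\delta,s+\delta}\rangle\,ds\,dt,$$
in which each $Y$ is paired with the Brownian window centered at the \emph{other} time point (and at the other scale). Writing out the inner products, the expectation of this integrand is $\sum_{i,j}\mathbb{E}\big[Y^{(i)}_sY^{(j)}_tB^{(i)}_{t-\epsilon,t+\epsilon}B^{(j)}_{s-\delta,s+\delta}\big]$, whereas the first term produces $\sum_{i,j}\mathbb{E}\big[Y^{(i)}_sY^{(j)}_tB^{(i)}_{s-\epsilon,s+\epsilon}B^{(j)}_{t-\delta,t+\delta}\big]$; these are different functions of $(s,t)$, and no purely algebraic or measure-theoretic manipulation identifies their integrals. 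The "scalar, rank-one nature" of the factors does not help: the difficulty sits entirely in the cross terms $i\neq j$ (for $i=j$, and in particular for $d=1$, $\mathrm{ant}(Y_s\otimes Y_t)$ contributes nothing, as you note), and for $i\neq j$ the two expectations involve genuinely different pairings of components. A telling symptom is that your argument uses no probabilistic property of $B$ at all, while the statement cannot be true for an arbitrary $\mathbb{R}^d$-valued process in place of $B$: some input about the joint law of the coordinates is indispensable.

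The paper's proof is structured around exactly this missing input. It first groups the $(i,j)$ and $(j,i)$ contributions and projects onto $\sigma(B_s,B_t)$, using the explicit Gaussian regression of Lemma \ref{Sprerepr} to show that $\Lambda^{0,ij}(\epsilon,\delta;s,t)-\Lambda^{0,ji}(\epsilon,\delta;s,t)$ is a deterministic multiple of $B^{(i)}_sB^{(j)}_t-B^{(j)}_sB^{(i)}_t$; the lemma then reduces to the pointwise identity $\mathbb{E}\big[Y^{(i)}_sY^{(j)}_tB^{(i)}_sB^{(j)}_t\big]=\mathbb{E}\big[Y^{(i)}_sY^{(j)}_tB^{(j)}_sB^{(i)}_t\big]$ for $i\neq j$, which is proved by an exchangeability-in-law argument for the pairs $(B^{(i)}_s,B^{(j)}_t)$ and $(B^{(j)}_s,B^{(i)}_t)$, exploiting that the $d$ components of $B$ are i.i.d. None of this is recoverable from the flip symmetry of $[0,T]^2$, so your proposal as written does not prove the lemma; to repair it you would have to abandon the $s\leftrightarrow t$ relabeling and instead perform the component exchange $i\leftrightarrow j$ at the level of the law of $B$, which is where the actual content of the statement lies.
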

\begin{proof}
Fix $(\epsilon,\delta) \in (0,1)^2$. For each $i\neq j$, we can write

$$\int_{[0,T]^2}\mathbb{E}\Big[ Y^{ij}_{ant}(s,t) \frac{1}{4\epsilon\delta}B^{(i)}_{s-\epsilon,s+\epsilon}B^{(j)}_{t-\delta,t+\delta} \Big]dsdt$$
$$+\int_{[0,T]^2}\mathbb{E}\Big[ Y^{ji}_{ant}(s,t) \frac{1}{4\epsilon\delta}B^{(j)}_{s-\epsilon,s+\epsilon}B^{(i)}_{t-\delta,t+\delta} \Big]dsdt$$
$$ = \frac{1}{2}\int_{[0,T]^2}\mathbb{E}\Big[ Y^{(i)}_sY^{(j)}_t \frac{1}{4\epsilon\delta}\{B^{(i)}_{s-\epsilon,s+\epsilon}B^{(j)}_{t-\delta,t+\delta} - B^{(j)}_{s-\epsilon,s+\epsilon}B^{(i)}_{t-\delta,t+\delta}\}  \Big]dsdt$$
$$+ \frac{1}{2}\int_{[0,T]^2}\mathbb{E}\Big[ Y^{(j)}_s Y^{(i)}_t \frac{1}{4\epsilon\delta}\{B^{(j)}_{s-\epsilon,s+\epsilon}B^{(i)}_{t-\delta,t+\delta} - B^{(i)}_{s-\epsilon,s+\epsilon}B^{(j)}_{t-\delta,t+\delta}\}  \Big]dsdt.$$

Therefore, we can write

$$I_{ant}(\epsilon,\delta) = \frac{1}{2} \sum_{i,j=1}^d \int_{[0,T]^2}\mathbb{E}\Big[ Y^{(i)}_sY^{(j)}_t \frac{1}{4\epsilon\delta}\{B^{(i)}_{s-\epsilon,s+\epsilon}B^{(j)}_{t-\delta,t+\delta} - B^{(j)}_{s-\epsilon,s+\epsilon}B^{(i)}_{t-\delta,t+\delta}\}  \Big]dsdt$$
\begin{equation}\label{antrep}
=\frac{1}{2}\sum_{i,j=1}^d \mathbb{E}\int_{[0,T]^2} Y^{(i)}_sY^{(j)}_t \Big[\Lambda^{0,ij}(\epsilon,\delta;s,t) - \Lambda^{0,ji}(\epsilon,\delta;s,t) \Big]dsdt.
\end{equation}
We observe the matrix-valued process $\Lambda ^{0,ij}(\epsilon,\delta); 1\le i,j\le d$ is not symmetric. Indeed, by applying Lemma \ref{Sprerepr}, we have

\begin{eqnarray}
\nonumber \Lambda^{0,ij}(\epsilon,\delta;s,t) - \Lambda^{0,ji}(\epsilon,\delta;s,t)&=\frac{1}{4\epsilon\delta\Theta^2_{s,t}}&\{\lambda_{11}(\epsilon,s,t)\lambda_{22}(\delta,s,t) - \label{antArg}\lambda_{12}(\epsilon,s,t)\lambda_{21}(\delta,s,t)\}\\
&\times& \{B^{(i)}_s B^{(j)}_{t} - B^{(j)}_s B^{(i)}_{t} \},
\end{eqnarray}
for $(s,t) \in [0,T]^2_\star$ and $i\neq j$. In view of (\ref{antrep}) and (\ref{antArg}), it sufficient to check

$$\mathbb{E}\big[ Y^{(i)}_s Y^{(j)}_t B^{(i)}_s B^{(j)}_t\big] = \mathbb{E}\big[ Y^{(i)}_s Y^{(j)}_t B^{(j)}_s B^{(i)}_t \big],$$
for every $(s,t) \in [0,T]^2_\star$ and $i\neq j$. Let us choose a version $A^{ij}(s,t; \cdot)$ of the conditional expectation $\mathbb{E}[Y^{(i)}_s Y^{(j)}_t| M,N]$, i.e.,

$$A^{ij}(s,t; M,N) = \mathbb{E}[Y^{(i)}_s Y^{(j)}_t| M, N],$$
for $(s,t) \in [0,T]^2_\star$ and any pair of random variables $(M,N)$. The independence between $B^{(i)}_s$ and $B^{(j)}_t$ yields
\begin{eqnarray*}
\mathbb{E}\big[ Y^{(i)}_s Y^{(j)}_t B^{(i)}_s B^{(j)}_t\big] &=& \mathbb{E}\big[ B^{(i)}_s B^{(j)}_t A^{ij}(s,t; B^{(i)}_s B^{(j)}_t) \big]\\
&=&\int_{\mathbb{R}^2} xy A^{ij}(s,t; x,y)\mathbb{P}_{(B^{(i)}_s,B^{(j)}_t)}(dxdy)\\
&=& \int_{\mathbb{R}^2} xy A^{ij}(s,t; x,y)\mathbb{P}_{(B^{(j)}_s,B^{(i)}_t)}(dxdy)\\
&=& \mathbb{E}\big[ B^{(j)}_s B^{(i)}_t A^{ij}(s,t; B^{(j)}_s B^{(i)}_t) \big]\\
&=& \mathbb{E}\big[ Y^{(i)}_s Y^{(j)}_t B^{(j)}_s B^{(i)}_t \big].
\end{eqnarray*}
This concludes the proof.

\end{proof}

Lemma \ref{Sprerepr} reveals that the planar increment of $R$ will play a role in the asymptotic behavior of $I_{sym}(\epsilon,\delta)$ as $(\epsilon,\delta)\downarrow 0$. More precisely, for $i=j$, we observe $\frac{1}{4\epsilon\delta} \mathbb{E}[B^{(i)}_{s-\epsilon,s+\epsilon}B^{(j)}_{t-\delta,t+\delta}]$ is written as

$$\frac{1}{2\epsilon2\delta} \mathbb{E}[B^{(i)}_{s-\epsilon,s+\epsilon}B^{(i)}_{t-\delta,t+\delta}] =\frac{1}{2\epsilon2\delta} \Delta_{I_{\epsilon,\delta}}R(s,t),$$
where $I_{\epsilon,\delta}:= (s-\epsilon,s+\epsilon]\times (t-\delta, t+ \delta]$ and

$$\Delta_{I_{\epsilon,\delta}}R(s,t):= R (s+\epsilon, t+\delta) + R(s-\epsilon,t-\delta) - R(s-\epsilon,t+\delta) - R(s+\epsilon,t-\delta).
$$
One can also view $\frac{1}{2\epsilon2\delta}\Delta_{I_{\epsilon,\delta}}R$ as the standard finite-difference approximation scheme for the singular derivative

\begin{equation}\label{2derR}
\frac{\partial^2R}{\partial t\partial s} = H(2H-1)|t-s|^{2H-2}; s\neq t.
\end{equation}

\begin{remark}\label{SingpointwiseR}
Fix $0 < H < \frac{1}{2}$ and $1\le i\le d$. For each $s\neq t$, we can take $(\epsilon,\delta) \in (0,1)^2$ small enough (and obviously depending on $(s,t)$) such that

 $$\frac{1}{2\epsilon2\delta} \mathbb{E}[B^{(i)}_{s-\epsilon,s+\epsilon}B^{(i)}_{t-\delta,t+\delta}] = \frac{1}{2\epsilon 2\delta} \int_{t-\delta}^{t+\delta} \int_{s-\epsilon}^{s+\epsilon} \frac{\partial^2R}{\partial t\partial s}(a,b)dadb.
 $$
Therefore, we have pointwise convergence
$$\frac{1}{2\epsilon2\delta} \mathbb{E}[B^{(i)}_{s-\epsilon,s+\epsilon}B^{(i)}_{t-\delta,t+\delta}] \rightarrow \frac{\partial^2R}{\partial t\partial s}(s,t),$$
as $(\epsilon,\delta)\downarrow 0$ for each $s\neq t$ and $1\le i\le d$.
\end{remark}
In view of the representation (\ref{split}), the following result is a first step of getting a hint on the limiting behavior of the increment component in $I_{sym}(\epsilon,\delta)$ as $(\epsilon,\delta)\downarrow 0$.

\begin{lemma}\label{Lambdalimit}
For each $(s,t) \in [0,T]^2_\star$, we have

\begin{equation}\label{asZlimit}
\Lambda(s,t):=\lim_{(\epsilon,\delta)\downarrow 0}\Lambda^0(\epsilon,\delta;s,t) = \mathcal{W}(s,t) + \frac{\partial^2R}{\partial t\partial s}(s,t)I_{d\times d},
\end{equation}
almost surely, where

%\begin{eqnarray}
%\nonumber\lim_{(\epsilon,\delta)\downarrow 0} \Lambda^{ij}(\epsilon,\delta;s,t)&=& \lambda_{11}(s,t) \\
% &+& \frac{\partial^2R}{\partial t\partial s}(s,t)
%\end{eqnarray}

\begin{eqnarray}
\label{Wreplambda}\mathcal{W}^{ij}(s,t)&:=& \lambda_{11}(s,t)\lambda_{21}(s,t)\big(B^{(i)}_sB^{(j)}_s-\text{cov}(B^{(i)}_s; B^{(j)}_s)\big) \\
\nonumber &+& \lambda_{11}(s,t)\lambda_{22}(s,t) \big( B^{(i)}_s B^{(j)}_t - \text{cov}(B^{(i)}_s; B^{(j)}_t) \big)\\
\nonumber&+& \lambda_{12}(s,t) \lambda_{21}(s,t) \big( B^{(i)}_t B^{(j)}_s - \text{cov}(B^{(i)}_t; B^{(j)}_s) \big)\\
\nonumber&+& \lambda_{12}(s,t)\lambda_{22}(s,t)\big(B^{(i)}_t B^{(j)}_t - \text{cov}(B^{(i)}_t; B^{(j)}_t) \big),
\end{eqnarray}
and

$$\lambda_{11}(s,t) := \frac{1}{\Theta_{s,t}} \Big\{\frac{1}{2}\gamma'(s) \gamma(t) - R(s,t)\frac{\partial R}{\partial s} (s,t) \Big\},$$
$$\lambda_{12}(s,t):= \frac{1}{\Theta_{s,t}} \Big\{ \frac{\partial R}{\partial s} (s,t)\gamma(s) - R(s,t)\frac{1}{2}\gamma'(s) \Big\},$$
$$\lambda_{21}(s,t):= \frac{1}{\Theta_{s,t}} \Big\{\frac{\partial R}{\partial t} (s,t) \gamma(t) - R(s,t)\frac{1}{2}\gamma'(t) \Big\},$$
$$\lambda_{22}(s,t):=\frac{1}{\Theta_{s,t}} \Big\{\frac{1}{2}\gamma'(t) \gamma(s) - R(s,t)\frac{\partial R}{\partial t} (s,t) \Big\}$$
for $(s,t) \in [0,T]^2_\star$ and $1\le i,j\le d$.
\end{lemma}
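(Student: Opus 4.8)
The plan is to pass to the limit termwise in the exact Gaussian‑regression representation of $\Lambda^{0}(\epsilon,\delta;\cdot)$ furnished by Lemma \ref{Sprerepr}. For fixed $1\le i,j\le d$ and $(s,t)$ with $0<s,t\le T$, $s\ne t$ (so that $\Theta_{s,t}>0$), write
$$
\Lambda^{0,ij}(\epsilon,\delta;s,t)=\underbrace{\tfrac{1}{4\epsilon\delta}\Big(Z^{1;ij}_{s,t}(\epsilon)Z^{2;ij}_{s,t}(\delta)-\mathbb{E}\big[Z^{1;ij}_{s,t}(\epsilon)Z^{2;ij}_{s,t}(\delta)\big]\Big)}_{=:A^{ij}(\epsilon,\delta;s,t)}+\underbrace{\tfrac{1}{4\epsilon\delta}\mathbb{E}\big[B^{(i)}_{s-\epsilon,s+\epsilon}B^{(j)}_{t-\delta,t+\delta}\big]}_{=:D^{ij}(\epsilon,\delta;s,t)}.
$$
The deterministic piece $D^{ij}$ is immediate: for $i\ne j$ independence of $B^{(i)},B^{(j)}$ and $\mathbb{E}[B^{(k)}_{a,b}]=0$ give $D^{ij}\equiv0$, while for $i=j$ Remark \ref{SingpointwiseR} gives $D^{ii}(\epsilon,\delta;s,t)\to\frac{\partial^{2}R}{\partial t\partial s}(s,t)$ as $(\epsilon,\delta)\downarrow0$, using $s\ne t$ to keep the box off the diagonal for $\epsilon,\delta$ small. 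Hence $D^{ij}\to\frac{\partial^{2}R}{\partial t\partial s}(s,t)\,\delta_{ij}$, which is the $(i,j)$ entry of $\frac{\partial^{2}R}{\partial t\partial s}(s,t)I_{d\times d}$.

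For the random piece $A^{ij}$, the core step is to compute the limits of the finite‑difference coefficients appearing in (\ref{Z1epsilon})--(\ref{Z2delta}). The claim is that
$$
\frac{\mathbf{n}_{11}(\epsilon,s)}{2\epsilon}\to\tfrac12\gamma'(s),\quad \frac{\mathbf{n}_{22}(\delta,s,t)}{2\delta}\to\tfrac12\gamma'(t),\quad \frac{\mathbf{n}_{12}(\epsilon,s,t)}{2\epsilon}\to\frac{\partial R}{\partial s}(s,t),\quad \frac{\mathbf{n}_{21}(\delta,s,t)}{2\delta}\to\frac{\partial R}{\partial t}(s,t).
$$
For the first two one uses the explicit form $R(u,v)=\tfrac12(v^{2H}+u^{2H}-|v-u|^{2H})$: for $0<\epsilon<s$ one has $\mathbf{n}_{11}(\epsilon,s)=R(s,s+\epsilon)-R(s,s-\epsilon)=\tfrac12\big((s+\epsilon)^{2H}-(s-\epsilon)^{2H}\big)$, where the singular term $\epsilon^{2H}$ and the term $s^{2H}$ cancel exactly, so that dividing by $2\epsilon$ and letting $\epsilon\downarrow0$ yields $Hs^{2H-1}=\tfrac12\gamma'(s)$; the $\mathbf{n}_{22}$ computation is identical with $(t,\delta)$ in place of $(s,\epsilon)$. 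For $\mathbf{n}_{12}$ and $\mathbf{n}_{21}$, since $s\ne t$ the kernel $R$ is $C^{\infty}$ on a neighbourhood of $(s,t)$, so once $\epsilon$ (resp.\ $\delta$) is small these are ordinary symmetric difference quotients of $R$ in its first (resp.\ second) variable and converge to the corresponding partial derivatives. \textbf{This exact cancellation of the singular increments $\epsilon^{2H}$ and $\delta^{2H}$ in $\mathbf{n}_{11},\mathbf{n}_{22}$, without which the scaled quantities would diverge (as $\epsilon^{2H-1}\to\infty$), together with the bookkeeping distinction between the diagonal‑straddling differences $\mathbf{n}_{11},\mathbf{n}_{22}$ and the genuinely off‑diagonal ones $\mathbf{n}_{12},\mathbf{n}_{21}$, is the only delicate point; the rest is algebra.}

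Inserting these limits into (\ref{Z1epsilon})--(\ref{Z2delta}) and using $\Theta_{s,t}>0$, we obtain, pointwise in $\omega$ (the coefficients being deterministic and convergent, and $B^{(i)}_s,B^{(i)}_t,B^{(j)}_s,B^{(j)}_t$ being fixed random variables), and hence almost surely,
$$
\frac{Z^{1;ij}_{s,t}(\epsilon)}{2\epsilon}\to\lambda_{11}(s,t)B^{(i)}_s+\lambda_{12}(s,t)B^{(i)}_t,\qquad \frac{Z^{2;ij}_{s,t}(\delta)}{2\delta}\to\lambda_{21}(s,t)B^{(j)}_s+\lambda_{22}(s,t)B^{(j)}_t,
$$
with the $\lambda_{k\ell}(s,t)$ exactly as in the statement. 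Consequently $\tfrac{1}{4\epsilon\delta}Z^{1;ij}_{s,t}(\epsilon)Z^{2;ij}_{s,t}(\delta)=\tfrac{Z^{1;ij}_{s,t}(\epsilon)}{2\epsilon}\cdot\tfrac{Z^{2;ij}_{s,t}(\delta)}{2\delta}$ converges a.s.\ to the product of the two limits; and since $\mathbb{E}[Z^{1;ij}_{s,t}(\epsilon)Z^{2;ij}_{s,t}(\delta)]$ is a finite linear combination of the same convergent scalar coefficients times the fixed covariances $\mathrm{cov}(B^{(i)}_\cdot;B^{(j)}_\cdot)$, its $\tfrac{1}{4\epsilon\delta}$‑rescaling converges to the expectation of that product. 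Subtracting, $A^{ij}(\epsilon,\delta;s,t)\to\mathcal{W}^{ij}(s,t)$ with $\mathcal{W}^{ij}$ given by (\ref{Wreplambda}). Adding the two contributions yields (\ref{asZlimit}). (When $s=0$ or $t=0$ the Gaussian vector $(B_s,B_t)$ is degenerate and $\Theta$ vanishes; since the coordinate axes are null for the measures entering the isometry, this case needs no separate treatment.)
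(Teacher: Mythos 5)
Your proposal is correct and follows essentially the same route as the paper's (very terse) proof: decompose $\Lambda^{0,ij}$ via Lemma \ref{Sprerepr}, use Remark \ref{SingpointwiseR} for the deterministic part, and pass to the limit in the coefficients of (\ref{Z1epsilon})--(\ref{Z2delta}) using the pointwise differentiability of $R$ off the diagonal together with the exact cancellation of the $\epsilon^{2H}$, $\delta^{2H}$ terms in the symmetric differences $\mathbf{n}_{11},\mathbf{n}_{22}$. You merely supply the details the paper leaves implicit (including the degenerate case $s\wedge t=0$, which the paper's formulas tacitly exclude), so nothing further is needed.
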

\begin{proof}
Just apply Remark \ref{SingpointwiseR}, Lemma \ref{Sprerepr}, make use of the representations (\ref{Z1epsilon}), (\ref{Z2delta}) and the pointwise differentiability of the covariance kernel $R$. In this case,

$$\lambda_{11}(s,t) = \lim_{\epsilon\downarrow 0}\frac{1}{2\epsilon}\lambda_{11}(\epsilon,s,t),\quad \lambda_{12}(s,t)=\lim_{\epsilon\downarrow 0}\frac{1}{2\epsilon}\lambda_{12}(\epsilon,s,t),$$
$$\lambda_{21}(s,t) = \lim_{\delta\downarrow 0}\frac{1}{2\delta}\lambda_{21}(\delta,s,t), \quad \lambda_{22}(s,t) = \lim_{\delta\downarrow 0}\frac{1}{2\delta}\lambda_{22}(\delta,s,t),$$
for each $(s,t) \in [0,T]^2_\star$. 
\end{proof}

\begin{remark}\label{nelsondisc}
By the very definition, we observe that 

$$\frac{1}{4\epsilon\delta}Z^{1,ij}_{s,t}(\epsilon)Z^{2,ij}_{s,t} = \frac{1}{2\epsilon}\mathbb{E}\Big[B^{(i)}_{s-\epsilon,s+\epsilon}\big| \mathbf{B}_{s,t} \Big]\frac{1}{2\delta} \mathbb{E}\Big[B^{(i)}_{t-\delta,t+\delta}\big| \mathbf{B}_{s,t} \Big] .$$
Hence, $\mathcal{W}^{ij}(s,t)$ can be interpreted as a sort of product of the stochastic derivatives introduced by the classical work of Nelson \cite{nelson} (in the Markovian case) and studied in the works \cite{darses} and \cite{darses1} in the non-Markovian case. In the language of the works \cite{nelson} and \cite{darses}, we are taking derivatives w.r.t.~present information and not w.r.t.~whole history of the Gaussian process. We observe the pointwise differentiability of $(s,t)\mapsto R(s,t)$ plays a central role for the pointwise existence of $\mathcal{W}$. Moreover, we stress the forward version of the almost sure pointwise limit (in time)

$$\lim_{(\epsilon,\delta)\downarrow 0}\frac{1}{4\epsilon\delta}\mathbb{E}\Big[B^{(i)}_{s,s+\epsilon}\big| \mathbf{B}_{s,t} \Big]\mathbb{E}\Big[B^{(i)}_{t,t+\delta}\big| \mathbf{B}_{s,t} \Big]
$$
does not exists when $0 < H < \frac{1}{2}$. Indeed, 

$$\frac{R(s,s+\epsilon) - R(s,s)}{\epsilon} = \frac{1}{2} \Big\{ \frac{(s+\epsilon)^{2H} - s^{2H}}{\epsilon} + \epsilon^{2H-1}   \Big\}.$$
This can also be interpreted as a side effect of the well-known fact that the stochastic forward integral only exists in the regime $\frac{1}{2}\le H < 1$.
\end{remark}

\begin{remark}\label{renor}
The random field $\Lambda$ is not integrable over the rectangle and it can be interpreted as a random Schwartz distribution on $[0,T]^2$ but also a random Radon sigma-finite measure on $[0,T]^2_\star$ which is absolutely continuous w.r.t.~Lebesgue. We also stress that each term $\lambda_{11}$, $\lambda_{12}$, $\lambda_{21}$ and $\lambda_{22}$ admits a rather strong singularity of the form $|t-s|^{-1}$ along the diagonal of $[0,T]^2$. See Lemma \ref{fhlemma}. Therefore, we get non-integrable divergent terms

$$
|t-s|^{2H}\lambda_{11}(s,t)\lambda_{21}(s,t)\big(B^{(i)}_sB^{(j)}_s-\text{cov}(B^{(i)}_s; B^{(j)}_s)\big),
$$
$$|t-s|^{2H}\lambda_{11}(s,t)\lambda_{22}(s,t) \big( B^{(i)}_s B^{(j)}_t - \text{cov}(B^{(i)}_s; B^{(j)}_t) \big),$$
$$|t-s|^{2H}\lambda_{12}(s,t) \lambda_{21}(s,t) \big( B^{(i)}_t B^{(j)}_s - \text{cov}(B^{(i)}_t; B^{(j)}_s) \big),
$$
$$|t-s|^{2H}\lambda_{12}(s,t)\lambda_{22}(s,t)\big(B^{(i)}_t B^{(j)}_t - \text{cov}(B^{(i)}_t; B^{(j)}_t) \big),
$$
even when multiplying by compensating terms of the form $|t-s|^{2H}$.
\end{remark}
Remark \ref{renor} suggests that despite pointwise convergence of the projection operator $\lim_{(\epsilon,\delta)\downarrow 0}\Lambda(\epsilon,\delta)$ is in force, the singularity of $R$ along the diagonal would prevent any attempt to make the full use of $\Lambda$ when proving convergence of $\lim_{(\epsilon,\delta)\downarrow 0}I_{sym}(\epsilon,\delta)$. The following calculation suggests that this is \textit{not} the case. 

In the sequel, let us denote 

\begin{equation}\label{etadef1}
\eta_{11}(s,t):=\lambda_{11}(s,t) + \lambda_{12}(s,t), \eta_{12}(s,t):= \lambda_{12}(s,t)
\end{equation}
\begin{equation}\label{etadef2}
\eta_{21}(s,t):=\lambda_{21}(s,t) + \lambda_{22}(s,t), \eta_{22}(s,t):= \lambda_{22}(s,t),
\end{equation}
for $(s,t) \in [0,T]^2_\star$. By using the fact that $\lambda_{11}(s,t) = \lambda_{22}(t,s)$ and $\lambda_{12}(s,t) = \lambda_{21}(t,s)$ for every $(s,t) \in [0,T]^2_\star$, we can represent

\begin{eqnarray}
\label{INTREP1}\mathcal{W}^{ij}(s,t) &=& \Big(\eta_{11}(s,t)B^{(i)}_s + \eta_{12}(s,t)B^{(i)}_{s,t} \Big)\Big(\eta_{21}(s,t)B^{(j)}_s + \eta_{22}(s,t) B^{(j)}_{s,t} \Big)\\
\nonumber&-& \mathbb{E}\Big[\Big(\eta_{11}(s,t)B^{(i)}_s + \eta_{12}(s,t)B^{(i)}_{s,t} \Big)\Big(\eta_{21}(s,t)B^{(j)}_s + \eta_{22}(s,t) B^{(j)}_{s,t} \Big)\Big],
\end{eqnarray}
for $0\le s < t\le T$ and

\begin{eqnarray}
\label{INTREP2}\mathcal{W}^{ij}(s,t) &=& \Big(\eta_{11}(t,s)B^{(j)}_t + \eta_{12}(t,s)B^{(j)}_{t,s} \Big)\Big(\eta_{21}(t,s)B^{(i)}_t + \eta_{22}(t,s) B^{(i)}_{t,s} \Big)\\
\nonumber&-& \mathbb{E}\Big[\Big(\eta_{11}(t,s)B^{(j)}_t + \eta_{12}(t,s)B^{(j)}_{t,s} \Big)\Big(\eta_{21}(t,s)B^{(i)}_t + \eta_{22}(t,s) B^{(i)}_{t,s} \Big)\Big],
\end{eqnarray}
for $0\le t < s\le T$. The appearance of the increments $B_{s,t}$ and $B_{t,s}$ and the more regular functions $\eta_{11}$ and $\eta_{21}$ in representations (\ref{INTREP1}) and (\ref{INTREP2}) give us hope that one can indeed make use of $\Lambda$ for the study of the asymptotics of  $I_{sym}(\epsilon,\delta)$ as $(\epsilon,\delta)\downarrow 0$. 
\subsection{The projection operator over the 2-simplex}
The above discussion suggests that it is important to view  $\Lambda^0(\epsilon,\delta; \cdot)$ in terms of increments $B_{s,t}$ rather than $B_s, B_t$ as follows. In case $0\le s <t$, we will write 

\begin{equation}\label{side1}
\Lambda^0(\epsilon,\delta; s,t)= \frac{1}{4 \epsilon\delta}\mathbb{E}\Big[ B_{s-\epsilon,s+\epsilon}\otimes B_{t-\delta,t+\delta}| B_s,B_{s,t}\Big].
\end{equation}
In case $0\le t < s$, we will write 

\begin{equation}\label{side2}
\Lambda^0(\epsilon,\delta; s,t)= \frac{1}{4 \epsilon\delta}\mathbb{E}\Big[ B_{s-\epsilon,s+\epsilon}\otimes B_{t-\delta,t+\delta}| B_t,B_{t,s}\Big]. 
\end{equation}
For this reason, we collect some important objects of the projection operator over $\Delta_T=\{(s,t); 0\le s < t\le T\}$ which will play a key role in our analysis. In this section, we restrict the discussion to the 2-simplex $\Delta_T$. The representation of (\ref{side2}) over the other $2$-simplex of $[0,T]^2 $ is totally analogous. Let us consider 

$$
\mathcal{O}^{\epsilon,\delta}_{s,t}:=\left(
    \begin{array}{cc}
      \mathbf{o}_{11}(\epsilon,s) & \mathbf{o}_{12}(\epsilon,s,t) \\
      \mathbf{o}_{21}(\delta,s,t) &\mathbf{o}_{22}(\delta,s,t) \\
    \end{array}
  \right),
$$
where the elements $\mathbf{o}_{11}, \mathbf{o}_{12}, \mathbf{o}_{21}, \mathbf{o}_{22}$ are given by 

$$\mathbf{o}_{11}(\epsilon,s):=\text{cov}\big(B^{(1)}_{(s-\epsilon)\vee 0, s+\epsilon}, B^{(1)}_s \big)= R(s,s+\epsilon) - R(s,(s-\epsilon)\vee 0),$$

\begin{eqnarray*}
\mathbf{o}_{12}(\epsilon,s,t)&:=&\text{cov}\big(B^{(1)}_{(s-\epsilon)\vee 0, s+\epsilon}; B^{(1)}_{s,t}\big)=R(s+\epsilon,t) - R((s-\epsilon)\vee 0,t)\\
& - & [R(s+\epsilon,s) - R((s-\epsilon)\vee 0,s)],
\end{eqnarray*}

$$\mathbf{o}_{21}(\delta,s,t):\text{cov}\big(B^{(1)}_{(t-\delta)\vee 0, t+\delta}; B^{(1)}_s\big) = R(s,t+\delta) - R(s,(t-\delta) \vee 0) $$
and

\begin{eqnarray*}
\mathbf{o}_{22}(\delta,s,t)&:=&\text{cov}\big(B^{(1)}_{(t-\delta)\vee 0, t+\delta}; B^{(1)}_{s,t}\big)=R(t+\delta,t) - R((t-\delta) \vee 0,t)\\
 &-& [R(t+\delta,s) - R((t-\delta)\vee 0,s)],
 \end{eqnarray*}
for $(\epsilon,\delta) \in (0,1)^2$ and $(s,t) \in \Delta_T$. Gaussian linear regression yields the following representation

\begin{equation}\label{Z1epsINCR}
Z^{1;ij}_{s,t}(\epsilon) = \frac{1}{\Theta_{s,t}}\Big\{\eta_{11}(\epsilon,s,t)B^{(i)}_s + \eta_{12}(\epsilon,s,t)B^{(i)}_{s,t}\Big\}
 \end{equation}
and

\begin{equation}\label{Z2deINCR}
Z^{2;ij}_{s,t}(\delta)= \frac{1}{\Theta_{s,t}} \Big\{\eta_{21}(\delta,s,t)B^{(j)}_s
+ \eta_{22}(\delta,s,t)B^{(j)}_{s,t}\Big\},
\end{equation}
where we set

$$\eta_{11}(\epsilon,s,t):=\mathbf{o}_{11}(\epsilon,s)|t-s|^{2H} - \mathbf{o}_{12}(\epsilon,s,t)\varphi(s,t),$$
$$\eta_{12}(\epsilon,s,t):=\mathbf{o}_{12}(\epsilon,s,t)s^{2H} - \mathbf{o}_{11}(\epsilon,s)\varphi(s,t),$$
$$\eta_{21}(\delta,s,t):=\mathbf{o}_{21}(\delta,s,t)|t-s|^{2H} - \mathbf{o}_{22}(\delta,s,t)\varphi(s,t),$$
$$\eta_{22}(\delta,s,t):=\mathbf{o}_{22}(\delta,s,t) s^{2H} - \mathbf{o}_{21}(\delta,s,t)\varphi(s,t),$$
for $(s,t) \in \Delta_T$. One can easily check that $\eta_{11}(\epsilon,s,t) = \lambda_{11}(\epsilon,s,t) + \lambda_{12}(\epsilon,s,t)$, $\eta_{21}(\delta,s,t) = \lambda_{21}(\delta,s,t) + \lambda_{22}(\delta,s,t)$, $\eta_{12}(\epsilon,s,t) = \lambda_{12}(\epsilon,s,t)$ and $\eta_{22}(\delta,s,t) = \lambda_{22}(\delta,s,t)$, for $(s,t) \in \Delta_T$ and $(\epsilon,\delta) \in (0,1)^2$.

The matrix $\mathcal{O}^{\epsilon,\delta}_{s,t}$ will play a key rule in our analysis and we can actually write it in terms of a single function $\ell:\mathbb{R}\rightarrow \mathbb{R}$ defined by

\begin{equation}\label{ellfunc}
\ell(x):=|1+x|^{2H} - |1-x|^{2H}; x \in \mathbb{R}.
\end{equation}
For a positive $\delta \in (0,1)$, we also define

$$\Delta\ell(\delta,s,t):=\Big\{ \frac{t^{2H}}{2}\ell\Big( \frac{\delta}{t}\Big)  - \frac{|t-s|^{2H}}{2}\ell\Big(\frac{\delta}{t-s}\Big) \Big\}$$
for $(s,t) \in \Delta_T$.

One can easily check we can write

$$
\mathcal{O}^{\epsilon,\delta}_{s,t} = \left(
                                      \begin{array}{cc}
                                        \frac{s^{2H}}{2} \ell\big( \frac{\epsilon}{s}\big) & \frac{|t-s|^{2H}}{2} \ell\big( \frac{\epsilon}{t-s}\big) \\
                                         \Delta \ell(\delta,s,t)& \frac{|t-s|^{2H}}{2} \ell\big( \frac{\delta}{t-s}\big) \\
                                      \end{array}
                                    \right),
$$
for $(s,t)\in \Delta_T$ such that $0 < s-\epsilon$ and $0 < t-\delta$. Other representations are given by

$$\mathcal{O}^{\epsilon,\delta}_{s,t} = \left(
                                      \begin{array}{cc}
                                        R(s,s+\epsilon) & R(t,s+\epsilon) - R(s,s+\epsilon) \\
                                         \Delta \ell(\delta,s,t)& \frac{|t-s|^{2H}}{2} \ell\big( \frac{\delta}{t-s}\big) \\
                                      \end{array}
                                    \right),
$$
for $(s,t)\in \Delta_T$ such that $0 \ge s-\epsilon$, $0 < t-\delta$ and

$$\mathcal{O}^{\epsilon,\delta}_{s,t} = \left(
                                      \begin{array}{cc}
                                        R(s,s+\epsilon) & R(t,s+\epsilon) - R(s,s+\epsilon) \\
                                         R(s,t+\delta)& R(t+\delta,t) - R(t+\delta,s) \\
                                      \end{array}
                                    \right),
$$
for $(s,t)\in \Delta_T$ such that $0 \ge s-\epsilon$ and $0 \ge t-\delta$.

The following inequality is crucial to estimate $\Lambda^0(\epsilon,\delta; s,t)$ along the diagonal of $[0,T]^2$ and the origin. 
%For the proof of Lemma \ref{fhlemma}, we refer the reader to \cite{ohashirusso1}.   
\begin{lemma}\label{fhlemma}
Fix $H  \in (0,1)$. The following representation holds

\begin{equation}\label{detrep}
\Theta_{s,t} = |t-s|^{2H}A(s,t),
\end{equation}
for $(s,t) \in [0,T]^2_\star$, where
$$
A(s,t):=\frac{1}{4}\Big\{ 2s^{2H}+ 2t^{2H} - |t-s|^{2H} - \frac{(t^{2H}-s^{2H})^2}{|t-s|^{2H}} \Big\},
$$
for $(s,t) \in [0,T]^2_\star$. Moreover,

$$\frac{s^{2H} \wedge t^{2H}}{|A(s,t)|} \le \frac{2^{2-2H}}{4-2^{2H}};~(s,t) \in [0,T]^2_\star.$$
Therefore,

\begin{equation}\label{deterest}
\frac{1}{\Theta^2_{s,t}}\lesssim \frac{1}{|t-s|^{4H}(s^{4H}\wedge t^{4H})},
\end{equation}
for every $(s,t) \in [0,T]^2_\star$ such that $s\wedge t>0$.
\end{lemma}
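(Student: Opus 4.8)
\emph{Step 1: the identity (\ref{detrep}).} This is a routine algebraic manipulation which I would dispatch first. Writing $a:=s^{2H}$, $b:=t^{2H}$, $c:=|t-s|^{2H}$, the relevant determinant is $\Theta_{s,t}=\gamma(s)\gamma(t)-R(s,t)^2=ab-\tfrac14(a+b-c)^2$, so $4\Theta_{s,t}=4ab-(a+b-c)^2=2c(a+b)-c^2-(a-b)^2$. On the other hand $4cA(s,t)=c(2a+2b-c)-(b-a)^2=2c(a+b)-c^2-(b-a)^2$, which is the same expression; hence $\Theta_{s,t}=|t-s|^{2H}A(s,t)$. I would also record the equivalent factorisation $4\Theta_{s,t}=\bigl(|t-s|^{2H}-(s^{H}-t^{H})^2\bigr)\bigl((s^{H}+t^{H})^2-|t-s|^{2H}\bigr)$, whose two factors are nonnegative (and strictly positive for $0<s<t$) by subadditivity of $r\mapsto r^{H}$ on $[0,1)$; this gives the handy form $4A(s,t)=c^{-1}(c-P)(Q-c)$ with $P:=(s^{H}-t^{H})^2$, $Q:=(s^{H}+t^{H})^2$.

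\emph{Step 2: the bound on $\frac{s^{2H}\wedge t^{2H}}{|A|}$.} First, for $s\wedge t>0$ and $s\ne t$ one has $\Theta_{s,t}>0$ (equivalently $A(s,t)>0$), from the strict positivity of the two factors above, so $|A|=A$; when $s\wedge t=0$ one has $A=0=s^{2H}\wedge t^{2H}$ and the assertion, read as $s^{2H}\wedge t^{2H}\le\tfrac{2^{2-2H}}{4-2^{2H}}A(s,t)$, is trivially an equality. It then remains to prove $A(s,t)\ge\tfrac{4-2^{2H}}{2^{2-2H}}\,(s^{2H}\wedge t^{2H})$ for $0<s<t$ (the case $t<s$ following from $A(s,t)=A(t,s)$). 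Since $A$ is homogeneous of degree $2H$, after scaling I may take $t=1$, $s=x\in(0,1)$; then $s^{2H}\wedge t^{2H}=x^{2H}$ and it suffices to show $f(x):=x^{2H}/A(x,1)$ satisfies $f(x)\le f(1/2)=\tfrac{2^{2-2H}}{4-2^{2H}}$ on $(0,1)$, the value $f(1/2)$ coming from the elementary computation $4A(1/2,1)=4-2^{2H}$. Using the explicit formula $4A(x,1)=2(1+x^{2H})-(1-x)^{2H}-(1-x^{2H})^2(1-x)^{-2H}$ one sees that $f$ extends continuously to $[0,1]$ with $f(0),f(1)\le 1\le f(1/2)$ (the last inequality being $(2^{2H}-2)^2\ge0$), so $f$ attains its maximum in the open interval. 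I would then finish with a one-variable argument showing that $x=1/2$ is the \emph{only} critical point of $f$ in $(0,1)$ when $H\ne\tfrac12$ (and $f\equiv1$ when $H=\tfrac12$): differentiating the displayed formula and clearing denominators reduces $f'(x)=0$ to an equation whose sole solution in $(0,1)$ is $x=1/2$. An equivalent, more symmetric route is to set $A_0:=s^{H}$, $C_0:=(t-s)^{H}$ scaled so that $A_0^{1/H}+C_0^{1/H}=1$; then $\frac{4A}{s^{2H}\wedge t^{2H}}$ equals $\Psi(A_0,C_0):=\dfrac{\bigl((A_0+C_0)^2-1\bigr)\bigl(1-(A_0-C_0)^2\bigr)}{A_0^2C_0^2}$, which is \emph{symmetric} in $(A_0,C_0)$, tends to $4$ at the endpoints of the constraint curve, and satisfies $\Psi\ge 2^{2H}(4-2^{2H})$ with equality precisely at the symmetric point $A_0=C_0=2^{-H}$ (i.e.\ $s=t/2$) — which is exactly the claimed bound.

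\emph{Step 3: the estimate (\ref{deterest}).} This is now immediate: by (\ref{detrep}), $\Theta_{s,t}^2=|t-s|^{4H}A(s,t)^2$, and by Step 2, $A(s,t)\gtrsim s^{2H}\wedge t^{2H}$ whenever $s\wedge t>0$, so $A(s,t)^2\gtrsim s^{4H}\wedge t^{4H}$ and dividing yields $\Theta_{s,t}^{-2}\lesssim|t-s|^{-4H}(s^{4H}\wedge t^{4H})^{-1}$.

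\emph{Main obstacle and a shortcut.} The only genuinely nontrivial point is the extremal analysis in Step 2 — verifying that $x=1/2$ is the sole interior critical point of $f$, equivalently that $A_0=C_0$ is the unique minimiser of $\Psi$ on the constraint curve; this is where the computations, though elementary, demand care. I would emphasise that the sharp constant is not actually needed for (\ref{deterest}): the crude bound $A(s,t)\ge\tfrac34(s^{2H}\wedge t^{2H})$ follows in one line from $4A=2(a+b)-c-(b-a)^2c^{-1}$, using that $c\mapsto c+(b-a)^2c^{-1}$ is convex on $[b-a,b]$ together with $c=|t-s|^{2H}\le t^{2H}$ and $t^{2H}-s^{2H}\le|t-s|^{2H}$, and this already gives (\ref{deterest}) with an explicit constant.
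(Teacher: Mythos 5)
Your proposal is correct and, for the sharp constant, follows essentially the same route as the paper: reduce to one variable (the paper fixes $t$ and studies $s\mapsto s^{2H}/A(s,t)$ on $[0,t]$ rather than normalising $t=1$, but this is the same thing), show the boundary values equal $1$, identify $s=t/2$ as the interior critical point, and evaluate $4A(t/2,t)=4-2^{2H}$ there. Note that the one genuinely delicate step you flag --- that $s=t/2$ is the \emph{only} interior critical point --- is also the step the paper dismisses with ``Clearly,'' so you are not missing anything the paper supplies; your candour about it is a virtue, and your symmetric reformulation via $\Psi(A_0,C_0)$ at least explains \emph{why} the midpoint is critical. What you add beyond the paper is worthwhile: the explicit verification of \eqref{detrep} (which the paper calls obvious), the factorisation $4\Theta_{s,t}=\bigl(|t-s|^{2H}-(s^H-t^H)^2\bigr)\bigl((s^H+t^H)^2-|t-s|^{2H}\bigr)$ giving positivity of $A$ directly from subadditivity of $r\mapsto r^H$, and above all the convexity shortcut $A\ge\tfrac34(s^{2H}\wedge t^{2H})$, which bypasses the critical-point analysis entirely and is all that \eqref{deterest} --- the only part of the lemma actually used downstream --- requires. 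One caveat on that shortcut: the inequality $t^{2H}-s^{2H}\le|t-s|^{2H}$ you invoke holds only for $H\le\tfrac12$ (for $H>\tfrac12$ the map $x\mapsto x^{2H}$ is superadditive and the inequality reverses), so the shortcut does not cover the full range $H\in(0,1)$ in which the lemma is stated, though it does cover the regime $\tfrac14<H<\tfrac12$ in which the paper works; your main argument is unaffected by this.
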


\begin{proof}
Representation (\ref{detrep}) is obvious. Fix $0 < t\le T$. Let us write
$$
A(s,t)=\frac{1}{4}\Big\{ 2s^{2H} + \phi(s,t) \Big\},
$$
for $ s \neq t$, where

$$\phi(s,t):= t^{2H} - |t-s|^{2H} + t^{2H} - \frac{(t^{2H}-s^{2H})^2}{|t-s|^{2H}}; s \neq t.$$
If $\frac{1}{2}\le H < 1$, the function $a\mapsto a^{2H}$ is Lipschitz. If $0 < H < \frac{1}{2}$, the function $a\mapsto a^{2H}$ is $2H$-H\"older, then

$$\frac{|t^{2H} - s^{2H}|^2}{|t-s|^{2H}}\rightarrow 0$$
as $s\rightarrow t$.  Therefore, $\lim_{s\rightarrow t}\phi(s,t)=2t^{2H}$ and hence

$$\lim_{s\rightarrow t}\frac{s^{2H}}{A(s,t)} = \lim_{s\rightarrow t}\frac{s^{2H}}{\frac{1}{4} \{2s^{2H} + \phi(s,t)\}} = 1.$$
We claim

\begin{equation}\label{con1}
\lim_{s\downarrow 0}\frac{s^{2H}}{A(s,t)}=1,
\end{equation}
for any $ H \in (0,1)$. Indeed, let us write

$$\frac{s^{2H}}{A(s,t)} = \frac{4 s^{2H}}{2s^{2H} + \phi(s,t)} = \frac{4}{2+\frac{\phi(s,t)}{s^{2H}}}.$$
Observe

\begin{eqnarray*}
\frac{\partial \phi}{\partial s}(s,t) &=& 2H \text{sgn}(t-s)|t-s|^{2H-1} + \frac{1}{|t-s|^{4H}}\Big\{ 4Hs^{2H-1}(t^{2H}-s^{2H})|t-s|^{2H}\\
&-& 2H \text{sgn}(t-s)|t^{2H} - s^{2H}|^2|t-s|^{2H-1}\Big\},
\end{eqnarray*}
for $s\neq t$. By L'Hospital, we know that

\begin{eqnarray*}
\lim_{s\downarrow 0}\frac{\phi(s,t)}{s^{2H}} &=& \lim_{s\downarrow 0}\Bigg[ \frac{2(t^{2H}-s^{2H})}{(t-s)^{2H}} + \frac{(t-s)^{4H}-(t^{2H}-s^{2H})^2}{(t-s)^{1+2H}s^{2H-1}}\Bigg]\\
&=& \lim_{s\downarrow 0}\Bigg[ \frac{2(t^{2H}-s^{2H})}{(t-s)^{2H}} + \frac{\{(t-s)^{4H}-t^{4H}\} + \{s^{2H} (2t^{2H}-s^{2H})\}}{(t-s)^{1+2H}s^{2H-1}}\Bigg].
\end{eqnarray*}
We have

$$\lim_{s\downarrow 0}\frac{2(t^{2H}-s^{2H})}{(t-s)^{2H}}=2, \quad \lim_{s\downarrow 0} \frac{s^{2H} (2t^{2H}-s^{2H})}{(t-s)^{1+2H}s^{2H-1}}=0.$$
Mean value theorem yields
$$|(t-s)^{4H}-t^{4H}|\le 4H\max\{t^{4H-1}, (t-s)^{4H-1}\}s$$
so that
$$\Bigg|\frac{(t-s)^{4H}-t^{4H}}{(t-s)^{1+2H}s^{2H-1}}\Bigg|\le \frac{4H\max\{t^{4H-1}, (t-s)^{4H-1}\}}{(t-s)^{1+2H}}s^{2-2H}\rightarrow 0,$$
as $s\downarrow 0$. This shows (\ref{con1}). Now, let us consider

$$
g(s,t)=\left\{
\begin{array}{rl}
\frac{s^{2H}}{A(s,t)}; & \hbox{if} \ 0 < s < t \\
1;& \hbox{if} \ s=0~\text{or}~s=t. \\
\end{array}
\right.
$$

This function is continuous (it is $C^1$ over $(0,t)$) on the interval $[0,t]$ for each $t \in (0,T]$. Moreover,

$$\frac{\partial g}{\partial s}(s,t) = \frac{2Hs^{2H-1}A(s,t) - s^{2H} \frac{\partial A}{\partial s}(s,t) }{|A(s,t)|^2};0< s < t.$$
Clearly, $g(\cdot, t)$ has a unique critical point at $\frac{t}{2}$ over $(0,t)$, i.e., $\partial_s g\Big(\frac{t}{2},t\Big)=0$. Moreover,

$$g\Big(\frac{t}{2},t\Big) = \frac{(\frac{t}{2})^{2H}}{A\big(\frac{t}{2},t\big)} = - \frac{2^{2-2H}}{2^{2H}-4}\ge 1,$$
for every $t \in (0,T]$ and $ H \in (0,1)$. This completes the proof.
\end{proof}

\section{Analysis of the marginal component}\label{secsection}
In this section, we will analyse the limit

$$\lim_{(\epsilon,\delta)\downarrow 0}\frac{1}{2} \mathbb{E}\int_{[0,T]^2}\Big\langle  Y_s \otimes Y_s + Y_t \otimes Y_t, \frac{1}{4\epsilon\delta}B_{s-\epsilon,s+\epsilon}\otimes B_{t-\delta,t+\delta}\Big \rangle dsdt,$$
for $Y_t = g(t,B_t).$

%$$\lim_{(\epsilon,\delta)\downarrow 0}\frac{1}{2}\sum_{i,j=1}^d\mathbb{E}\int_{[0,T]^2}\Big\{ Y^{(i)}_s Y^{(j)}_s + Y^{(i)}_t Y^{(j)}_t \Big\}\frac{1}{4\epsilon\delta}B^{(i)}_{s-\epsilon,s+\epsilon}B^{(j)}_{t-\delta,t+\delta}dsdt,$$
%for $Y = g(B). $

\begin{lemma}\label{marglemma}
If $Y_\cdot=g(\cdot, B_\cdot)$ satisfies 

$$\mathbb{E}\sup_{0\le t\le T}|Y_t|^p < \infty,$$
for some $1 < p < \infty$, then

$$\lim_{(\epsilon,\delta)\downarrow 0}\frac{1}{2} \mathbb{E}\int_{[0,T]^2}\Big\langle Y_s\otimes   Y_s + Y_t\otimes  Y_t, \frac{1}{4\epsilon\delta}B_{s-\epsilon,s+\epsilon}\otimes B_{t-\delta,t+\delta}\Big\rangle dsdt$$
$$= \mathbb{E}\int_0^T \big\langle Y_t \otimes Y_t, \mathcal{M}_t\big\rangle dt$$
$$+ \mathbb{E}\int_0^T |Y_t|^2 \frac{\partial R}{\partial t }(T,t)dt.$$

%$$\lim_{(\epsilon,\delta)\downarrow 0}\frac{1}{2}\sum_{i,j=1}^d\mathbb{E}\int_{[0,T]^2}\Big\{ Y^{(i)}_s  Y^{(j)}_s+ Y^{(i)}_t  Y^{(j)}_t \Big\}\frac{1}{4\epsilon\delta}B^{(i)}_{s-\epsilon,s+\epsilon}B^{(j)}_{t-\delta,t+\delta}dsdt$$
%$$= \sum_{i,j=1}^d\mathbb{E}\int_0^T Y^{(i)}_t Y^{(j)}_t\Big\{ \frac{B^{(i)}_t B^{(j)}_t-cov(B^{(i)}_t; B^{(j)}_t)}{t^{2H}} \Big\} \frac{R(T,t)}{t^{2H}}d\gamma(t)$$
%$$+ \mathbb{E}\int_0^T |Y_t|^2 \frac{\partial R}{\partial t }(T,t)dt.$$
\end{lemma}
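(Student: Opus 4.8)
The plan is to reduce the two summands to a single one by symmetry, to condition on the ``present'' variable $B_t$, to compute the almost sure pointwise limit of the resulting Gaussian-regression expression, and finally to pass to the limit under $\mathbb E\int_0^T$ by dominated convergence. Since $Y_t\otimes Y_t$ is a symmetric matrix, $\langle Y_t\otimes Y_t,a\otimes b\rangle=\langle Y_t\otimes Y_t,b\otimes a\rangle$, so the relabelling $(s,\epsilon)\leftrightarrow(t,\delta)$ carries the $Y_s\otimes Y_s$ integral onto the $Y_t\otimes Y_t$ integral; as $(\epsilon,\delta)\downarrow 0$ jointly the two integrals have the same limit and it suffices to treat
\[
J(\epsilon,\delta):=\mathbb E\int_{[0,T]^2}\Big\langle Y_t\otimes Y_t,\ \tfrac{1}{4\epsilon\delta}B_{s-\epsilon,s+\epsilon}\otimes B_{t-\delta,t+\delta}\Big\rangle\,ds\,dt .
\]
As $Y_t=g(t,B_t)$ is $\sigma(B_t)$-measurable, the tower property together with Fubini --- legitimate for each fixed $(\epsilon,\delta)$ by the crude bound $\mathbb E\int_{[0,T]^2}|Y_t|^2|B_{s-\epsilon,s+\epsilon}|\,|B_{t-\delta,t+\delta}|\,ds\,dt<\infty$, which uses $\mathbb E\sup_t|Y_t|^p<\infty$ and the finiteness of all Gaussian moments --- yields
\[
J(\epsilon,\delta)=\mathbb E\int_0^T\big\langle Y_t\otimes Y_t,\ G_{\epsilon,\delta}(t)\big\rangle\,dt,\qquad
G^{ij}_{\epsilon,\delta}(t):=\int_0^T\tfrac{1}{4\epsilon\delta}\,\mathbb E\big[B^{(i)}_{s-\epsilon,s+\epsilon}B^{(j)}_{t-\delta,t+\delta}\,\big|\,B_t\big]\,ds .
\]

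For the pointwise limit of $G_{\epsilon,\delta}(t)$: when $i\neq j$ the conditional expectation factors by independence of $B^{(i)}$ and $B^{(j)}$; when $i=j$ I would use, exactly as in Lemma \ref{Sprerepr} but with the single conditioning variable $B_t$, the orthogonal decomposition $\mathbb E[XY\mid B_t]=\mathbb E[X\mid B_t]\,\mathbb E[Y\mid B_t]+\big(\mathbb E[XY]-\mathbb E[\mathbb E[X\mid B_t]\,\mathbb E[Y\mid B_t]]\big)$ with $X=B^{(i)}_{s-\epsilon,s+\epsilon}$, $Y=B^{(i)}_{t-\delta,t+\delta}$. Each piece has an elementary limit in terms of $R$: $\tfrac1{2\delta}\text{cov}(B^{(i)}_{t-\delta,t+\delta},B^{(i)}_t)=\tfrac1{2\delta}[R(t+\delta,t)-R((t-\delta)\vee0,t)]\to\tfrac12\gamma'(t)$ (the \emph{symmetric} window is essential here: the singular $\delta^{2H}$ contributions cancel, contrary to the forward quotient discussed in Remark \ref{nelsondisc}); $\int_0^T\tfrac1{2\epsilon}\text{cov}(B^{(i)}_{s-\epsilon,s+\epsilon},B^{(i)}_t)\,ds\to R(T,t)-R(0,t)=R(T,t)$, after telescoping the integral into $\tfrac1{2\epsilon}\int_{T-\epsilon}^{T+\epsilon}R(u,t)\,du-\tfrac1{2\epsilon}\int_0^\epsilon R(u,t)\,du$; and $\int_0^T\tfrac1{4\epsilon\delta}\Delta_{I_{\epsilon,\delta}}R(s,t)\,ds\to\tfrac{\partial R}{\partial t}(T,t)$ by the pointwise convergence of Remark \ref{SingpointwiseR} followed by the same telescoping in $s$. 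Collecting the three contributions (and the cancellations among them) and using $\mathbb E[(B^{(i)}_t)^2]=\gamma(t)=t^{2H}$, one obtains that for every $t\in(0,T)$, $G_{\epsilon,\delta}(t)\to\Psi_t=\mathcal M_t+\tfrac{\partial R}{\partial t}(T,t)I_{d\times d}$ almost surely as $(\epsilon,\delta)\downarrow0$, with $\mathcal M_t$ as in \eqref{Mproc} and $\Psi_t$ as in \eqref{capPsi}.

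It then remains to justify $\mathbb E\int_0^T\langle Y_t\otimes Y_t,G_{\epsilon,\delta}(t)\rangle\,dt\to\mathbb E\int_0^T\langle Y_t\otimes Y_t,\Psi_t\rangle\,dt$ by dominated convergence, and this is the main obstacle. Reading off from the regression formulas of Section \ref{prelimi} (and Lemma \ref{fhlemma} for $\Theta$), one writes $G^{ij}_{\epsilon,\delta}(t)=a^{ij}_{\epsilon,\delta}(t)\,B^{(i)}_tB^{(j)}_t+b^{ij}_{\epsilon,\delta}(t)$ with deterministic coefficients that are bounded uniformly in $(\epsilon,\delta)$ by a function which is $O(t^{2H-1})$ near $t=0$ (crucially, because $R(T,t)=O(t)$ there, the apparent $t^{-2H-1}$ blow-up of the regression coefficients is cancelled) and locally bounded on $(0,T]$; since $G_{\epsilon,\delta}(t)$ lies in a constant plus the second Wiener chaos, hypercontractivity gives $\|G_{\epsilon,\delta}(t)\|_q\lesssim\|G_{\epsilon,\delta}(t)\|_2\lesssim 1+t^{2H-1}$, and Hölder then yields $\mathbb E|\langle Y_t\otimes Y_t,G_{\epsilon,\delta}(t)\rangle|\le\|Y_t\|_{2q'}^2\,\|G_{\epsilon,\delta}(t)\|_q\lesssim\big(\mathbb E\sup_s|Y_s|^{2q'}\big)^{1/q'}(1+t^{2H-1})\in L^1(0,T)$, uniformly in $(\epsilon,\delta)$, once $q'$ is taken small enough for the conjugate pairing to close (in particular this is possible when $p\ge2$). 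One also checks that the truncations $(s-\epsilon)\vee0$ and $(t-\delta)\vee0$ alter the integrals only on sets of measure $O(\epsilon)+O(\delta)$ and hence do not affect the limit. The point to emphasise is that $\tfrac1{2\epsilon}B_{s-\epsilon,s+\epsilon}$ is \emph{not} bounded uniformly in $\epsilon$, so the domination cannot be done at the level of the kernel $\tfrac1{4\epsilon\delta}B_{s-\epsilon,s+\epsilon}\otimes B_{t-\delta,t+\delta}$: it must be carried out \emph{after} conditioning, on $G_{\epsilon,\delta}(t)$, where the algebraic cancellations of the Gaussian regression make the coefficients integrable in $t$. This is exactly where $\mathbb E\sup_t|Y_t|^p<\infty$ enters, and the analysis is much softer than that of the increment component precisely because $\tfrac{\partial R}{\partial t}(T,t)\,dt=m(dt)$ is a finite measure.
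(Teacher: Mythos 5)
Your proposal follows essentially the same route as the paper's proof: reduce to the $Y_t\otimes Y_t$ term by symmetry, condition on $B_t$, apply one--variable Gaussian regression, identify the pointwise limit, and interchange limit and integral. The elementary limits you compute ($\tfrac{1}{2\delta}\mathrm{cov}(B^{(j)}_{t-\delta,t+\delta},B^{(j)}_t)\to\tfrac12\gamma'(t)$ and the telescoped $s$-integrals) are exactly those of the paper, which merely telescopes $\int_0^T\tfrac{1}{2\epsilon}B^{(i)}_{s-\epsilon,s+\epsilon}\,ds$ into boundary integrals near $T$ and $0$ \emph{before} conditioning --- a cosmetic difference. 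The only methodological divergence is the final interchange: the paper packages the $\delta$-dependent coefficient into measures $dF_\delta$ with uniformly bounded total variation converging weakly to $\tfrac12 d\gamma$ (and splits $B_s=B_T-(B_T-B_s)$ to isolate a vanishing remainder), whereas you use dominated convergence on $G_{\epsilon,\delta}(t)$ together with hypercontractivity on the second chaos. Both work, and your observation that the domination must be performed \emph{after} conditioning is exactly the right point.

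Three concrete corrections are needed, though none is fatal. First, your dominating function $1+t^{2H-1}$ is wrong near $t=T$: since $\frac{\partial R}{\partial t}(T,t)=H\big[t^{2H-1}+(T-t)^{2H-1}\big]$, the deterministic component of $G_{\epsilon,\delta}(t)$ is of order $\epsilon^{2H-1}$ when $|T-t|\lesssim\epsilon$ and is certainly not locally bounded on $(0,T]$ uniformly in $(\epsilon,\delta)$; the correct uniform bound is $C\big[1+t^{2H-1}+(T-t)^{2H-1}\big]$, which one can check (via Lemma \ref{growthELL}) does dominate the doubly smoothed quotient and is integrable, so the argument survives after this repair. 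Second, $R(T,t)\sim\tfrac12 t^{2H}$ near $0$, not $O(t)$; what actually cancels the blow-up is $R(T,t)/t^{2H}=O(1)$, which still gives $\|G_{\epsilon,\delta}(t)\|_q\lesssim t^{2H-1}$ there. Third, the H\"older pairing genuinely requires $p>2$ (at $p=2$ you would need $\|G_{\epsilon,\delta}(t)\|_\infty$, which is infinite); the paper's proof shares this implicit restriction, as does the finiteness of the right-hand side of the lemma, so this is a defect of the hypothesis rather than of your argument. Finally, be aware that the limits you list actually give $G_{\epsilon,\delta}(t)\to\tfrac12\mathcal M_t+\frac{\partial R}{\partial t}(T,t)I_{d\times d}$, not $\Psi_t$: the factor $\tfrac12$ on the $\mathcal M$ part mirrors the discrepancy already present in the paper between (\ref{mareql}) (which carries a $\tfrac12$ in front of the $r^{ij}$ term) and the statement of the lemma, so you should track this constant explicitly rather than asserting the limit is $\Psi_t$.
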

\begin{proof}
We claim that if $f:[0,T]\times\mathbb{R}^d\rightarrow \mathbb{R} $ satisfies $\mathbb{E}\sup_{0\le t\le T}|f(t,B_t)|^p < \infty,$ for some $1 < p < \infty$, then 

$$\lim_{(\epsilon,\delta)\downarrow 0}\mathbb{E}\int_{[0,T]^2} f(t,B_t)\frac{1}{4\epsilon\delta}B^{(i)}_{s-\epsilon,s+\epsilon}B^{(j)}_{t-\delta,t+\delta}dsdt
$$ 
$$= \lim_{(\epsilon,\delta)\downarrow 0}\mathbb{E}\int_{[0,T]^2} f(s,B_s)\frac{1}{4\epsilon\delta}B^{(i)}_{s-\epsilon,s+\epsilon}B^{(j)}_{t-\delta,t+\delta}dsdt$$
$$= \mathbb{E}\int_0^T |f(t,B_t)|^2 \frac{\partial R}{\partial t }(T,t)dt\delta_{ij}$$
\begin{equation}\label{mareql}
+\frac{1}{2}\mathbb{E}\int_0^T f(t,B_t)\Big\{ \frac{B^{(i)}_t B^{(j)}_t-cov(B^{(i)}_t; B^{(j)}_t)}{t^{2H}} \Big\} \frac{R(T,t)}{t^{2H}}d\gamma(t),
\end{equation}
for each $1\le i,j\le d$. In this case, (\ref{mareql}) will allow us to conclude the proof. 

First, we observe we can write

\begin{eqnarray*}
\mathbb{E}\Bigg[\int_{[0,T]^2} f(t,B_t)\frac{1}{4\epsilon\delta}B^{(i)}_{s-\epsilon,s+\epsilon}B^{(j)}_{t-\delta,t+\delta}dsdt\Bigg] &=& \frac{1}{2\delta}\mathbb{E}\Bigg[\int_0^T f(t,B_t) B^{(j)}_{t-\delta,t+\delta} dt\frac{1}{2\epsilon}\int_{T-\epsilon}^{T+\epsilon} B^{(i)}_sds\Bigg]
\\
&-& \frac{1}{2\delta}\mathbb{E}\Bigg[\int_0^T f(t,B_t) B^{(j)}_{t-\delta,t+\delta} dt\frac{1}{2\epsilon}\int_{0}^{\epsilon} B^{(i)}_sds\Bigg].
\end{eqnarray*}
The idea is to project onto the sigma algebra generated by $B_t$ as follows

$$\frac{1}{2\delta}\mathbb{E}\Bigg[\int_0^T f(t,B_t) B^{(j)}_{t-\delta,t+\delta} dt\frac{1}{2\epsilon}\int_{T-\epsilon}^{T+\epsilon} B^{(i)}_sds\Bigg]
$$
\begin{equation}\label{mg1}
= \frac{1}{2\delta}\mathbb{E}\Bigg[\int_0^T \int_{T-\epsilon}^{T+\epsilon} f(t,B_t) \mathbb{E}\Big[ B^{(j)}_{t-\delta,t+\delta} B^{(i)}_s \big| B_t\Big] ds\frac{1}{2\epsilon}dt\Bigg].
\end{equation}
Gaussian regression yields the representation

\begin{equation}\label{regsim}
\nonumber\mathbb{E}\Big[ B^{(j)}_{t-\delta,t+\delta} B^{(i)}_s \big| B_t\Big]= q^{ij}(\delta;s,t)r^{ij}(t)
+ \mathbb{E}\big[ B^{(i)}_s B^{(j)}_{t-\delta,t+\delta}\big],
\end{equation}
where
$$q^{ij}(\delta; s,t)=\text{cov}\Big( B^{(j)}_{t-\delta,t+\delta}; \frac{B^{(j)}_t}{t_H} \Big)\text{cov}\Big( B^{(i)}_s; \frac{B^{(i)}_t}{t_H} \Big),$$
$$r^{ij}(t)=\Big\{ \frac{B^{(i)}_t B^{(j)}_t-cov(B^{(i)}_t; B^{(j)}_t)}{t^{2H}} \Big\}.$$

By writing $\mathbb{E}\big[ B^{(i)}_s B^{(j)}_{t-\delta,t+\delta}\big] = \mathbb{E}\big[ B^{(i)}_T B^{(j)}_{t-\delta,t+\delta}\big] - \mathbb{E}\big[ (B^{(i)}_T - B^{(i)}_s) B^{(j)}_{t-\delta,t+\delta}\big] $, we can decompose

$$\frac{1}{4\delta\epsilon}\mathbb{E}\Bigg[\int_0^T \int_{T-\epsilon}^{T+\epsilon} f(t,B_t) \mathbb{E}\big[ B^{(i)}_s B^{(j)}_{t-\delta,t+\delta}\big]dsdt\Bigg]$$
$$=\frac{1}{2\delta}\mathbb{E}\Bigg[\int_0^T  f(t,B_t) \mathbb{E}\big[ B^{(i)}_T B^{(j)}_{t-\delta,t+\delta}\big]dt\Bigg]$$
$$-\frac{1}{4\delta\epsilon}\mathbb{E}\Bigg[\int_0^T \int_{T-\epsilon}^{T+\epsilon} f(t,B_t) \mathbb{E}\big[ (B^{(i)}_T-B^{(i)}_s) B^{(j)}_{t-\delta,t+\delta}\big]dsdt\Bigg].$$
Observe

$$
\frac{1}{2\delta}\mathbb{E}\big[ (B^{(i)}_T-B^{(i)}_s) B^{(j)}_{t-\delta,t+\delta}\big] = \Bigg[\frac{\big(R(t+\delta,T) - R(t+\delta,s)\big)  - \big(R(t-\delta,T) - R(t-\delta,s)\big)}{2\delta}\Bigg]\delta_{ij},
$$
for $(s,t) \in [0,T]^2$ and hence

$$\frac{1}{2\delta}\mathbb{E}\Bigg[\int_0^T  f(t,B_t) \mathbb{E}\big[ B^{(i)}_T B^{(j)}_{t-\delta,t+\delta}\big]dt\Bigg]\rightarrow \mathbb{E}\Bigg[\int_0^T  f(t,B_t) \frac{\partial R}{\partial t}(t,T)dt\Bigg]\delta_{ij}, $$
as $\delta \downarrow 0$ and

$$\frac{1}{2\delta}\mathbb{E}\Bigg[\int_0^T \frac{1}{2\epsilon}\int_{T-\epsilon}^{T+\epsilon} f(t,B_t) \mathbb{E}\big[ (B^{(i)}_T-B^{(i)}_s) B^{(j)}_{t-\delta,t+\delta}\big]dsdt\Bigg]\rightarrow 0 $$
as $(\epsilon,\delta) \downarrow 0$ for any $1\le i,j\le d$. Again, by writing $\text{cov}\big( B^{(i)}_s; \frac{B^{(i)}_t}{t_H} \big) = \text{cov}\big( B^{(i)}_s - B^{(i)}_T; \frac{B^{(i)}_t}{t_H} \big)+ \text{cov}\big( B^{(i)}_T; \frac{B^{(i)}_t}{t_H} \big)$, we can decompose

$$\frac{1}{4\delta\epsilon}\mathbb{E}\Bigg[\int_0^T \int_{T-\epsilon}^{T+\epsilon} f(t,B_t) q^{ij}(\delta;s,t)r^{ij}(t)dsdt\Bigg]$$
$$ = \frac{1}{4\delta\epsilon}\mathbb{E}\Bigg[\int_0^T \int_{T-\epsilon}^{T+\epsilon} f(t,B_t) \text{cov}\Big( B^{(j)}_{t-\delta,t+\delta}; \frac{B^{(j)}_t}{t_H} \Big)\text{cov}\Big( B^{(i)}_s - B^{(i)}_T; \frac{B^{(i)}_t}{t_H} \Big) r^{ij}(t)dsdt\Bigg]$$
$$+\frac{1}{2\delta}\mathbb{E}\Bigg[\int_0^T f(t,B_t) \text{cov}\Big( B^{(j)}_{t-\delta,t+\delta}; \frac{B^{(j)}_t}{t_H} \Big)\text{cov}\Big(B^{(i)}_T; \frac{B^{(i)}_t}{t_H} \Big) r^{ij}(t)dt\Bigg]=:I^{ij}_1(\epsilon,\delta) + I^{ij}_2(\epsilon,\delta). $$
We claim that

\begin{equation}\label{clmar1}
I^{ij}_2(\epsilon,\delta)\rightarrow \frac{1}{2}\mathbb{E}\int_0^T f(t,B_t)r^{ij}(t) \frac{R(T,t)}{t^{2H}}d\gamma(t),
\end{equation}
and
\begin{equation}\label{clmar2}
I^{ij}_1(\epsilon,\delta)\rightarrow 0,
\end{equation}
as $(\epsilon,\delta)\downarrow 0$. Indeed, let us denote

$$F_{\delta}(s) = \frac{1}{2\delta}\int_0^s \text{cov}\Big( B^{(j)}_{t-\delta,t+\delta}; B^{(j)}_t \Big)dt.$$
We observe

\begin{eqnarray*}
F_{\delta}(s) &=& \frac{1}{2\delta}\int_{s-\delta}^s R(t+\delta,t)dt + \int_0^s
\Big\{\frac{R(t+\delta,t) - R(t-\delta,t)}{2\delta}\Big\}dt\\
&-&\frac{1}{2\delta}\int_{s-\delta}^s R(t+\delta,t)dt\rightarrow \frac{1}{2}\gamma(s),
\end{eqnarray*}
for each $s \in [0,T]$, as $\delta\downarrow 0$. Moreover, by applying Lemma \ref{growthELL}, we get

\begin{eqnarray*}
\| F_\delta\|_{TV}&=& \int_0^T t^{2H}\ell \Big(\frac{\epsilon}{t} \Big)\frac{1}{2\epsilon}
dt\\
&\le&  2^{2H-1}\int_0^T t^{2H-1}dt,
\end{eqnarray*}
for every $\delta \in (0,1)$. This shows (\ref{clmar1}). Similar argument can be made for (\ref{clmar2}). Therefore, we conclude

$$\frac{1}{2\delta}\mathbb{E}\Bigg[\int_0^T f(t,B_t) B^{(j)}_{t-\delta,t+\delta} dt\frac{1}{2\epsilon}\int_{T-\epsilon}^{T+\epsilon} B^{(i)}_sds\Bigg]$$
converges to the right-hand side of (\ref{mareql}). By applying a similar analysis, one can also easily check that

$$\frac{1}{2\delta}\mathbb{E}\Bigg[\int_0^T f(t,B_t) B^{(j)}_{t-\delta,t+\delta} dt\frac{1}{2\epsilon}\int_{0}^{\epsilon} B^{(i)}_sds\Bigg]\rightarrow 0,$$
as $(\epsilon,\delta)\downarrow 0$. By the symmetry of our argument, we may conclude the proof.
\end{proof}
%$$\frac{1}{2\delta}\text{cov}\Big(B^{(j)}_{t-\delta,t+\delta}; B^{(j)}_t \Big)\rightarrow \frac{d\gamma(t)}{2},$$
%as $\delta \downarrow 0$, in the weak-star topology.

\section{Analysis of the increment component and proof of Theorem \ref{mainTH1}}\label{mainsection}
In this section, we will concentrate the analysis on the increment component of $I_{sym}(\epsilon,\delta)$ in (\ref{split}). That is, we will investigate the limit
$$\lim_{(\epsilon,\delta)\downarrow 0}-\frac{1}{2} \int_{[0,T]^2}\Big\langle  Y_{s,t} \otimes Y_{s,t}, \Lambda^0(\epsilon,\delta;s,t)\Big \rangle dsdt.
$$
%$$\lim_{(\epsilon,\delta)\downarrow 0}\frac{1}{2}\sum_{i,j=1}^d \int_{[0, T]^2\setminus D}\mathbb{E}\Big[ Y^{(i)}_{s,t} Y^{(j)}_{s,t} \frac{1}{4\epsilon\delta}B^{(i)}_{s-\epsilon,s+\epsilon}B^{(j)}_{t-\delta,t+\delta}\Big]dsdt.
%$$

For this purpose, we decompose the matrix-valued process $\Lambda^{0,ij}(\epsilon,\delta)$ given in (\ref{LambdaOP}) into two distinct components according to Lemma \ref{Sprerepr}. The random component 

$$\frac{1}{4\epsilon \delta}\Big[ Z^{1;ij}_{s,t}(\epsilon) Z^{2;ij}_{s,t}(\delta) - \mathbb{E}[
Z^{1;ij}_{s,t}(\epsilon) Z^{2;ij}_{s,t}(\delta)] \Big]
$$
and the deterministic component
$$\frac{1}{4\epsilon \delta}\mathbb{E}\big[B^{(i)}_{s-\epsilon,s+\epsilon} B^{(j)}_{t-\delta,t+\delta}\big].
$$

\subsection{Deterministic component}
In this section, we investigate the limit of

$$\lim_{(\epsilon,\delta)\downarrow 0} -\frac{1}{2}\sum_{i=1}^d \int_{[0,T]^2} \mathbb{E}|Y^{(i)}_{s,t}|^2\frac{1}{4\epsilon\delta}\mathbb{E}\big[B^{(i)}_{s-\epsilon,s+\epsilon} B^{(i)}_{t-\delta,t+\delta}\big]dsdt,$$
where we recall $\frac{1}{4\epsilon\delta}\mathbb{E}\big[B^{(i)}_{s-\epsilon,s+\epsilon} B^{(i)}_{t-\delta,t+\delta}\big]=\frac{1}{4\epsilon \delta} \Delta_{I_{\epsilon,\delta}}R(s,t)$ for $(s,t) \in [0,T]^2_\star$.
\begin{lemma}\label{inpd1}
Let $g \in C^{\bar{\gamma},\gamma}([0,T]\times \mathbb{R}^d)$ with $ \bar{\gamma}> \frac{1}{2}-H, \gamma > \frac{1}{2H}-1$ and $Y_\cdot=g(\cdot, B_\cdot)$. For every $1 < p < \min \big\{\frac{1}{2-2H-2H\gamma}; \frac{1}{2-2H-2\bar{\gamma}}\big\}$, there exists a constant $C=C(H,\bar{\gamma},\gamma,T,p)$ such that

\begin{equation}\label{awayest}
\sup_{(\epsilon,\delta) \in (0,1)^2}\mathbb{E} \int_0^T \int_0^{t-\max\{2\epsilon+\delta; 2\delta +\epsilon\}}\big| Y_{s,t}\big|^{2p} \Big|\frac{1}{4\epsilon \delta} \Delta_{I_{\epsilon,\delta}}R(s,t)\Big|^pdsdt\le C\|g\|^{2p}_{\bar{\gamma},\gamma}. 
\end{equation}
\end{lemma}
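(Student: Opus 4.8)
The plan is to estimate the kernel $\frac{1}{4\epsilon\delta}\Delta_{I_{\epsilon,\delta}}R(s,t)$ uniformly on the region $\{0\le s < t - \max\{2\epsilon+\delta;\,2\delta+\epsilon\}\}$, i.e.\ where the rectangle $I_{\epsilon,\delta}=(s-\epsilon,s+\epsilon]\times(t-\delta,t+\delta]$ stays strictly away from the diagonal. On such a region, $R$ is $C^\infty$ on a neighborhood of $\overline{I_{\epsilon,\delta}}$, so by the mean value theorem applied twice (once in each variable), $\frac{1}{4\epsilon\delta}\Delta_{I_{\epsilon,\delta}}R(s,t) = \frac{\partial^2 R}{\partial t\partial s}(\xi,\eta)$ for some $(\xi,\eta)$ with $|\xi-s|\le\epsilon$, $|\eta-t|\le\delta$. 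Using the explicit formula $\frac{\partial^2 R}{\partial t\partial s}(a,b)=H(2H-1)|b-a|^{2H-2}$ for $a\neq b$, together with the fact that on the prescribed region $|\eta-\xi|\ge |t-s| - (\epsilon+\delta)\gtrsim |t-s|$ (shrinking constants, and using that $|t-s|$ exceeds $\max\{2\epsilon+\delta;2\delta+\epsilon\}$), one gets
$$
\Big|\frac{1}{4\epsilon\delta}\Delta_{I_{\epsilon,\delta}}R(s,t)\Big|\lesssim |t-s|^{2H-2},
$$
uniformly in $(\epsilon,\delta)\in(0,1)^2$, with a constant depending only on $H$ and $T$.

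Next I would bound the integrand's random factor. Since $g\in C^{\bar\gamma,\gamma}([0,T]\times\mathbb R^d)$ and $Y_t=g(t,B_t)$, we have
$$
|Y_{s,t}| = |g(t,B_t)-g(s,B_s)| \le \|g\|_{\bar\gamma,\gamma}\big(|t-s|^{\bar\gamma}+|B_t-B_s|^{\gamma}\big),
$$
hence $|Y_{s,t}|^{2p}\lesssim \|g\|_{\bar\gamma,\gamma}^{2p}\big(|t-s|^{2p\bar\gamma}+|B_{s,t}|^{2p\gamma}\big)$. Taking expectations and using $\mathbb E|B_{s,t}|^{2p\gamma}\lesssim |t-s|^{2pH\gamma}$ (Gaussian moments, since $B_{s,t}$ is centered Gaussian with variance $|t-s|^{2H}$), we obtain
$$
\mathbb E|Y_{s,t}|^{2p}\lesssim \|g\|_{\bar\gamma,\gamma}^{2p}\big(|t-s|^{2p\bar\gamma}+|t-s|^{2pH\gamma}\big).
$$

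Combining the two estimates, the quantity to be bounded is dominated (up to $\|g\|_{\bar\gamma,\gamma}^{2p}$ and a constant) by
$$
\int_0^T\!\!\int_0^{t}\big(|t-s|^{2p\bar\gamma}+|t-s|^{2pH\gamma}\big)|t-s|^{p(2H-2)}\,ds\,dt,
$$
which is finite provided $p(2H-2)+2p\bar\gamma>-1$ and $p(2H-2)+2pH\gamma>-1$, i.e.\ $p<\frac{1}{2-2H-2\bar\gamma}$ and $p<\frac{1}{2-2H-2H\gamma}$; these are exactly the stated constraints on $p$, and they are meaningful since $\bar\gamma>\frac12-H$ and $\gamma>\frac1{2H}-1$ make the denominators positive (and if a denominator is nonpositive the corresponding exponent is already $>-1$ for all $p$). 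Finiteness of this $(s,t)$-integral is uniform in $(\epsilon,\delta)$, giving the desired supremum bound.

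The main obstacle is the kernel estimate near the ``edge'' of the admissible region: one must check that the buffer $\max\{2\epsilon+\delta;2\delta+\epsilon\}$ is exactly what is needed so that (a) the rectangle $I_{\epsilon,\delta}$ does not touch the diagonal, keeping $R$ smooth there, and (b) the mean-value point $(\xi,\eta)$ still satisfies $|\eta-\xi|\gtrsim|t-s|$ with an absolute constant, so that no blow-up is introduced by the finite-difference approximation. Once this geometric bookkeeping is done correctly, the rest is the routine combination of Gaussian moment bounds with the deterministic integrability of $|t-s|^{2H-2}$ against H\"older-type factors, as above. I would also remark that the singular factor $\epsilon^{2H-1}$ appearing in one-sided increments (cf.\ Remark \ref{nelsondisc}) does \emph{not} appear here because the symmetric second difference $\Delta_{I_{\epsilon,\delta}}R$ over a rectangle away from the diagonal is genuinely an integral of the continuous density $\frac{\partial^2 R}{\partial t\partial s}$, as recorded in Remark \ref{SingpointwiseR}.
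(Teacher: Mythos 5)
Your proof is correct and follows essentially the same route as the paper: both represent the rectangular increment of $R$ as an integral of $\frac{\partial^2 R}{\partial t\partial s}$ away from the diagonal, bound it by $|t-s-(\epsilon+\delta)|^{2H-2}$, and combine this with the H\"older bound $|Y_{s,t}|^{2p}\lesssim \|g\|_{\bar\gamma,\gamma}^{2p}(|t-s|^{2p\bar\gamma}+|B_{s,t}|^{2p\gamma})$ and Gaussian moments, arriving at exactly the same integrability constraints on $p$. Your observation that $t-s-(\epsilon+\delta)\ge\tfrac13(t-s)$ on the region $\{s<t-\max\{2\epsilon+\delta;2\delta+\epsilon\}\}$ lets you avoid the paper's triangle-inequality decomposition of the increments into pieces over $[s,s+\epsilon]$, $[s+\epsilon,t-\delta]$ and $[t-\delta,t]$; this is a harmless and slightly cleaner piece of bookkeeping, not a different method.
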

\begin{proof}
Fix $\gamma > \frac{1}{2H}-1, \bar{\gamma} > \frac{1}{2}-H$ and $1 < p < \min \{\frac{1}{2-2H-2\gamma H}; \frac{1}{2-2H-2\bar{\gamma}}\}$. Fix $G_{\epsilon,\delta} = \{(s,t) \in \Delta_T; 0\le s < t-\max\{2\epsilon+\delta; 2\delta +\epsilon\}, 0\le t\le T\}$. First, we observe $(s-\epsilon,s+\epsilon)\cap (t-\delta, t+\delta) = \emptyset$ whenever $t-s > \epsilon + \delta$. Hence, we can write

\begin{equation}\label{secrepR}
\frac{1}{4\epsilon \delta} \Delta_{I_{\epsilon,\delta}}R(s,t) = H(2H-1)\frac{1}{2\epsilon 2\delta}\int_{t-\delta}^{t+\delta} \int_{s-\epsilon}^{s+\epsilon} (b-a)^{2H-2}dadb,
\end{equation}
for $t-s > \epsilon + \delta$.
From (\ref{secrepR}), we get

\begin{equation}\label{awa1}
\Big|\frac{1}{4\epsilon \delta} \Delta_{I_{\epsilon,\delta}}R(s,t)\Big| \le H(1-2H)|t-(\delta+\epsilon) - s|^{2H-2},
\end{equation}
whenever $t-s > \epsilon +\delta$. By assumption, 
%\bar{\gamma H}%. 
$$|Y_{s,t}|^{2p}\le \|g\|^{2p}_{\bar{\gamma},\gamma} \{|t-s|^{2p \bar{\gamma}} + |B_{s,t}|^{2p\gamma}\},$$
for $(s,t) \in \Delta_T$. By triangle inequality 

\begin{eqnarray*}
\{|t-s|^{2p \bar{\gamma}} + |B_{s,t}|^{2p\gamma}\}&\lesssim& \{\delta^{2p\bar{\gamma}} + \epsilon^{2p\bar{\gamma}} + |t-s - (\epsilon+\delta)|^{2p\bar{\gamma}}\\
& +& |B_{t-\delta,t}|^{2p\gamma} +  |B_{s,s+\epsilon}|^{2p\gamma} + |B_{s+\epsilon,t-\delta}|^{2p\gamma}  \}\\
&=:& I_{\epsilon,\delta}(s,t) + J_{\epsilon,\delta}(s,t),
\end{eqnarray*}
for every $(s,t) \in \Delta_T$. Then,

\begin{eqnarray*}
\mathbb{E}\int_{G_{\epsilon,\delta}}|Y_{s,t}|^{2p}\big|\frac{1}{4\epsilon \delta} \Delta_{I_{\epsilon,\delta}}R(s,t)\big|^p dsdt&\le& \|g\|^{2p}_{\bar{\gamma},\gamma} \mathbb{E}\int_{G_{\epsilon,\delta}} |I_{\epsilon,\delta}(s,t)+ J_{\epsilon,\delta}(s,t)|\\
&\times& |t-s-(\epsilon+\delta)|^{(2H-2)p}dsdt.
\end{eqnarray*}
Observe that $\delta \le t-s - (\epsilon+\delta) \Longleftrightarrow s < t-(\epsilon + 2\delta)$ and $\epsilon \le t-s - (\epsilon+\delta) \Longleftrightarrow s < t-(\delta + 2\epsilon)$. Therefore, 

$$|t-s - (\epsilon+\delta)|^{(2H-2)p}I_{\epsilon,\delta}(s,t)\lesssim |t-s - (\epsilon+\delta)|^{(2H-2+ 2\bar{\gamma}) p}$$
on $G_{\epsilon,\delta}$. We observe

\begin{eqnarray*}
\mathbb{E}\Big[|J_{\epsilon,\delta}||t-s-(\epsilon+\delta)|^{(2H-2)p}|\Big]&\lesssim& |t-s - (\epsilon+\delta)|^{(2H-2+ 2\gamma H)p}
\end{eqnarray*}
on $G_{\epsilon,\delta}$ and using the fact that $(2H-2+2\gamma H)p +1>0$ and $(2H-2+2\bar{\gamma})p +1>0$, there exists a constant $C = C(H,\bar{\gamma},\gamma, p,T)$ such that 

$$\mathbb{E}\int_{G_{\epsilon,\delta}} |J_{\epsilon,\delta}(s,t)||t-s-(\epsilon+\delta)|^{(2H-2)p}dsdt$$
$$\lesssim \int_{G_{\epsilon,\delta}}|t-s-(\epsilon+\delta)|^{(2H-2+2\gamma H)p}dsdt\le C$$ 
and 
$$\int_{G_{\epsilon,\delta}} |I_{\epsilon,\delta}(s,t)||t-s-(\epsilon+\delta)|^{(2H-2)p}dsdt$$
$$\lesssim \int_{G_{\epsilon,\delta}}|t-s-(\epsilon+\delta)|^{(2H-2+2\bar{\gamma})p}dsdt\le C$$ 
for every $(\epsilon,\delta) \in (0,1)^2$. This concludes the proof.
\end{proof}

We now need to investigate

$$
\mathbb{E}\int_0^T \int_{t-\max\{2\epsilon+\delta; 2\delta +\epsilon\}}^t |Y_{s,t}|^2 \frac{1}{4\epsilon \delta} \big|\Delta_{I_{\epsilon,\delta}}R(s,t)\big|dsdt.
$$
First, we need the following elementary lemma.

\begin{lemma}\label{elemINEQ}
For each $0 < H \le \frac{1}{2}$, the following inequality holds

$$(1-u)^{2H}\ge 1-4Hu,$$
for every $ u\in [0, C^\star_H]$, where $C_H^\star= 1-2^{\frac{-1}{1-2H}}$.
\end{lemma}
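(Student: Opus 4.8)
The plan is to prove the inequality by a one-variable monotonicity argument. Define $f(u) := (1-u)^{2H} - 1 + 4Hu$ for $u \in [0, C_H^\star]$; the goal is to show $f(u) \ge 0$ throughout this interval. First I would record the trivial boundary value $f(0) = 0$, and observe that since $1 - 2H \ge 0$ we have $C_H^\star = 1 - 2^{-1/(1-2H)} \in [0,1)$ (treating the case $H = \tfrac{1}{2}$ as the limit $C_{1/2}^\star = 1$, where the statement reduces to $1-u \ge 1-2u$ and is immediate). In particular $1 - u > 0$ for all $u \in [0, C_H^\star]$, so $f$ is differentiable there.

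Next I would compute $f'(u) = -2H(1-u)^{2H-1} + 4H = 2H\bigl(2 - (1-u)^{2H-1}\bigr)$. The crux is to verify that $f'(u) \ge 0$ on $[0, C_H^\star]$. Since $2H - 1 \le 0$ and $0 < 1-u \le 1$, we have $(1-u)^{2H-1} = (1-u)^{-(1-2H)}$, so the inequality $f'(u) \ge 0$ is equivalent to $(1-u)^{-(1-2H)} \le 2$, i.e. to $(1-u)^{1-2H} \ge \tfrac{1}{2}$, i.e. to $1 - u \ge 2^{-1/(1-2H)}$, i.e. precisely to $u \le C_H^\star$. Hence $f' \ge 0$ on the whole interval $[0, C_H^\star]$ (with equality only at the right endpoint when $H < \tfrac12$).

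Finally I would conclude: $f$ is nondecreasing on $[0, C_H^\star]$ and $f(0) = 0$, so $f(u) \ge 0$ there, which is exactly $(1-u)^{2H} \ge 1 - 4Hu$. There is essentially no serious obstacle; the only point requiring care is recognizing the threshold $C_H^\star$ as exactly the value at which $f'$ changes sign, which is a direct algebraic rearrangement of the condition $(1-u)^{1-2H} \ge \tfrac12$, together with handling the degenerate case $H = \tfrac12$ separately.
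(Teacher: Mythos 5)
Your proposal is correct and follows essentially the same route as the paper: both define $f(u)=(1-u)^{2H}-(1-4Hu)$, note $f(0)=0$, compute $f'(u)=-2H(1-u)^{2H-1}+4H$, and observe that $f'\ge 0$ exactly up to the threshold $u=C_H^\star$, which forces $f\ge 0$ on $[0,C_H^\star]$. Your write-up merely makes explicit the algebraic identification of $C_H^\star$ as the zero of $f'$ and the degenerate case $H=\tfrac12$, both of which the paper leaves implicit.
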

\begin{proof}
Let $f(u) = (1-u)^{2H} - (1-4Hu)$ for $0\le u\le 1$. Then, $f(0)=0$ and $f'(u)=-2H(1-u)^{2H-1} + 4H$ for $0 < u < 1$. We observe that $f'(u) >0$ on $[0,C^\star_H)$ and $f'(C^\star_H)=0$, where $0 < C^\star_H \le 1$. This completes the proof.
\end{proof}
\begin{lemma}\label{inpd2}
Let $g \in C^{\bar{\gamma},\gamma}([0,T]\times \mathbb{R}^d)$ with $\bar{\gamma}> \frac{1}{2}-H, \gamma > \frac{1}{2H}-1$ and $Y_\cdot=g(\cdot,B_\cdot)$. Then, there exists a constant $C=C(H,\gamma,\bar{\gamma}, T)$ which only depends on $H$, $\gamma$, $\bar{\gamma}$ and $T$ such that

$$
\mathbb{E}\int_0^T \int_{t-\max\{2\epsilon+\delta; 2\delta +\epsilon\}}^t | Y_{s,t}|^2 \big|\frac{1}{4\epsilon \delta}\Delta_{I_{\epsilon,\delta}}R(s,t)\big|dsdt
$$
\begin{equation}\label{awayest1}
\le C\|g\|^2_{\bar{\gamma},\gamma}  \min \Big\{\epsilon^{[2H-1+2 (\bar{\gamma}\wedge \gamma H)] }, \delta^{[2H-1+2 (\bar{\gamma}\wedge \gamma H)  ]}\Big\},
\end{equation}
for every $(\epsilon,\delta)\in (0,1)^2$.   
\end{lemma}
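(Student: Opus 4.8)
The plan is to pull the expectation inside, estimate the integrand factor by factor, reduce the whole quantity to an explicit one-variable integral, and then control a regularized singular kernel. Since $(s,t)\mapsto \Delta_{I_{\epsilon,\delta}}R(s,t)$ is deterministic, Tonelli's theorem rewrites the left-hand side of (\ref{awayest1}) as $\int_0^T\int_{t-M}^{t}\mathbb{E}|Y_{s,t}|^2\,\big|\tfrac{1}{4\epsilon\delta}\Delta_{I_{\epsilon,\delta}}R(s,t)\big|\,ds\,dt$, with $M:=\max\{2\epsilon+\delta;2\delta+\epsilon\}$. From $g\in C^{\bar\gamma,\gamma}$ we have $|Y_{s,t}|\le\|g\|_{\bar\gamma,\gamma}\big(|t-s|^{\bar\gamma}+|B_{s,t}|^{\gamma}\big)$, and since $B_{s,t}$ is centred Gaussian with variance $|t-s|^{2H}$, taking expectations gives $\mathbb{E}|Y_{s,t}|^2\lesssim\|g\|^2_{\bar\gamma,\gamma}\big(|t-s|^{2\bar\gamma}+|t-s|^{2\gamma H}\big)\lesssim\|g\|^2_{\bar\gamma,\gamma}\,|t-s|^{2(\bar\gamma\wedge\gamma H)}$, the last step using $|t-s|\le T$ and $\bar\gamma,\gamma H>0$. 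Next, in $R(a,b)=\tfrac12\big(a^{2H}+b^{2H}-|a-b|^{2H}\big)$ the terms $a^{2H}$ and $b^{2H}$ telescope to zero in the planar increment, so $\Delta_{I_{\epsilon,\delta}}R(s,t)$ depends on $(s,t)$ only through $u:=t-s$, namely $\Delta_{I_{\epsilon,\delta}}R=\tfrac12\big(F_u(\epsilon+\delta)-F_u(|\epsilon-\delta|)\big)$ with $F_u(x):=|u+x|^{2H}+|u-x|^{2H}$. Substituting $s\mapsto u=t-s$ and integrating over $t\in[0,T]$, the quantity to estimate becomes $\le T\|g\|^2_{\bar\gamma,\gamma}\int_0^{M}u^{2(\bar\gamma\wedge\gamma H)}\big|\tfrac{1}{4\epsilon\delta}\Delta_{I_{\epsilon,\delta}}R(u)\big|\,du$.

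The heart of the matter is a pointwise bound on the mollified kernel $\tfrac{1}{4\epsilon\delta}\Delta_{I_{\epsilon,\delta}}R(u)$, which is a regularization of the non-integrable derivative $\frac{\partial^2 R}{\partial t\partial s}=H(2H-1)|t-s|^{2H-2}$ of (\ref{2derR}), and two regimes are needed. \emph{Saturated regime, valid for all $u\in[0,M]$:} from $|F_u'(x)|\le 4H|u-x|^{2H-1}$ and $\int_{|\epsilon-\delta|}^{\epsilon+\delta}|u-x|^{2H-1}\,dx\lesssim(\epsilon\wedge\delta)^{2H}$ — the latter via sub-additivity of $r\mapsto r^{2H}$ together with Lemma~\ref{elemINEQ}, which also handle with uniform constants the edge effects coming from $s-\epsilon\le 0$ and from $u$ lying inside $[|\epsilon-\delta|,\epsilon+\delta]$ — one obtains $\big|\tfrac{1}{4\epsilon\delta}\Delta_{I_{\epsilon,\delta}}R(u)\big|\lesssim(\epsilon\wedge\delta)^{2H-1}(\epsilon\vee\delta)^{-1}$. \emph{Disjoint regime $u>\epsilon+\delta$:} here $(s-\epsilon,s+\epsilon)\cap(t-\delta,t+\delta)=\emptyset$, so the planar mean value theorem applied to the representation (\ref{secrepR}) yields the genuinely decaying bound $\big|\tfrac{1}{4\epsilon\delta}\Delta_{I_{\epsilon,\delta}}R(u)\big|\le H(1-2H)(u-\epsilon-\delta)^{2H-2}$, i.e.\ the estimate already used in (\ref{awa1}).

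I would then split $\int_0^{M}u^{2(\bar\gamma\wedge\gamma H)}\big|\tfrac{1}{4\epsilon\delta}\Delta_{I_{\epsilon,\delta}}R(u)\big|\,du$ at the crossover of the two pointwise bounds (near $u\approx\epsilon+\delta$, plus a transition layer whose width is the scale at which $(u-\epsilon-\delta)^{2H-2}$ meets $(\epsilon\wedge\delta)^{2H-1}(\epsilon\vee\delta)^{-1}$), use the saturated bound below the crossover and the decaying bound above it, and evaluate the remaining elementary integrals of the type $\int u^{2\beta}\,du$ and $\int(u-\epsilon-\delta)^{2H-2}\,du$ with $\beta=\bar\gamma\wedge\gamma H$. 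Because $\bar\gamma>\tfrac12-H$ and $\gamma>\tfrac{1}{2H}-1$ force $2H-1+2(\bar\gamma\wedge\gamma H)>0$, each resulting term carries a strictly positive power of $\epsilon$ and of $\delta$, and collecting the exponents produces the bound (\ref{awayest1}). The step I expect to be the main obstacle is exactly this crossover bookkeeping: the regularized kernel must be controlled by the saturated bound near the diagonal, where it is finite but as large as $(\epsilon\wedge\delta)^{2H-1}(\epsilon\vee\delta)^{-1}$, and by the true singular estimate away from it, and it is precisely the surplus $2(\bar\gamma\wedge\gamma H)-(1-2H)>0$ furnished by the RKHS-type H\"older hypotheses that makes the combined integral vanish at the claimed polynomial rate as $(\epsilon,\delta)\downarrow 0$.
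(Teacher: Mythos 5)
Your reduction steps coincide with the paper's: Tonelli, the bound $\mathbb{E}|Y_{s,t}|^2\lesssim\|g\|^2_{\bar\gamma,\gamma}|t-s|^{2(\bar\gamma\wedge\gamma H)}$, and the observation that $\Delta_{I_{\epsilon,\delta}}R$ depends only on $u=t-s$, so that everything collapses to a one-variable integral $\int_0^{M}u^{2\beta}\big|\tfrac{1}{4\epsilon\delta}\Delta_{I_{\epsilon,\delta}}R(u)\big|\,du$ with $\beta=\bar\gamma\wedge\gamma H$. The gap is in the kernel estimate. Your ``saturated'' bound $\big|\tfrac{1}{4\epsilon\delta}\Delta_{I_{\epsilon,\delta}}R(u)\big|\lesssim(\epsilon\wedge\delta)^{2H-1}(\epsilon\vee\delta)^{-1}$ is correct pointwise, but it is only attained on $O(\epsilon\wedge\delta)$-neighbourhoods of $u=\epsilon+\delta$ and $u=|\epsilon-\delta|$, where $|u-x|^{2H-1}$ is genuinely singular for $x\in[|\epsilon-\delta|,\epsilon+\delta]$; elsewhere on $[0,\epsilon+\delta]$ the mean value theorem applied to $x\mapsto F_u(x)$ over an interval of length $2(\epsilon\wedge\delta)$ shows the kernel is of the much smaller size $(\epsilon\vee\delta)^{-1}\,\mathrm{dist}\big(u,[|\epsilon-\delta|,\epsilon+\delta]\big)^{2H-1}$. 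Using the constant saturated bound over the whole interval below the crossover, whose length is of order $\epsilon\vee\delta$, produces
$$\int_0^{\epsilon+\delta+\tau}u^{2\beta}\,du\cdot(\epsilon\wedge\delta)^{2H-1}(\epsilon\vee\delta)^{-1}\;\approx\;(\epsilon\vee\delta)^{2\beta}(\epsilon\wedge\delta)^{2H-1},$$
and a similar defect appears in your region above the crossover, since $\tau^{2H-1}$ still carries a negative power of $\epsilon\wedge\delta$. This quantity does not tend to zero in the unconstrained joint limit (take $\delta=\epsilon^{n}$ with $n>2\beta/(1-2H)$), so it cannot imply (\ref{awayest1}), and your claim that ``each resulting term carries a strictly positive power of $\epsilon$ and of $\delta$'' fails: the offending term carries the negative power $2H-1$ of $\epsilon\wedge\delta$.

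What rescues the estimate is precisely the $u$-dependence your uniform bound discards. The paper substitutes $z=u/(\epsilon+\delta)$, splits at $z=(\epsilon-\delta)/(\epsilon+\delta)$ and $z=1$, and on each piece extracts, via Lemma \ref{elemINEQ}, the H\"older continuity of $x\mapsto x^{2H}$, and $(1+x)^{2H}-1\lesssim x$, a factor $\tfrac{\epsilon\wedge\delta}{\epsilon+\delta}$ multiplying an integrable singularity $|1-z|^{2H-1}$; this factor cancels the $\epsilon\wedge\delta$ in the denominator $\tfrac{1}{\epsilon\delta}$ and yields the bound $(\epsilon\vee\delta)^{2H+2\beta-1}$. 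If you want to keep your structure, replace the constant saturated bound by the distance-dependent one above (which integrates to $(\epsilon\vee\delta)^{2\beta+2H-1}$) and treat the two $O(\epsilon\wedge\delta)$-windows where saturation actually occurs separately; they contribute $(\epsilon\wedge\delta)^{2H}(\epsilon\vee\delta)^{2\beta-1}\le(\epsilon\vee\delta)^{2\beta+2H-1}$. Note finally that both the paper's computation and this corrected version give $(\epsilon\vee\delta)^{2H-1+2(\bar\gamma\wedge\gamma H)}$, i.e.\ the \emph{maximum} of the two quantities appearing in (\ref{awayest1}) rather than the stated minimum; this suffices for Lemma \ref{detlemma}, but it is a further reason the final ``collecting the exponents'' step cannot be taken at face value.
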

%\theta(\hat{\gamma},\gamma,H)
\begin{proof}
First, we may suppose that $g(t,x) = g(x)$ because both $\gamma H > \frac{1}{2}-H$ and $\bar{\gamma}> \frac{1}{2}-H$ and $\mathbb{E}|B_{s,t}|^{2\gamma}\approxeq |t-s|^{2\gamma H}$. Let us denote 
$$G_{\epsilon,\delta} = \{(s,t) \in \Delta_T; t-\max\{2\epsilon+\delta; 2\delta +\epsilon\}\le s \le t\}.$$ 
One can easily check we can write

$$\frac{1}{4\epsilon\delta}\Delta_{I_{\epsilon,\delta}}R(s,t) = \frac{|t-s|^{2H}}{4\epsilon\delta}\frac{1}{2}\Psi\Bigg(\frac{\epsilon+\delta}{|t-s|},  \frac{\epsilon-\delta}{|t-s|} \Bigg),$$
for $(s,t) \in \Delta_T$, where

$$\Psi(x,y):= |1+x|^{2H} + |1-x|^{2H} - |1+y|^{2H} - |1-y|^{2H}; \quad (x,y) \in \mathbb{R}^2.$$
The H\"older property of $g$ yields  
\begin{small}
\begin{equation}\label{asze1}
\mathbb{E}\int_{G_{\epsilon,\delta}}| Y_{s,t}|^2 \big|\frac{1}{4\epsilon \delta} \Delta_{I_{\epsilon,\delta}}R(s,t)\big|dsdt \lesssim  \int_{G_{\epsilon,\delta}}\frac{|t-s|^{2H+2H\gamma}}{\epsilon\delta}\Bigg|\Psi\Bigg(\frac{\epsilon+\delta}{|t-s|},  \frac{\epsilon-\delta}{|t-s|} \Bigg)\Bigg|dsdt
\end{equation}
\end{small}
Next, we devote our attention to the evaluation of the right-hand side of (\ref{asze1}). Fix the variable $t$. In the sequel, for a given $(\epsilon,\delta) \in (0,1)^2$, we may suppose, for instance, $\delta\le\epsilon$. Observe $2\epsilon + \delta \ge 2\delta+\epsilon\Longleftrightarrow \epsilon \ge \delta$. By making the change of variables, $u = t-s$, we can write

$$\int_{t-\max\{2\epsilon+\delta; 2\delta +\epsilon\}}^t \frac{|t-s|^{2H+2H\gamma}}{\epsilon\delta}\Bigg|\Psi\Bigg(\frac{\epsilon+\delta}{|t-s|},  \frac{\epsilon-\delta}{|t-s|}\Bigg)\Bigg| ds$$
$$= \int_0^{2\epsilon+\delta} \frac{u^{2H+2H\gamma}}{\epsilon\delta}\Big|\Psi\Big(\frac{\epsilon+\delta}{u},  \frac{\epsilon-\delta}{u} \Big)\Big|du $$
$$=\int_0^{2\epsilon+\delta} \frac{u^{2H\gamma}}{\epsilon\delta}\Big| |u+\epsilon+\delta|^{2H} + |u-(\epsilon+\delta)|^{2H}$$
\begin{equation}\label{awa4}
- |u+(\epsilon-\delta)|^{2H} - |u-(\epsilon-\delta)|^{2H} \Big|du.
\end{equation}
Make another change of variable $z = \frac{u}{\epsilon +\delta}$. Then, the right-hand side of (\ref{awa4}) is given by

$$ \frac{(\epsilon+\delta)^{2H(\gamma +1)+1}}{(\epsilon\delta)}\int_0^{\frac{2\epsilon+\delta}{\epsilon+\delta}}z^{2H\gamma}\Big| |z+1|^{2H}$$
$$+ |z-1|^{2H} - \big|z+1 - \frac{2\delta}{\epsilon+\delta}\big|^{2H} - \big| z-1 + \frac{2\delta}{\epsilon+\delta}  \big|^{2H}\Big| dz.$$
Since $\frac{2\epsilon + \delta}{\epsilon+\delta} = 1 + \frac{\epsilon}{\epsilon+\delta}$, we first analyse

$$ \frac{(\epsilon+\delta)^{2H(\gamma +1)+1}}{(\epsilon\delta)}\int_0^{1}z^{2H\gamma}\Big| |z-1|^{2H}  - \big| z-1 + \frac{2\delta}{\epsilon+\delta}  \big|^{2H}\Big|  dz.$$
We observe $1-z- \frac{2\delta}{\epsilon+\delta}\ge 0 \Longleftrightarrow z \le \frac{\epsilon-\delta}{\epsilon+\delta}$. Then, we split
$$ \frac{(\epsilon+\delta)^{2H(\gamma +1)+1}}{(\epsilon\delta)}\int_0^{1}z^{2H\gamma}\Big| |z-1|^{2H}  - \big| z-1 + \frac{2\delta}{\epsilon+\delta}  \big|^{2H}\Big| dz$$
$$ = \frac{(\epsilon+\delta)^{2H(\gamma +1)+1}}{(\epsilon\delta)}\int_0^{\frac{\epsilon-\delta}{\epsilon+\delta}}z^{2H\gamma}\Big| |z-1|^{2H}  - \big| z-1 + \frac{2\delta}{\epsilon+\delta}  \big|^{2H}\Big|  dz$$
$$+ \frac{(\epsilon+\delta)^{2H(\gamma +1)+1}}{(\epsilon\delta)}\int_{\frac{\epsilon-\delta}{\epsilon+\delta}}^1z^{2H\gamma}\Big| |z-1|^{2H}  - \big| z-1 + \frac{2\delta}{\epsilon+\delta}  \big|^{2H}\Big|  dz$$
$$=J_1(k;\epsilon,\delta,t) + J_2(k;\epsilon,\delta,t). $$
Observe
$$
J_1(k;\epsilon,\delta,t) = \frac{(\epsilon+\delta)^{2H(\gamma +1)+1}}{\epsilon\delta}\int_0^{\frac{\epsilon-\delta}{\epsilon+\delta}}z^{2H\gamma }(1-z)^{2H}\Big[ 1- \Big(1-\frac{2\delta}{\epsilon+\delta}\frac{1}{1-z} \Big)^{2H}\Big] dz.
$$
By applying Lemma \ref{elemINEQ}, we get

$$\int_0^T J_1(k;\epsilon,\delta,t)dt \lesssim \frac{(\epsilon+\delta)^{2H(\gamma +1)+1}}{(\epsilon\delta)}\int_0^{\frac{\epsilon-\delta}{\epsilon+\delta}} z^{2H\gamma}(1-z)^{2H-1}dz \frac{2\delta}{\epsilon+\delta} $$
$$\lesssim \epsilon^{2H\gamma + 2H-1} \times \int_0^{\frac{\epsilon-\delta}{\epsilon+\delta}} z^{2H\gamma}(1-z)^{2H-1}dz.$$

We observe

\begin{eqnarray*}
J_2(k;\epsilon,\delta,t) &=&\frac{(\epsilon+\delta)^{2H(\gamma +1)+1}}{\epsilon\delta}\int_{1-\frac{2\delta}{\epsilon+\delta}}^1z^{2H\gamma}(1-z)^{2H}\Big| 1- \Big(\frac{2\delta}{\epsilon+\delta}\frac{1}{1-z}-1 \Big)^{2H}\Big|dz.
\end{eqnarray*}
By using the H\"older continuity of $x\mapsto x^{2H}$, we observe

$$(1-z)^{2H}\Big| 1- \Big(\frac{2\delta}{\epsilon+\delta}\frac{1}{1-z}-1 \Big)^{2H}\Big|$$
$$ = \Big| (1-z)^{2H} - \Big(\frac{2\delta}{\epsilon+\delta}- (1-z)  \Big)^{2H}\Big|$$
$$\lesssim \Big|2(1-z) - \frac{2\delta}{\epsilon+\delta}\Big|^{2H}\le \Big(\frac{2\delta}{\epsilon+\delta}\Big)^{2H},$$
whenever $1-\frac{2\delta}{\epsilon+\delta} < z < 1$. Therefore, integrating w.r.t.~the variable $t$, we have

\begin{eqnarray*}
\int_0^T J_2(k;\epsilon,\delta,t)dt &\lesssim& \frac{(\epsilon+\delta)^{2H(\gamma +1)+1}}{\epsilon\delta}\Big(\frac{2\delta}{\epsilon+\delta}\Big)^{2H} \int_{1-\frac{2\delta}{\epsilon+\delta}}^1z^{2H\gamma}  dzdt\\
&\lesssim& \frac{(\epsilon+\delta)^{2H(\gamma +1)+1}}{\epsilon\delta}\Big(\frac{2\delta}{\epsilon+\delta}\Big)^{2H+1}\\
&=& \frac{(\epsilon+\delta)^{2H\gamma}}{\epsilon} \delta^{2H}\lesssim \epsilon^{2H + 2H\gamma -1}.
\end{eqnarray*}

Next, we treat the integral over $[1,\frac{\epsilon}{\epsilon+\delta}]$. Observing that $(1+x)^{2H}-1\lesssim x$ for every $x\ge 0$, we have

$$ \frac{(\epsilon+\delta)^{2H(\gamma +1)+1}}{\epsilon\delta}\int_{1}^{1+\frac{\epsilon}{\epsilon+\delta}}z^{2H\gamma}\Big| |z-1|^{2H}  - \big| z-1 + \frac{2\delta}{\epsilon+\delta}  \big|^{2H}\Big| dz$$
$$ = \frac{(\epsilon+\delta)^{2H(\gamma +1)+1}}{\epsilon\delta}\int_{1}^{1+\frac{\epsilon}{\epsilon+\delta}}z^{2H\gamma}(z-1)^{2H}\Big[ \Big(1+\frac{2\delta}{\epsilon+\delta}\frac{1}{z-1} \Big)^{2H} -1\Big] dz$$
$$\lesssim \frac{(\epsilon+\delta)^{2H\gamma +2H }}{\epsilon}\int_{1}^{1+\frac{\epsilon}{\epsilon+\delta}}z^{2H\gamma}(z-1)^{2H-1} dz \lesssim \epsilon^{2H+2\gamma H -1}.$$
Summing up the above estimates, we arrive at

$$ \frac{(\epsilon+\delta)^{2H(\gamma +1)+1}}{(\epsilon\delta)}\int_0^T \int_0^{\frac{2\epsilon+\delta}{\epsilon+\delta}}z^{2H\gamma}\Big||z-1|^{2H} - \big| z-1 + \frac{2\delta}{\epsilon+\delta}  \big|^{2H}\Big|dz$$
$$\lesssim \epsilon^{2H+ 2H\gamma-1},$$
for every $\epsilon\ge \delta$. Similarly, 

$$ \frac{(\epsilon+\delta)^{2H(\gamma +1)+1}}{(\epsilon\delta)}\int_0^T \int_0^{\frac{2\epsilon+\delta}{\epsilon+\delta}}z^{2H\gamma}\Big||z+1|^{2H} - \big| z+1 - \frac{2\delta}{\epsilon+\delta}  \big|^{2H}\Big|dzdt$$
$$\lesssim \epsilon^{2H+ 2H\gamma-1},$$
for every $\epsilon\ge \delta$. In case $\epsilon\le \delta$, we have

$$\mathbb{E}\int_{t-\max\{2\epsilon+\delta; 2\delta +\epsilon\}}^t | Y_{s,t}|^2 \big|\frac{1}{4\epsilon \delta}\Delta_{I_{\epsilon,\delta}}R(s,t)\big|ds$$
$$\lesssim \|g\|_{\bar{\gamma},\gamma}^2 \int_0^{2\delta+\epsilon} \Bigg| \frac{u^{2H+2H\gamma}}{\epsilon\delta}\Big|\Psi\Big(\frac{\epsilon+\delta}{u},  \frac{\epsilon-\delta}{u} \Big)\Big|\Bigg|du $$
$$=\int_0^{2\delta + \epsilon} \frac{u^{2H\gamma}}{\epsilon\delta}\Big| |u+\epsilon+\delta|^{2H} + |u-(\epsilon+\delta)|^{2H}$$
\begin{equation}\label{awa5}
- |u+(\epsilon-\delta)|^{2H} - |u-(\epsilon-\delta)|^{2H} \Big| du.
\end{equation}
Make the change of variable $z = \frac{u}{\epsilon +\delta}$. Then, the right-hand side of (\ref{awa5}) is given by

$$ \frac{(\epsilon+\delta)^{2H(\gamma +1)+1}}{\epsilon\delta}\int_0^{\frac{2\delta + \epsilon}{\epsilon+\delta}}z^{2H\gamma}\Big| |z+1|^{2H}$$
$$+ |z-1|^{2H} - \big|z+1 - \frac{2\epsilon}{\epsilon+\delta}\big|^{2H} - \big| z-1 + \frac{2\epsilon}{\epsilon+\delta}  \big|^{2H}\Big|  dz.$$
By symmetry of our argument, we then conclude the proof. 
\end{proof}
By symmetry, Lemmas \ref{inpd1}, \ref{inpd2} and Remark \ref{SingpointwiseR}, we arrive at the following result.
%and noticing that  $\dot{N}^\gamma_{2p\beta,2} \subset \dot{N}^\gamma_{2p\beta,2p}$ for every $p,\beta>1$, w
\begin{lemma}\label{detlemma}
Let $g \in C^{\bar{\gamma},\gamma}([0,T]\times \mathbb{R}^d)$ with $\gamma > \frac{1}{2H}-1, \bar{\gamma}> \frac{1}{2}-H$ and $Y_\cdot=g(\cdot,B_\cdot)$. Then
$$-\frac{1}{2}\sum_{i,j=1}^d
\mathbb{E}\int_{[0,T]^2} Y^{(i)}_{s,t} Y^{(j)}_{s,t}\frac{1}{4\epsilon\delta}\mathbb{E}\big[B^{(i)}_{s-\epsilon,s+\epsilon} B^{(j)}_{t-\delta,t+\delta}\big]dsdt \rightarrow$$
$$ -\frac{1}{2}\mathbb{E}\int_{[0,T]^2} |Y_{s,t}|^2 \frac{\partial^2R}{\partial t\partial s}(s,t) dsdt,$$
as $(\epsilon,\delta)\downarrow 0$.
\end{lemma}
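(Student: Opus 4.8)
The plan is to establish Lemma \ref{detlemma} as a dominated-convergence argument built on top of Lemmas \ref{inpd1} and \ref{inpd2} together with the pointwise convergence recorded in Remark \ref{SingpointwiseR}. First I would fix $\gamma > \frac{1}{2H}-1$, $\bar\gamma > \frac12 - H$ and $Y_\cdot = g(\cdot,B_\cdot)$ with $g \in C^{\bar\gamma,\gamma}$, and note that by the polarization identity and the independence of distinct coordinates it suffices to treat the diagonal terms $i=j$ and, by symmetry of $(s,t)$, work on the simplex $\Delta_T$. On $\Delta_T$ we split the integral $\mathbb{E}\int_{[0,T]^2}|Y_{s,t}|^2 \frac{1}{4\epsilon\delta}\Delta_{I_{\epsilon,\delta}}R(s,t)\,dsdt$ along the threshold $s = t - \max\{2\epsilon+\delta,\,2\delta+\epsilon\}$ into a ``far-from-diagonal'' piece $G^{\mathrm{far}}_{\epsilon,\delta}$ and a ``near-diagonal'' piece $G^{\mathrm{near}}_{\epsilon,\delta}$.

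On the far piece, Lemma \ref{inpd2} is not applicable, but Lemma \ref{inpd1} gives a uniform $L^{2p}(\mathbb{P})$-type bound for some $p>1$; combined with the pointwise limit $\frac{1}{4\epsilon\delta}\Delta_{I_{\epsilon,\delta}}R(s,t) \to \frac{\partial^2 R}{\partial t\partial s}(s,t) = H(2H-1)|t-s|^{2H-2}$ of Remark \ref{SingpointwiseR}, which holds for every fixed $(s,t)$ with $s\ne t$ once $\epsilon,\delta$ are small enough, I would invoke dominated convergence (or rather, a uniform-integrability argument: the $L^p$-bound from Lemma \ref{inpd1} with $p>1$ upgrades pointwise a.e.\ convergence to $L^1$-convergence of the integrand over $[0,T]^2$) to conclude that the far piece converges to $-\frac12 \mathbb{E}\int_{[0,T]^2}|Y_{s,t}|^2 \frac{\partial^2 R}{\partial t\partial s}(s,t)\,dsdt$. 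The crucial integrability $\int_{[0,T]^2}|t-s|^{2H-2}\,\mathbb{E}|Y_{s,t}|^2\,dsdt < \infty$ needed for the limit to be finite follows from the H\"older bound $\mathbb{E}|Y_{s,t}|^2 \lesssim |t-s|^{2\bar\gamma} + |t-s|^{2\gamma H}$ together with $2\bar\gamma + 2H - 2 > -1$ and $2\gamma H + 2H - 2 > -1$, which are exactly the conditions $\bar\gamma > \frac12 - H$ and $\gamma > \frac{1}{2H}-1$.

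On the near-diagonal piece, Lemma \ref{inpd2} gives the estimate $\lesssim \|g\|^2_{\bar\gamma,\gamma}\min\{\epsilon^{2H-1+2(\bar\gamma\wedge\gamma H)},\,\delta^{2H-1+2(\bar\gamma\wedge\gamma H)}\}$, and since $2H - 1 + 2(\bar\gamma\wedge\gamma H) > 0$ (again by the standing assumptions on $\gamma,\bar\gamma$), this bound tends to $0$ as $(\epsilon,\delta)\downarrow 0$; hence the near-diagonal contribution vanishes in the limit and does not affect the answer. Assembling the two pieces and summing over $1\le i=j\le d$ (the off-diagonal $i\ne j$ contributions being zero since $\mathbb{E}[B^{(i)}_{\cdot}B^{(j)}_{\cdot}]=0$), I obtain the claimed convergence.

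The main obstacle I anticipate is the passage from pointwise a.e.\ convergence of the integrand $|Y_{s,t}|^2\,\frac{1}{4\epsilon\delta}\Delta_{I_{\epsilon,\delta}}R(s,t)$ on the far region to convergence of the integral: one needs a genuinely uniform ($\epsilon,\delta$-independent) dominating function, and $\left|\frac{1}{4\epsilon\delta}\Delta_{I_{\epsilon,\delta}}R(s,t)\right| \le H(1-2H)|t-(\epsilon+\delta)-s|^{2H-2}$ from \eqref{awa1} is not quite of that form near the boundary $s \approx t-(\epsilon+\delta)$. This is precisely why Lemma \ref{inpd1} is phrased with an exponent $p>1$: the uniform $L^p$-bound there provides the uniform integrability that substitutes for a pointwise dominating function, so the resolution is to invoke Vitali's convergence theorem rather than Lebesgue's. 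Verifying that the exponent range $1 < p < \min\{\frac{1}{2-2H-2\gamma H};\,\frac{1}{2-2H-2\bar\gamma}\}$ in Lemma \ref{inpd1} is nonempty — equivalently, that $2 - 2H - 2\gamma H < 1$ and $2-2H-2\bar\gamma < 1$, which is again the standing hypothesis — closes this gap cleanly.
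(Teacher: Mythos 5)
Your argument is correct and is exactly the route the paper takes: the paper's proof of this lemma is the one-line remark that it follows ``by symmetry, Lemmas \ref{inpd1}, \ref{inpd2} and Remark \ref{SingpointwiseR},'' and you have correctly reconstructed how those pieces fit together — the near-diagonal strip is killed by the quantitative bound of Lemma \ref{inpd2} (using $2H-1+2(\bar\gamma\wedge\gamma H)>0$), while on the far region the uniform $L^p$-bound of Lemma \ref{inpd1} with $p>1$ supplies uniform integrability so that the pointwise convergence of Remark \ref{SingpointwiseR} upgrades to $L^1(\Omega\times[0,T]^2)$ convergence via Vitali. Your observation that only the diagonal terms $i=j$ survive (by independence of the components) and your verification that the hypotheses $\gamma>\frac{1}{2H}-1$, $\bar\gamma>\frac12-H$ are precisely what make the exponent ranges nonempty are both accurate.
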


\subsection{Random component}

In this section, we investigate the limit of

$$\lim_{(\epsilon,\delta)\downarrow 0}-\frac{1}{2} \mathbb{E} \int_{[0,T]^2} \big\langle Y_{s,t}\otimes Y_{s,t}, \mathcal{W}^0(\epsilon,\delta;s,t)\big\rangle dsdt,$$
where 
$$\mathcal{W}^{0,ij}(\epsilon,\delta;s,t):= \frac{1}{4\epsilon\delta} \Big\{Z^{1;ij}_{s,t}(\epsilon)Z^{2;ij}_{s,t}(\delta) - \mathbb{E}\big[ Z^{1;ij}_{s,t}(\epsilon)Z^{2;ij}_{s,t}(\delta)\big]\Big\},$$
for $(s,t) \in [0,T]^2_\star$ and $(\epsilon,\delta) \in (0,1)^2$. Recall that $(Z^{1;ij}, Z^{2;ij})$ is given by (\ref{2dcexp2}), (\ref{Z1epsilon}) and (\ref{Z2delta}).

By Lemma \ref{Lambdalimit}, we know that

$$\lim_{(\epsilon,\delta)\downarrow 0}\mathcal{W}^{0,ij}(\epsilon,\delta;s,t) = \mathcal{W}^{ij}(s,t)$$
almost surely, for each $(s,t) \in [0,T]^2_\star$ and $1\le i,j\le d$. In this section, we will prove that

\begin{equation}\label{family}
\Big\{ Y^{(i)}_{s,t} Y^{(j)}_{s,t} \mathcal{W}^{0,ij}(\epsilon,\delta;s,t); (s,t) \in \Delta_T; (\epsilon,\delta) \in(0,1)^2 \Big\}
\end{equation}
is an uniformly integrable family in $L^1(\Omega\times \Delta_T)$, for each $1\le i,j\le d$.

\begin{remark}\label{remsimplex}
By symmetry and projecting onto the sigma algebra generated by $B_t, B_{t,s}$ for $0\le t< s\le T$, we will also conclude the convergence on the other part of the simplex in $[0,T]^2_\star$.
\end{remark}

The following elementary estimates are important in this section.
\begin{lemma}\label{growthELL}
For a given $0 < H < \frac{1}{2}$ and $a>0$, let $\ell_a(x) = |a+x|^{2H} - |a-x|^{2H}; x \in \mathbb{R}$. Then,

$$\sup_{ x >0} \Big|\frac{\ell_a(x)}{x}\Big|=2^{2H}a^{2H-1}.$$
%For a given $0 < H < \frac{1}{2}$, let $\ell(x) = |1+x|^{2H} - |1-x|^{2H}; x \in \mathbb{R}$. Then,

%$$\sup_{ x >0} \Big|\frac{\ell(x)}{x}\Big|=2^{2H}.$$
\end{lemma}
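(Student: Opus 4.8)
The plan is to prove Lemma \ref{growthELL} by a direct scaling argument reducing the two-parameter family $\{\ell_a\}_{a>0}$ to the single function $\ell=\ell_1$ already introduced in \eqref{ellfunc}, and then to study the quotient $\ell(x)/x$ on $(0,\infty)$ by elementary calculus. First I would observe that for every $a>0$ and $x>0$,
\[
\ell_a(x) = |a+x|^{2H} - |a-x|^{2H} = a^{2H}\Bigl(\bigl|1+\tfrac{x}{a}\bigr|^{2H} - \bigl|1-\tfrac{x}{a}\bigr|^{2H}\Bigr) = a^{2H}\,\ell\!\Bigl(\tfrac{x}{a}\Bigr),
\]
so that $\ell_a(x)/x = a^{2H-1}\,\ell(x/a)/(x/a)$. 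Setting $y=x/a$, the supremum over $x>0$ equals $a^{2H-1}\sup_{y>0}|\ell(y)/y|$, and it remains to show $\sup_{y>0}|\ell(y)/y| = 2^{2H}$.

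For the one-variable claim, I would write $h(y):=\ell(y)/y = \bigl(|1+y|^{2H}-|1-y|^{2H}\bigr)/y$ for $y>0$. On $(0,1]$ one has $h(y) = \bigl((1+y)^{2H}-(1-y)^{2H}\bigr)/y$, which is positive, and on $[1,\infty)$ one has $h(y)=\bigl((1+y)^{2H}-(y-1)^{2H}\bigr)/y$, also positive (since $1+y>y-1\ge 0$); hence the absolute value is unnecessary. The key step is to show $h$ is nonincreasing on $(0,\infty)$, so that $\sup_{y>0}h(y) = \lim_{y\downarrow 0} h(y)$. A clean way: by the mean value theorem applied to $u\mapsto u^{2H}$ on the interval with endpoints $1-y$ and $1+y$ (resp. $y-1$ and $1+y$ when $y\ge1$), we get $\ell(y) = 2y\cdot 2H\,\xi^{2H-1}$ for some $\xi$ between the endpoints, hence $h(y) = 4H\,\xi^{2H-1}$; since $2H-1<0$ and $\xi$ can be taken as small as $|1-y|$ when $y<1$, this already gives the shape, but to get the sharp constant I would instead directly compute $\lim_{y\downarrow 0} h(y)$: by definition of the derivative of $u\mapsto u^{2H}$ at $u=1$,
\[
\lim_{y\downarrow 0}\frac{(1+y)^{2H}-(1-y)^{2H}}{y} = 2\cdot\frac{d}{du}u^{2H}\Big|_{u=1} = 4H.
\]
Wait — this gives $4H$, not $2^{2H}$, so the supremum is \emph{not} attained at $0$; the function $h$ is not monotone and I must locate its true maximum.

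Reconsidering, I would instead examine the behavior at $y=1$: $h(1) = |2|^{2H} - 0 = 2^{2H}$. So the natural conjecture is that the maximum of $h$ on $(0,\infty)$ is attained exactly at $y=1$ with value $2^{2H}$. To prove $h(y)\le 2^{2H}$ for all $y>0$, i.e. $\ell(y)\le 2^{2H} y$, I would argue on the two regimes. For $y\ge 1$: $\ell(y) = (1+y)^{2H}-(y-1)^{2H} \le (1+y)^{2H} \le (2y)^{2H} = 2^{2H}y^{2H}\le 2^{2H}y$ since $y\ge1$ and $2H\le1$. For $0<y\le1$: I want $(1+y)^{2H}-(1-y)^{2H}\le 2^{2H}y$. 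Using concavity of $u\mapsto u^{2H}$ on $[0,\infty)$ (valid since $0<2H<1$), the function $y\mapsto (1+y)^{2H}-(1-y)^{2H}$ is itself concave on $[0,1]$ (sum of a concave function and the reflection of a concave function — more precisely its second derivative $2H(2H-1)[(1+y)^{2H-2}+(1-y)^{2H-2}]$ is negative); a concave function on $[0,1]$ vanishing at $0$ lies below the chord through $(0,0)$ and $(1,2^{2H})$, namely $y\mapsto 2^{2H}y$. This establishes $\sup_{y>0}h(y)\le 2^{2H}$, and since $h(1)=2^{2H}$ the supremum equals $2^{2H}$ and is attained. Combining with the scaling identity yields $\sup_{x>0}|\ell_a(x)/x| = a^{2H-1}2^{2H} = 2^{2H}a^{2H-1}$, as claimed.

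The main obstacle, and the only genuinely non-routine point, is the inequality $\ell(y)\le 2^{2H}y$ on $(0,1]$: the naive guesses (monotonicity of $h$, or attainment of the sup at an endpoint) fail, so one must identify $y=1$ as the maximizer and justify it via the concavity-chord argument above (equivalently, by checking $h'$ changes sign once, from $+$ to $-$, at $y=1$). Everything else — the scaling reduction and the $y\ge1$ estimate — is immediate.
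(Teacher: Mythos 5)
Your overall route matches the paper's: reduce to $a=1$ by the scaling identity $\ell_a(x)=a^{2H}\ell(x/a)$, identify $y=1$ as the maximizer with value $2^{2H}$, and bound the two regimes $y\ge 1$ and $0<y\le 1$ separately. (The paper handles $[1,\infty)$ via the $2H$-H\"older seminorm of $u\mapsto u^{2H}$ and handles $(0,1)$ by asserting that the quotient has no critical point there; your doubling bound $(1+y)^{2H}\le(2y)^{2H}$ and your chord comparison are legitimate substitutes.) However, the justification of what you yourself single out as the only non-routine step --- the inequality $(1+y)^{2H}-(1-y)^{2H}\le 2^{2H}y$ on $(0,1]$ --- contains two sign errors that happen to cancel. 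First, the second derivative of $\phi(y):=(1+y)^{2H}-(1-y)^{2H}$ is $2H(2H-1)\bigl[(1+y)^{2H-2}-(1-y)^{2H-2}\bigr]$, a \emph{difference}, not the sum you wrote: each differentiation of $-(1-y)^{2H}$ carries a factor $-1$ from the chain rule. Since $2H(2H-1)<0$ and $(1+y)^{2H-2}<(1-y)^{2H-2}$ for $y\in(0,1)$ (the exponent $2H-2$ is negative), this is a product of two negative quantities, hence \emph{positive}: $\phi$ is convex on $[0,1]$, not concave. A numerical check with $H=1/4$ confirms this: $\phi(1/2)=\sqrt{3/2}-\sqrt{1/2}\approx 0.518$, which lies below the chord value $2^{1/2}/2\approx 0.707$. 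Second, your stated principle is also reversed: a concave function vanishing at $0$ lies \emph{above} the chord through $(0,0)$ and $(1,\phi(1))$, since $\phi(y)=\phi(y\cdot 1+(1-y)\cdot 0)\ge y\,\phi(1)$. Had $\phi$ really been concave, your argument would have yielded the opposite of the inequality you need.

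Because the two errors compensate, the inequality you want is true and the repair is immediate: $\phi$ is convex on $[0,1]$ with $\phi(0)=0$ and $\phi(1)=2^{2H}$, and convexity places the graph below the chord, so $\phi(y)\le(1-y)\phi(0)+y\,\phi(1)=2^{2H}y$. With that correction, the rest of your argument --- positivity of $\ell(y)/y$, the bound for $y\ge 1$, the attainment at $y=1$, and the scaling reduction --- is sound and the proof is complete.
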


\begin{proof}

First, we observe $\lim_{x\downarrow 0} \frac{\ell_1(x)}{x} = 4H$. Then, we can define the continuous function
$$
f(x)=\left\{
\begin{array}{rl}
\frac{(1+x)^{2H} - (1-x)^{2H}}{x}; & \hbox{if} \ 0 < x \le 1 \\
\frac{(1+x)^{2H} - (x-1)^{2H}}{x};& \hbox{if} \ 1 < x < \infty \\
4H;& \hbox{if} \ x=0.\\
\end{array}
\right.
$$
The function $x \mapsto x^{2H}$ is $2H$-H\"older continuous over $\mathbb{R}_+$ with H\"older seminorm equals 1 and hence
$$\Big|\frac{(1+x)^{2H} - (x-1)^{2H}}{x}\Big|\le 2^{2H}$$
for every $x\ge 1$. There is no critical point for $f$ in $(0,1)$ and $4H\le 2^{2H}$ and hence we conclude the proof for $a=1$. In the general case $a>0$, we just observe we shall write

$$\Big|\frac{\ell_a (x)}{x}\Big| = a^{2H-1}  \frac{a}{x} \Big| \big|1+\frac{x}{a}\big|^{2H} - \big|1-\frac{x}{a}\big|^{2H}  \Big|$$
and by applying the previous case, we conclude the proof.
\end{proof}
There is slight abuse of notation here. We recall $\ell_1 = \ell$ as described in (\ref{ellfunc}). The following elementary result is useful for the next lemmas. 
\begin{lemma}\label{covfest}
Fix $0 < H < \frac{1}{2}$. Let $\varphi(s,t) = R(s,t) - \gamma(s)$. Then,

$$|\varphi(s,t)|\le \min \big\{s^{H}|t-s|^H; |t-s|^{2H}; s^{2H}\big\},$$
for every $(s,t) \in \Delta_T$.
\end{lemma}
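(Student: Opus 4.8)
The plan is to reduce all three bounds to a single sharp estimate on $|\varphi(s,t)|$. First I would write $\varphi$ explicitly: since $R(s,t)=\frac12\{t^{2H}+s^{2H}-|t-s|^{2H}\}$ and $\gamma(s)=R(s,s)=s^{2H}$, for $(s,t)\in\Delta_T$ (so $0\le s<t$) one has
$$\varphi(s,t)=R(s,t)-\gamma(s)=\tfrac12\bigl\{t^{2H}-s^{2H}-(t-s)^{2H}\bigr\}.$$
The case $s=0$ is trivial since then $\varphi\equiv 0$, so I would assume $0<s<t$ and abbreviate $u:=t-s>0$, $v:=s>0$, $t=u+v$.

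Next I would isolate the two elementary properties of the map $a\mapsto a^{2H}$ on $[0,\infty)$ that do all the work, both valid precisely because $0<2H<1$: it is nondecreasing, and it is subadditive, $(a+b)^{2H}\le a^{2H}+b^{2H}$. Subadditivity applied to $t^{2H}=(u+v)^{2H}\le u^{2H}+v^{2H}$ shows $\varphi(s,t)\le 0$, so
$$|\varphi(s,t)|=\tfrac12\bigl\{u^{2H}+v^{2H}-(u+v)^{2H}\bigr\}.$$
Monotonicity gives $(u+v)^{2H}\ge u^{2H}$ and $(u+v)^{2H}\ge v^{2H}$; dropping each of these in turn from the bracket yields the single estimate
$$|\varphi(s,t)|\le\tfrac12\min\{u^{2H},v^{2H}\}=\tfrac12\bigl(\min\{u,v\}\bigr)^{2H}.$$

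Finally I would extract the claimed minimum. Writing $w:=\min\{u,v\}$, monotonicity gives $w^{2H}\le u^{2H}=|t-s|^{2H}$ and $w^{2H}\le v^{2H}=s^{2H}$, while $w^{2H}=w^{H}w^{H}\le u^{H}v^{H}=s^{H}|t-s|^{H}$; hence
$$|\varphi(s,t)|\le\tfrac12 w^{2H}\le w^{2H}\le\min\bigl\{s^{H}|t-s|^{H},\,|t-s|^{2H},\,s^{2H}\bigr\},$$
which is the assertion. I do not expect a genuine obstacle here: the only points requiring attention are that the regime $0<H<\frac12$ must be used twice — once via subadditivity, to pin down the sign of $\varphi$ and the clean closed form for $|\varphi|$, and once via plain monotonicity of $a\mapsto a^{2H}$ — and that the boundary value $s=0$ needs a (trivial) separate word.
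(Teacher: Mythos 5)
Your proof is correct, and it takes a genuinely different route from the paper's. The paper proves the three bounds separately: the bound $s^H|t-s|^H$ comes from interpreting $\varphi(s,t)$ as the covariance $\mathbb{E}[B^{(1)}_sB^{(1)}_{s,t}]$ and applying Cauchy--Schwarz; the bound $|t-s|^{2H}$ comes from the $2H$-H\"older property of $x\mapsto x^{2H}$ applied to $|t^{2H}-s^{2H}|\le|t-s|^{2H}$ inside the explicit formula; and the bound $s^{2H}$ comes from the same H\"older property applied to $|(t-s)^{2H}-t^{2H}|\le s^{2H}$. You instead establish the single, purely deterministic estimate $|\varphi(s,t)|\le\frac12\bigl(\min\{s,t-s\}\bigr)^{2H}$ via subadditivity (which fixes the sign of $\varphi$ and gives the clean closed form) and monotonicity of $a\mapsto a^{2H}$, and then read off all three claimed bounds from it. Your argument is more economical and in fact slightly sharper (it carries an extra factor of $\tfrac12$ and the stronger unified bound), while the paper's covariance/Cauchy--Schwarz step for the first bound has the virtue of not depending on the explicit FBM formula and so would transfer to other Gaussian covariances. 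Both are complete; your handling of the boundary case $s=0$ and the two uses of $0<2H<1$ are exactly the points that need attention, and you address them.
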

\begin{proof}
H\"older's inequality yields
$$|\varphi(s,t)|\le \mathbb{E}|B^{(1)}_s B^{(1)}_{s,t}|\le s^H |t-s|^{H},$$
 for $(s,t) \in [0,T]^2$. The function $x \mapsto x^{2H}$ is $2H$-H\"older continuous over $\mathbb{R}_+$ with H\"older seminorm equals 1 and hence

$$|\varphi(s,t)| = \frac{1}{2} \big|\big\{t^{2H} - s^{2H} - |t-s|^{2H}  \big\}\big|\le |t-s|^{2H},$$
for $(s,t) \in [0,T]^2$. If $0\le s\le t\le T$, then

\begin{eqnarray*}
|\varphi(s,t)| &=& \frac{1}{2}\big\{s^{2H}+ (t-s)^{2H}-t^{2H} \big\}\\
&\le& \frac{1}{2}s^{2H} + \frac{1}{2}|(t-s)^{2H} - t^{2H}|\\
&\le& s^{2H}.
\end{eqnarray*}
\end{proof}

In the sequel, for each $(\epsilon,\delta) \in (0,1)^2$, we will split the simplex set $\Delta_T = \cup_{i=1}^4 \Delta_{i,T}(\epsilon,\delta)$, where

$$\Delta_{1,T}(\epsilon,\delta):=\{(s,t) \in \Delta_T; \delta < t , \epsilon < s\},$$
$$\Delta_{2,T}(\epsilon,\delta):=\{(s,t) \in \Delta_T; 0\le t\le \delta, 0\le s \le \epsilon\},$$

$$\Delta_{3,T}(\epsilon,\delta):=\{(s,t) \in \Delta_T; 0\le s \le \epsilon, t > \delta\},$$

$$\Delta_{4,T}(\epsilon,\delta):=\{(s,t) \in \Delta_T; s > \epsilon, 0\le t\le \delta\}. $$
The analysis over $\Delta_{2,T}(\epsilon,\delta)$ is unnecessary as the following elementary lemma shows. It is a straightforward application of H\"older's inequality.

\begin{lemma}
Let $g \in C^{\bar{\gamma},\gamma}([0,T]\times \mathbb{R}^d)$ with $\bar{\gamma}> \frac{1}{2}-H, \gamma > \frac{1}{2H}-1$ and $Y_\cdot=g(\cdot, B_\cdot)$. Then, there exists a constant $C = C(H,T)$ which depends on $H$ and $T$ such that

$$\frac{1}{4\epsilon\delta}\mathbb{E}\int_0^\delta \int_0^\epsilon |Y^{(i)}_{s,t} Y^{(j)}_{s,t}| |B^{(i)}_{s-\epsilon,s+\epsilon}B^{(j)}_{t-\delta,t+\delta}|dsdt\le \|g\|^2_{\bar{\gamma},\gamma}  (2\delta)^H (2\epsilon)^H,$$
for every $(\epsilon,\delta) \in (0,1)^2$ and $i,j=1,\ldots, d$.
\end{lemma}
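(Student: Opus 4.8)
The plan is to reduce the double integral to a uniform pointwise moment bound, which suffices here because the domain of integration shrinks to a point at the origin. First I would invoke the convention $B_r=0$ for $r\le 0$: on the rectangle $(s,t)\in[0,\epsilon]\times[0,\delta]$ one has $s-\epsilon\le 0$ and $t-\delta\le 0$, so $B^{(i)}_{s-\epsilon,s+\epsilon}=B^{(i)}_{s+\epsilon}$ and $B^{(j)}_{t-\delta,t+\delta}=B^{(j)}_{t+\delta}$. Next, since $|Y^{(i)}_{s,t}\,Y^{(j)}_{s,t}|\le|Y_{s,t}|^2$ and $Y_{s,t}=g(t,B_t)-g(s,B_s)$, the defining seminorm of $g\in C^{\bar\gamma,\gamma}$ gives the pointwise estimate $|Y_{s,t}|\le\|g\|_{\bar\gamma,\gamma}\bigl(|t-s|^{\bar\gamma}+|B_{s,t}|^{\gamma}\bigr)$.

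I would then estimate the integrand's expectation by two applications of the Cauchy--Schwarz inequality,
$$\mathbb{E}\bigl[|Y^{(i)}_{s,t}Y^{(j)}_{s,t}|\,|B^{(i)}_{s+\epsilon}B^{(j)}_{t+\delta}|\bigr]\ \le\ \bigl\| |Y_{s,t}|^2\bigr\|_{2}\,\|B^{(i)}_{s+\epsilon}\|_{4}\,\|B^{(j)}_{t+\delta}\|_{4},$$
and control each factor by the Gaussian moment identities: $\|B^{(i)}_{s+\epsilon}\|_{4}=3^{1/4}(s+\epsilon)^{H}\le 3^{1/4}(2\epsilon)^{H}$ and likewise $\|B^{(j)}_{t+\delta}\|_{4}\le 3^{1/4}(2\delta)^{H}$ on the rectangle, while $\bigl\| |Y_{s,t}|^2\bigr\|_2\lesssim\|g\|^2_{\bar\gamma,\gamma}\bigl(|t-s|^{2\bar\gamma}+\|B_{s,t}\|_{4\gamma}^{2\gamma}\bigr)\lesssim\|g\|^2_{\bar\gamma,\gamma}\bigl(|t-s|^{2\bar\gamma}+|t-s|^{2\gamma H}\bigr)$ by the self-similar scaling $\mathbb{E}|B_{s,t}|^{4\gamma}\approxeq|t-s|^{4\gamma H}$.

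Finally I would substitute these bounds, pull the factor $(2\epsilon)^{H}(2\delta)^{H}$ out of the integral, and observe that the remaining quantity $\frac{1}{4\epsilon\delta}\int_0^\delta\!\int_0^\epsilon\bigl(|t-s|^{2\bar\gamma}+|t-s|^{2\gamma H}\bigr)\,ds\,dt$ is at most $\max(\epsilon,\delta)^{2\bar\gamma}+\max(\epsilon,\delta)^{2\gamma H}\le 2$, since $\bar\gamma,\gamma H>0$ and $\epsilon,\delta<1$; this produces the asserted inequality with a constant depending only on $H$ and the exponents $\bar\gamma,\gamma$. There is no genuine obstacle in this lemma — the smallness of the rectangle makes the crudest moment bounds sufficient — and the only points deserving a word of care are the truncation of the increments at the origin, handled above, and checking that all constants coming from Hölder's inequality and the Gaussian moment formulas are uniform in $(\epsilon,\delta)$, which they plainly are.
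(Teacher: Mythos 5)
Your proof is correct and is exactly the ``straightforward application of H\"older's inequality'' that the paper invokes for this lemma without writing out the details: truncation of the increments at the origin via the convention $B_r=0$ for $r\le 0$, the $C^{\bar\gamma,\gamma}$ seminorm bound on $Y_{s,t}$, two applications of Cauchy--Schwarz, and Gaussian moment scaling, with the smallness of the rectangle $[0,\epsilon]\times[0,\delta]$ absorbing everything else. The only cosmetic discrepancy is that your bound carries a multiplicative constant depending on $H,\bar\gamma,\gamma,d$, which is consistent with the constant $C$ announced in the lemma's statement (though accidentally omitted from its displayed inequality).
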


In the sequel, we repeatedly make use of the following inequality:  

\begin{eqnarray*}
\nonumber \mathbb{E}\int_{\Delta_{r,T}(\epsilon
,\delta)} |Y^{(i)}_{s,t}Y^{(j)}_{s,t}|^p |\mathcal{W}^{0,ij}(\epsilon,\delta;s,t)|^pdsdt&\le& \|g\|^{2p}_{\bar{\gamma},\gamma}\mathbb{E}\int_{\Delta_{r,T}(\epsilon,\delta)}\big|\textbf{d}^{\bar{\gamma},\gamma} \big((s,B_s); (t,B_t) \big)\big|^{2p}\\
&\times& |\mathcal{W}^{0,ij}(\epsilon,\delta;s,t)|^p dsdt,
\end{eqnarray*}
for $1\le i,j\le d$, $r=1,3,4$ and $\mathcal{W}^{0,ij}(\epsilon,\delta;s,t)$ is given by (see (\ref{Z1epsilon}) and (\ref{Z2delta}))

%\begin{eqnarray}\label{appINE}
%\nonumber \mathbb{E}\int_G |Y^{(i)}_{s,t}Y^{(j)}_{s,t}|^p |\mathcal{W}^{ij}(\epsilon,\delta;s,t)|^pdsdt&\le& \sum_{k \in \mathbb{Z}}\mathbb{E}\int_{G}\textbf{d}\big((s,B_s); (t,B_t) \big)^{p\gamma}|\mathcal{W}^{ij}(\epsilon,\delta;s,t)|^p\\
%&\times& |Y_s(k) + Y_t(k)|^{2p} dsdt,
%\end{eqnarray}
%for $1\le i,j\le d$. In the context of (\ref{appINE}), we will take $G= \Delta_{j,T}(\epsilon,\delta)$ for $j=1,2,3,4$ and $\mathcal{W}^{ij}(\epsilon,\delta;s,t)$ is given by (see (\ref{Z1epsilon}) and (\ref{Z2delta}))

%The analysis over $\Delta_{2,T}(\epsilon,\delta)$ is unnecessary as the following elementary lemma shows. It is a straightforward application of Assumption 1 and H\"older's inequality.

%$$\Delta_T(\epsilon,\delta):=\{(s,t) \in \Delta_T; 0\le s-\epsilon, 0\le t-\delta\}.$$

%\begin{lemma}
%Under Assumption 1, there exists a constant $C$ which depends on $\|g\|_\gamma$, $H$ and $T$ such that
%$$\frac{1}{4\epsilon\delta}\mathbb{E}\int_0^\delta \int_0^\epsilon |Y^{(i)}_{s,t} Y^{(j)}_{s,t}| |B^{(i)}_{s-\epsilon,s+\epsilon}B^{(j)}_{t-\delta,t+\delta}|dsdt\le C (2\delta)^H (2\epsilon)^H,$$
%for every $(\epsilon,\delta) \in (0,1)^2$ and $i,j=1,\ldots, d$.
%\end{lemma}
%Next, we devote our attention to the analysis over the regions $\Delta_{1,T}(\epsilon,\delta), \Delta_{3,T}(\epsilon,\delta)$, $\Delta_{4,T}(\epsilon,\delta)$ and $\Delta_{2,T}(\epsilon,\delta)$. 

\begin{eqnarray}
\label{prodZ} \mathcal{W}^{0,ij}(\epsilon,\delta;s,t)  &=& \eta_{11}(\epsilon,s,t)\eta_{21}(\delta,s,t)\big[B^{(i)}_sB^{(j)}_s - \gamma(s)\delta_{ij}\big]\frac{1}{4\epsilon\delta}\\
\nonumber&+& \eta_{11}(\epsilon,s,t)\eta_{22}(\delta,s,t)\big[B^{(i)}_s B^{(j)}_{s,t}- \varphi(s,t)\delta_{ij}\big]\frac{1}{4\epsilon\delta}\\
\nonumber&+& \eta_{12}(\epsilon,s,t)\eta_{21}(\delta,s,t)\big[B^{(j)}_s B^{(i)}_{s,t} -\varphi(s,t)\delta_{ij}\big]\frac{1}{4\epsilon\delta} \\
\nonumber &+& \eta_{12}(\epsilon,s,t) \eta_{22}(\delta,s,t)\big[B^{(i)}_{s,t}B^{(j)}_{s,t} - \text{Var}(B^{(i)}_{s,t})\delta_{ij}\big]\frac{1}{4\epsilon\delta},
\end{eqnarray}
for $(s,t) \in \Delta_T$. In case $(s,t)\in \Delta_{1,T}(\epsilon,\delta)$, we have

\begin{equation}\label{etaexp}
\eta_{11}(\epsilon,s,t) =\frac{1}{\Theta_{s,t}} \Big\{\frac{s^{2H}}{2}\ell \Big(\frac{\epsilon}{s}\Big)|t-s|^{2H} - \frac{|t-s|^{2H}}{2}\varphi(s,t) \ell \Big( \frac{\epsilon}{t-s}\Big)\Big\},
\end{equation}

$$\eta_{12}(\epsilon,s,t) = \frac{1}{\Theta_{s,t}} \Bigg\{\frac{|t-s|^{2H}}{2} s^{2H} \ell \Big( \frac{\epsilon}{t-s} \Big)  -  \frac{s^{2H}}{2}\ell \Big( \frac{\epsilon}{s}\Big)\varphi(s,t)\Bigg\},$$

\begin{eqnarray*}
\eta_{21}(\delta,s,t) &=& \frac{1}{\Theta_{s,t}}\Big\{ \Delta \ell(\delta,s,t) |t-s|^{2H} - \varphi(s,t) \frac{|t-s|^{2H}}{2} \ell \Big(\frac{\delta}{t-s}\Big)\Big\},
\end{eqnarray*}

\begin{eqnarray*}
\eta_{22}(\delta,s,t) &=& \frac{1}{\Theta_{s,t}} \Big\{ s^{2H}\frac{|t-s|^{2H}}{2} \ell \Big(\frac{\delta}{t-s}\Big) -\Delta \ell(\delta,s,t)\varphi(s,t)\Big\}.
\end{eqnarray*}

In the following technical lemmas, we repeatedly make use of Lemmas \ref{fhlemma}, \ref{growthELL} and \ref{covfest}. The estimates for $\mathcal{W}^{0,ij}(\epsilon,\delta)$ will be based on the decomposition (\ref{prodZ}). 
%In Lemmas \ref{LemmaF1}, \ref{LemmaF2}, \ref{LemmaF3} and \ref{LemmaF4}, $Y = g(B)$ for $g \in C^\gamma([0,T]\times \mathbb{R}^d)$. 

\begin{lemma}\label{LemmaF1}
Let $g \in C^{\bar{\gamma},\gamma}([0,T]\times \mathbb{R}^d)$ with $\bar{\gamma}> \frac{1}{2}-H, \gamma > \frac{1}{2H}-1$ and $Y_\cdot=g(\cdot,B_\cdot)$. For each $1 < p < \frac{1}{2-2H-2(\gamma H\wedge \bar{\gamma})}$, there exists a constant $C = C(\gamma, \bar{\gamma}, H,T)$ such that 
\begin{equation}\label{F1ex}
\mathbb{E}\int_{\Delta_{1,T}(\epsilon,\delta)}\Big| \frac{1}{4\epsilon\delta}Y^{(i)}_{s,t}Y^{(j)}_{s,t} \eta_{12}(\epsilon,s,t) \eta_{22}(\delta,s,t)\big[B^{(i)}_{s,t} B^{(j)}_{s,t} - |t-s|^{2H}\delta_{ij}\big]\Big|^pdsdt\le C \| g\|^{2p}_{\bar{\gamma},\gamma},
\end{equation}
for every $(\epsilon,\delta) \in (0,1)^2$ and $i,j=1,\ldots, d$.  
\end{lemma}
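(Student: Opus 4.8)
The plan is to reduce the claimed estimate to a purely deterministic bound and then combine the explicit $\ell$‑representation of $\mathcal{O}^{\epsilon,\delta}_{s,t}$ with Lemmas \ref{fhlemma}, \ref{growthELL} and \ref{covfest}. Throughout set $\alpha:=\gamma H\wedge\bar\gamma$; by hypothesis $\alpha>\tfrac12-H$, so that $2\alpha+2H>1$, and $\alpha\le\gamma H\le H<\tfrac12$ (we may assume $\gamma\le 1$, since a globally $\gamma$‑H\"older function with $\gamma>1$ is constant in the spatial variable). First I would fix $(s,t)\in\Delta_{1,T}(\epsilon,\delta)$ and $1\le i,j\le d$ and observe that $\eta_{12}(\epsilon,s,t)$ and $\eta_{22}(\delta,s,t)$ are deterministic, so Cauchy--Schwarz in $\omega$ separates $Y^{(i)}_{s,t}Y^{(j)}_{s,t}$ from $B^{(i)}_{s,t}B^{(j)}_{s,t}-|t-s|^{2H}\delta_{ij}$. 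The latter lies in the second Wiener chaos with $\mathbb{E}\big|B^{(i)}_{s,t}B^{(j)}_{s,t}-|t-s|^{2H}\delta_{ij}\big|^{2p}\lesssim|t-s|^{4pH}$, while the H\"older property of $g$ together with $\mathbb{E}|B_{s,t}|^{4p\gamma}\lesssim|t-s|^{4p\gamma H}$ gives $\mathbb{E}\big|Y^{(i)}_{s,t}Y^{(j)}_{s,t}\big|^{2p}\lesssim\|g\|^{4p}_{\bar\gamma,\gamma}|t-s|^{4p\alpha}$. Hence the integrand of the left-hand side of \eqref{F1ex} is, pointwise in $(s,t)$, $\lesssim\|g\|^{2p}_{\bar\gamma,\gamma}\big(\tfrac1{\epsilon\delta}\,|\eta_{12}(\epsilon,s,t)\,\eta_{22}(\delta,s,t)|\,|t-s|^{2(\alpha+H)}\big)^{p}$, and by Tonelli the lemma reduces to the deterministic statement
\[
\sup_{(\epsilon,\delta)\in(0,1)^2}\ \int_{\Delta_{1,T}(\epsilon,\delta)}\Big(\tfrac{1}{\epsilon\delta}\,\big|\eta_{12}(\epsilon,s,t)\,\eta_{22}(\delta,s,t)\big|\,|t-s|^{2(\alpha+H)}\Big)^{p}\,ds\,dt<\infty .
\]

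For the deterministic bound I would substitute the formulas for $\eta_{12},\eta_{22}$ from \eqref{etaexp} and the lines below it, expand $\eta_{12}\eta_{22}$ as a four‑term product of the entries $\mathbf{o}_{12}=\tfrac{|t-s|^{2H}}{2}\ell\big(\tfrac{\epsilon}{t-s}\big)$, $\mathbf{o}_{11}=\tfrac{s^{2H}}{2}\ell\big(\tfrac{\epsilon}{s}\big)$, $\mathbf{o}_{22}=\tfrac{|t-s|^{2H}}{2}\ell\big(\tfrac{\delta}{t-s}\big)$, $\mathbf{o}_{21}=\Delta\ell(\delta,s,t)$ and of $\varphi(s,t)$, divided by $\Theta_{s,t}^{2}$, and estimate each term. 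The three ingredients are: Lemma \ref{fhlemma}, used as $\Theta_{s,t}^{-1}\lesssim|t-s|^{-2H}s^{-2H}$ on $\Delta_T$; Lemma \ref{growthELL} together with the elementary bound $|\ell(x)|\le 2^{2H}$ (a consequence of the $2H$‑H\"older continuity of $y\mapsto y^{2H}$ with constant $1$, used repeatedly above), which gives $|\ell(x)|\le 2^{2H}\min\{1,x\}$, so each clean factor $\mathbf{o}_{12},\mathbf{o}_{22}$ furnishes a factor $\min\{1,\tfrac{\epsilon}{t-s}\}$, resp.\ $\min\{1,\tfrac{\delta}{t-s}\}$, absorbing the $\tfrac1\epsilon$, resp.\ $\tfrac1\delta$; and Lemma \ref{covfest} used with the choice $|\varphi(s,t)|\le s^{2H}$, which is precisely what compensates the $s^{-2H}$ coming from $\Theta_{s,t}^{-1}$ and avoids a non‑integrable power of $s$ at the origin. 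In the region $\tfrac{\epsilon}{t-s},\tfrac{\delta}{t-s}\le 1$ the leading term $\mathbf{o}_{12}\mathbf{o}_{22}s^{4H}\Theta_{s,t}^{-2}$ then yields, after division by $\epsilon\delta$, a kernel $\lesssim|t-s|^{-2}$; multiplying by $|t-s|^{2(\alpha+H)}$ gives $|t-s|^{2\alpha+2H-2}$, whose $p$‑th power is integrable on $\Delta_T$ exactly because $(2-2\alpha-2H)p<1$, i.e.\ because of the hypothesis on $p$ (here one uses $2\alpha+2H>1$). The cross‑terms carrying a factor $\varphi$ are milder: with $|\varphi|\le s^{2H}$ they contain only non‑negative powers of $s$ apart from the $s^{2H-1}$ produced by $\mathbf{o}_{11}/\epsilon\lesssim s^{2H-1}$, which is $p$‑integrable in $s$ near $0$ since $p<\tfrac1{1-2H}$ is implied by the hypothesis as $\alpha<\tfrac12$, the residual power of $t$ being controlled analogously.

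The main obstacle is the factor $\mathbf{o}_{21}=\Delta\ell(\delta,s,t)=\tfrac12\big[(t+\delta)^{2H}-(t-\delta)^{2H}\big]-\tfrac12\big[(t-s+\delta)^{2H}-|t-s-\delta|^{2H}\big]$, a second‑order difference for which the trivial bound $|\Delta\ell(\delta,s,t)|\lesssim t^{2H-1}\delta+|t-s|^{2H}$ is too lossy when $\delta\lesssim t-s$. I would extract cancellation by writing $\Delta\ell(\delta,s,t)=\int_{t-s}^{t}\tfrac{d}{dy}\big(\tfrac12[(y+\delta)^{2H}-|y-\delta|^{2H}]\big)\,dy$ and applying the mean value theorem to the derivative, which for $t-s\ge 2\delta$ gives $|\Delta\ell(\delta,s,t)|\lesssim\delta\,(t-s)^{2H-1}$ — crucially extracting exactly \emph{one} power of $\delta/(t-s)$; extracting two (the sharper $\lesssim\delta(t-s)(t-s)^{2H-2}$) must be avoided, since it would combine with the $(t-s)^{-1}$ coming from $\mathbf{o}_{12}/\epsilon$ into a power of $(t-s)$ that fails to be $p$‑integrable in the allowed range — while for $t-s\lesssim\delta$ the crude bound, divided by $\delta$, still reads $\lesssim|t-s|^{2H-1}+t^{2H-1}$, which is harmless. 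Finally the genuinely near‑diagonal pieces, where some of $\tfrac{\epsilon}{t-s},\tfrac{\delta}{t-s}$ exceed $1$ and $\ell$ is only bounded, are treated separately: a crude pointwise bound combined with the smallness of the relevant region of integration and the gain $|t-s|^{2(\alpha+H)}$ produces powers of $\epsilon$ and $\delta$ that remain bounded as $(\epsilon,\delta)\downarrow 0$ exactly under the stated constraint on $p$, once more by $2(\alpha+H)>1$. Summing the four product terms over these regions yields the deterministic bound above, hence \eqref{F1ex}.
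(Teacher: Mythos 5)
Your proposal is correct and follows essentially the same route as the paper: expand $\eta_{12}\eta_{22}$ into the four products of $\mathbf{o}_{11},\mathbf{o}_{12},\mathbf{o}_{21},\mathbf{o}_{22},\varphi$, control $\Theta_{s,t}^{-2}$ by Lemma \ref{fhlemma}, absorb $\frac{1}{\epsilon\delta}$ via $|\ell(x)|\lesssim x$ from Lemma \ref{growthELL} (which also yields the one-power-of-$\delta$ bound $|\Delta\ell(\delta,s,t)|\lesssim \delta t^{2H-1}+\delta|t-s|^{2H-1}$ directly, without needing your mean-value cancellation argument), use $|\varphi|\le s^{2H}$ from Lemma \ref{covfest} to neutralize the $s^{-2H}$ singularities, and integrate the resulting power $|t-s|^{(2\alpha+2H-2)p}$ under the stated constraint on $p$. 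The minor differences (decoupling the random factors by Cauchy--Schwarz up front rather than carrying $\beta_1^{ij}$ through, and the superfluous separate treatment of the region $\epsilon\vee\delta> t-s$) do not change the substance.
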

\begin{proof}
Without loss of generality, we may assume $g$ is time-homogeneous, i.e., $g(t,x) = g(x)$ for $(t,x) \in [0,T]\times \mathbb{R}^d$. 
Fix $i,j=1,\ldots, d$, $\gamma > \frac{1}{2H}-1$ and $1 < p < \frac{1}{2-2H-2\gamma H}$. In order to shorten notation, we define

$$\beta^{ij}_1(\gamma,H, s,t):= \big|\textbf{d}^{\bar{\gamma},\gamma}\big((s,B_s); (t,B_t) \big)\big|^2  \big| B^{(i)}_{s,t}B^{(j)}_{s,t} - |t-s|^{2H}\delta_{ij}\big|,$$
for $(s,t) \in \Delta_T$. Throughout this proof, we repeatedly use the following elementary estimate

\begin{equation}\label{beta1esta}
\beta^{ij}_1(\gamma,H,s,t)\lesssim|B_{s,t}|^{2\gamma}|B^{(i)}_{s,t}B^{(j)}_{s,t}|+  |t-s|^{2H}|B_{s,t}|^{2\gamma}\delta_{ij}.
\end{equation}

%\begin{eqnarray}
%\nonumber \beta^{ij}_1(\gamma,H,s,t)&\lesssim& |t-s|^{2\gamma}|B^{(i)}_{s,t}B^{(j)}_{s,t}|\\
%\nonumber& + & |t-s|^{2\gamma+2H}\delta_{ij} +|B_{s,t}|^{2\gamma}|B^{(i)}_{s,t}B^{(j)}_{s,t}|\\
%\label{beta1esta}& + & |t-s|^{2H}|B_{s,t}|^{2\gamma}\delta_{ij}.
%\end{eqnarray}

For each $1 < q < \infty$, we observe

\begin{equation}\label{beta1estb}
\mathbb{E}|\beta^{ij}_1(\gamma,H,s,t)|^q\lesssim |t-s|^{(2\gamma + 2)Hq},
\end{equation}
for $(s,t) \in \Delta_T$. In the sequel, in order to shorten notation, we write

%In the sequel, in order to shorten notation, we write $\mathcal{Y}(k;s,t)=Y_s(k) + Y_t(k)$ for $[s,t] \in [0,T]^2$ and 

$$I^{ij}_1(\epsilon,\delta,s,t):=\beta^{ij}_1(\gamma,H, s,t)s^{2H}\Big|\ell \Big( \frac{\epsilon}{s}\Big)\Big|\varphi^2(s,t)|\Delta \ell (\delta,s,t)|\Theta^{-2}_{s,t}\frac{1}{\epsilon\delta}$$  

$$I^{ij}_2(\epsilon,\delta,s,t):=\beta^{ij}_1(\gamma,H, s,t) s^{4H}\Big|\ell \Big( \frac{\epsilon}{s}\Big)\ell \Big(\frac{\delta}{t-s}\Big)\Big||\varphi(s,t)| |t-s|^{2H}\Theta^{-2}_{s,t}\frac{1}{\epsilon\delta}$$ 
$$I^{ij}_3(\epsilon,\delta,s,t):=\beta^{ij}_1(\gamma,H, s,t) |t-s|^{2H} s^{2H} \Big|\ell \Big( \frac{\epsilon}{t-s} \Big)\Big| |\Delta \ell (\delta,s,t)\varphi(s,t)|\Theta^{-2}_{s,t}\frac{1}{\epsilon\delta}$$ 
$$ I^{ij}_4(\epsilon,\delta,s,t):=\beta^{ij}_1(\gamma,H, s,t) |t-s|^{4H} s^{4H} \Big|\ell \Big( \frac{\epsilon}{t-s} \Big) \ell \Big( \frac{\delta}{t-s} \Big)\Big|\Theta^{-2}_{s,t}\frac{1}{\epsilon\delta},$$
for $(s,t) \in \Delta_{1,T}(\epsilon,\delta)$. By using the expressions (\ref{etaexp}) for $\eta_{12}$ and $\eta_{22}$ and the H\"older property of $g$, we have

$$\mathbb{E}\int_{\Delta_{1,T}(\epsilon,\delta)}\Big| \frac{1}{4\epsilon\delta}  Y^{(i)}_{s,t} Y^{(j)}_{s,t} \eta_{12}(\epsilon,s,t)\eta_{22}(\delta,s,t)[B^{(i)}_{s,t} B^{(j)}_{s,t}  - |t-s|^{2H}\delta_{ij}]  \Big|^pdsdt$$

$$\lesssim \|g\|^{2p}_{\bar{\gamma},\gamma} \sum_{m=1}^4 \int_{\Delta_{1,T}(\epsilon,\delta)} \mathbb{E}|I^{ij}_m(\epsilon,\delta,s,t)|^{p} dsdt,$$
By using Lemmas \ref{fhlemma} and \ref{growthELL}, the fact $\Theta_{s,t} = |t-s|^{2H}A(s,t)$ for $(s,t) \in \Delta_T$ and (\ref{beta1esta}), we have

\begin{equation}\label{I4eT1}
I^{ij}_4(\epsilon,\delta,s,t)\lesssim  \beta^{ij}_1(\gamma,H,s,t)|t-s|^{-2},
\end{equation}
for $(s,t) \in \Delta_T$. Then, 

%H\"older's inequality and Fubini's theorem yield
%\begin{eqnarray}\label{bes1}
%\mathbb{E}\int_{\Delta_T} |B_{s,t}|^{2\gamma p} |B^{(i)}_{s,t}B^{(j)}_{s,t}|^{p} |t-s|^{-2p}|\mathcal{Y}(k;s,t)|^{2p}dsdt&\le& \int_{\Delta_T} |t-s|^{(2\gamma H + 2H-2)p}\\
%\nonumber &\times& \{\| Y_s(k)\|^{2p}_{2p\alpha} +  \| Y_t(k)\|^{2p}_{2p\alpha}\}dsdt,
%\end{eqnarray}
%for any $\alpha>1$. Another H\"older's inequality on the right-hand side of (\ref{bes1}) yields  
%\begin{eqnarray}
%\nonumber \sum_{k \in \mathbb{Z}}\mathbb{E}\int_{\Delta_T} |B_{s,t}|^{2\gamma p} |B^{(i)}_{s,t}B^{(j)}_{s,t}|^{p}  |t-s|^{-2p}|\mathcal{Y}(k;s,t)|^{2p} dsdt&\lesssim& \Big(\int_{\Delta_T} |t-s|^{(2\gamma H + 2H-2)p\beta}dsdt\Big)^{\frac{1}{\beta}}\\
%\label{bes2}&\times &\sum_{k \in \mathbb{Z}} \Big(\int_0^T \|Y_t(k)\|^{2p\alpha}_{2p\alpha}dt\Big)^{\frac{1}{\alpha}}
%\end{eqnarray}

$$\int_{\Delta_{1,T}(\epsilon,\delta)}\mathbb{E}|I^{ij}_4(\epsilon,\delta,s,t)|^{p}dsdt\lesssim \int_{\Delta_T} |t-s|^{(2\gamma H + 2H-2)p}dsdt<\infty.$$

%Due to (\ref{beta1estb}), we can also infer that the other terms in the right-hand side of (\ref{I4eT1}) they are $p$-integrable as well and they are bounded by (\ref{bes2}). 

Next, we estimate $I^{ij}_3(\epsilon,\delta,s,t)$. Lemma \ref{growthELL} yields

\begin{equation}\label{brest}
|\Delta \ell(\delta,s,t)|\lesssim \delta t^{2H-1} + \delta (t-s)^{2H-1},
\end{equation}
for $(s,t) \in \Delta_T$ and $\delta \in (0,1)$. By applying (\ref{brest}) and (\ref{deterest}) we get

\begin{eqnarray*}
|I^{ij}_3(\epsilon,\delta,s,t)|&\lesssim & \beta^{ij}_1(\gamma,H,s,t)|t-s|^{-2H-1}s^{-2H}t^{2H-1}|\varphi(s,t)|\\
&+& |t-s|^{-2}s^{-2H}|\varphi(s,t)|\beta_1(\gamma,H,s,t),
\end{eqnarray*}
for $(s,t) \in \Delta_T$ and hence, the estimates (\ref{beta1esta}) and (\ref{beta1estb}) jointly with Lemma \ref{covfest} yield

\begin{eqnarray*}
\int_{\Delta_T}\mathbb{E}|I^{ij}_3(\epsilon,\delta,s,t)|^{p}dsdt &\lesssim& \int_{\Delta_T} |t-s|^{(2\gamma H-1)p} t^{(2H-1)p}dsdt\\
& + & \int_{\Delta_T}|t-s|^{(2\gamma H + 2H-2)p}dsdt<\infty. 
\end{eqnarray*}

%\begin{eqnarray*}
%\int_{\Delta_T}\mathbb{E}|I^{ij}_3(\epsilon,\delta,s,t)|^{p}dsdt &\lesssim& \int_{\Delta_T} |t-s|^{(2\gamma H-1)p} t^{(2H-1)p}dsdt\\
%& + & \int_{\Delta_T}|t-s|^{(2\gamma H + 2H-2)p}dsdt\\
%&\lesssim&  \int_0^T t^{(2\gamma H + 2H-2)p +1}dt<\infty,
%\end{eqnarray*}
%for $1 < p < \frac{1}{2-2H-2\gamma H}$. 

Next, by applying again (\ref{deterest}), we have 

$$|I^{ij}_2(\epsilon,\delta,s,t)|\lesssim \beta^{ij}_1 (\gamma,H,s,t) s^{-1}|\varphi(s,t)||t-s|^{-2H-1}$$
for $(s,t) \in \Delta_T$. Therefore, 

\begin{eqnarray*}
\int_{\Delta_T}\mathbb{E}|I^{ij}_2(\epsilon,\delta,s,t)|^{p}dsdt &\lesssim& \int_{\Delta_T} |t-s|^{(2\gamma H -1)p}s^{-p}|\varphi(s,t)|^{p}dsdt\\
&=& \int_0^T \int_0^{\frac{t}{2}} |t-s|^{(2\gamma H-1)p}s^{-p}|\varphi(s,t)|^{p}dsdt\\
&+& \int_0^T \int_{\frac{t}{2}}^t |t-s|^{(2\gamma H-1)p}s^{-p}|\varphi(s,t)|^{p}dsdt.
\end{eqnarray*}

Lemma \ref{covfest} yield 
\begin{eqnarray*}
\int_0^T \int_0^{\frac{t}{2}} |t-s|^{(2\gamma H-1)p }s^{-p}|\varphi(s,t)|^{p} dsdt&\lesssim& \int_0^T t^{(2\gamma H-1)p}\int_0^{\frac{t}{2}} s^{-p+2Hp}dsdt\\
&\lesssim& \int_0^T t^{(2\gamma H + 2H-2)p +1}dt < \infty
\end{eqnarray*}
and 

\begin{eqnarray*}
\int_0^T \int_{\frac{t}{2}}^t |t-s|^{(2\gamma H-1)p}s^{-p}|\varphi(s,t)|^{p}dsdt&\lesssim&\int_0^T t^{-p}\int_{\frac{t}{2}}^t |t-s|^{(2\gamma H + 2H-1)p}dsdt\\
&\lesssim& \int_0^T t^{(2\gamma H + 2H-2)p +1}dt < \infty. 
\end{eqnarray*}
Next, we analyze $I^{ij}_1$ as follows. By using (\ref{brest}) and (\ref{deterest}), we get

\begin{eqnarray}
\nonumber|I^{ij}_1(\epsilon,\delta,s,t)|&\lesssim& \frac{\beta^{ij}_1(\gamma,H,s,t) s^{-1-2H} t^{2H-1}\varphi^2(s,t)}{|t-s|^{4H}}\\
\label{I1eT1a}&+& \frac{\beta^{ij}_1(\gamma,H,s,t) s^{-1-2H} |t-s|^{2H-1}\varphi^2(s,t)}{|t-s|^{4H}}
\end{eqnarray}
for $(s,t) \in \Delta_T$. A direct application of Lemma \ref{covfest} yields 

\begin{eqnarray*}
\int_{\Delta_T}\mathbb{E}|I^{ij}_1(\epsilon,\delta,s,t)|^{p} dsdt&\lesssim& \int_{\Delta_T}|t-s|^{(2\gamma-2)p H} s^{-(2H+1)p}t^{(2H-1)p} |\varphi(s,t)|^{2p}dsdt\\
&+& \int_{\Delta_T}  |t-s|^{(2\gamma H-1)p} s^{-(2H+1)p} |\varphi(s,t)|^{2p}dsdt < \infty. 
\end{eqnarray*}
This concludes the proof. 
\end{proof}

\begin{lemma}\label{deltaLlemma}
Let $\Delta \ell (\delta,s,t) = \frac{t^{2H}}{2}\ell \Big( \frac{\delta}{t}\Big) - \frac{(t-s)^{2H}}{2} \ell \Big( \frac{\delta}{t-s}\Big)$ for $(s,t) \in \Delta_T$. Then, there exists a constant $C$ which depends on $H$ such that

\begin{eqnarray}
\nonumber|\Delta \ell(\delta,s,t)|&\le& C \Big\{s^{2H}\frac{\delta}{t} + |t-s|^{2H-1}\Big(1-\frac{\delta}{t} \Big)^{2H-1}\frac{\delta s}{t}\Big\}\\
\label{dest1}&+& C \Big\{(t-s-\delta)^{2H-1}\frac{\delta s}{t}|t-s|^{-2H}\Big\},
\end{eqnarray}
for $0 < s < t-\delta$ and $\delta \in (0,1)$. Moreover,

\begin{equation}\label{dest2}
s^{-2H}|\Delta \ell(\delta,s,t)|\le 2 + (2T)^{2H},
\end{equation}
for every $t > \delta$ and $t-\delta < s < t\le T$.
\end{lemma}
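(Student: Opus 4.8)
The plan is to reduce $\Delta\ell(\delta,s,t)$ to a closed form using the identity $\frac{a^{2H}}{2}\ell\big(\frac{\delta}{a}\big)=\frac12\big[(a+\delta)^{2H}-|a-\delta|^{2H}\big]$, which is immediate from the definition (\ref{ellfunc}), and then to treat the two assertions separately, since they correspond to the regimes $\delta<t-s$ and $\delta>t-s$, i.e. to $\frac{\delta}{t-s}<1$ versus $\frac{\delta}{t-s}>1$.

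For (\ref{dest1}), where $0<s<t-\delta$ and hence $\delta<t-s<t$, I would not use the closed form directly but rather the decomposition
\[
\Delta\ell(\delta,s,t)=\frac{t^{2H}}{2}\Big[\ell\Big(\tfrac{\delta}{t}\Big)-\ell\Big(\tfrac{\delta}{t-s}\Big)\Big]+\frac12\,\ell\Big(\tfrac{\delta}{t-s}\Big)\big[t^{2H}-(t-s)^{2H}\big].
\]
The first summand is estimated by the mean value theorem applied to $\ell$ on the interval $[\frac{\delta}{t},\frac{\delta}{t-s}]$, which has length $\frac{\delta s}{t(t-s)}$: since $\ell'(x)=2H[(1+x)^{2H-1}+(1-x)^{2H-1}]$ and $x\le\frac{\delta}{t-s}<1$ there, $\big|\ell(\frac{\delta}{t})-\ell(\frac{\delta}{t-s})\big|\le 2H\big[1+\big(\frac{t-s-\delta}{t-s}\big)^{2H-1}\big]\frac{\delta s}{t(t-s)}$. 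The second summand is controlled by Lemma \ref{growthELL} (in the form $|\ell(x)|\le 2^{2H}|x|$) together with the $2H$-H\"older continuity of $x\mapsto x^{2H}$, giving $\le 2^{2H-1}\frac{\delta}{t-s}\,s^{2H}$. Reinstating the prefactor $\frac{t^{2H}}{2}$, one finds that $|\Delta\ell(\delta,s,t)|$ is bounded by a sum of three pieces, namely $H t^{2H-1}\frac{\delta s}{t-s}$, $H t^{2H-1}(t-s)^{-2H}(t-s-\delta)^{2H-1}\delta s$, and $2^{2H-1}(t-s)^{-1}\delta s^{2H}$. It then remains to match these against the right-hand side of (\ref{dest1}): using $t\le T$ and the elementary inequality $(t-s-\delta)^{2H-1}\ge(t-s)^{2H-1}$, the first two pieces are each dominated by a constant (depending only on $H$ and $T$) times the third term of (\ref{dest1}); for the last piece, splitting on $s\le t/2$ (where it is dominated by the first term of (\ref{dest1})) versus $s>t/2$ (where, using $t<2s$, it is dominated by the third term) completes the estimate.

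For (\ref{dest2}), where $t-\delta<s<t\le T$ and hence $0<t-s<\delta<t$, the closed form is the efficient route. Since $\frac{\delta}{t}<1$ and $\frac{\delta}{t-s}>1$, one gets $\Delta\ell(\delta,s,t)=\frac12[(t+\delta)^{2H}-(t-\delta)^{2H}]-\frac12[(t-s+\delta)^{2H}-(\delta-t+s)^{2H}]$, which I regroup as $\frac12[(t+\delta)^{2H}-(t-s+\delta)^{2H}]+\frac12[(\delta-t+s)^{2H}-(t-\delta)^{2H}]$. In the first bracket the two positive arguments differ by exactly $s$, so $2H$-H\"older continuity bounds it by $s^{2H}$; in the second bracket the two nonnegative arguments $t-\delta$ and $\delta-t+s$ add up to $s$, so each is $\le s$ and the bracket is again $\le s^{2H}$. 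Hence $|\Delta\ell(\delta,s,t)|\le s^{2H}$, which yields (\ref{dest2}) with constant $1$.

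The main obstacle is (\ref{dest1}): the crude estimate obtained directly from the double-integral representation $|\Delta\ell(\delta,s,t)|=H(1-2H)\int_{t-\delta}^{t+\delta}\int_{y-s}^{y}w^{2H-2}\,dw\,dy$ produces a singularity of order $(t-s-\delta)^{2H-2}$ as $s\uparrow t-\delta$, which is stronger than the $(t-s-\delta)^{2H-1}$ allowed by the statement. Extracting the extra cancellation — so that the singular exponent is only $2H-1$ and, crucially, carries the correct $\frac{\delta s}{t}$ prefactor uniformly in $\delta\in(0,1)$ and in $(s,t)$ — is exactly what the two-term decomposition above is designed to do, and the region-by-region bookkeeping needed to identify the resulting pieces with the three specified terms of (\ref{dest1}) is the delicate part of the argument.
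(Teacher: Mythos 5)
Your proof is correct and follows essentially the same route as the paper's: both split $\Delta\ell$ by adding and subtracting a cross term, bound the ``difference of prefactors'' piece via Lemma \ref{growthELL} and the $2H$-H\"older continuity of $x\mapsto x^{2H}$, bound the ``difference of $\ell$'s'' piece by a derivative estimate producing the factor $\frac{\delta s}{t(t-s)}$, and obtain (\ref{dest2}) from the closed form with the same regrouping of the four power terms. The only difference is that the paper attaches $\ell(\delta/t)-\ell(\delta/(t-s))$ to the prefactor $(t-s)^{2H}$ rather than $t^{2H}$, which makes the three terms of (\ref{dest1}) appear directly and spares your case split on $s\le t/2$ versus $s>t/2$; your bookkeeping to absorb the resulting pieces is nonetheless correct, with a constant that (like the one implicit in the paper's own derivation of the third term of (\ref{dest1})) also depends on $T$, which is immaterial here since $T$ is fixed.
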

\begin{proof}
Lemma \ref{growthELL} yields

$$|\Delta \ell(\delta,s,t)| = \Big|\frac{t^{2H}}{2} \ell \Big( \frac{\delta}{t} \Big) - \frac{|t-s|^{2H}}{2}\ell \Big( \frac{\delta}{t-s}\Big)\Big|$$
$$\lesssim \Big| \big(t^{2H} - (t-s)^{2H}\big) \ell \Big( \frac{\delta}{t}\Big)   \Big| +  \big(t - s \big)^{2H} \Big|\ell \Big( \frac{\delta}{t}\Big) - \ell \Big( \frac{\delta}{t-s}\Big)   \Big|$$
$$
\lesssim s^{2H} \frac{\delta}{t} + \big(t - s \big)^{2H} \Big|\ell \Big( \frac{\delta}{t}\Big) - \ell \Big( \frac{\delta}{t-s}\Big)   \Big|,
$$
for $(s,t)\in \Delta_T$. Fix $t$ and denote

$$f(s)=\ell\Big(\frac{\delta}{t}\Big) -\ell \Big( \frac{\delta}{t-s}\Big); 0\le s < t-\delta.$$
Observe $f(0)=0$, $f'(s) < 0$ for $s \in (0, t-\delta)$. The fact that $f$ is decreasing on $(0,t-\delta)$ and $f(0)=0$ imply $f(s)<0$ for $s \in (0,t-\delta)$. Then,

\begin{eqnarray*}
\Big|\ell\Big(\frac{\delta}{t}\Big) -\ell \Big( \frac{\delta}{t-s}\Big)\Big|&=& \Big( 1+ \frac{\delta}{t-s} \Big)^{2H} -  \Big( 1+ \frac{\delta}{t} \Big)^{2H}\\
&+& \Big( 1- \frac{\delta}{t} \Big)^{2H} -  \Big( 1- \frac{\delta}{t-s} \Big)^{2H},
\end{eqnarray*}
Because $2H-1 < 0$, a second order Taylor expansion yields

$$0 < \Big( 1+ \frac{\delta}{t-s} \Big)^{2H} -  \Big( 1+ \frac{\delta}{t} \Big)^{2H} < 2H \Big( 1  - \frac{\delta}{t} \Big)^{2H-1}\frac{\delta s}{t(t-s)},$$
for $s \in (0,t-\delta)$. Similarly,

\begin{eqnarray*}
0 < \Big( 1- \frac{\delta}{t} \Big)^{2H} -  \Big( 1- \frac{\delta}{t-s} \Big)^{2H} &<& 2H \Big( 1  - \frac{\delta}{t-s} \Big)^{2H-1}\frac{\delta s}{t(t-s)}\\
&=& 2H \Big( t-s-\delta \Big)^{2H-1} (t-s)^{1-2H}\frac{\delta s}{t(t-s)}
\end{eqnarray*}
for $s \in (0,t-\delta)$. This shows (\ref{dest1}). Next, we claim that

\begin{eqnarray}
\nonumber 2|\Delta \ell (\delta,s,t)|&=& t^{2H} \Bigg\{ \Big( 1 + \frac{\delta}{t} \Big)^{2H} -  \Big( 1-\frac{\delta}{t} \Big)^{2H}  \Bigg\}\\
\label{int3}&-& (t-s)^{2H} \Bigg\{ \Big( 1 + \frac{\delta}{t-s} \Big)^{2H} -  \Big(\frac{\delta}{t-s}-1 \Big)^{2H}  \Bigg\},
\end{eqnarray}
whenever $0 < t-\delta < s < t$. Indeed,

\begin{eqnarray*}
 - \Delta \ell(\delta,s,t) &=& (t-s)^{2H} \Bigg\{ \Big( 1 + \frac{\delta}{t-s} \Big)^{2H} -  \Big(\frac{\delta}{t-s}-1 \Big)^{2H}  \Bigg\}\\
 &-& t^{2H} \Bigg\{ \Big( 1 + \frac{\delta}{t} \Big)^{2H} -  \Big( 1-\frac{\delta}{t} \Big)^{2H}  \Bigg\}.
\end{eqnarray*}
We observe $-\delta \ell (\delta,t,t) = (t-\delta)^{2H} -(t+\delta)^{2H} < 0$ for $t > \delta$. Moreover, Lemma \ref{growthELL} yield

\begin{eqnarray*}
\nonumber-\Delta \ell (\delta,t-\delta,t) &=& \Big( 1 + \frac{\delta}{t}\Big)^{2H} - \Big( 1  - \frac{\delta}{t}\Big)^{2H} - 2^{2H}\\
\nonumber&<& \frac{t}{\delta} \Big\{ \Big( 1 + \frac{\delta}{t} \Big)^{2H} - \Big( 1 - \frac{\delta}{t} \Big)^{2H}  \Big\}-2^{2H}\\
&\le& 0,
\end{eqnarray*}
for $ t > \delta$. Since $-\frac{\partial \Delta \ell}{\partial s}(\delta,s,t) < 0$ for $t-\delta < s < t$, we can safely state that $-\Delta \ell(\delta,s,t) < 0$ for $0 < t-\delta < s< t$. The estimate (\ref{int3}) yields

$$s^{-2H}|\Delta \ell (\delta,s,t)|$$
$$  =  s^{-2H} \{(t+\delta)^{2H} - (t-\delta)^{2H} \} + s^{-2H} \{ (\delta-(t-s))^{2H}  - (t-s + \delta)^{2H}  \}$$
$$\le s^{-2H} \{(t+\delta)^{2H} - (t-s+ \delta)^{2H}\} + s^{-2H} \big| (\delta-(t-s))^{2H} - (t-\delta)^{2H}\big|$$
$$\le 2 + (2T)^{2H},$$
for every $t>\delta$ and $t-\delta < s < t\le T$. This concludes the proof.
\end{proof}

\begin{lemma}\label{LemmaF2}
Let $g \in C^{\bar{\gamma},\gamma}([0,T]\times \mathbb{R}^d)$ with $\bar{\gamma}> \frac{1}{2}-H, \gamma > \frac{1}{2H}-1$ and $Y_\cdot=g(\cdot, B_\cdot)$. For each $1 < p < \min \big\{\frac{1}{2-2H-2(\gamma H \wedge \bar{\gamma}) }; \frac{1}{1-(\gamma H\wedge \bar{\gamma})}\big\}$, there exists a constant $C = C(\gamma,\bar{\gamma},H,T,p)$ such that 

\begin{equation}\label{F2ex}
\mathbb{E}\int_{\Delta_{1,T}(\epsilon,\delta)}\Big| \frac{1}{4\epsilon\delta}Y^{(i)}_{s,t}Y^{(j)}_{s,t} \eta_{11}(\epsilon,s,t) \eta_{21}(\delta,s,t)\big[B^{(i)}_{s} B^{(j)}_s  - \gamma(s)\delta_{ij}\big]\Big|^pdsdt\le C \| g\|^{2p}_{\bar{\gamma},\gamma},
\end{equation}
for every $(\epsilon,\delta) \in (0,1)^2$ and $i,j=1,\ldots, d$. 
\end{lemma}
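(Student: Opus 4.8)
The plan is to run the same scheme as in the proof of Lemma \ref{LemmaF1}, but with the centered product $B^{(i)}_{s,t}B^{(j)}_{s,t}-|t-s|^{2H}\delta_{ij}$ replaced by $B^{(i)}_sB^{(j)}_s-\gamma(s)\delta_{ij}$ and with $\eta_{12}\eta_{22}$ replaced by $\eta_{11}\eta_{21}$. As a first step reduce to time-homogeneous $g$ (legitimate since $\gamma H\wedge\bar\gamma>\frac12-H$ and $\mathbb{E}|B_{s,t}|^{2\gamma}\approxeq|t-s|^{2\gamma H}$), and set $\theta:=\gamma H\wedge\bar\gamma$. Introduce
$$\beta^{ij}_2(s,t):=\big|\mathbf{d}^{\bar\gamma,\gamma}\big((s,B_s);(t,B_t)\big)\big|^2\,\big|B^{(i)}_sB^{(j)}_s-\gamma(s)\delta_{ij}\big|,$$
so that $\beta^{ij}_2(s,t)\lesssim\big(|t-s|^{2\bar\gamma}+|B_{s,t}|^{2\gamma}\big)\big|B^{(i)}_sB^{(j)}_s-\gamma(s)\delta_{ij}\big|$ and, by the equivalence of $L^q$-norms on a fixed Wiener chaos together with H\"older's inequality,
$$\mathbb{E}|\beta^{ij}_2(s,t)|^q\lesssim|t-s|^{2\theta q}s^{2Hq},\qquad 1<q<\infty.$$
Since $|Y^{(i)}_{s,t}Y^{(j)}_{s,t}[B^{(i)}_sB^{(j)}_s-\gamma(s)\delta_{ij}]|\le\|g\|^2_{\bar\gamma,\gamma}\,\beta^{ij}_2(s,t)$ and $\eta_{11},\eta_{21}$ are deterministic, the claim reduces to a uniform (in $(\epsilon,\delta)$) bound for
$$\int_{\Delta_{1,T}(\epsilon,\delta)}\big(\mathbb{E}|\beta^{ij}_2(s,t)|^p\big)\Big(\tfrac{1}{4\epsilon\delta}\,|\eta_{11}(\epsilon,s,t)\,\eta_{21}(\delta,s,t)|\Big)^p\,dsdt.$$

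Next, insert the explicit expressions (\ref{etaexp}) for $\eta_{11},\eta_{21}$ valid on $\Delta_{1,T}(\epsilon,\delta)$, factor out $|t-s|^{4H}/\Theta^2_{s,t}$, and use $\Theta_{s,t}=|t-s|^{2H}A(s,t)$ with Lemma \ref{fhlemma} so that $|t-s|^{4H}/\Theta^2_{s,t}=A(s,t)^{-2}\lesssim(s^{4H}\wedge t^{4H})^{-1}$. This writes $\eta_{11}(\epsilon,s,t)\eta_{21}(\delta,s,t)$ as a sum of four products over $A(s,t)^2$, namely $s^{2H}\ell(\epsilon/s)\,\Delta\ell(\delta,s,t)$, $s^{2H}\ell(\epsilon/s)\,\varphi(s,t)\,\ell(\delta/(t-s))$, $\varphi(s,t)\,\ell(\epsilon/(t-s))\,\Delta\ell(\delta,s,t)$ and $\varphi(s,t)^2\,\ell(\epsilon/(t-s))\,\ell(\delta/(t-s))$. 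For the $\ell$-factors use the elementary two-sided bound $4Hx\le\ell(x)\le2^{2H}x$ on $[0,1]$ and $\ell(x)\le2^{2H}x^{2H-1}$ for $x\ge1$ (both consequences of Lemma \ref{growthELL}); since $\epsilon<s$ on $\Delta_{1,T}$, and splitting according to whether $\epsilon$ resp.\ $\delta$ is $\lessgtr t-s$, one obtains in every case $\tfrac1\epsilon\ell(\epsilon/s)\lesssim s^{-1}$, $\tfrac1\epsilon\ell(\epsilon/(t-s))\lesssim(t-s)^{-1}$ and $\tfrac1\delta\ell(\delta/(t-s))\lesssim(t-s)^{-1}$, which absorbs the prefactor $\tfrac{1}{4\epsilon\delta}$ entirely. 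For $\Delta\ell(\delta,s,t)$ split $\Delta_{1,T}(\epsilon,\delta)$ into $\{s<t-\delta\}$, where (\ref{dest1}) applies, and $\{t-\delta<s<t\}$, where (\ref{dest2}) gives $|\Delta\ell(\delta,s,t)|\lesssim s^{2H}$ and, since $\delta>t-s$ there, $\tfrac1\delta|\Delta\ell(\delta,s,t)|\lesssim s^{2H}(t-s)^{-1}$. Finally bound $\varphi(s,t)$ via Lemma \ref{covfest}, using $|\varphi(s,t)|\le s^{2H}$ for $s\le t/2$ and $|\varphi(s,t)|\le|t-s|^{2H}$ for $s>t/2$ (or the mixed bound $s^H|t-s|^H$ where convenient).

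After these substitutions each of the four terms is dominated by $\|g\|^{2p}_{\bar\gamma,\gamma}$ times a finite sum of monomials $|t-s|^{a}s^{b}t^{c}$, with exponents depending on $\gamma,\bar\gamma,H,p$ but not on $(\epsilon,\delta)$, and the lemma reduces to checking $\int_{\Delta_T}|t-s|^{a}s^{b}t^{c}\,dsdt<\infty$ for each, done by splitting $\Delta_T$ into $\{s<t/2\}$ and $\{t/2<s<t\}$ and computing elementary one-dimensional integrals. The two restrictions on $p$ are dictated respectively by the behaviour along the diagonal $s=t$ and near the origin $s=0$: along the diagonal the worst contributions (coming from $\varphi(s,t)^2\ell(\epsilon/(t-s))\ell(\delta/(t-s))$ and from the ``unprotected'' $\Delta\ell$-term of $\eta_{21}$) behave like $|t-s|^{(2\theta+2H-2)p}$, which is integrable exactly when $p<\frac{1}{2-2H-2\theta}$; near the origin one must use the sharpened estimate (\ref{dest1})--(\ref{dest2}) of Lemma \ref{deltaLlemma} to extract the extra power of $s$ it supplies, and tracking the residual $s$-exponent forces the condition $p<\frac{1}{1-\theta}$. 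The main obstacle is precisely this near-origin analysis: unlike in Lemma \ref{LemmaF1}, the centered product $B^{(i)}_sB^{(j)}_s-\gamma(s)\delta_{ij}$ does not vanish to order $|t-s|^{2H}$ on the diagonal, so the large factor $s^{2H}\ell(\epsilon/s)\sim\epsilon s^{2H-1}$ inside $\eta_{11}$, multiplied by $A(s,t)^{-2}\sim s^{-4H}$, would produce a non-integrable $s^{-1-2H}$-type divergence at $s=0$ under the crude bound (\ref{brest}); it is the refined $\Delta\ell$-estimate of Lemma \ref{deltaLlemma} that rescues integrability, and checking that the surviving exponents are admissible exactly in the stated range of $p$ is the delicate point — this is also why $\Delta\ell$ must be handled more carefully here than in Lemma \ref{LemmaF1}, where it always occurred already multiplied by the small factor $\varphi$.
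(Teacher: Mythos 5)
Your proposal is correct and follows essentially the same route as the paper's proof: the same four-term expansion of $\eta_{11}\eta_{21}$ via (\ref{etaexp}), the same moment bound $\mathbb{E}|\beta^{ij}_2|^q\lesssim|t-s|^{2\theta q}s^{2Hq}$, the same use of Lemmas \ref{fhlemma}, \ref{growthELL} and \ref{covfest}, and, crucially, the same identification of the $s^{2H}\ell(\epsilon/s)\Delta\ell(\delta,s,t)$ term as the one where the crude bound (\ref{brest}) produces a non-integrable $s^{-p}$-type singularity at the origin, resolved exactly as in the paper by splitting at $s=t-\delta$ and invoking (\ref{dest1})--(\ref{dest2}) of Lemma \ref{deltaLlemma}. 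You also correctly locate the sources of the two constraints on $p$ (diagonal behaviour for $p<\frac{1}{2-2H-2\theta}$, near-origin behaviour for $p<\frac{1}{1-\theta}$), so nothing essential is missing.
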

\begin{proof}
Fix $i,j=1,\ldots, d$, $\gamma > \frac{1}{2H}-1$ and $1 < p < \min \big\{\frac{1}{2-2H-2\gamma H}; \frac{1}{1-\gamma H}\big\}$. Similar to Lemma \ref{LemmaF1}, we may suppose $g$ is time-homogeneous. In order to shorten notation, we define
$$\beta^{ij}_2(\gamma,H, s,t):= \big|\textbf{d}^{\bar{\gamma},\gamma} \big((s,B_s); (t,B_t)\big)\big|^2 \big| B^{(i)}_{s} B^{(j)}_s - s^{2H}\delta_{ij}\big|,$$
for $(s,t) \in \Delta_T$. Throughout this proof, we repeatedly use the following elementary estimate

\begin{equation}\label{beta2esta}
\beta^{ij}_2(\gamma,H,s,t)\lesssim |B_{s,t}|^{2\gamma} |B^{(i)}_{s} B^{(j)}_s|    + s^{2H}|B_{s,t}|^{2\gamma}\delta_{ij},
\end{equation}
for $(s,t) \in \Delta_T$. 
%\begin{eqnarray}
%\nonumber\beta^{ij}_2(\gamma,H,s,t)&\lesssim& |t-s|^{2\gamma} |B^{(i)}_{s} B^{(j)}_s|  + |t-s|^{2\gamma}s^{2H}\delta_{ij}\\
%\label{beta2esta}&+& |B_{s,t}|^{2\gamma} |B^{(i)}_{s} B^{(j)}_s|    + s^{2H}|B_{s,t}|^{2\gamma}\delta_{ij}
%\end{eqnarray}
For each $1 < q < \infty$, we observe

\begin{equation}\label{beta2estb}
\mathbb{E}|\beta^{ij}_2(\gamma,H,s,t)|^q\lesssim |t-s|^{2\gamma qH} s^{2qH}
\end{equation}
whenever $0 < t -s < 1$ or $s$ is close to the origin. In the sequel, in order to shorten notation, we write 
%$\mathcal{Y}(k;s,t)=Y_s(k) + Y_t(k)$ for $[s,t] \in [0,T]^2$ and 

$$I^{ij}_1(\epsilon,\delta,s,t):=\beta^{ij}_2(\gamma,H, s,t)s^{2H}\Big|\ell \Big(\frac{\epsilon}{s}\Big) \Big| |t-s|^{4H}| \Delta \ell (\delta,s,t)|\Theta^{-2}_{s,t}\frac{1}{\epsilon\delta}$$

$$I^{ij}_2(\epsilon,\delta,s,t):=\beta^{ij}_2(\gamma,H, s,t) |t-s|^{4H}|\varphi(s,t)| \Big| \ell \Big(\frac{\epsilon}{t-s}\Big) \Big| | \Delta\ell(\delta,s,t)|  \Theta^{-2}_{s,t}\frac{1}{\epsilon\delta}$$ 

$$I^{ij}_3(\epsilon,\delta,s,t):=\beta^{ij}_2(\gamma,H, s,t)  s^{2H} \Big| \ell \Big(\frac{\epsilon}{s}\Big) \Big| |t-s|^{4H} |\varphi(s,t)|  \Big| \ell \Big(\frac{\delta}{t-s}\Big) \Big| \Theta^{-2}_{s,t}\frac{1}{\epsilon\delta}$$

$$ I^{ij}_4(\epsilon,\delta,s,t):=\beta^{ij}_2(\gamma,H, s,t) |t-s|^{4H}\varphi^2(s,t)  \Big| \ell \Big( \frac{\epsilon}{t-s}\Big) \ell \Big(\frac{\delta}{t-s}\Big) \Big| \Theta^{-2}_{s,t}\frac{1}{\epsilon\delta},$$
for $(s,t) \in \Delta_{T}$. By using (\ref{etaexp}) for $\eta_{11}$ and $\eta_{21}$ and the H\"older property of $g$, we have

$$\mathbb{E}\int_{\Delta_{1,T}(\epsilon,\delta)}\Big| \frac{1}{4\epsilon\delta}  Y^{(i)}_{s,t} Y^{(j)}_{s,t} \eta_{11}(\epsilon,s,t)\eta_{21}(\delta,s,t)[B^{(i)}_{s} B^{(j)}_{s}  - s^{2H}\delta_{ij}]  \Big|^pdsdt$$

$$\lesssim\|g\|^{2p}_{\bar{\gamma},\gamma} \sum_{m=1}^4 \int_{\Delta_{1,T}(\epsilon,\delta)} \mathbb{E}|I^{ij}_m(\epsilon,\delta,s,t)|^{p} dsdt.$$
We first analyse the components with $I^{ij}_4$ and $I^{ij}_3$. By Lemma \ref{fhlemma}, we have

$$I^{ij}_4(\epsilon,\delta,s,t)\lesssim \beta^{ij}_2(\gamma,H,s,t) \varphi^2(s,t) s^{-4H}|t-s|^{-2}$$
and 
$$I^{ij}_3(\epsilon,\delta,s,t)\lesssim \beta^{ij}_2(\gamma,H,s,t)s^{-2H-1}|t-s|^{-1}|\varphi(s,t)|,$$
for $(s,t) \in \Delta_T$. Lemma \ref{covfest} and (\ref{beta2estb}) yield 

$$
\int_{\Delta_{1,T}(\epsilon,\delta)} \mathbb{E}|I^{ij}_4(\epsilon,\delta,s,t)|^{p} dsdt\lesssim \int_{\Delta_T} |t-s|^{(2\gamma H-2)p}|\varphi(s,t)|^{2p}s^{-2Hp}dsdt<\infty
$$
and 

$$\int_{\Delta_T}\mathbb{E}|I^{ij}_3(\epsilon,\delta,s,t)|^{p}dsdt\lesssim\int_{\Delta_T}|t-s|^{(2\gamma H-1)p} s^{-p}|\varphi(s,t)|^{p}dsdt<\infty,$$
for $1 < p < \frac{1}{2-2\gamma H -2H}$. Next, we analyse the component with $I^{ij}_2$. By Lemma \ref{growthELL} and (\ref{brest}), we have

%\begin{eqnarray*}
%\int_{\Delta_{1,T}(\epsilon,\delta)} \mathbb{E}|I^{ij}_4(\epsilon,\delta,s,t)|^{p} dsdt&\lesssim& \int_{\Delta_T} |t-s|^{(2\gamma H-2)p}|\varphi(s,t)|^{2p}s^{-2Hp}dsdt\\
%&\lesssim& \int_0^T \int_0^{\frac{t}{2}} s^{2Hp }ds t^{(2\gamma H - 2)p}dt\\
%& + & \int_0^T \int_{\frac{t}{2}}^t |t-s|^{(2\gamma H + 4H-2)p } t^{-2p H}dt < \infty,
%\end{eqnarray*}
%because $1 < p < \frac{1}{2-2H-2\gamma H}$. Next, we analyse the component with $I^{ij}_3$. Lemma \ref{fhlemma} yields

%$$I^{ij}_3(\epsilon,\delta,s,t)\lesssim \beta^{ij}_2(\gamma,H,s,t)s^{-2H-1}|t-s|^{-1}|\varphi(s,t)|$$
%for $(s,t) \in \Delta_T$ and Lemma \ref{covfest} allows us to state  

%$$\int_{\Delta_T}\mathbb{E}|I^{ij}_3(\epsilon,\delta,s,t)|^{p}dsdt\lesssim\int_{\Delta_T}|t-s|^{(2\gamma H-1)p} s^{-p}|\varphi(s,t)|^{p}dsdt<\infty,$$
%because $1 < p < \frac{1}{2-2\gamma H -2H}$. 

\begin{eqnarray*}
I^{ij}_2(\epsilon,\delta,s,t)&\lesssim& \beta^{ij}_2(\gamma,H,s,t) |t-s|^{-1} |\varphi(s,t)| t^{2H-1} s^{-4H}\\
&+&\beta^{ij}_2(\gamma,H,s,t) |t-s|^{2H-2} |\varphi(s,t)| s^{-4H},
\end{eqnarray*}
for $(s,t) \in \Delta_{T}$ and hence Lemma \ref{covfest} allows us to state

$$\int_{\Delta_T}\mathbb{E}|I^{ij}_2(\epsilon,\delta,s,t)|^{p}dsdt\lesssim \int_{\Delta_T} |t-s|^{(2\gamma H + 2H-2)p} |\varphi(s,t)|^{p}s^{-2pH}dsdt<\infty,$$
for $1 < p < \frac{1}{2-2\gamma H -2H}$. Next, we treat the term with $I^{ij}_1$. We will check that

\begin{equation}\label{claim1}
\int_{\Delta_{1,T}(\epsilon,\delta)}\mathbb{E}|I^{ij}_1(\epsilon,\delta,s,t)|^{p}\mathds{1}_{\{t-\delta\le s < t\}}dsdt \rightarrow 0
\end{equation}
as $(\epsilon,\delta)\downarrow 0$ and

\begin{equation}\label{claim2}
\sup_{(\epsilon,\delta) \in (0,1)^2}\int_{\Delta_{1,T}(\epsilon,\delta)} \mathbb{E}|I^{ij}_1(\epsilon,\delta,s,t)|^{p}\mathds{1}_{\{0\le s \le t-\delta\}}dsdt < \infty.
\end{equation}
First, we observe that Lemma \ref{growthELL} yields

\begin{equation}\label{int1}
I^{ij}_1(\epsilon,\delta,s,t)\lesssim  \beta^{ij}_2(\gamma,H,s,t)s^{-2H-1} \frac{1}{\delta}|\Delta \ell(\delta,s,t)|
\end{equation}
for $(s,t) \in \Delta_{1,T}(\epsilon,\delta)$. Then, 

$$\int_{\Delta_{1,T}(\epsilon,\delta)} \mathbb{E}|I^{ij}_1(\epsilon,\delta,s,t)|^{p}\mathds{1}_{\{0\le s \le t-\delta\}}dsdt$$
$$\lesssim \int_{\Delta_{1,T}(\epsilon,\delta)} |t-s|^{2\gamma H p } s^{-p}\mathds{1}_{\{0\le s \le t-\delta\}} \big|\frac{1}{\delta}\Delta \ell(\delta,s,t)\big|^{p}dsdt.$$

We now invoke (\ref{dest1}) in Lemma \ref{deltaLlemma}. We then have 

$$|t-s|^{2\gamma H p} s^{-p}\Big|\frac{1}{\delta} \Delta \ell(\delta,s,t) \Big|^{p}\lesssim |t-s|^{2\gamma p H} s^{(2H-1)p}t^{-p}$$

$$+ |t-s|^{2\gamma p H}\Bigg\{ \Big( 1 + \frac{\delta}{t}\Big)^{(2H-1)p}\frac{1}{|t(t-s)|^{p}} +  (t-s-\delta)^{(2H-1)p} (t-s)^{-2Hp}t^{-p}\Bigg\},$$
for $(s,t) \in \Delta_T$ such that $0 < s < t-\delta$. Clearly,

$$\int_0^T \int_0^t (t-s)^{2\gamma p H} s^{(2H-1)p}t^{-p}dsdr\lesssim \int_0^T t^{(2\gamma H + 2H-2)p+1}  < \infty ,$$
for $1 < p <  \frac{2}{2-2H-2\gamma H}$. Moreover,

$$\int_\delta^T \int_0^{t-\delta}(t-s-\delta)^{(2H-1)p}(t-s)^{2\gamma H p-2Hp}t^{-p}dsdt $$
$$\le \int_\delta^T \int_0^{t-\delta}(t-s-\delta)^{(2\gamma H-1)p}t^{-p}dsdt $$
$$ = \int_\delta^T(t-\delta)^{(2\gamma H-1)p+1}t^{-p}dt\lesssim \int_0^T z^{(2\gamma H-2)p +1}dz < \infty,$$
for $1 < p < \min\{\frac{1}{2-2H-2\gamma H}; \frac{1}{1-\gamma H}\}$. Similarly,

$$\int_0^T \int_0^t |t-s|^{2\gamma p H}\Big( 1 + \frac{\delta}{t}\Big)^{(2H-1)p}\frac{1}{|t(t-s)|^{p}}dsdt\lesssim \int_0^T t^{2\gamma p H-2p+1}dt < \infty.$$
This shows (\ref{claim2}). By (\ref{dest2}), we have

%\begin{eqnarray*}
%\int_\delta^T \int_{t-\delta}^t|t-s|^{2\gamma Hp}s^{-p} \Big|\frac{1}{\delta}\Delta \ell (\delta,s,t)\Big|^{p}dsdt&\lesssim& \frac{1}{\delta^{p}}\int_\delta^T \int_{t-\delta}^t (t-s)^{2\gamma H p} s^{(2H-1)p}dsdt\\
%&\lesssim& \delta^{(2\gamma H-1)p} \int_{\delta}^T \{ t^{(2H-1)p+1}\\
%& - & (t-\delta)^{(2H-1)p+1}\}dt\\
%&\lesssim& \delta^{(2\gamma H + 2H-2)p +1} \rightarrow 0,
%\end{eqnarray*}

\begin{eqnarray*}
\int_\delta^T \int_{t-\delta}^t|t-s|^{2\gamma Hp}s^{-p} \Big|\frac{1}{\delta}\Delta \ell (\delta,s,t)\Big|^{p}dsdt&\lesssim& \frac{1}{\delta^{p}}\int_\delta^T \int_{t-\delta}^t (t-s)^{2\gamma H p} s^{(2H-1)p}dsdt\\
&\lesssim& \delta^{(2\gamma H + 2H-2)p +1} \rightarrow 0,
\end{eqnarray*}
as $\delta\downarrow0$. This concludes the proof. 
\end{proof}

%\begin{remark}
%We observe that $\frac{1}{2H}-1 < \frac{1}{H}-2$ for every $0 < H < \frac{1}{2}$. Moreover, for $\frac{1}{2H}-1 < \gamma \le 1$, 

%$$
%\min \Big\{\frac{1}{2-2H-2\gamma H}; \frac{1}{1-\gamma H}\Big\} =\left\{
%\begin{array}{rl}
%\frac{1}{2-2H-2\gamma H}; & \hbox{for} \ \gamma \le \frac{1}{H}-2  \\
%\frac{1}{1-\gamma H};& \hbox{for} \   \frac{1}{H}-2< \gamma \le 1\\
%\end{array}
%\right.
%$$
%and $\frac{1}{2H}-1 < \gamma \le 1 \le \frac{1}{H}-2 $   whenever $\frac{1}{4} < H \le \frac{1}{3}$. 
%\end{remark}

\begin{lemma}\label{LemmaF3}
Let $g \in C^{\bar{\gamma},\gamma}([0,T]\times \mathbb{R}^d)$ with $\bar{\gamma}> \frac{1}{2}-H, \gamma > \frac{1}{2H}-1$ and $Y_\cdot=g(\cdot,B_\cdot)$. For each $1 < p < \min \Big\{\frac{1}{2-2H-2(\gamma H \wedge \bar{\gamma})  }; \frac{1}{1-(\gamma H\wedge \bar{\gamma}) - \frac{H}{2}}; \frac{1}{1-H}\Big\}$, there exists a constant $C = C(\bar{\gamma},\gamma,H,T,p)$ such that 

\begin{equation}\label{F3ex}
\mathbb{E}\int_{\Delta_{1,T}(\epsilon,\delta)}\Big| \frac{1}{4\epsilon\delta}Y^{(i)}_{s,t} Y^{(j)}_{s,t} \eta_{11}(\epsilon,s,t) \eta_{22}(\delta,s,t)\big[B^{(i)}_{s}B^{(j)}_{s,t}  - \varphi(s,t)\delta_{ij}\big]\Big|^pdsdt\le C \| g\|^{2p}_{\bar{\gamma},\gamma},
\end{equation}
for every $(\epsilon,\delta) \in (0,1)^2$ and $i,j=1,\ldots, d$.
\end{lemma}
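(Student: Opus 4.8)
The plan is to follow the same template already used for Lemmas \ref{LemmaF1} and \ref{LemmaF2}: bound the integrand by $\|g\|_{\bar\gamma,\gamma}^{2p}$ times a quantity of the form $\beta_3^{ij}(\gamma,H,s,t)$ multiplied by an explicit deterministic kernel built from $\eta_{11}(\epsilon,s,t)\eta_{22}(\delta,s,t)$, where we set
$$\beta_3^{ij}(\gamma,H,s,t):=\big|\textbf{d}^{\bar\gamma,\gamma}\big((s,B_s);(t,B_t)\big)\big|^2\,\big|B^{(i)}_sB^{(j)}_{s,t}-\varphi(s,t)\delta_{ij}\big|,$$
and to check that the resulting deterministic double integral over $\Delta_T$ is finite uniformly in $(\epsilon,\delta)$. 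First I would record the elementary moment bound $\mathbb{E}|\beta_3^{ij}(\gamma,H,s,t)|^q\lesssim |t-s|^{(\gamma+\frac12)H q}\,s^{Hq}$ (using $\mathbb{E}|B^{(i)}_sB^{(j)}_{s,t}|^q\lesssim s^{Hq}|t-s|^{Hq}$ together with Lemma \ref{covfest}), and the pointwise estimate $\beta_3^{ij}\lesssim |B_{s,t}|^{2\gamma}|B^{(i)}_sB^{(j)}_{s,t}|+|t-s|^{2H}|B_{s,t}|^{2\gamma}\delta_{ij}$ which is the analogue of (\ref{beta1esta}) and (\ref{beta2esta}). The key difference from the previous two lemmas is the appearance of a single increment $B_{s,t}$ paired with $B^{(i)}_s$, which produces the worse exponent $(\gamma+\tfrac12)H$ instead of $2\gamma H$ or $2H$; this is exactly why the extra constraint $p<\frac{1}{1-(\gamma H\wedge\bar\gamma)-\frac H2}$ appears in the hypothesis.

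Next, I would substitute the explicit expressions (\ref{etaexp}) for $\eta_{11}(\epsilon,s,t)$ and $\eta_{22}(\delta,s,t)$ and expand the product $\eta_{11}\eta_{22}$ into four terms, defining $I_1^{ij},\dots,I_4^{ij}$ as in Lemma \ref{LemmaF1}, each of the form $\beta_3^{ij}(\gamma,H,s,t)$ times a product of powers of $s$, $|t-s|$, $t$, of the quantities $|\ell(\epsilon/s)|$, $|\ell(\epsilon/(t-s))|$, $|\ell(\delta/(t-s))|$, of $|\Delta\ell(\delta,s,t)|$, of $|\varphi(s,t)|$, and of $\Theta_{s,t}^{-2}$, all divided by $\epsilon\delta$. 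I would then use Lemma \ref{growthELL} (to replace $|\ell(\epsilon/s)|$ by $\lesssim (\epsilon/s)\,s^{2H-1}$ etc., thereby absorbing $1/(\epsilon\delta)$), the bound (\ref{brest}) together with (\ref{dest1})--(\ref{dest2}) of Lemma \ref{deltaLlemma} for $|\Delta\ell(\delta,s,t)|$, the determinant identity $\Theta_{s,t}=|t-s|^{2H}A(s,t)$ and the estimate (\ref{deterest}) of Lemma \ref{fhlemma}, and finally Lemma \ref{covfest} for $|\varphi(s,t)|$. After these substitutions each $\int_{\Delta_{1,T}(\epsilon,\delta)}\mathbb{E}|I_m^{ij}|^p\,dsdt$ reduces to a pure power integral $\int_0^T\int_0^t |t-s|^{a}s^{b}t^{c}\,ds\,dt$ (possibly with shifted arguments $t-s-\delta$ near the diagonal, handled exactly as in Lemma \ref{LemmaF2} via a change of variables and by splitting the region $\{t-\delta\le s<t\}$ where one invokes (\ref{dest2}) to get a vanishing $\delta$-power, from $\{0\le s\le t-\delta\}$ where one invokes (\ref{dest1})).

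The convergence of each power integral is what fixes the admissible range of $p$: the most dangerous term will be the one carrying a factor $s^{-2H-1}$ (from $\Theta_{s,t}^{-2}$ paired with an $s^{2H}$ in the numerator and with the extra $1/s$ coming from $\ell(\epsilon/s)\lesssim \epsilon s^{2H-2}$), combined with the worst $\beta_3$ exponent $(\gamma H\wedge\bar\gamma)+\tfrac H2$ in the exponent of $|t-s|$; requiring finiteness near $s=0$, near the diagonal $s=t$, and uniformly in $\delta$ near the diagonal is precisely what yields the three constraints in the statement, $p<\frac{1}{2-2H-2(\gamma H\wedge\bar\gamma)}$, $p<\frac{1}{1-(\gamma H\wedge\bar\gamma)-\frac H2}$ and $p<\frac{1}{1-H}$. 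I would verify all four terms in turn, noting that $I_4^{ij}$ (the one with $\varphi^2$ and no $\Delta\ell$) is the mildest and that $I_1^{ij}$ (the one with $s^{2H}|\ell(\epsilon/s)||t-s|^{?H}|\Delta\ell(\delta,s,t)|\Theta_{s,t}^{-2}$) requires the split near the diagonal. The main obstacle I anticipate is not conceptual but bookkeeping: keeping track of exactly which powers of $\epsilon$ and $\delta$ are produced by each application of Lemma \ref{growthELL} versus Lemma \ref{deltaLlemma}, ensuring the $1/(\epsilon\delta)$ is fully cancelled with a nonnegative residual power, and confirming that the $H/2$ loss in the increment moment (rather than a full $H$) is genuinely the binding constraint giving the middle inequality in the hypothesis — so I would carry the exponents symbolically through each of the four terms before collapsing to the final Beta-function computations, exactly mirroring the proof of Lemma \ref{LemmaF2} but with the sharper $\beta_3$ bound.
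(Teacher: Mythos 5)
Your strategy coincides with the paper's proof of Lemma \ref{LemmaF3}: the same auxiliary quantity $\beta^{ij}_3$, the same expansion of $\eta_{11}(\epsilon,s,t)\eta_{22}(\delta,s,t)$ into four terms $I^{ij}_1,\dots,I^{ij}_4$, and the same reduction to deterministic power integrals via Lemmas \ref{fhlemma}, \ref{growthELL}, \ref{covfest} and \ref{deltaLlemma}. One quantitative correction is needed, however: your moment bound should read $\mathbb{E}|\beta^{ij}_3(\gamma,H,s,t)|^q\lesssim |t-s|^{(2\gamma+1)Hq}$ near the diagonal and $\lesssim s^{qH}$ near the origin (this is (\ref{beta3estb}) and (\ref{beta3estbO}) in the paper), not $|t-s|^{(\gamma+\frac12)Hq}s^{Hq}$: since $\beta^{ij}_3\lesssim |B_{s,t}|^{2\gamma}|B^{(i)}_sB^{(j)}_{s,t}|+\cdots$, the increment contributes its full exponent $(2\gamma+1)H$ per unit $q$. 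With your weaker exponent the critical term produces $\int_0^T t^{((\gamma+\frac12)H-2)p+1}dt$, which only converges for $p<\frac{2}{2-(\gamma+\frac12)H}$ and therefore does not cover the stated range; with the correct exponent one gets $\int_0^T t^{((2\gamma+1)H-2)p+1}dt$ and hence exactly $p<\frac{2}{2-(2\gamma+1)H}=\frac{1}{1-\gamma H-\frac{H}{2}}$, the middle constraint in the statement. A second, minor remark: for this lemma the diagonal split $\{t-\delta\le s<t\}$ versus $\{0\le s\le t-\delta\}$ from Lemma \ref{LemmaF2} is not needed; the cruder bound (\ref{brest}) on $|\Delta\ell(\delta,s,t)|$ already suffices because the extra factor $|\varphi(s,t)|\le s^{2H}$ present in $I^{ij}_1$ tames the singularity at $s=0$, so your planned split is workable but unnecessary bookkeeping.
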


%Assume that $g$ satisfies Assumption 1 with $\gamma > \frac{1}{2H}-1$ and fix $1 < p < \frac{1}{1-\frac{H}{2}-\gamma H}$. Then, there exists a constant $C$ which depends on $\| g\|_\gamma$ and $H$ and a two-parameter processes $F_3$ such that
%$$\frac{1}{2\epsilon}\frac{1}{2\delta}  \big| Y^{(i)}_{s,t} Y^{(j)}_{s,t} \big| \Big|\eta_{11}(\epsilon,s,t) \eta_{22}(\delta,s,t)[B_{s}B_{s,t} - \varphi(s,t)]\Big|\le CF_3(s,t);$$
%for every $(s,t) \in \Delta_{1,T}(\epsilon,\delta)$. Moreover, $\mathbb{E}\int_0^T \int_0^t |F_3(s,t)|^pdsdt < \infty$.

\begin{proof}
%\big\{|t-s|^{2\gamma} + |B_{s,t}|^{2\gamma} \big\}\big| B^{(i)}_{s,t}B^{(j)}_{s,t} - |t-s|^{2H}\delta_{ij}\big|,$$
%for $(s,t) \in \Delta_T$. 

%Throughout this proof, we repeatedly use the following elementary estimate
%First, we define
%$$\beta_3(\gamma,H, s,t):= \big\{|t-s|^{2\gamma} + |B_{s,t}|^{2\gamma} \big\}\big| B_{s}B_{s,t} - \varphi(s,t)\big|,$$
%for $(s,t) \in \Delta_T$.

%\begin{eqnarray}
%\nonumber\beta^{ij}_3(\gamma,H,s,t)&\lesssim& |t-s|^{2\gamma} |B^{(i)}_{s}B^{(j)}_{s,t}| + |t-s|^{2\gamma} |\varphi(s,t)|\delta_{ij}\\
%\label{beta3esta}&+& |B_{s,t}|^{2\gamma} |B^{(i)}_sB^{(j)}_{s,t}| + |B_{s,t}|^{2\gamma} |\varphi(s,t)|\delta_{ij},
%\end{eqnarray}
Similar to the proof of Lemma \ref{LemmaF2}, we may suppose $g$ is time-homogeneous. Fix $i,j=1,\ldots, d$, $\gamma > \frac{1}{2H}-1$ and $1 < p < \min \Big\{\frac{1}{2-2H-2\gamma H}; \frac{1}{1-\gamma H - \frac{H}{2}}; \frac{1}{1-H}\Big\}$. In order to shorten notation, we define

$$\beta^{ij}_3(\gamma,H, s,t):= \big| \textbf{d}^{\bar{\gamma},\gamma}\big((s,B_s); (t,B_t) \big)\big|^2  \big| B^{(i)}_{s}B^{(j)}_{s,t} - \varphi(s,t)\delta_{ij}\big|,$$
for $(s,t) \in \Delta_T$. Throughout this proof, we repeatedly make use of the following elementary estimate

\begin{equation}\label{beta3esta}
\beta^{ij}_3(\gamma,H,s,t)\lesssim |B_{s,t}|^{2\gamma} |B^{(i)}_sB^{(j)}_{s,t}| + |B_{s,t}|^{2\gamma} |\varphi(s,t)|\delta_{ij},
\end{equation}
for $(s,t) \in [0,T]^2$. For each $1 < q < \infty$, we observe Lemma \ref{covfest} and H\"older's inequality yield

%\begin{eqnarray*}
%\mathbb{E}|\beta^{ij}_3 (\gamma,H,s,t)|^q &\lesssim&  |t-s|^{2\gamma q}\Big[ \min \big\{s^H|t-s|^H; |t-s|^{2H}; s^{2H}  \big\}^q\delta_{ij} + s^{Hq}|t-s|^{H q} \Big]\\
%&+& |t-s|^{2\gamma Hq} \big[ \min \big\{s^H|t-s|^H; |t-s|^{2H}; s^{2H}  \big\}^q\delta_{ij} + s^{Hq}|t-s|^{H q}  \big],
%\end{eqnarray*}
%for $(s,t) \in [0,T]^2$. 

$$
\mathbb{E}|\beta^{ij}_3 (\gamma,H,s,t)|^q \lesssim |t-s|^{2\gamma Hq} \big[ \min \big\{s^H|t-s|^H; |t-s|^{2H}; s^{2H}  \big\}^q\delta_{ij} + s^{Hq}|t-s|^{H q}  \big],
$$
for $(s,t) \in [0,T]^2$. Therefore, for each $1 < q < \infty$, we have 

\begin{equation}\label{beta3estb}
\mathbb{E}|\beta^{ij}_3(\gamma,H,s,t)|^q\lesssim |t-s|^{2\gamma qH + qH}
\end{equation}
whenever $0 < t -s < 1$ and

\begin{equation}\label{beta3estbO}
\mathbb{E}|\beta^{ij}_3(\gamma,H,s,t)|^q\lesssim s^{qH},
\end{equation}
whenever the variable $s$ is close to the origin. In the sequel, in order to shorten notation, we write 

\begin{equation}\label{I1Lem3}
I^{ij}_1(\epsilon,\delta,s,t):=\beta^{ij}_3(\gamma,H, s,t) s^{2H}\Big|\ell \Big(\frac{\epsilon}{s}\Big)\Big||t-s|^{2H} \big|\Delta \ell (\delta,s,t)\varphi(s,t)\big|  \Theta^{-2}_{s,t}\frac{1}{\epsilon\delta}
\end{equation}

$$I^{ij}_2(\epsilon,\delta,s,t):=\beta^{ij}_3(\gamma,H, s,t) s^{4H}|t-s|^{4H} \Big| \ell \Big( \frac{\epsilon}{s}\Big)\ell \Big( \frac{\delta}{t-s}\Big)\Big|  \Theta^{-2}_{s,t}\frac{1}{\epsilon\delta}$$ 

$$I^{ij}_3(\epsilon,\delta,s,t):=\beta^{ij}_3(\gamma,H, s,t) |t-s|^{2H} \varphi^2(s,t)\big|\Delta \ell(\delta,s,t)\big| \big| \ell \Big( \frac{\epsilon}{t-s}\Big) \big|   \Theta^{-2}_{s,t}\frac{1}{\epsilon\delta}$$ 

\begin{equation}\label{I4Lem3} 
I^{ij}_4(\epsilon,\delta,s,t):=\beta^{ij}_3(\gamma,H, s,t) s^{2H}|t-s|^{4H} \Big| \ell \Big( \frac{\epsilon}{t-s}\Big)\ell \Big( \frac{\delta}{t-s}\Big)\Big| |\varphi(s,t)|    \Theta^{-2}_{s,t}\frac{1}{\epsilon\delta},
\end{equation}
for $(s,t) \in \Delta_{T}$. By using (\ref{etaexp}) for $\eta_{11}$ and $\eta_{22}$ and applying H\"older's inequality, we have 

$$\mathbb{E}\int_{\Delta_{1,T}(\epsilon,\delta)}\Big| \frac{1}{4\epsilon\delta}  Y^{(i)}_{s,t} Y^{(j)}_{s,t} \eta_{11}(\epsilon,s,t)\eta_{22}(\delta,s,t)[B^{(i)}_s B^{(j)}_{s,t}  - \varphi(s,t)\delta_{ij}]  \Big|^pdsdt$$

$$\lesssim \| g\|^{2p}_{\bar{\gamma},\gamma} \sum_{q=1}^4 \int_{\Delta_{1,T}(\epsilon,\delta)} \mathbb{E}|I^{ij}_q(\epsilon,\delta,s,t)|^{p} dsdt.$$
Lemmas \ref{fhlemma} and \ref{growthELL} yield

$$I^{ij}_4(\epsilon,\delta,s,t)\lesssim \beta^{ij}_3(\gamma,H, s,t)s^{-2H}|\varphi(s,t)| (t-s)^{-2},$$
for $(s,t) \in \Delta_{1,T}(\epsilon,\delta)$. By using Lemma \ref{covfest}, (\ref{beta3estb}) and (\ref{beta3estbO}), one can easily check

$$\int_{\Delta_T}\mathbb{E}|I^{ij}_4(\epsilon,\delta,s,t)|^{p}dsdt\lesssim \int_0^T \int_0^t \mathbb{E}|\beta^{ij}_3(\gamma,H, s,t)|^{p} s^{-2Hp}|\varphi(s,t)|^{p} (t-s)^{-2p}dsdt < \infty,
$$
for $1 < p < \frac{1}{2-2H-2\gamma H}$. Lemmas \ref{fhlemma} and \ref{growthELL} yield

%\begin{eqnarray*}
%\int_{\Delta_T}\mathbb{E}|I^{ij}_4(\epsilon,\delta,s,t)|^{p}dsdt&\lesssim& \int_0^T \int_0^t \mathbb{E}|\beta^{ij}_3(\gamma,H, s,t)|^{p} s^{-2Hp}|\varphi(s,t)|^{p} (t-s)^{-2p}dsdt\\
%&\lesssim& \int_0^T \int_0^{\frac{t}{2}} s^{-2Hp} s^{pH}  |\varphi(s,t)|^{p} (t-s)^{-2p}dsdt\\
%&+& \int_0^T \int_{\frac{t}{2}}^{t} s^{-2Hp}  |\varphi(s,t)|^{p} (t-s)^{2\gamma p H + pH -2p}dsdt\\
%&\lesssim& \int_0^T t^{pH+1-2p}dt\\
%& + & \int_0^T \int_{\frac{t}{2}}^t (t-s)^{(2\gamma H + 3H-2)p}ds t^{-2Hp}dt < \infty,
%\end{eqnarray*}

$$I^{ij}_2(\epsilon,\delta,s,t)\lesssim \beta^{ij}_3(\gamma,H,s,t)s^{-1}|t-s|^{-1},$$
for $(s,t) \in \Delta_{1,T}(\epsilon,\delta)$. In view of (\ref{beta3esta}) and (\ref{beta3estb}), we then have

\begin{eqnarray*}
\int_{\Delta_T}\mathbb{E}|I^{ij}_2(\epsilon,\delta,s,t)|^{p}dsdt&\lesssim& \int_0^T \int_0^{\frac{t}{2}}  s^{p H-p}|t-s|^{-p}dsdt\\
&+& \int_0^T \int_{\frac{t}{2}}^t  |t-s|^{(2\gamma H + H-1)p}s^{-p}dsdt\\
&\lesssim& \int_0^T \int_0^{\frac{t}{2}}  s^{(H-1)p}t^{-p}dsdt\\
& +& \int_0^T \int_{\frac{t}{2}}^t  |t-s|^{[(2\gamma +1) H -1]p}t^{-p}dsdt. 
\end{eqnarray*}

Observe 
$$\int_0^T \int_0^{\frac{t}{2}}  s^{(H-1)p}t^{-p}dsdt\lesssim \int_0^T t^{(H-1)p+1-p}dt < \infty$$ 
for $1 < p < \frac{1}{1-H}$. For the second integral, we observe 

$$\int_0^T \int_{\frac{t}{2}}^t  |t-s|^{[(2\gamma +1) H -1]p}t^{-p}dsdt\lesssim \int_0^T t^{[(2\gamma+1)H-2]p+1}dt< \infty,$$
for $1 < p < \min \big \{  \frac{1}{2-2H-2H\gamma}; \frac{2}{2- (2\gamma+1)H}  \big\}.$

Next, we evaluate the terms $I^{ij}_1(\epsilon,\delta,s,t)$ and $I^{ij}_3(\epsilon,\delta,s,t)$. Lemmas \ref{growthELL}, \ref{fhlemma} and (\ref{brest}) yield

\begin{eqnarray*}
I^{ij}_1(\epsilon,\delta,s,t)&\lesssim& \beta^{ij}_3(\gamma,H,s,t) s^{-2H-1}|t-s|^{-2H}\frac{1}{\delta}|\Delta \ell(\delta,s,t)| |\varphi(s,t)|\\
&\lesssim & \beta^{ij}_3(\gamma,H,s,t) s^{-2H-1}|t-s|^{-2H} t^{2H-1}|\varphi(s,t)|\\
&+& \beta^{ij}_3(\gamma,H,s,t) s^{-2H-1}|t-s|^{-1}|\varphi(s,t)|,
\end{eqnarray*}
for $(s,t) \in \Delta_{1,T}(\epsilon,\delta)$. We will evaluate

$$\int_0^T \int_0^t \mathbb{E}|\beta^{ij}_3(\epsilon,\delta,s,t)|^{p}s^{-(2H+1)p} |t-s|^{-2Hp} t^{(2H-1)p} |\varphi(s,t)|^{p}dsdt$$
$$+ \int_0^T \int_0^t \mathbb{E}|\beta^{ij}_3(\epsilon,\delta,s,t)|^{p}s^{-(2H+1)p} |t-s|^{-p} |\varphi(s,t)|^{p}dsdt$$
$$=: J^{ij}_{11}(\epsilon,\delta,p) + J^{ij}_{12}(\epsilon,\delta,p).$$
%In view of (\ref{beta3estb}) and (\ref{beta3estbO}) and applying Lemma \ref{covfest}, we have

%$$I_{11}(\epsilon,\delta,p)\lesssim \int_0^T \int_0^{\frac{t}{2}} s^{-(2H+1)p + pH}|t-s|^{-2Hp}s^{2H p} t^{(2H-1)p} dt $$
%$$+ \int_0^T \int_{\frac{t}{2}}^t |t-s|^{(2\gamma H-H)p}s^{-(2H+1)p}|t-s|^{2Hp}ds t^{(2H-1)p}dt$$
%$$\lesssim \int_0^T t^{(H-2)p+1}dt + \int_0^T t^{(2\gamma +1)Hp+1-2p}dt < \infty, $$
%for $1< p < \frac{2}{2-H}$. 

It is sufficient to estimate $J^{ij}_{12}$. In view of (\ref{beta3estb}) and (\ref{beta3estbO}) and applying Lemma \ref{covfest}, we have

$${J}^{ij}_{12}(\epsilon,\delta,p)\lesssim \int_0^T \int_{0}^{\frac{t}{2}}s^{Hp -(2H+1)p}|t-s|^{-p} s^{2Hp}dsdt$$
$$+ \int_0^T \int_{\frac{t}{2}}^t |t-s|^{(2\gamma H+H-1)p} s^{-(2H+1)p}s^{Hp} |t-s|^{Hp}dsdt.$$
Clearly,

$$\int_0^T \int_{0}^{\frac{t}{2}}s^{Hp -(2H+1)p}|t-s|^{-p} s^{2Hp}dsdt\lesssim\int_0^T \int_0^{\frac{t}{2}} s^{(H-1)p}dst^{-p}dt < \infty,$$
for $1 <  p < \frac{1}{1-H}$. Moreover,

$$\int_0^T \int_{\frac{t}{2}}^t |t-s|^{(2\gamma H+H-1)p} s^{-(2H+1)p}s^{Hp} |t-s|^{Hp}dsdt$$
$$
\lesssim \int_0^T t^{-Hp -p} \int_0^{\frac{t}{2}} z^{(2\gamma H + 2H-1)p}dz dt \lesssim \int_0^T t^{(2\gamma H + H-2)p+1}dt < \infty,$$
for $1 <  p < \frac{2}{2- (2\gamma+1) H}$. This concludes the proof.

% then the seminorm of $Y$ in the right-hand side of (\ref{F3ex}) is replaced by $\| Y\|^{2p}_{\dot{N}^\gamma_{\infty,\infty}}$.
\end{proof}

\begin{lemma}\label{LemmaF4}
Let $g \in C^{\bar{\gamma},\gamma}([0,T]\times \mathbb{R}^d)$ with $\bar{\gamma}> \frac{1}{2}-H, \gamma > \frac{1}{2H}-1$ and $Y_\cdot=g(\cdot,B_\cdot)$. For each $1 < p < \min \Big\{\frac{1}{2-2H-2(\bar{\gamma}\wedge \gamma H)}; \frac{1}{1- \frac{H}{2}}\Big\}$, there exists a constant $C = C(\bar{\gamma},\gamma,H,T,p)$ such that 

\begin{equation}\label{F4ex}
\mathbb{E}\int_{\Delta_{1,T}(\epsilon,\delta)}\Big| \frac{1}{4\epsilon\delta}Y^{(i)}_{s,t} Y^{(j)}_{s,t} \eta_{12}(\epsilon,s,t) \eta_{21}(\delta,s,t)\big[(B^{(j)}_{s} B^{(i)}_{s,t})  - \varphi(s,t)\delta_{ij}\big]\Big|^pdsdt\le C \| g\|^{2p}_{\bar{\gamma},\gamma},
\end{equation}
for every $(\epsilon,\delta) \in (0,1)^2$ and $i,j=1,\ldots, d$.
\end{lemma}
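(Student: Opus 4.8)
The plan is to follow, essentially verbatim, the strategy of Lemma \ref{LemmaF3}, of which the present statement is the ``mirror'': the term $\eta_{12}(\epsilon,s,t)\eta_{21}(\delta,s,t)\big[B^{(j)}_{s}B^{(i)}_{s,t}-\varphi(s,t)\delta_{ij}\big]$ is obtained from the term $\eta_{11}(\epsilon,s,t)\eta_{22}(\delta,s,t)\big[B^{(i)}_{s}B^{(j)}_{s,t}-\varphi(s,t)\delta_{ij}\big]$ treated there by interchanging the roles of the two independent coordinates $B^{(i)},B^{(j)}$. First I would reduce to the time-homogeneous case $g(t,x)=g(x)$ as in the previous lemmas (legitimate because $\gamma H>\frac{1}{2}-H$ and $\bar{\gamma}>\frac{1}{2}-H$), fix $i,j$ and $p$ in the stated range, and set
$$\beta^{ij}_4(\gamma,H,s,t):=\big|\textbf{d}^{\bar{\gamma},\gamma}\big((s,B_s);(t,B_t)\big)\big|^2\,\big|B^{(j)}_sB^{(i)}_{s,t}-\varphi(s,t)\delta_{ij}\big|,\qquad (s,t)\in\Delta_T.$$
Swapping $B^{(i)}$ and $B^{(j)}$ is a measure-preserving map on path space that fixes both $\textbf{d}^{\bar{\gamma},\gamma}\big((s,B_s);(t,B_t)\big)$ (which depends only on $|B_{s,t}|$) and $\varphi(s,t)\delta_{ij}$; hence, for each fixed $(s,t)$, $\beta^{ij}_4(\gamma,H,s,t)$ has the same law as the variable $\beta^{ij}_3$ of Lemma \ref{LemmaF3}, so the pointwise bound (\ref{beta3esta}) and the moment estimates (\ref{beta3estb}), (\ref{beta3estbO}) continue to hold with $\beta^{ij}_4$ in place of $\beta^{ij}_3$.

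On $\Delta_{1,T}(\epsilon,\delta)$ I would insert the explicit formulas (\ref{etaexp}) for $\eta_{12}$ and $\eta_{21}$ and expand $\eta_{12}(\epsilon,s,t)\eta_{21}(\delta,s,t)$ into four monomials; multiplying by $\beta^{ij}_4$, $\Theta^{-2}_{s,t}$ and $\frac{1}{\epsilon\delta}$ and taking absolute values produces the analogues of (\ref{I1Lem3})--(\ref{I4Lem3}),
\begin{align*}
I^{ij}_1&:=\beta^{ij}_4\,s^{2H}|t-s|^{4H}\big|\ell(\tfrac{\epsilon}{t-s})\big|\,|\Delta\ell(\delta,s,t)|\,\Theta^{-2}_{s,t}\tfrac1{\epsilon\delta},\\
I^{ij}_2&:=\beta^{ij}_4\,s^{2H}|t-s|^{4H}|\varphi(s,t)|\,\big|\ell(\tfrac{\epsilon}{t-s})\ell(\tfrac{\delta}{t-s})\big|\,\Theta^{-2}_{s,t}\tfrac1{\epsilon\delta},\\
I^{ij}_3&:=\beta^{ij}_4\,s^{2H}|t-s|^{2H}|\varphi(s,t)|\,\big|\ell(\tfrac{\epsilon}{s})\big|\,|\Delta\ell(\delta,s,t)|\,\Theta^{-2}_{s,t}\tfrac1{\epsilon\delta},\\
I^{ij}_4&:=\beta^{ij}_4\,s^{2H}|t-s|^{2H}\varphi^2(s,t)\,\big|\ell(\tfrac{\epsilon}{s})\ell(\tfrac{\delta}{t-s})\big|\,\Theta^{-2}_{s,t}\tfrac1{\epsilon\delta}.
\end{align*}
By H\"older's inequality and the H\"older property of $g$ it then suffices to bound $\sum_{m=1}^{4}\int_{\Delta_{1,T}(\epsilon,\delta)}\mathbb{E}\,|I^{ij}_m|^p\,dsdt$ uniformly in $(\epsilon,\delta)$. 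Each term is handled as in Lemmas \ref{LemmaF2} and \ref{LemmaF3}: use $|\ell(x)|\lesssim |x|$ (Lemma \ref{growthELL}), the bound $\Theta^{-2}_{s,t}\lesssim |t-s|^{-4H}s^{-4H}$ on $\Delta_T$ (Lemma \ref{fhlemma}), the crude estimate $\frac{1}{\delta}|\Delta\ell(\delta,s,t)|\lesssim t^{2H-1}+(t-s)^{2H-1}$ of (\ref{brest}) in $I^{ij}_1$ and $I^{ij}_3$, and $|\varphi(s,t)|\lesssim\min\{s^{2H};(t-s)^{2H};s^{H}|t-s|^{H}\}$ (Lemma \ref{covfest}) in $I^{ij}_2,I^{ij}_3,I^{ij}_4$; then split the inner $s$-integral over $(0,t/2)$, where (\ref{beta3estbO}) applies, and over $(t/2,t)$, where (\ref{beta3estb}) applies, reducing every term to an elementary integral $\int_0^T\!\!\int_0^t |t-s|^{a}s^{b}t^{c}\,dsdt$, whose finiteness amounts to $b>-1$, $a>-1$ and $a+b+c>-2$.

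The main obstacle is bookkeeping of exponents, and the genuinely new feature compared to Lemma \ref{LemmaF3} is that the monomials $I^{ij}_1$ (which carries $\ell(\epsilon/(t-s))$ and $\Delta\ell(\delta,s,t)$ with \emph{no} compensating $\varphi$) and $I^{ij}_4$ (which carries $\varphi^2$ together with $\ell(\epsilon/s)\ell(\delta/(t-s))$) do not occur there; after the above reduction, both produce near the origin an integral of the type $\int_0^T t^{(H-2)p+1}\,dt$, which converges exactly when $p<\frac{1}{1-H/2}$ -- precisely the extra restriction on $p$ in the statement. For $I^{ij}_1$ one must moreover replace, on the diagonal strip $\{t-\delta<s<t\}$, the crude bound (\ref{brest}) by the sharper estimate (\ref{dest2}), i.e.\ $s^{-2H}|\Delta\ell(\delta,s,t)|\lesssim 1$ there, so as to absorb the $\frac{1}{\delta}$-singularity, exactly as in the estimate of $I^{ij}_1$ in Lemma \ref{LemmaF2}. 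The monomials $I^{ij}_2$ and $I^{ij}_3$ have, modulo the equidistribution of $\beta^{ij}_4$ and $\beta^{ij}_3$, the same structure as two of the monomials already estimated in Lemma \ref{LemmaF3}; since $\bar{\gamma}\wedge\gamma H>0$ and $H<\frac{1}{2}$, the two bounds $\frac{1}{1-(\bar{\gamma}\wedge\gamma H)-\frac{H}{2}}$ and $\frac{1}{1-H}$ are no smaller than $\frac{1}{1-H/2}$ whenever they are active, so the conditions on $p$ required by those two terms are already contained in $p<\frac{1}{1-H/2}$, while the remaining restriction $p<\frac{1}{2-2H-2(\bar{\gamma}\wedge\gamma H)}$ arises, as in the earlier lemmas, from the integrability near the diagonal of $|t-s|^{(2\gamma H+2H-2)p+1}$ and $|t-s|^{(2\bar{\gamma}+2H-2)p+1}$.
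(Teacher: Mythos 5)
Your proposal follows essentially the same route as the paper: reduce to the time-homogeneous case, observe that the relevant centered product has the same law as the $\beta^{ji}_3$ of Lemma \ref{LemmaF3}, expand $\eta_{12}\eta_{21}$ into the same four monomials (two of which are literally the terms $I^{ij}_1$ and $I^{ij}_4$ already estimated in Lemma \ref{LemmaF3}), and bound the two new ones via Lemmas \ref{fhlemma}, \ref{growthELL}, \ref{covfest} and (\ref{brest}), correctly locating the origin of the extra constraint $p<\frac{1}{1-H/2}$ in the integral $\int_0^T t^{(H-2)p+1}dt$. The only (harmless) deviation is your insistence on invoking (\ref{dest2}) on the strip $\{t-\delta<s<t\}$ for your $I^{ij}_1$: since that monomial carries $s^{-2H}$ rather than the $s^{-2H-1}$ appearing in Lemma \ref{LemmaF2}, the crude bound (\ref{brest}) already suffices there, which is what the paper does.
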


\begin{proof}
Similar to the proof of Lemma \ref{LemmaF3}, we may suppose $g$ is time-homogeneous. Fix $i,j=1,\ldots, d$, $\gamma > \frac{1}{2H}-1$ and $1 < p < \min \Big\{\frac{1}{2-2H-2\gamma H}; \frac{1}{1- \frac{H}{2}}\Big\}$. In order to shorten notation, we define $\beta^{ji}_3(\gamma,H, s,t)$ as defined in the proof of Lemma \ref{LemmaF3}. In the sequel, in order to shorten notation, we write

%$$I^{ij}_1(\epsilon,\delta,s,t):=\beta^{ji}_3(\gamma,H, s,t) s^{2H}\Big|\ell \Big(\frac{\epsilon}{s}\Big)\Big||t-s|^{2H} \big|\Delta \ell (\delta,s,t)\varphi(s,t)\big|  \Theta^{-2}_{s,t}\frac{1}{\epsilon\delta}$$

%$$ I^{ij}_4(\epsilon,\delta,s,t):=\beta^{ji}_3(\gamma,H, s,t) s^{2H}|t-s|^{4H} \Big| \ell \Big( \frac{\epsilon}{t-s}\Big)\ell \Big( \frac{\delta}{t-s}\Big)\Big| |\varphi(s,t)|    \Theta^{-2}_{s,t}\frac{1}{\epsilon\delta},$$

$$I^{ij}_2(\epsilon,\delta,s,t):=\beta^{ji}_3(\gamma,H, s,t) s^{2H}|t-s|^{2H}\varphi^2(s,t)\Big| \ell \Big( \frac{\epsilon}{s}\Big)\ell \Big( \frac{\delta}{t-s}\Big)\Big| \Theta^{-2}_{s,t}\frac{1}{\epsilon\delta}$$ 

$$I^{ij}_3(\epsilon,\delta,s,t):=\beta^{ji}_3(\gamma,H, s,t) |t-s|^{4H} s^{2H}  |\Delta \ell(\delta,s,t)| \Big| \ell \Big( \frac{\epsilon}{t-s}\Big)\Big|    \Theta^{-2}_{s,t}\frac{1}{\epsilon\delta},$$ 
for $(s,t) \in \Delta_{T}$. We also recall $I^{ij}_1(\epsilon)$ and $I^{ij}_4(\epsilon,\delta)$ as described in (\ref{I1Lem3}) and (\ref{I4Lem3}), respectively. By using (\ref{etaexp}) for $\eta_{12}$ and $\eta_{21}$ and by applying H\"older's inequality, we have 

$$\mathbb{E}\int_{\Delta_{1,T}(\epsilon,\delta)}\Big| \frac{1}{4\epsilon\delta}  Y^{(i)}_{s,t} Y^{(j)}_{s,t} \eta_{12}(\epsilon,s,t)\eta_{21}(\delta,s,t)[B^{(j)}_s B^{(i)}_{s,t}  - \varphi(s,t)\delta_{ij}]  \Big|^pdsdt$$

$$\lesssim \| g\|^{2p}_{\bar{\gamma},\gamma} \sum_{q=1}^4 \int_{\Delta_{1,T}(\epsilon,\delta)} \mathbb{E}|I^{ij}_q(\epsilon,\delta,s,t)|^{p} dsdt.$$
The terms $I^{ij}_1(\epsilon,\delta,s,t)$ and $I^{ij}_4(\epsilon,\delta,s,t)$ are investigated in Lemma \ref{LemmaF3}. We only need to check $I^{ij}_2(\epsilon,\delta,s,t)$ and $I^{ij}_3(\epsilon,\delta,s,t)$. By using Lemmas \ref{fhlemma} and \ref{growthELL} and (\ref{brest}), we have

\begin{eqnarray*}
I^{ij}_3(\epsilon,\delta,s,t)&\lesssim& \beta^{ji}_3(\gamma,H,s,t) |t-s|^{-1}s^{-2H}\frac{1}{\delta}|\Delta \ell(\delta,s,t)|\\
&\lesssim& \beta^{ji}_3(\gamma,H,s,t) \Big[|t-s|^{-1}s^{-2H}t^{2H-1}+  s^{-2H}(t-s)^{2H-2}\Big],
\end{eqnarray*}
for $(s,t) \in \Delta_{1,T}(\epsilon,\delta)$. Then, (\ref{beta3estb}) and (\ref{beta3estbO}) yield

\begin{eqnarray}\label{fi1}
\int_{\Delta_T}\mathbb{E}|I^{ij}_3(\epsilon,\delta,s,t)|^{p}dsdt &\lesssim & \int_0^T \int_0^{\frac{t}{2}} s^{-Hp}(t-s)^{(2H-2)p}dsdt\\
\nonumber&+& \int_0^T \int_{\frac{t}{2}}^t (t-s)^{(2\gamma H + 3H-2)p} s^{-2H p}dsdt\\
\nonumber&+& \int_0^T \int_0^{\frac{t}{2}} s^{-Hp}(t-s)^{-p}dst^{(2H-1)p}dt\\
\nonumber&+& \int_0^T \int_{\frac{t}{2}}^t (t-s)^{(2\gamma H + H-1)p} s^{-2H p}dst^{(2H-1)p}dt. 
\end{eqnarray}
It is enough to estimate the first two integrals in the right-hand side of (\ref{fi1}). We have 
$$\int_0^T \int_0^{\frac{t}{2}} s^{-Hp}(t-s)^{(2H-2)p}dsdt\lesssim \int_0^T t^{(H-2)p+1}dt < \infty,$$
for $1 < p < \frac{1}{1-\frac{H}{2}}$. In order the second integral in the right-hand side of (\ref{fi1}) be finite, the following condition is necessary
\begin{equation}\label{fi2}
(2-3H - 2\gamma H)p < 1.
\end{equation}
If $2-3H - 2\gamma H <0$, any $p>1$ realizes (\ref{fi2}). In case $2-3H - 2\gamma H >0$, then $(2-3H - 2\gamma H)p < 1 \Longleftrightarrow 1 < p < \frac{1}{2-3H-2\gamma H}$ which is fulfilled for $1 < p < \min \Big\{\frac{1}{2-2H-2\gamma H}; \frac{1}{1- \frac{H}{2}}\Big\}$. Then,

$$\int_0^T \int_{\frac{t}{2}}^t (t-s)^{(2\gamma H + 3H-2)p} s^{-2H p}dsdt\lesssim \int_0^T t^{-2H p } t^{(2\gamma H + 3H-2)p +1}dt < \infty,$$
for $1 < p < \frac{1}{1-\frac{H}{2}-\gamma H}$. The analysis of $I^{ij}_2(\epsilon,\delta,s,t)$ follows the same lines. This concludes the proof.
\end{proof}

Lemmas \ref{LemmaF1}, \ref{LemmaF2}, \ref{LemmaF3} and \ref{LemmaF4} allow us to conclude uniformly integrability of the family of processes
\begin{equation}\label{proUI}
Y^{(i)}_{s,t} Y^{(j)}_{s,t}  \Big\{Z^{(1)}_{s,t}(\epsilon,\delta)Z^{(2)}_{s,t}(\delta) - \mathbb{E}\big[ Z^{(1)}_{s,t}(\epsilon,\delta)Z^{(2)}_{s,t}(\delta) \big]\Big\}\mathds{1}_{\Delta_{r,T}(\epsilon,\delta)}(s,t)
\end{equation}
in $L^1(\Omega\times \Delta_T)$ parameterized by $(\epsilon,\delta) \in (0,1)^2$ for $r=1$. It remains to analyse 

\begin{equation}\label{boundary}
\mathbb{E}\int_{\Delta_{r,T}(\epsilon,\delta)} Y^{(i)}_{s,t} Y^{(j)}_{s,t} \mathcal{W}^{0,ij}(\epsilon,\delta;s,t)dsdt
\end{equation}
over the boundary sets $\Delta_{r,T}(\epsilon,\delta)$ for $r=3,4$. In view of Lemmas \ref{LemmaF1}, \ref{LemmaF2}, \ref{LemmaF3} and \ref{LemmaF4}, it is quite intuitive that (\ref{boundary}) vanishes as $(\epsilon,\delta)\downarrow 0$. 

\begin{lemma}\label{LemmaF5}
Let $g \in C^{\bar{\gamma},\gamma}([0,T]\times \mathbb{R}^d)$ with $\bar{\gamma}> \frac{1}{2}-H, \gamma > \frac{1}{2H}-1$ and $Y_\cdot=g(\cdot, B_\cdot)$. Then, 
$$
\mathbb{E}\int_{\Delta_{r,T}(\epsilon,\delta)}| Y^{(i)}_{s,t} Y^{(j)}_{s,t} \mathcal{W}^{0,ij}(\epsilon,\delta;s,t)|dsdt\rightarrow 0,
$$
as $(\epsilon,\delta)\downarrow 0$, for $r=3,4$.  
\end{lemma}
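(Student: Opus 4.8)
The plan is to show that each of the two boundary contributions, over $\Delta_{3,T}(\epsilon,\delta)=\{0\le s\le\epsilon,\ t>\delta\}$ and $\Delta_{4,T}(\epsilon,\delta)=\{s>\epsilon,\ 0\le t\le\delta\}$, converges to zero by reproducing the four-term estimates of Lemmas \ref{LemmaF1}--\ref{LemmaF4}, but this time tracking the extra smallness coming from the fact that one of the integration variables is confined to an interval of length $\epsilon$ or $\delta$. First I would expand $\mathcal{W}^{0,ij}(\epsilon,\delta;s,t)$ via the decomposition \eqref{prodZ} into the four products $\eta_{ab}(\epsilon,s,t)\eta_{cd}(\delta,s,t)$ times the centered Gaussian bilinear terms, and bound $|Y^{(i)}_{s,t}Y^{(j)}_{s,t}|$ using the H\"older property of $g$ as in the displayed inequality preceding \eqref{prodZ}. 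On $\Delta_{3,T}$ one has $0\le s\le\epsilon$, so $s-\epsilon\le 0$ and we must use the representations of $\mathbf{o}_{11},\mathbf{o}_{12}$ (equivalently $\mathcal{O}^{\epsilon,\delta}_{s,t}$) valid when $0\ge s-\epsilon$; concretely $\mathbf{o}_{11}(\epsilon,s)=R(s,s+\epsilon)\lesssim\epsilon^{2H}$ and $\mathbf{o}_{12}(\epsilon,s,t)=R(t,s+\epsilon)-R(s,s+\epsilon)$, which is controlled by $|t-s|^{H}\epsilon^{H}$ up to constants; symmetrically on $\Delta_{4,T}$ one uses the $0\ge t-\delta$ representations for the second row.

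The key steps, in order: (i) take absolute values and use H\"older's inequality to pass to $L^1(\Omega\times\Delta_{r,T}(\epsilon,\delta))$ bounds controlled by $\mathbb{E}|\beta^{ij}_\bullet(\gamma,H,s,t)|^q$ with $q$ close to $1$, exactly as in the proofs of Lemmas \ref{LemmaF1}--\ref{LemmaF4}; (ii) substitute the boundary expressions for $\mathcal{O}^{\epsilon,\delta}_{s,t}$, producing the same deterministic kernels in $s,t$ that appeared before, but now multiplied by an honest factor $\epsilon^{2H}$ or $\epsilon^{H}|t-s|^{H}$ (resp.\ with $\delta$) instead of $\frac{1}{2\epsilon}\ell(\epsilon/s)s^{2H}\lesssim \epsilon^{2H-1}s^{?}$ — the point being that on the boundary strip the variable $s$ itself is of order $\epsilon$, so after integrating $ds$ over $[0,\epsilon]$ one gains a further power of $\epsilon$; (iii) use Lemma \ref{fhlemma} (the bound \eqref{deterest} on $\Theta^{-2}_{s,t}$) together with Lemma \ref{covfest} to reduce to elementary integrals of the form $\int_0^\epsilon\int_\delta^T (\cdots)\,dt\,ds$, whose $s$-integral over $[0,\epsilon]$ is bounded by $\epsilon^{\theta}$ for some $\theta>0$ provided the exponent of $s$ in the integrand is $>-1$, which holds in the same range of $p$ as in Lemmas \ref{LemmaF1}--\ref{LemmaF4} (one checks the relevant exponents $Hp-(2H+1)p+2Hp=(H-1)p>-1$, $(H-1)p>-1$, etc., are unchanged); (iv) conclude $\mathbb{E}\int_{\Delta_{3,T}(\epsilon,\delta)}|\cdots|\,dsdt\lesssim \epsilon^{\theta}\|g\|^2_{\bar\gamma,\gamma}\to 0$, and symmetrically $\lesssim\delta^{\theta}\to 0$ on $\Delta_{4,T}$.

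The main obstacle I anticipate is the bookkeeping of which representation of $\mathcal{O}^{\epsilon,\delta}_{s,t}$ to use: on $\Delta_{3,T}$ the first row entries $\mathbf{n}_{11},\mathbf{n}_{12}$ (hence $\eta_{11},\eta_{12}$) are no longer of the clean form $\tfrac{s^{2H}}{2}\ell(\epsilon/s)$, and one must verify that $R(s,s+\epsilon)\lesssim\epsilon^{2H}$ and $|R(t,s+\epsilon)-R(s,s+\epsilon)|\lesssim\epsilon^{H}|t-s|^{H}\wedge|t-s|^{2H}$ give enough smallness when paired against the singular factor $\Theta^{-2}_{s,t}\lesssim|t-s|^{-4H}(s^{4H}\wedge t^{4H})^{-1}$ — here the cancellation that previously came from the $\varphi(s,t)$ weights is the delicate point, and one uses that on the strip $s\le\epsilon$ the factor $s^{-4H}$ is compensated because $\beta^{ij}_\bullet$ and the numerators carry matching powers of $s$, exactly as in the interior estimates. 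Once that is in place the $\epsilon$- or $\delta$-power gained from the short integration interval makes every term vanish; by Remark \ref{remsimplex} the same argument handles the symmetric half of $[0,T]^2_\star$, completing the proof.
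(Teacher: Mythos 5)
Your proposal is correct and follows essentially the same route as the paper, which itself only sketches this proof: expand $\mathcal{W}^{0,ij}$ via (\ref{prodZ}), replace the interior expressions $\tfrac{s^{2H}}{2}\ell(\epsilon/s)$ and $\tfrac{|t-s|^{2H}}{2}\ell(\epsilon/(t-s))$ by the boundary quantities $R(s,s+\epsilon)$ and $R(t,s+\epsilon)-R(s,s+\epsilon)$ on the strip $\{s\le\epsilon\}$ (symmetrically on $\{t\le\delta\}$), and rerun the kernel estimates of Lemmas \ref{LemmaF1}--\ref{LemmaF4}, with the vanishing coming from integrating over an interval of length $\epsilon$ (or $\delta$). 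The only blemish is the placeholder exponent in ``$\epsilon^{2H-1}s^{?}$'' (the relevant bound is $\tfrac{1}{2\epsilon}\tfrac{s^{2H}}{2}|\ell(\epsilon/s)|\lesssim s^{2H-1}$, and on the strip $\tfrac{1}{2\epsilon}R(s,s+\epsilon)\lesssim\epsilon^{2H-1}\le s^{2H-1}$), which does not affect the argument.
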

\begin{proof}
By using (\ref{prodZ}) and the representations of $\eta_{11}(\epsilon,\delta), \eta_{12}(\epsilon,\delta), \eta_{21}(\epsilon,\delta)$ and $\eta_{22}(\epsilon,\delta)$ over the regions $\Delta_{r,T}(\epsilon,\delta)$, the proof is almost identical to Lemmas \ref{LemmaF1}, \ref{LemmaF2}, \ref{LemmaF3} and \ref{LemmaF4}. For instance, now $\Delta_{3,T}(\epsilon,\delta)$ is negligible as $(\epsilon,\delta)\downarrow 0$ and we should play with 
$\epsilon^{-1}R(s,s+\epsilon)$ and $\epsilon^{-1} [R(t,s+\epsilon) - R(s,s+\epsilon)]$ rather than $\frac{s^{2H}}{2}\ell \big(\epsilon s^{-1}\big)$ and $\frac{|t-s|^{2H}}{2}\ell \big( \epsilon(t-s)^{-1}\big)$ with the same arguments (with the obvious modification) for the region $\Delta_{4,T}(\epsilon,\delta)$. Therefore, we omit the details of the proof. 
\end{proof}

By Remark \ref{remsimplex}, Lemmas \ref{Lambdalimit}, \ref{LemmaF1}, \ref{LemmaF2}, \ref{LemmaF3}, \ref{LemmaF4} and \ref{LemmaF5}, we arrive at the following result. 

\begin{proposition}\label{proprandom}
If $g \in C^{\bar{\gamma},\gamma}([0,T]\times \mathbb{R}^d)$ with $\bar{\gamma}> \frac{1}{2}-H, \gamma > \frac{1}{2H}-1$ and $Y_\cdot = g(\cdot, B_\cdot)$, then 
$$\lim_{(\epsilon,\delta)\downarrow 0}-\frac{1}{2}\sum_{i,j=1}^d \int_{[0,T]^2_\star } \mathbb{E}\Big[Y^{(i)}_{s,t} Y^{(j)}_{s,t} \mathcal{W}^{0,ij}(\epsilon,\delta;s,t)\Big]dsdt = $$
$$ = -\frac{1}{2}\sum_{i,j=1}^d \int_{[0,T]^2_\star} \mathbb{E}\Big[Y^{(i)}_{s,t} Y^{(j)}_{s,t} \mathcal{W}^{ij}(s,t)\Big]dsdt.$$
\end{proposition}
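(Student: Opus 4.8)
The plan is to combine the pointwise almost-sure convergence established in Lemma \ref{Lambdalimit} with the uniform integrability statements proved in the preceding lemmas, upgrading this to convergence of the full double integral over $\Delta_T$ via a Vitali-type argument, and then to handle the other $2$-simplex of $[0,T]^2_\star$ by the symmetry explained in Remark \ref{remsimplex}. Concretely, fix $1\le i,j\le d$ and recall the decomposition $\Delta_T = \bigcup_{r=1}^4 \Delta_{r,T}(\epsilon,\delta)$. For $r=1$, the product
$$
Y^{(i)}_{s,t} Y^{(j)}_{s,t}\,\mathcal{W}^{0,ij}(\epsilon,\delta;s,t)\,\mathds{1}_{\Delta_{1,T}(\epsilon,\delta)}(s,t)
$$
is handled by splitting $\mathcal{W}^{0,ij}$ along the four terms of (\ref{prodZ}); each of these four contributions is, by Lemmas \ref{LemmaF1}, \ref{LemmaF2}, \ref{LemmaF3} and \ref{LemmaF4}, bounded in $L^p(\Omega\times\Delta_T)$ uniformly in $(\epsilon,\delta)\in(0,1)^2$ for a suitable $p>1$ (with $\|g\|_{\bar\gamma,\gamma}^{2p}$ as the constant). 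A family bounded in $L^p$ for some $p>1$ over a finite measure space is uniformly integrable, and since Lemma \ref{Lambdalimit} gives $\mathcal{W}^{0,ij}(\epsilon,\delta;s,t)\to\mathcal{W}^{ij}(s,t)$ a.s. for each $(s,t)\in[0,T]^2_\star$ (note $\mathds{1}_{\Delta_{1,T}(\epsilon,\delta)}(s,t)\to 1$ for each fixed $(s,t)\in\Delta_T$), the Vitali convergence theorem applied on $\Omega\times\Delta_T$ yields
$$
\lim_{(\epsilon,\delta)\downarrow 0}\int_{\Delta_{1,T}(\epsilon,\delta)}\mathbb{E}\big[Y^{(i)}_{s,t}Y^{(j)}_{s,t}\mathcal{W}^{0,ij}(\epsilon,\delta;s,t)\big]\,dsdt = \int_{\Delta_T}\mathbb{E}\big[Y^{(i)}_{s,t}Y^{(j)}_{s,t}\mathcal{W}^{ij}(s,t)\big]\,dsdt.
$$
Here one also needs $Y^{(i)}_{s,t}Y^{(j)}_{s,t}\mathcal{W}^{ij}\in L^1(\Omega\times\Delta_T)$, which follows by Fatou from the uniform $L^p$ bounds (or directly by redoing the estimates of Lemmas \ref{LemmaF1}--\ref{LemmaF4} with $\eta_{kl}(s,t)$ in place of $\eta_{kl}(\epsilon,s,t)$, using the limiting relations recorded in the proof of Lemma \ref{Lambdalimit}).

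Next, for the boundary regions $r=3,4$ (the region $r=2$ being irrelevant by the elementary lemma preceding (\ref{prodZ})), Lemma \ref{LemmaF5} states precisely that
$$
\mathbb{E}\int_{\Delta_{r,T}(\epsilon,\delta)}\big|Y^{(i)}_{s,t}Y^{(j)}_{s,t}\mathcal{W}^{0,ij}(\epsilon,\delta;s,t)\big|\,dsdt\to 0
$$
as $(\epsilon,\delta)\downarrow 0$, so these pieces contribute nothing to the limit. Summing the four regional contributions and then over $1\le i,j\le d$, and finally inserting the factor $-\tfrac12$, gives the asserted convergence of
$$
-\frac{1}{2}\sum_{i,j=1}^d\int_{\Delta_T}\mathbb{E}\big[Y^{(i)}_{s,t}Y^{(j)}_{s,t}\mathcal{W}^{0,ij}(\epsilon,\delta;s,t)\big]\,dsdt
$$
to $-\tfrac12\sum_{i,j}\int_{\Delta_T}\mathbb{E}[Y^{(i)}_{s,t}Y^{(j)}_{s,t}\mathcal{W}^{ij}(s,t)]\,dsdt$. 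By Remark \ref{remsimplex}, projecting onto the sigma-algebra generated by $(B_t,B_{t,s})$ for $0\le t<s\le T$ and repeating verbatim the analogue of Lemmas \ref{LemmaF1}--\ref{LemmaF5} on the simplex $\{0\le t<s\le T\}$ gives the matching convergence there; adding the two simplices reconstitutes the integral over $[0,T]^2_\star$ and completes the proof.

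\textbf{Main obstacle.} The genuinely substantive work is already packaged into Lemmas \ref{LemmaF1}--\ref{LemmaF5}: obtaining the uniform $L^p$ bounds on each of the four terms of $\mathcal{W}^{0,ij}$ requires the sharp singularity analysis of $\Theta_{s,t}^{-1}$ near the diagonal and the origin (Lemma \ref{fhlemma}), the growth control on $\ell$ and $\Delta\ell$ (Lemmas \ref{growthELL}, \ref{deltaLlemma}), and the estimate $|\varphi(s,t)|\le\min\{s^H|t-s|^H;|t-s|^{2H};s^{2H}\}$ (Lemma \ref{covfest}), together with the H\"older exponents $\gamma>\tfrac{1}{2H}-1$, $\bar\gamma>\tfrac12-H$ being exactly what makes the exponents of the resulting power functions integrable. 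Granting those lemmas, the only remaining care is bookkeeping: choosing a single $p>1$ that simultaneously satisfies the (finitely many) constraints of Lemmas \ref{LemmaF1}--\ref{LemmaF4}, and verifying that the dominating functions are indeed in $L^1$ over the finite-measure space $\Omega\times\Delta_T$ so that boundedness in $L^p$ upgrades to uniform integrability; these are routine but must be stated so that Vitali's theorem applies cleanly. I expect no conceptual difficulty beyond assembling these ingredients.
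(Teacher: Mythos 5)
Your proposal is correct and follows essentially the same route as the paper: the authors likewise combine the uniform $L^p$-boundedness ($p>1$) from Lemmas \ref{LemmaF1}--\ref{LemmaF4} (hence uniform integrability on the finite measure space $\Omega\times\Delta_T$) with the almost-sure pointwise convergence of Lemma \ref{Lambdalimit}, dispose of the boundary regions $\Delta_{3,T},\Delta_{4,T}$ via Lemma \ref{LemmaF5}, and invoke the symmetry of Remark \ref{remsimplex} for the other simplex. Your explicit remarks on Vitali's theorem, the integrability of the limit via Fatou, and the choice of a single admissible $p$ merely make precise what the paper leaves implicit.
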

By combining Proposition \ref{proprandom}, Lemmas \ref{detlemma} and \ref{marglemma}, we conclude the proof of Theorem \ref{mainTH1}.  

\section{The isometry and proof of Theorem \ref{mainTH2}}\label{Isosection}
This section presents the underlying Hilbert space which supports the second moment of the symmetric-Stratonovich. For this purpose, we recall that FBM admits the RKHS $L_R(\mathbb{R}^d)$ described as follows. From now on, any function $f$ defined on $[0,T]$ will be extended to the real line $\mathbb{R}$ as $f(t)=f(0)$ for $t\le 0$ and $f(t) = f(T)$ for $t\ge T$. Let $C^1_c(\mathbb{R}_+,\mathbb{R}^d)$ be the space of $\mathbb{R}^d$-valued $C^1$-functions with compact support in $\mathbb{R}_+$. Let us recall 

$$\mu(dsdt)= H(2H-1)|t-s|^{2H-2}dsdt$$
is a $\sigma$-finite measure on $[0,T]^2$. Let $I:C^1_c(\mathbb{R}_+,\mathbb{R}^d)\rightarrow L^2(\mathbb{P})$ be the linear mapping defined by

$$I(f):= \int_0^\infty f_s dB_s:= \langle f(+\infty), B_\infty\rangle - \int_0^{+\infty} \langle B_s, d f(s)\rangle, $$
where $\langle f(+\infty), B_\infty\rangle:= \lim_{t\rightarrow +\infty}\langle f(t), B_T\rangle=0$. Let $\tilde{L}_R(\mathbb{R}^d)$ be the linear space of all Borel functions $f:\mathbb{R}_+\rightarrow \mathbb{R}^d$ such that

\begin{description}
  \item[i] $\int_0^\infty |f|^2(s)|R|(ds,T) < \infty$,
  \item[ii] $\int_{\mathbb{R}^2_+} |f(s_1) - f(s_2)|^2|\mu|(ds_1 ds_2) < \infty$.
\end{description}
For $f\in \tilde{L}_R(\mathbb{R}^d)$, we define

\begin{equation}\label{Vinner}
\|f\|^2_{L_R(\mathbb{R}^d)}:=\int_0^\infty |f(s)|^2R(ds,T) - \frac{1}{2}\int_{\mathbb{R}^2_+} |f(s_1) - f(s_2)|^2 \mu(ds_1 ds_2).
\end{equation}
It is possible to show $\tilde{L}_R(\mathbb{R}^d)$ is a Hilbert space w.r.t.~the inner-product associated with (\ref{Vinner}) and

$$
\mathbb{E}|I(f)|^2 = \|f\|^2_{L_R(\mathbb{R}^d)},
$$
for every $f\in C^1_0(\mathbb{R}_+,\mathbb{R}^d)$. Let $L_R(\mathbb{R}^d)$ be the closure of $C^1_0(\mathbb{R}_+,\mathbb{R}^d)$ w.r.t.~$\|\cdot\|_{L_R(\mathbb{R}^d)}$ as a subset of $\tilde{L}_R(\mathbb{R}^d)$. Then, $I:C^1_0(\mathbb{R}_+,\mathbb{R}^d)\rightarrow L^2(\mathbb{P})$ can be uniquely extended to a linear isometry

\begin{equation}\label{paleyop}
I:L_R(\mathbb{R}^d)\rightarrow L^2(\mathbb{P}).
\end{equation}

One can check $L_R(\mathbb{R}^d)$ is a real separable Hilbert space and

\begin{equation}\label{BVcase}
\int_0^\infty \varphi dB  = -\int_0^\infty \langle B, d\varphi\rangle,
\end{equation}
for every bounded variation function $\varphi$ with compact support. This implies that

\begin{equation}\label{cov1}
R(s,t) = \langle \mathds{1}_{[0,t]}, \mathds{1}_{[0,s]}\rangle_{L_R}; s,t \in [0,T].
\end{equation}

See Propositions 6.18, 6.14, 6.22, 6.32 and 6.33 in \cite{krukrusso} for the proof of these results. The Paley - Wiener integral associated with $B$ is given by

$$I(f):= \int_0^\infty fdB; f \in L_R(\mathbb{R}^d).$$
At this point, it is important to recall the following result. Let $V:\mathbb{R}_+\rightarrow \mathbb{R}_+$ be an increasing function such that 

$$\int_{[0,T]^2}V^2(t-s)|\mu|(dsdt) < \infty.$$
Then, every Borel function $f:[0,T]\rightarrow \mathbb{R}^d$ such that 
$$|f_{s,t}|\le V(|t-s|),$$
for $(s,t) \in [0,T]^2_\star$ will be an element of $L_R(\mathbb{R}^d)$. See Proposition 6.9 in \cite{kruk2007} for the proof of this result.  
%For further details, and \cite{OhashiRusso}. 

Next, we give an important example of this article. By using Prop 6.9 in \cite{kruk2007}, one can easily check the statement of the following example.  
\begin{example}\label{RKHSex}
Let $f:[0,T]\rightarrow \mathbb{R}^d$ be a continuous function with modulus of continuity of the form $V(t) = t^{\frac{1}{2}-H}\psi(t)$, where $\psi$ satisfies 

\begin{itemize}
  \item $\lim_{t\downarrow 0}\psi(t)=\psi(0)=0$
  \item $\lim_{t\downarrow 0}\frac{\psi(t)}{t^\alpha}=+\infty$ for every $\alpha \in (0,1]$. 
  \item There exists $\eta\in (0,1)$ such that $\int_0^\eta \frac{1}{t} \psi^2 (t) dt < \infty$.
\end{itemize}
Then, $f \in L_R(\mathbb{R}^d)$. For a concrete example, take $\alpha> \frac{1}{2}$, $\psi(t) = (\log(t^{-1}))^{-\alpha}$ for $0 < t < 1$ and $\psi(0)=0$.  
\end{example}
\begin{remark}\label{RKHSex1}
The Example \ref{RKHSex} shows that the RKHS $L_R(\mathbb{R}^d)$ contains a large class of functions beyond the H\"older scale of modulus of continuity. Of course, if $\psi$ in Example \ref{RKHSex} is of the form $\psi(t) =  t^\beta$ for some $\beta \in (0,1)$, then $f$ in Example \ref{RKHSex} is $\alpha$-H\"older continuous for $\alpha > \frac{1}{2}-H$. In other words, $C^{\alpha} \subset L_R (\mathbb{R}^ d)$ for every $\alpha > \frac{1}{2}-H$ and this inclusion fails for $\alpha=\frac{1}{2}-H$. Moreover, $L_R(\mathbb{R}^d)$ contains continuous functions with a more diverse modulus of continuity as illustrated in Section \ref{examplesection}.  
\end{remark}

With Theorem \ref{mainTH1} at hand, we now add randomness for the $L^2(\mathbb{P})$-norm of the symmetric-Stratonovich integral. Fix $\frac{1}{4} < H < \frac{1}{2}$. Let $\mathcal{E}^0$ be the set of all processes of the form $g(t,B_t)$ where $g:[0,T]\times \mathbb{R}^d\rightarrow \mathbb{R}^d \in C^{\bar{\gamma},\gamma}([0,T]\times \mathbb{R}^d)$ for some $\bar{\gamma}> \frac{1}{2}-H$ and $\gamma > \frac{1}{2H}-1$. By Theorem \ref{mainTH1}, we know that

$$\mathbb{E}\Big[ \int_0^T Y_td^{0}B_t \int_0^T Z_t d^0B_t\Big] = \mathbb{E}\big\langle Y, Z \big\rangle_{L_R(\mathbb{R}^d)} -\frac{1}{2}\mathbb{E}\int_{[0,T]^2} \big\langle Y_{s,t}\otimes Z_{s,t}, \mathcal{W}(s,t)\big\rangle dsdt
$$
\begin{equation}\label{iso}
+\mathbb{E}\int_0^T \big \langle Y_t\otimes Z_t, \mathcal{M}_t\big\rangle dt,
\end{equation}
for every $Y,Z \in \mathcal{E}^0$.

%For $a = (a_1,\ldots, a_d) \in \mathbb{R}^d$ and $b = (b_1,\ldots, b_d) \in \mathbb{R}^d$, we set

%$$a\otimes b := a_i b_j; 1\le i,j\le d$$
We can write (\ref{iso}) in the more compact form as a function of $(\Psi,\Lambda)$:  
\begin{eqnarray}\label{iso2}
\nonumber\mathbb{E}\Bigg[ \int_0^T Y_td^0B_t \int_0^T Z_t d^0B_t\Bigg]  &=& \mathbb{E}\int_{[0,T]} \big \langle Y_t \otimes Z_t, \Psi_t\big\rangle dt\\
&-& \frac{1}{2}\mathbb{E}\int_{[0,T]^2} \big \langle Y_{s,t}\otimes Z_{s,t}, \Lambda(s,t)\big\rangle dsdt,
\end{eqnarray}
for every $Y,Z \in \mathcal{E}^0$. Here, we recall 
$$
\Lambda(s,t)=\mathcal{W}(s,t) + \frac{\partial^2 R}{\partial t \partial s}(s,t)I_{d\times d }
$$
and

$$
\Psi_t= \mathcal{M}_t + \frac{\partial R}{\partial t}(T,t)I_{d\times d},
$$
where the matrix-valued processes $\mathcal{M}$ and $\mathcal{W}$ are given by (\ref{Mproc}) and (\ref{Wproc}), respectively. Let us define

\begin{eqnarray}\label{sesq}
\langle Y, Z \rangle_{\mathcal{E}^0}&:=&\mathbb{E}\int_{[0,T]} \big \langle Y_t \otimes Z_t, \Psi_t\big\rangle dt
\\
\nonumber &-& \frac{1}{2}\mathbb{E}\int_{[0,T]^2} \big \langle Y_{s,t}\otimes Z_{s,t}, \Lambda(s,t)\big\rangle dsdt
%\nonumber&=& \mathbb{E}\Bigg[ \int_0^T g(t,B_t)d^-B_t \int_0^T f(s,B_s)d^-B_s\Bigg]
\end{eqnarray}
for $Y,Z \in \mathcal{E}^0$. The bracket in (\ref{sesq}) is a semi-inner product on the vector space $\mathcal{E}^0$. Indeed, the positiveness and bilinearity follow from $\langle Y, Y\rangle_{\mathcal{E}^0} = \|\int_0^T Y_t d^0 B_t\|_2^2$ and the linearity of the Lebesgue integral, respectively. The symmetry is immediate from the definition. We then set

$$
\|Y\|^2_{\mathcal{E}^0}:= \langle Y, Y \rangle_{\mathcal{E}^0}.
$$

We will identify $Y$ and $Z$ to be in the same equivalence class whenever 
$$\langle Y-Z,Y-Z\rangle_{\mathcal{E}^0} = 0$$ 
and quotient $\mathcal{E}^0$ w.r.t.~these classes. With a slight abuse of notation, we write $\mathcal{E}^0$ as the quotient space equipped with the inner product (\ref{sesq}). 

Let $\mathcal{L}_R(\mathbb{R}^d)$ be the (Hilbert space) completion of $\big(\mathcal{E}^0, \langle \cdot, \cdot \rangle_{\mathcal{E}^0}\big)$. Then, we can extend the domain of the symmetric-Stratonovich stochastic integral from $\mathcal{E}^0$ to $\mathcal{L}_R(\mathbb{R}^d)$ via (\ref{sesq}).

\begin{corollary}
Fix $\frac{1}{4}< H < \frac{1}{2}$. If $Y \in \mathcal{L}_R(\mathbb{R}^d)$, then symmetric-Stratonovich stochastic integral $\int_0^T Y_t\hat{d}^0B_t$ is defined as

$$\int_0^T Y_s\hat{d}^0B_s:=\lim_{n\rightarrow +\infty} \int_0^T Y^n_sd^0B_s~\quad \text{in}~L^2(\mathbb{P}),$$
for every sequence $\{Y_n; n\ge 1\} \subset \mathcal{E}^0$ such that $Y_n\rightarrow Y$ in $\mathcal{L}_R(\mathbb{R}^d)$. Moreover,

%$U(Y_n)\rightarrow Y$ in $\mathcal{L}_R(\mathbb{R}^d)$, where $U$ is the one-to-one linear isometry between $\mathcal{E}(\mathbb{R}^d)$ and  $\mathcal{L}_R(\mathbb{R}^d)$ and $\gamma> \frac{1}{2H}-1$. Moreover,

$$\| Y\|^2_{\mathcal{L}_R(\mathbb{R}^d)} = \mathbb{E}\Bigg|\int_0^T Y_t\hat{d}^0B_t\Bigg|^2,$$
for every $Y \in \mathcal{L}_R(\mathbb{R}^d)$.
\end{corollary}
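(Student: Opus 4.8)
The plan is to treat this corollary as the ``extension by density'' step that formally closes the construction, since all the analytic content is already contained in Theorem~\ref{mainTH1}. First I would observe that the map $J:\mathcal{E}^0\to L^2(\mathbb{P})$ given by $J(Y):=\int_0^T Y_t\,d^0B_t=\lim_{\epsilon\downarrow 0}I^0(\epsilon,Y,dB)$ is well defined: the limit exists in $L^2(\mathbb{P})$ by Theorem~\ref{mainTH1}, and $J$ is linear in $Y$. Polarizing the isometry formula (\ref{isoFORMULA}) — equivalently, reading off (\ref{iso}) and (\ref{iso2}) — gives
$$\mathbb{E}\big[J(Y)\,J(Z)\big]=\langle Y,Z\rangle_{\mathcal{E}^0},\qquad Y,Z\in\mathcal{E}^0,$$
and in particular $\|J(Y)\|_{L^2(\mathbb{P})}^2=\|Y\|_{\mathcal{E}^0}^2$. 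This identity shows simultaneously that $J$ descends to the quotient space $\mathcal{E}^0$ (if $\|Y-Z\|_{\mathcal{E}^0}=0$ then $J(Y)=J(Z)$ in $L^2(\mathbb{P})$) and that $J$ is a linear isometry from the pre-Hilbert space $(\mathcal{E}^0,\langle\cdot,\cdot\rangle_{\mathcal{E}^0})$ into $L^2(\mathbb{P})$.

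Next I would invoke the standard extension theorem for bounded linear maps: since $\mathcal{L}_R(\mathbb{R}^d)$ is by definition the completion of $(\mathcal{E}^0,\langle\cdot,\cdot\rangle_{\mathcal{E}^0})$ and $L^2(\mathbb{P})$ is complete, the isometry $J$ admits a unique bounded linear extension $\bar J:\mathcal{L}_R(\mathbb{R}^d)\to L^2(\mathbb{P})$. One then \emph{defines} $\int_0^T Y_t\,\hat d^0B_t:=\bar J(Y)$ for $Y\in\mathcal{L}_R(\mathbb{R}^d)$; by construction this coincides with $\int_0^T Y_t\,d^0B_t$ whenever $Y\in\mathcal{E}^0$. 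Being a limit of isometries (or directly by density of $\mathcal{E}^0$ together with continuity of both sides), $\bar J$ preserves norms, so $\mathbb{E}\big|\int_0^T Y_t\,\hat d^0B_t\big|^2=\|\bar J(Y)\|_{L^2(\mathbb{P})}^2=\|Y\|_{\mathcal{L}_R(\mathbb{R}^d)}^2$, which is the asserted isometry.

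Finally I would check the representation as an $L^2(\mathbb{P})$-limit along approximating sequences: given any $\{Y_n\}\subset\mathcal{E}^0$ with $Y_n\to Y$ in $\mathcal{L}_R(\mathbb{R}^d)$ — such sequences exist because $\mathcal{E}^0$ is dense in its completion — the continuity of $\bar J$ yields $\int_0^T Y^n_t\,d^0B_t=J(Y_n)=\bar J(Y_n)\to\bar J(Y)=\int_0^T Y_t\,\hat d^0B_t$ in $L^2(\mathbb{P})$. In particular the limit is independent of the chosen sequence, which is exactly the well-posedness claim in the statement. There is no genuine obstacle in this argument; the only points that deserve a line of justification are that $J$ passes to the quotient (immediate from the norm identity above) and that the bounded extension is unique (immediate, since a bounded operator is determined by its values on a dense set) — everything substantive has been established in Theorem~\ref{mainTH1} and the lemmas preceding it.
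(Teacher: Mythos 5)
Your proposal is correct and follows essentially the same route the paper takes: the paper states this corollary without proof, relying precisely on the standard completion argument (linearity and the polarized isometry from Theorem \ref{mainTH1} on $\mathcal{E}^0$, passage to the quotient, unique isometric extension to the completion $\mathcal{L}_R(\mathbb{R}^d)$, and sequence-independence by continuity). Nothing is missing.
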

Next, we are going to exploit the isometry property and we aim to provide a large class of processes in $\mathcal{L}_R(\mathbb{R}^d)$ under slightly stronger conditions than (\ref{constRKHS}) which determines the regularity of the functions in $L^2(\Omega; L_R(\mathbb{R}^d))$. By the very definition of (\ref{etadef1}) and (\ref{etadef2}), we have 

$$\eta_{11}(s,t) =   \Big\{ s^{2H-1}|t-s|^{2H} - \varphi(s,t) |t-s|^{2H-1}\Big\}\frac{H}{\Theta_{s,t}}$$
$$\eta_{21}(s,t) =  \Big\{ \Big[t^{2H-1} - |t-s|^{2H-1}\Big]|t-s|^{2H} - \varphi(s,t)|t-s|^{2H-1}\Big\}\frac{H}{\Theta_{s,t}}$$
$$\eta_{12}(s,t) =  \Big\{ s^{2H}|t-s|^{2H-1} - s^{2H-1}\varphi(s,t)  \Big\}\frac{H}{\Theta_{s,t}}$$
$$\eta_{22}(s,t) =  \Big\{ s^{2H} |t-s|^{2H-1} - \varphi(s,t)\Big[t^{2H-1} - |t-s|^{2H-1}\Big]   \Big\}\frac{H}{\Theta_{s,t}}.$$

For $i\neq j$, let $\mathbb{A}_{s,t}(i,j) = (a_{mn}(s,t))$ be the $4\times 4$-upper triangular matrix given by $a_{13}(s,t) = \eta_{11}\eta_{21}(s,t)$, $a_{14}(s,t) = \eta_{11}\eta_{22}(s,t)$, $a_{23}(s,t) = \eta_{12}\eta_{21}(s,t)$, $a_{24}(s,t) = \eta_{12}\eta_{22}(s,t)$ and $a_{mn}(s,t)=0$ otherwise. For $i\neq j$, we recall the Gaussian vector $(B^{(i)}_s,  B^{(i)}_{s,t},  B^{(j)}_s ,  B^{(j)}_{s,t})$ has the covariance matrix

$$\overline{\Sigma}_{s,t}(i,j)=\left(
  \begin{array}{cc}
    \Sigma_{s,t}(i,i) & 0 \\
    0 & \Sigma_{s,t}(j,j) \\
  \end{array}
\right).
$$
In case $i=j$, we just set $\overline{\Sigma}_{s,t} = \Sigma_{s,t}(i,i)$ and $\mathbb{A}_{s,t}(i,i) = \left(
                                                                                             \begin{array}{cc}
                                                                                              \eta_{11}\eta_{21}(s,t)  & \eta_{11}\eta_{22}(s,t) \\
                                                                                               \eta_{12}\eta_{21}(s,t) & \eta_{12}\eta_{22}(s,t) \\
                                                                                             \end{array}
                                                                                           \right)
$
for $(s,t) \in [0,T]^2_\star$. Finally, for $i\neq j$, we set $\mathbb{W}_{s,t}(i,j):= \text{sym}~(\mathbb{A}_{s,t}(i,j))$ and $\mathbb{W}_{s,t}:= \mathbb{W}_{s,t}(i,i)$, whenever $i=j$.

By construction, we can express $\mathcal{W}^{ij}(s,t)$ in terms of the quadratic form associated with the matrix $\mathbb{W}_{s,t}(i,j)$ and applied to the Gaussian vector $(B^{(i)}_s,  B^{(i)}_{s,t},  B^{(j)}_s ,  B^{(j)}_{s,t})$ equipped with the covariance matrix $\overline{\Sigma}_{s,t}(i,j)$ for $(s,t) \in \Delta_T$.

\begin{lemma}\label{sharpW}
The following formulas hold 
\begin{equation}\label{isoM}
\| \mathcal{M}_t\|_2=
c(d)\frac{R(t,T)}{\gamma(t)} \frac{d\gamma}{dt}(t),
\end{equation}
for $0 < t\le T$ and 

\begin{equation}\label{isoW}
\| \mathcal{W}(s,t)\|_2 = \sqrt{2\sum_{i,j=1}^d \text{Tr} \big[ (\mathbb{W}_{s,t}(i,j) \overline{\Sigma}_{s,t}(i,j))^2 \big]               },
\end{equation}
for $(s, t) \in \Delta_T$, where $c(d)$ is a constant which depend on $d$.

\end{lemma}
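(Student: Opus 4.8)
The plan is to compute the two $L^2(\mathbb{P})$-norms directly from the explicit chaos expansions of $\mathcal{M}_t$ and $\mathcal{W}(s,t)$ given in Theorem \ref{mainTH1}, using the fact that both live in the second Wiener chaos and reduce, up to constants, to traces of powers of products of matrices with Gaussian covariance matrices.

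For \eqref{isoM}: starting from the definition \eqref{Mproc}, $\mathcal{M}^{ij}_t = \big(t^{-2H}\big)^2 R(t,T)\,\gamma'(t)\,\big(B^{(i)}_tB^{(j)}_t - \mathrm{cov}(B^{(i)}_t;B^{(j)}_t)\big)$, and since $B^{(i)}_t$ has variance $\gamma(t)=t^{2H}$ and $B^{(i)}_t, B^{(j)}_t$ are independent for $i\neq j$, I would write $B^{(i)}_t = \sqrt{\gamma(t)}\,\xi_i$ for an i.i.d.\ standard Gaussian vector $(\xi_1,\dots,\xi_d)$. Then $\mathcal{M}^{ij}_t = \frac{R(t,T)}{\gamma(t)}\gamma'(t)\,(\xi_i\xi_j - \delta_{ij})$, so $\|\mathcal{M}_t\|_2^2 = \big(\frac{R(t,T)}{\gamma(t)}\gamma'(t)\big)^2 \, \mathbb{E}\big[\sum_{i,j}(\xi_i\xi_j-\delta_{ij})^2\big]$. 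The expectation $\sum_{i,j}\mathbb{E}(\xi_i\xi_j-\delta_{ij})^2$ is a universal constant depending only on $d$ (it equals $d(d-1)\cdot 1 + d\cdot 2 = d^2+d$, since $\mathbb{E}\xi_i^2\xi_j^2 = 1$ for $i\neq j$ and $\mathbb{E}(\xi_i^2-1)^2 = 2$), which we name $c(d)^2$. This yields \eqref{isoM} immediately.

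For \eqref{isoW}: the key is the matrix reformulation already set up in the paragraph preceding the lemma. By construction $\mathcal{W}^{ij}(s,t)$ equals the centered quadratic form $G^\top \mathbb{W}_{s,t}(i,j)\, G - \mathbb{E}[G^\top \mathbb{W}_{s,t}(i,j)\,G]$ applied to the Gaussian vector $G = (B^{(i)}_s, B^{(i)}_{s,t}, B^{(j)}_s, B^{(j)}_{s,t})$ with covariance $\overline{\Sigma}_{s,t}(i,j)$ (for $i\neq j$; the analogous $2$-dimensional statement holds for $i=j$, using \eqref{INTREP1} and the symmetrization $\mathbb{W}_{s,t}$). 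I would then invoke the standard Wiener--It\^o identity for second-chaos elements: for a symmetric matrix $A$ and a centered Gaussian vector $G$ with covariance $\Sigma$, one has $\mathbb{E}\big[\big(G^\top A G - \mathrm{Tr}(A\Sigma)\big)^2\big] = 2\,\mathrm{Tr}\big((A\Sigma)^2\big)$; this is why the symmetrization $\mathbb{W}_{s,t}(i,j) = \mathrm{sym}(\mathbb{A}_{s,t}(i,j))$ is needed, since $G^\top A G = G^\top \mathrm{sym}(A) G$. Using $\|\mathcal{W}(s,t)\|_2^2 = \mathbb{E}\langle \mathcal{W}(s,t), \mathcal{W}(s,t)\rangle = \sum_{i,j} \mathbb{E}|\mathcal{W}^{ij}(s,t)|^2$ and applying the identity coordinate-pair by coordinate-pair gives $\|\mathcal{W}(s,t)\|_2^2 = \sum_{i,j} 2\,\mathrm{Tr}\big[(\mathbb{W}_{s,t}(i,j)\overline{\Sigma}_{s,t}(i,j))^2\big]$, which is \eqref{isoW} after taking the square root.

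The main obstacle I anticipate is purely bookkeeping rather than conceptual: one must carefully verify the matrix identity $\mathcal{W}^{ij}(s,t) = G^\top \mathbb{W}_{s,t}(i,j) G - \mathbb{E}[\cdots]$ by matching the four product terms $B^{(i)}_s B^{(j)}_s$, $B^{(i)}_s B^{(j)}_{s,t}$, $B^{(j)}_s B^{(i)}_{s,t}$, $B^{(i)}_{s,t}B^{(j)}_{s,t}$ in \eqref{INTREP1} against the entries $a_{13},a_{14},a_{23},a_{24}$ of $\mathbb{A}_{s,t}(i,j)$, using the relations $\eta_{11}(s,t)=\lambda_{11}(s,t)+\lambda_{12}(s,t)$, $\eta_{21}(s,t)=\lambda_{21}(s,t)+\lambda_{22}(s,t)$, $\eta_{12}=\lambda_{12}$, $\eta_{22}=\lambda_{22}$, together with the symmetry $\lambda_{11}(s,t)=\lambda_{22}(t,s)$, $\lambda_{12}(s,t)=\lambda_{21}(t,s)$, and being careful that on $\Delta_T$ one uses \eqref{INTREP1} while for $i=j$ the cross terms $B^{(i)}_s B^{(i)}_{s,t}$ collapse and the symmetrization $\mathrm{sym}(\mathbb{A}_{s,t}(i,i))$ reproduces exactly the coefficient of that single cross term. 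The off-diagonal independence of $B^{(i)}$ and $B^{(j)}$ is what makes $\overline{\Sigma}_{s,t}(i,j)$ block-diagonal and hence makes the trace factorize cleanly across the $(i,j)$ sum.
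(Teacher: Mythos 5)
Your proposal is correct and follows essentially the same route as the paper: both formulas are obtained from the variance identity $\mathrm{Var}(G^\top A G)=2\,\mathrm{Tr}\big[(A\Sigma)^2\big]$ for a symmetric matrix $A$ and centered Gaussian $G$ with covariance $\Sigma$, applied coordinate-pair by coordinate-pair, and from a direct variance computation for $\mathcal{M}^{ij}_t$ (the paper treats the cases $i\neq j$ and $i=j$ separately and absorbs the count into $c(d)$, while you rescale to i.i.d.\ standard Gaussians and compute $c(d)^2=d^2+d$ explicitly, which agrees with the paper's implicit constant).
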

\begin{proof}
Fix $(s,t) \in \Delta_T$. Let $Q_{s,t}(i,j)$ be the quadratic form associated with the matrix $\mathbb{W}_{s,t}(i,j)$. We know that  

\begin{eqnarray*}
\mathbb{E}|\mathcal{W}^{ij}(s,t)|^2&=&\text{Var} (Q_{s,t} (B^{(i)}_s,  B^{(i)}_{s,t},  B^{(j)}_s ,  B^{(j)}_{s,t}) = 2 \text{Tr} \big[ (\mathbb{W}_{s,t}(i,j) \overline{\Sigma}_{s,t}(i,j))^2 \big].
%&=& c^2_H|t-s|^{2(2H-2)} \frac{2}{c^2_H}|t-s|^{2(2-2H)} \text{Tr} \big[ (\mathbb{W}_{s,t}(i,j) \overline{\Sigma}_{s,t}(i,j))^2 \big] 
\end{eqnarray*}
Therefore, (\ref{isoW}) holds true. To keep notation simple, we set $f(t) = \frac{R(t,T)}{t^{4H}}\frac{d\gamma}{dt}(t)$ and we write 

$$\mathcal{M}^{ij}_t = f(t)B^{(i)}_t B^{(j)}_t - \mathbb{E}\big[ f(t)B^{(i)}_t B^{(j)}_t\big],$$
for $t \in (0,T]$. Then, 

\begin{eqnarray*}
\mathbb{E}|\mathcal{M}^{ij}_t|^2 &=& \text{Var}\big( f(t)B^{(i)}_t B^{(j)}_t  \big) = \mathbb{E}\big|f(t) B^{(i)}_t B^{(j)}_t\big|^2 -   \big(\mathbb{E}\big[ f(t)B^{(i)}_t B^{(j)}_t\big]\big)^2\\
& =& f^2 (t)t^{4H},
\end{eqnarray*}
for $i\neq j$. For $i=j$, $\mathbb{E}|\mathcal{M}^{ij}_t|^2 = f^2(t) \text{Var}\big(|B^{(i)}_t|^2 \big)= 2f^2(t) t^{4H}$. Therefore, 
$$
\| \mathcal{M}^{ij}_t\|_2=\left\{
\begin{array}{rl}
\frac{R(t,T)}{\gamma(t)} \frac{d\gamma}{dt}(t); & \hbox{if} \ i\neq j \\
\frac{2R(t,T)}{\gamma(t)} \frac{d\gamma}{dt}(t);& \hbox{if} \ i=j. \\
\end{array}
\right.
$$
Then, we can find a constant $c(d)$ which depends on $d$ such that (\ref{isoM}) holds true.
\end{proof}

%$$\mathbb{W}(s,t) =\left(
%                     \begin{array}{cccc}
%                       B^{(i)}_s & B^{(i)}_{s,t} &  B^{(j)}_s &  B^{(j)}_{s,t} \\
%                     \end{array}
%                   \right)^\top
%\mathbb{W}(s,t) \left(
%                  \begin{array}{c}
%                    B^{(i)}_s \\
%                    B^{(i)}_{s,t} \\
%                    B^{(j)}_s \\
%                    B^{(j)}_{s,t}  \\
%                  \end{array}
%                \right)
%$$

%Next, we are going to show that

%$\theta \in L^{q}([0,T]^2)$ for $1\le q < \frac{1}{1-H}$. For this purpose, we will exhibit an upper bound for $\| \mathcal{W}(s,t)\|_q$ as follows. Let us define 
Next, we are going to show that the leading term for $\| \mathcal{W}(s,t)\|_{q}$ is actually $ \big| \frac{\partial^2 R}{\partial t\partial s}(s,t) \big|$.  Let us denote

\begin{equation}\label{kappaexp}
\kappa(s,t):= (s\wedge t)^{H-1}|t-s|^{H-1} + \frac{|t-s|^{2H-1}}{s\vee t};
\end{equation}
for $(s,t) \in [0,T]^2_\star$ and $s\wedge t >0$. One can easily check that $\kappa \in L^{q}([0,T]^2)$ for $1\le q < \frac{1}{1-H}$.

%just liMore precisely, we aim to provide a class of processes in $\mathcal{L}_R(\mathbb{R}^d)$ where 

\begin{lemma}\label{West}
For $0 < H < \frac{1}{2}$ and $q>1$, there exists a constant $C = C(q,H,T)$ such that  
$$\| \mathcal{W}(s,t)\|_{q}\le C\Big| \frac{\partial^2 R}{\partial t\partial s}(s,t) \Big|+ \kappa(s,t),$$
for every $(s,t) \in [0,T]^2_\star$. 
\end{lemma}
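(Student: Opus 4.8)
The plan is to prove the bound $\|\mathcal{W}(s,t)\|_q \le C\big|\tfrac{\partial^2 R}{\partial t\partial s}(s,t)\big| + \kappa(s,t)$ by reducing to the $L^2$-formula already established in Lemma~\ref{sharpW}, since all norms on a fixed Wiener chaos of bounded order are equivalent. Indeed, $\mathcal{W}^{ij}(s,t)$ lives in the sum of the zeroth and second Wiener chaos (it is a centred quadratic form in the Gaussian vector $(B^{(i)}_s, B^{(i)}_{s,t}, B^{(j)}_s, B^{(j)}_{s,t})$), so by hypercontractivity there is a constant $C=C(q,H,T)$ with $\|\mathcal{W}(s,t)\|_q \le C\|\mathcal{W}(s,t)\|_2$. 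Thus it suffices to estimate $\|\mathcal{W}(s,t)\|_2$, and by \eqref{isoW} this amounts to bounding $\mathrm{Tr}\big[(\mathbb{W}_{s,t}(i,j)\overline{\Sigma}_{s,t}(i,j))^2\big]$, equivalently the Hilbert--Schmidt norm of $\mathbb{W}_{s,t}(i,j)\overline{\Sigma}_{s,t}(i,j)$, which I will bound by $\|\mathbb{W}_{s,t}(i,j)\|_{HS}\cdot\|\overline{\Sigma}_{s,t}(i,j)\|_{op}$.

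The second factor is elementary: $\overline{\Sigma}_{s,t}(i,j)$ is block-diagonal with blocks $\Sigma_{s,t}(i,i)=\mathrm{Cov}(B^{(i)}_s,B^{(i)}_{s,t})$, whose entries are $s^{2H}$, $\varphi(s,t)$, and $|t-s|^{2H}$; hence $\|\overline{\Sigma}_{s,t}(i,j)\|_{op} \lesssim s^{2H}+|t-s|^{2H}\lesssim (s\vee|t-s|)^{2H}$ on $\Delta_T$, using Lemma~\ref{covfest} for the off-diagonal term. The real work is the first factor: the entries of $\mathbb{W}_{s,t}(i,j)$ are the products $\eta_{11}\eta_{21}$, $\eta_{11}\eta_{22}$, $\eta_{12}\eta_{21}$, $\eta_{12}\eta_{22}$ evaluated at $(s,t)$, and I will estimate each $\eta_{mn}(s,t)$ individually using the explicit formulas displayed just before Lemma~\ref{sharpW}, together with $\Theta_{s,t}=|t-s|^{2H}A(s,t)$ and the bound $\tfrac{1}{|A(s,t)|}\lesssim \tfrac{1}{s^{2H}\wedge t^{2H}}$ from Lemma~\ref{fhlemma}, and $|\varphi(s,t)|\le\min\{s^H|t-s|^H;|t-s|^{2H};s^{2H}\}$ from Lemma~\ref{covfest}. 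A careful term-by-term accounting of the numerators should give bounds of the shape $|\eta_{11}(s,t)|, |\eta_{21}(s,t)| \lesssim \text{(more regular, involving } t^{2H-1} \text{ or } s^{2H-1}\text{-type terms)}$ and $|\eta_{12}(s,t)|, |\eta_{22}(s,t)| \lesssim s^{2H}|t-s|^{-1}+\cdots$, whose pairwise products, after multiplication by $(s\vee|t-s|)^{2H}$ and taking square roots, collapse to the two contributions $|t-s|^{2H-2}\approx\big|\tfrac{\partial^2 R}{\partial t\partial s}(s,t)\big|$ and the boundary/near-origin terms $(s\wedge t)^{H-1}|t-s|^{H-1}+|t-s|^{2H-1}(s\vee t)^{-1}=\kappa(s,t)$. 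The distinction between the region $s<t/2$ and $t/2<s<t$ (as in the proofs of Lemmas~\ref{LemmaF2}--\ref{LemmaF4}) will be convenient for deciding which of the competing bounds on $\varphi$ and on $A(s,t)$ is sharpest.

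The main obstacle is precisely this bookkeeping for the $\eta_{mn}$: each is a difference of two products whose individual pieces carry the strong $|t-s|^{-1}$-type singularity flagged in Remark~\ref{renor}, and the cancellation that tames them is visible only after writing $\eta_{11}=\lambda_{11}+\lambda_{12}$, $\eta_{21}=\lambda_{21}+\lambda_{22}$ and using the mean-value/Taylor estimates on $x\mapsto x^{2H}$ (in the spirit of Lemma~\ref{deltaLlemma} and the inequality $(1+x)^{2H}-1\lesssim x$). I expect to need a short auxiliary computation showing $|\eta_{11}(s,t)|\lesssim t^{2H-1}+ (s\wedge t)^{-1}|\varphi(s,t)|$-type control (so that the factor $\tfrac{1}{\Theta_{s,t}}$ is absorbed), and similarly for $\eta_{21}$, while $\eta_{12},\eta_{22}$ retain a genuine $|t-s|^{-1}$ but are paired only with the regular factors. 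Once these four scalar estimates are in hand, assembling \eqref{isoW}, applying $\|\cdot\|_q\lesssim\|\cdot\|_2$, and verifying $\kappa\in L^q$ for $1\le q<\tfrac{1}{1-H}$ (direct integration near the diagonal and near the axes) finishes the proof; the last point is already asserted in the text right after \eqref{kappaexp} and needs only the routine check that $(H-1)q>-1$ and $(2H-1)q>-1$.
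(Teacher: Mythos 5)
Your overall strategy (reduce to $L^2$ by equivalence of norms on the second chaos, then estimate the deterministic coefficients) is sound and is essentially what the paper does, but there is a genuine gap at the step where you bound $\mathrm{Tr}\big[(\mathbb{W}_{s,t}\overline{\Sigma}_{s,t})^2\big]$ by $\|\mathbb{W}_{s,t}\|_{HS}^2\,\|\overline{\Sigma}_{s,t}\|_{op}^2$. That factorization destroys the pairing between each coefficient $\eta_{mn}\eta_{m'n'}$ and the variance of \emph{its own} Gaussian monomial, and the resulting bound is quantitatively false near the diagonal. Concretely, on $\Delta_T$ one has $\eta_{12}(s,t)\eta_{22}(s,t)\sim H^2|t-s|^{-2}$ as $s\uparrow t$ with $t$ fixed (no cancellation: $\varphi(s,t)=O(|t-s|^{2H})$ there, so the $\varphi$-terms are lower order and $\eta_{12},\eta_{22}\approx H|t-s|^{-1}$), hence $\|\mathbb{W}_{s,t}\|_{HS}\gtrsim |t-s|^{-2}$, while $\|\overline{\Sigma}_{s,t}\|_{op}\ge \gamma(s)=s^{2H}$. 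Your bound is therefore at least of order $t^{2H}|t-s|^{-2}$, which exceeds the target $C|t-s|^{2H-2}+\kappa(s,t)$ by the unbounded factor $(t/|t-s|)^{2H}$. The correct accounting must attach the weight $|t-s|^{2H}=\|B^{(i)}_{s,t}B^{(j)}_{s,t}-\mathbb{E}[\cdot]\|_2$ (not $s^{2H}$) to the coefficient $\eta_{12}\eta_{22}$, the weight $s^{2H}$ to $\eta_{11}\eta_{21}$, and $s^{H}|t-s|^{H}$ to the cross terms. You can get this either by expanding the trace entry-by-entry (which keeps each $\mathbb{W}_{ab}$ multiplied by the covariances in row $b$ and column $a$), or, more simply, by bypassing the trace formula altogether and applying H\"older's inequality directly to the four-term representation \eqref{INTREP1} of $\mathcal{W}^{ij}$ together with the Gaussian moment bounds — which is exactly what the paper does; it then reduces to showing
$$|\eta_{11}\eta_{21}|\,s^{2H}+|\eta_{12}\eta_{22}|\,|t-s|^{2H}+\big(|\eta_{11}\eta_{22}|+|\eta_{12}\eta_{21}|\big)\,s^{H}|t-s|^{H}\lesssim |t-s|^{2H-2}+\kappa(s,t).$$

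A secondary warning on the ``bookkeeping'' you defer: for the term $|\eta_{11}\eta_{21}|s^{2H}$ the naive bound $|t^{2H-1}-|t-s|^{2H-1}|\le|t-s|^{2H-1}$ inside $\eta_{21}$ produces a contribution $s^{-1}|t-s|^{2H-1}$, which is \emph{not} dominated by $\kappa(s,t)$ as $s\downarrow 0$; you need the refined inequality $|t-s|^{2H-1}-t^{2H-1}\le \tfrac{s}{t}|t-s|^{2H-1}$ (the paper's estimate \eqref{parR}), whose extra factor $s/t$ turns this contribution into $|t-s|^{2H-1}/t$, which is the second piece of $\kappa$. You correctly anticipate that Taylor-type cancellations are needed, but this is the specific one that makes the origin-singularity come out as $\kappa$ rather than something non-integrable. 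With these two corrections the rest of your outline (operator-norm bound on $\overline{\Sigma}$, use of Lemmas \ref{fhlemma} and \ref{covfest}, integrability of $\kappa$) goes through.
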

\begin{proof}
By symmetry (see (\ref{INTREP1}) and (\ref{INTREP2})), it is sufficient to check over the simplex $\Delta_T$. From (\ref{INTREP1}), we can write

\begin{eqnarray*}
\mathcal{W}^{ij}(s,t)&=& \eta_{11}(s,t)\eta_{21}(s,t)\big(B^{(i)}_sB^{(j)}_s-s^{2H}\delta_{ij}\big) \\
\nonumber&+& \eta_{11}(s,t)\eta_{22}(s,t) \big( B^{(i)}_s B^{(j)}_{s,t} - \varphi(s,t)\delta_{ij} \big)\\
\nonumber&+& \eta_{12}(s,t) \eta_{21}(s,t) \big( B^{(j)}_s B^{(i)}_{s,t} - \varphi(s,t)\delta_{ij} \big)\\
\nonumber&+& \eta_{12}(s,t)\eta_{22}(s,t)\big(B^{(i)}_{s,t} B^{(j)}_{s,t} - |t-s|^{2H}\delta_{ij} \big),
\end{eqnarray*}
for $1\le i,j\le d$. By H\"older's inequality and using the Gaussian property of $B$, it is sufficient to estimate 
$$
|\eta_{11}(s,t)\eta_{21}(s,t)|s^{2H} + |\eta_{12}(s,t)\eta_{22}(s,t)||t-s|^{2H}$$ 
$$+ \big|\eta_{11}(s,t) \eta_{22}(s,t) + \eta_{12}(s,t) \eta_{21}(s,t) \big| s^{H}|t-s|^H$$
for $(s,t) \in \Delta_T$. 

We observe

\begin{equation}\label{parR}
\Big| \frac{\partial R}{\partial t}(s,t)\Big| = H \big[|t-s|^{2H-1}-t^{2H-1} \big] < |t-s|^{2H-1}\frac{s}{t}, 
\end{equation}
for $(s,t) \in \Delta_T$. By using Lemmas \ref{fhlemma}, \ref{covfest} and (\ref{parR}), one can easily check 

\begin{equation}\label{fia}
|\eta_{12}(s,t)\eta_{22}(s,t)| |t-s|^{2H}\lesssim |t-s|^{2H-2} + s^{H-1}|t-s|^{H-1} + \frac{1}{t}|t-s|^{2H-1},
\end{equation}
\begin{equation}\label{fib}
|\eta_{11}(s,t)\eta_{21}(s,t)| s^{2H} \lesssim |t-s|^{2H-2} + s^{H-1}|t-s|^{H-1} + \frac{1}{t}|t-s|^{2H-1} + t^{-2H}|t-s|^{2H-1},
\end{equation}

\begin{equation}\label{fic}
|\eta_{11}(s,t)\eta_{22}(s,t)|s^H|t-s|^H \lesssim  |t-s|^{2H-2} + s^{H-1}|t-s|^{H-1} + \frac{1}{t}|t-s|^{2H-1} + t^{-2H}|t-s|^{2H-1},
\end{equation}

\begin{equation}\label{fid}
|\eta_{12}(s,t)\eta_{21}(s,t)| s^{H}|t-s|^H\lesssim |t-s|^{2H-2} + s^{H-1}|t-s|^{H-1} + \frac{1}{t}|t-s|^{2H-1} + t^{-2H}|t-s|^{2H-1},
\end{equation}
for $(s,t) \in \Delta_T$.

%\newpage 
%\begin{equation}\label{phiterm}
%|\eta_{11}(s,t)\eta_{22}(s,t)|s^H|t-s|^H \le \frac{H^2}{\Theta^2_{s,t}} s^{4H-1}|t-s|^{4H-1}\times s^H|t-s|^H
%\end{equation}
%$$+\frac{H^2}{\Theta^2_{s,t}} |\varphi(s,t)| \Big| \big[t^{2H-1}  - |t-s|^{2H-1} \big] s^{2H-1}|t-s|^{2H} + |t-s|^{4H-2}s^{2H}  \Big|s^H|t-s|^H$$
%$$+\frac{H^2}{\Theta^2_{s,t}}\varphi^2(s,t)|t-s|^{2H-1}\big|t^{2H-1}  - |t-s|^{2H-1} \big| s^{H}|t-s|^H,$$
%for $(s,t) \in \Delta_T$. From Lemma \ref{fhlemma}, we observe the first term in the right-hand side of (\ref{phiterm}) is bounded by 
%$$|t-s|^{2H-2} \Big| \frac{t-s}{s}\Big|^{1-H},$$
%for $(s,t) \in \Delta_T$. Lemmas \ref{fhlemma} and \ref{covfest} and (\ref{parR}) allow us to state the second term in the right-hand side of (\ref{phiterm}) is bounded by 

%$$\frac{H^2}{\Theta^2_{s,t}} |\varphi(s,t)| |t-s|^{4H-2}s^{2H} s^H|t-s|^H\lesssim |t-s|^{2H-2} \Big| \frac{t-s}{s} \Big|^H,$$
%and 
%$$\frac{H^2}{\Theta^2_{s,t}} |\varphi(s,t)| \big| \big[t^{2H-1}  - |t-s|^{2H-1} \big]\big| s^{2H-1}|t-s|^{2H}  \times s^H|t-s|^H\lesssim |t-s|^{2H-2}s^{\min\{-H; 2H-1\}},$$
%for $(s,t) \in \Delta_T$. By using again Lemma \ref{fhlemma} and (\ref{parR}), the third term in the right-hand side of (\ref{phiterm}) is bounded by 

%$$|t-s|^{2H-2} \Big\{s^{-H} + s^{1-3H}t^{H-1} \Big\},$$
%for $(s,t) \in \Delta_T$. 

%In case $\frac{1}{4} < H < \frac{1}{3}$, the leading term is 
%$$|t-s|^{2H-2}(st)^{H-1},$$
%for $(s,t) \in \Delta_T$.  
\end{proof}
In the sequel, we recall

$$\rho(dsdt)=|\mu|(dsdt) + \kappa(s,t)dsdt,$$
where $|\mu|$ is given by (\ref{measures}). In the sequel, for a given $1 < p < \infty$, we denote $\mathcal{E}_{p,\rho}$ as the set of all processes such that

\begin{equation}\label{mod1}
\int_0^T \|Y_t\|^{2}_{2p}m(dt) + \frac{1}{2} \int_{[0,T]^2} \|Y_{s,t}\|^2_{2p} \rho(dsdt) < \infty.
\end{equation}

\begin{lemma}\label{almRK}
If $Y$ satisfies (\ref{mod1}) for $1 < p < \infty$, then the right-hand side of (\ref{iso2}) is finite and 
\begin{eqnarray}
\nonumber\|Y\|^2_{\mathcal{E}^0}&\lesssim& \mathbb{E}\int_0^T \|Y_t\|^{2}_{2p} m(dt) \\ 
&+&\label{st4} \frac{1}{2}\mathbb{E}\int_{[0,T]^2} \|Y_{s,t}\|^2_{2p} \rho(dsdt) < \infty.
\end{eqnarray}
\end{lemma}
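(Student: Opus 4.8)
The plan is to bound each of the three terms in the compact isometry expression
\eqref{iso2} for $Y=Z$ separately, using the decompositions
$\Psi_t = \mathcal{M}_t + \frac{\partial R}{\partial t}(T,t)I_{d\times d}$ and
$\Lambda(s,t) = \mathcal{W}(s,t) + \frac{\partial^2 R}{\partial t\partial s}(s,t)I_{d\times d}$,
together with the $L^2$-norm formulas for $\mathcal{M}$ and $\mathcal{W}$ from Lemma \ref{sharpW}
and the pointwise domination $\|\mathcal{W}(s,t)\|_q \lesssim |\frac{\partial^2R}{\partial t\partial s}(s,t)| + \kappa(s,t)$
from Lemma \ref{West}.

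First I would treat the marginal term $\mathbb{E}\int_0^T \langle Y_t\otimes Y_t, \Psi_t\rangle dt$.
Splitting $\Psi_t$, the deterministic part gives $\mathbb{E}\int_0^T |Y_t|^2 \frac{\partial R}{\partial t}(T,t)\,dt = \mathbb{E}\int_0^T |Y_t|^2 m(dt)$,
which is controlled by $\int_0^T \|Y_t\|_2^2\, m(dt) \le \int_0^T \|Y_t\|_{2p}^2\, m(dt)$ by Jensen.
For the stochastic part, $|\langle Y_t\otimes Y_t,\mathcal{M}_t\rangle| \le |Y_t|^2 |\mathcal{M}_t|$ (Frobenius--Cauchy--Schwarz),
so by H\"older with exponents $p$ and $p'=p/(p-1)$,
$\mathbb{E}[|Y_t|^2|\mathcal{M}_t|] \le \|\,|Y_t|^2\|_p \|\mathcal{M}_t\|_{p'} = \|Y_t\|_{2p}^2\|\mathcal{M}_t\|_{p'}$;
and since $\mathcal{M}_t$ lies in the second Wiener chaos, $\|\mathcal{M}_t\|_{p'}\approxeq \|\mathcal{M}_t\|_2 = c(d)\frac{R(t,T)}{\gamma(t)}\gamma'(t)$ by Lemma \ref{sharpW}.
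Because $\frac{R(t,T)}{\gamma(t)}$ is bounded on $[0,T]$ (it equals $1$ at $t=T$ and $R(t,T)\le\gamma(t)^{1/2}\gamma(T)^{1/2}$, or more simply $R(t,T)/\gamma(t)\le$ const by the explicit FBM expression) and $\gamma'(t)\,dt = m(dt)$ up to the same type of bound via $m(dt) = \frac{\partial R}{\partial t}(T,t)dt$, one gets
$\mathbb{E}\int_0^T \langle Y_t\otimes Y_t,\mathcal{M}_t\rangle dt \lesssim \int_0^T \|Y_t\|_{2p}^2\, m(dt)$.

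Next I would handle the increment term $-\frac12\mathbb{E}\int_{[0,T]^2}\langle Y_{s,t}\otimes Y_{s,t},\Lambda(s,t)\rangle ds\,dt$.
Splitting $\Lambda(s,t)$ the deterministic piece is
$-\frac12\mathbb{E}\int_{[0,T]^2}|Y_{s,t}|^2\frac{\partial^2 R}{\partial t\partial s}(s,t)\,ds\,dt
= \frac12\mathbb{E}\int_{[0,T]^2}|Y_{s,t}|^2\,|\mu|(ds\,dt)$,
since $\frac{\partial^2R}{\partial t\partial s}(s,t)<0$ off the diagonal so $|\mu|(ds\,dt) = -\frac{\partial^2R}{\partial t\partial s}(s,t)\,ds\,dt$;
this is $\le \frac12\int_{[0,T]^2}\|Y_{s,t}\|_{2p}^2\,|\mu|(ds\,dt)$ by Jensen.
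For the stochastic piece, again $|\langle Y_{s,t}\otimes Y_{s,t},\mathcal{W}(s,t)\rangle|\le |Y_{s,t}|^2|\mathcal{W}(s,t)|$,
and by H\"older $\mathbb{E}[|Y_{s,t}|^2|\mathcal{W}(s,t)|]\le \|Y_{s,t}\|_{2p}^2\|\mathcal{W}(s,t)\|_{p'}$;
Lemma \ref{West} (with $q=p'$) gives $\|\mathcal{W}(s,t)\|_{p'}\lesssim |\frac{\partial^2R}{\partial t\partial s}(s,t)| + \kappa(s,t)$,
so $\frac12\mathbb{E}\int_{[0,T]^2}|\langle Y_{s,t}\otimes Y_{s,t},\mathcal{W}(s,t)\rangle|\,ds\,dt
\lesssim \frac12\int_{[0,T]^2}\|Y_{s,t}\|_{2p}^2\big(|\mu|(ds\,dt)+\kappa(s,t)\,ds\,dt\big)
= \frac12\int_{[0,T]^2}\|Y_{s,t}\|_{2p}^2\,\rho(ds\,dt)$.
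Collecting the four bounds and using that each is finite by hypothesis \eqref{mod1} yields \eqref{st4}, and in particular the right-hand side of \eqref{iso2} is absolutely integrable, so $\|Y\|^2_{\mathcal{E}^0}$ is well-defined and dominated as claimed.

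The only genuinely delicate point I anticipate is the H\"older step: a priori $\langle Y_{s,t}\otimes Y_{s,t},\mathcal{W}(s,t)\rangle$ need only be integrated against Lebesgue measure on $[0,T]^2_\star$, and one must be sure that the pairing of the $L^{2p}$-norm of $|Y_{s,t}|^2$ against the $L^{p'}$-norm of the chaos variable $\mathcal{W}(s,t)$ really reproduces the measure $\rho$ with a finite total contribution — i.e. that the equivalence of $L^p$-norms on the second Wiener chaos is uniform in $(s,t)$ (which it is, the constant depending only on $p'$) and that Lemma \ref{West}'s kernel $\kappa$ is the correct one. Everything else is an application of Cauchy--Schwarz for the Frobenius inner product, Jensen, and Fubini. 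I would also note the harmless identification of constants: the $\lesssim$ in \eqref{st4} absorbs $c(d)$, the norm-equivalence constants, and the $\sup_{t}R(t,T)/\gamma(t)$ bound.
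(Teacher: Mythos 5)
Your proposal is correct and follows essentially the same route as the paper: Cauchy--Schwarz for the Frobenius pairing, H\"older with exponents $p$ and $p'$, Lemma \ref{sharpW} to control $\|\mathcal{M}_t\|$ by $t^{2H-1}\lesssim \frac{\partial R}{\partial t}(T,t)$, and Lemma \ref{West} to control $\|\mathcal{W}(s,t)\|_{p'}$ by the density of $\rho$. One micro-caveat: the Cauchy--Schwarz bound $R(t,T)\le \gamma(t)^{1/2}\gamma(T)^{1/2}$ alone gives $R(t,T)/\gamma(t)\le (T/t)^{H}$, which is not bounded near $t=0$, so you do need your alternative observation $R(t,T)\le t^{2H}$ (which follows from the $2H$-H\"older property of $x\mapsto x^{2H}$, and is what the paper uses).
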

\begin{proof}

First, Cauchy-Schwartz's inequality yields 
\begin{eqnarray*}
\big|\langle Y_t\otimes Y_t, \Psi_t\rangle \big|&\le& |Y_t\otimes Y_t||\Psi_t|\\
&\le&|Y_t|^2|\mathcal{M}_t| + |Y_t|^2\frac{\partial R}{\partial t}(T,t)
\end{eqnarray*}
Observe that $|R(T,t)|\lesssim t^{2H}$, $\frac{\partial R}{\partial t}(T,t) = H \{t^{2H-1} + (T-t)^{2H-1}\}$. We then apply H\"older's inequality and Lemma \ref{sharpW} to get 

$$\mathbb{E} \Big[|Y_t|^2 |\mathcal{M}_t|\Big]\lesssim \|Y_t\|^2_{2p} t^{2H-1}$$
for every $t \in (0,T]$. This shows that 
$$\int_0^T\mathbb{E}\big|\langle Y_t\otimes Y_t, \Psi_t\rangle \big|dt\lesssim \int_0^T \| Y_t\|^2_{2p}\frac{\partial R}{\partial t}(t,T)dt .$$
Similarly, Cauchy-Schwartz's inequality and Lemma \ref{West} yield 

\begin{eqnarray*}
\int_{[0,T]^2}\mathbb{E}\big|\langle Y_{s,t}\otimes Y_{s,t}, \Lambda(s,t)\rangle \big|dsdt &\le& \int_{[0,T]^2}\mathbb{E} \Big[|Y_{s,t}|^2 |\mathcal{W}(s,t)|\Big] dsdt\\
&+& \int_{[0,T]^2}\mathbb{E}|Y_{s,t}|^2|\mu|(dsdt)\\
&\lesssim& \int_{[0,T]^2}\|Y_{s,t}\|^2_{2p}\rho(dsdt). 
\end{eqnarray*}

This concludes the proof.
\end{proof}

\begin{theorem}\label{mainTH3}
Let $g:[0,T]\times \mathbb{R}^d\rightarrow \mathbb{R}^d$ be a bounded function. Assume there exists $p>1$ such that 

\begin{equation}\label{fundASS1}
\int_{[0,T]^2} \|g(t,B_t)-g(s,B_s)\|^2_{2p} \rho(dsdt) < \infty,
\end{equation}
and

\begin{equation}\label{fundASS2}
\int_{[0,T]^2} \Big(\sup_{n\ge 1}\int_{[0,T]\times \mathbb{R}^d} \Big\|g\big(t-\frac{r}{n},B_t-\frac{z}{n}\big)-g\big(s-\frac{r}{n},B_s-\frac{z}{n}\big)\Big\|^{2p}_{2p}\alpha(r,z)drdz\Big)^{\frac{1}{p}} \rho(dsdt)< \infty,
\end{equation}
for every non-negative $\alpha \in C^\infty_c((0,T)\times \mathbb{R}^d)$ with $\int \alpha(r,z)drdz=1$. Then $Y = g(B)  \in \mathcal{L}_R(\mathbb{R}^d)\cap \mathcal{E}_{p,\rho}$ and 

\begin{eqnarray}
\nonumber\|Y\|^2_{\mathcal{L}_R(\mathbb{R}^d)}&\lesssim& \int_0^T \|Y_t\|^{2}_{2p} m(dt) \\
\label{st3}&+& \frac{1}{2} \int_{[0,T]^2} \|Y_{s,t}\|^2_{2p} \rho(dsdt).
\end{eqnarray}
\end{theorem}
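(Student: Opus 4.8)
The plan is to prove Theorem~\ref{mainTH3} by a mollification-and-approximation argument that reduces the general bounded $g$ satisfying \eqref{fundASS1}--\eqref{fundASS2} to the H\"older-type integrands in $\mathcal{E}^0$ for which the isometry \eqref{iso2} is already established. First I would fix a non-negative $\alpha \in C^\infty_c((0,T)\times\mathbb{R}^d)$ with $\int\alpha=1$ and set $\alpha_n(r,z) := n^{d+1}\alpha(nr,nz)$, and define the mollified functions
\begin{equation*}
g_n(t,x) := \int_{[0,T]\times\mathbb{R}^d} g\big(t-\tfrac{r}{n},x-\tfrac{z}{n}\big)\alpha(r,z)\,dr\,dz,
\end{equation*}
with the convention on extending $g$ off $[0,T]$ used in Section~\ref{Isosection}. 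Since $g$ is bounded and $\alpha$ is smooth with compact support, each $g_n$ is smooth in both variables with bounded derivatives on $[0,T]\times\mathbb{R}^d$, hence $g_n \in C^{\bar\gamma,\gamma}([0,T]\times\mathbb{R}^d)$ for \emph{all} $\bar\gamma,\gamma\le 1$; in particular $g_n \in C^+_H$ and $Y^n := g_n(B) \in \mathcal{E}^0$. Thus Theorem~\ref{mainTH1} applies to each $Y^n$, giving $\|Y^n\|^2_{\mathcal{E}^0} = \mathbb{E}|\int_0^T Y^n_t\,d^0B_t|^2$ and, via Lemmas~\ref{sharpW}, \ref{West} and \ref{almRK}, the bound
\begin{equation*}
\|Y^n\|^2_{\mathcal{E}^0}\lesssim \int_0^T \|Y^n_t\|^2_{2p}\,m(dt) + \frac{1}{2}\int_{[0,T]^2}\|Y^n_{s,t}\|^2_{2p}\,\rho(dsdt).
\end{equation*}

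The second step is to control the right-hand side above uniformly in $n$ and identify its limit. For the increment term, observe that $Y^n_{s,t} = g_n(t,B_t) - g_n(s,B_s) = \int g\big(t-\tfrac rn, B_t-\tfrac zn\big) - g\big(s-\tfrac rn, B_s-\tfrac zn\big)\,\alpha(r,z)\,dr\,dz$, so by Minkowski's integral inequality in $L^{2p}(\mathbb{P})$ followed by Jensen (since $\alpha\,dr\,dz$ is a probability measure and $x\mapsto x^{2p}$ is convex) one gets
\begin{equation*}
\|Y^n_{s,t}\|^{2p}_{2p} \le \int_{[0,T]\times\mathbb{R}^d}\Big\|g\big(t-\tfrac rn, B_t-\tfrac zn\big) - g\big(s-\tfrac rn, B_s-\tfrac zn\big)\Big\|^{2p}_{2p}\alpha(r,z)\,dr\,dz,
\end{equation*}
whence $\|Y^n_{s,t}\|^2_{2p} \le \big(\sup_{n\ge 1}\int \|\cdots\|^{2p}_{2p}\alpha\,dr\,dz\big)^{1/p}$, which is precisely the integrand in \eqref{fundASS2}. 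Hence $\sup_n \int_{[0,T]^2}\|Y^n_{s,t}\|^2_{2p}\rho(dsdt) < \infty$ by \eqref{fundASS2}. The marginal term is handled more easily: boundedness of $g$ gives $\sup_n\sup_{t}\|Y^n_t\|_{2p}<\infty$, and since $m$ is a finite measure this term is bounded uniformly in $n$. This establishes $\sup_n \|Y^n\|_{\mathcal{E}^0} < \infty$.

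The third step is to pass to the limit and conclude $Y \in \mathcal{L}_R(\mathbb{R}^d)$ with the stated bound. Standard properties of mollifiers give $g_n(t,x)\to g(t,x)$ at every point of continuity of $g$, hence $Y^n_t \to Y_t$ and $Y^n_{s,t}\to Y_{s,t}$ in $L^{2p}(\mathbb{P})$ at $(dsdt)$-a.e. $(s,t)$ (using joint measurability of $g$ and dominated convergence in $\mathbb{P}$, as $g$ is bounded); moreover the dominating function $\big(\sup_n\int\|\cdots\|^{2p}_{2p}\alpha\,dr\,dz\big)^{1/p}$ is $\rho$-integrable by \eqref{fundASS2}, so $Y^n \to Y$ in the norm induced by the right-hand side of \eqref{mod1}, which by Lemma~\ref{almRK} dominates $\|\cdot\|_{\mathcal{E}^0}$. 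Therefore $\{Y^n\}$ is Cauchy in $\mathcal{E}^0$ (up to the equivalence-class identification), so it converges in $\mathcal{L}_R(\mathbb{R}^d)$ to some limit; the $L^{2p}$-a.e. convergence of marginals and increments identifies that limit (as a class) with $Y$, giving $Y \in \mathcal{L}_R(\mathbb{R}^d)\cap\mathcal{E}_{p,\rho}$. Finally, by lower semicontinuity of the norm (or directly, $\|Y\|^2_{\mathcal{L}_R} = \lim_n\|Y^n\|^2_{\mathcal{E}^0}$) together with Fatou's lemma applied to $\|Y^n_{s,t}\|^2_{2p}\to\|Y_{s,t}\|^2_{2p}$ under the $\rho$-integrable dominator, we obtain \eqref{st3}. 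The main obstacle I anticipate is the justification that $Y^n\to Y$ in $\mathcal{L}_R(\mathbb{R}^d)$ identifies the limit with $g(B)$ — i.e., showing the completion procedure does not lose the pointwise information — and the careful verification that the dominated-convergence hypotheses hold $(dsdt)$-a.e.; the condition \eqref{fundASS2} is designed exactly to supply the uniform $\rho$-integrable envelope needed for this, but threading it correctly through Minkowski/Jensen and Fatou requires care, and this is where Remark~\ref{UNcont} becomes relevant.
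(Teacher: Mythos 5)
Your argument is correct and follows essentially the same route as the paper's proof: mollify $g$ to produce $Y^n=g_n(\cdot,B_\cdot)\in\mathcal{E}^0$, apply Jensen's inequality against the probability measure $\alpha\,dr\,dz$ so that (\ref{fundASS2}) supplies a $\rho$-integrable envelope for $\|Y^n_{s,t}\|^2_{2p}$, and conclude by dominated convergence together with Lemma \ref{almRK}. The only cosmetic difference is your appeal to continuity points for $g_n\to g$ (for a merely Borel bounded $g$ one should instead use Lebesgue points plus the absolute continuity of the law of $B_t$), which does not affect the argument.
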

\begin{proof}
%We choose $\text{supp}~\alpha \subset U$.
We will show that $Y_\cdot =g(\cdot,B_\cdot)$ is the limit of a sequence $(Y_n)_{n\ge 1}$ in $\mathcal{E}^0$, where $Y_n(t) = g_n(t,B_t)$ and $g_n$ is a sequence of smooth functions. For simplicity of exposition, we set $d=1$ and we assume $g(t,x) = g(x)$. Let $\alpha_n =n \alpha(nx) $ be a sequence of mollifiers and we denote $g_n = g \star \alpha_n$ for $n\ge 1$. Since $g$ is bounded, then $g_n$ is globally Lipschitz (in particular, $g_n \in C^\gamma(\mathbb{R})$ for $\gamma > \frac{1}{2H}-1$) and hence $Y_n \in \mathcal{E}^0$ for every $n\ge 1$. By change the variables, we have 

$$g_n(x) = \int_{0}^\infty g(x-y)\alpha_{n}(y)dy = \int_{0}^\infty g\Big(x-\frac{y}{n}\Big)\alpha(y)dy.$$
We observe that 

$$\lim_{n\rightarrow +_\infty}\mathbb{E}|g_n(B_t) - g(B_t)|^{2p}=0$$
and 

$$\lim_{n\rightarrow \infty}\mathbb{E}|g_n(B_t) - g(B_t) - g_n(B_{s})+ g(B_s)|^{2p}=0$$
for each $s,t \in [0,T]^2$. Since $g$ is bounded and the positive measure $m(dt)=\frac{\partial R}{\partial t}(t,T)dt$ is finite, then we can apply bounded convergence theorem to get 

$$\lim_{n\rightarrow+\infty}\int_0^T \| g_n(B_t) - g(B_t)\|^2_{2p}\frac{\partial R}{\partial t}(t,T)dt=0.$$

%Let $m_i$ be the Lebesgue measure on $\mathbb{R}^i$ for $i\ge 1$ and let $E = \{x; g~\text{is continuous at}~x\}$. We observe $\mathbb{P}\otimes m_1 \{(\omega,t); B_t(\omega)\in E^c\}=0$ and $\mathbb{P}\otimes m_2 \{(\omega,s,t); (B_s(\omega),B_t(\omega)) \in E^c\times E^c\} = 0$. Hence, 

%$$\lim_{n\rightarrow +\infty}|g_n(B_t) - g(B_t)|=0$$ 
%a.s. for almost all $t$ w.r.t $m_1$ and

%$$\lim_{n\rightarrow +\infty}|g_n(B_t) - g(B_t) - g_n(B_{s})+ g(B_s)|=0,$$
%a.s. for almost all $s,t \in [0,T]^2$ w.r.t $m_2$. Lebesgue dominated theorem yields 

%$$\int_0^T \|g_n(B_t) - g(B_t)\|^2_{2q} t^{2H-1}dt\rightarrow 0,$$
%as $n\rightarrow +\infty$ for every $q\ge 1$. 
It remains to prove

$$\int_{[0,T]^2} \|g_n(B_t) - g(B_t) - g_n(B_{s})+ g(B_s)\|^2_{2p} \rho(dsdt)\rightarrow 0,$$
as $n\rightarrow +\infty$, for $p>1$. We will check there exists $G \in L^1(\rho)$ such that 

$$\Big(\mathbb{E}|g_n(B_t) - g(B_t) - g_n(B_{s})+ g(B_s)|^{2p}\Big)^{\frac{1}{p}}\lesssim G(s,t)$$
for every $(s,t) \in [0,T]^2$. Indeed, Jensen's inequality and Fubini's theorem yield 

$$\Big(\mathbb{E}|g_n(B_t) - g(B_t) - g_n(B_{s})+ g(B_s)|^{2p}\Big)^{}$$
%$$ = \int_{\mathbb{R}^2}|g_n(x_1) - g(x_1) - g_n(x_2)+ g(x_2)|^{2p}\mathbb{P}_{(B_s,B_t)}(dx_1dx_2)$$
$$\le  \mathbb{E}\int_{\mathbb{R}}\Big| g\Big(B_s-\frac{y}{n}\Big) - g(B_s) - g\Big(B_t-\frac{y}{n}\Big)+g(B_t)\Big|^{2p}\alpha(y)dy $$
$$
\lesssim \mathbb{E}\int_{\mathbb{R}} \Big| g\Big(B_s-\frac{y}{n}\Big) - g\Big(B_t-\frac{y}{n}\Big)\Big|^{2p}\alpha(y)dy +\mathbb{E}\Big| g(B_s) - g(B_t)\Big|^{2p}$$
%$$\lesssim 2 \mathbb{E}|V(B_{s,t})|^{2p},$$
for $(s,t) \in [0,T]^2$. Assumptions (\ref{fundASS1}) and (\ref{fundASS2}), Lebesgue dominated theorem and Lemma \ref{almRK} allow us to conclude the proof. 

%\textcolor{magenta}{I want to relax the assumption (ii) and impose only}

%$$\int_{[0,T]^2\setminus D}\| Y_{s,t}\|^2_{2p}||t-s|^{2H-2}dsdt < \infty$$

%The idea is to use Fubini's theorem and change of variables to express 

%$$\int_{\mathbb{R}^2}\int_{\mathbb{R}_+}\Big| g\Big(x_1-\frac{y}{n}\Big)  - g\Big(x_2-\frac{y}{n}\Big)\Big|^{2p}\rho(y)dy\mathbb{P}_{(B_s,B_t)}(dx_1dx_2) $$

%$$ = \int_{\mathbb{R}_+} \int_{\mathbb{R}^2} |g(u) - g(v)|^{2p} f_{s,t}\big(u+\frac{y}{n}, v+ \frac{y}{n}\big)dudv\rho(y)dy$$
%where $f_{s,t} $ is the density of $\mathbb{P}_{B_s,B_t}$ given by 

%$$f_{s,t}(x,y) = \frac{1}{2\pi \Theta^{\frac{1}{2}}_{s,t}} \exp \Big\{ -\frac{1}{2 \Theta_{s,t}} \big[x^2t^{2H} + y^2 s^{2H}-2xy R(s,t)  \big]  \Big\}$$

%QUESTION: Let $q(x,y) = ax^2 + by^2 - 2xy c$ for $a,b>0$ and $c \in \mathbb{R}$. Is it true that the quadratic forms have the following property 

%$$x < x', y< y' \Rightarrow q(x,y) \le q(x',y') \quad ?????$$ 
%If yes, then $f_{s,t}\big(u+\frac{y}{n}, v+ \frac{y}{n}\big) \le f_{s,t}\big(u, v\big)$ and we have the extension. 

%$$\int_\mathbb{R}\mathbb{E}\int_{[0,T]^2}|g(B_t-\frac{y}{n}) - g(B_s-\frac{y}{n})|^{2p}|t-s|^{2H-2}dsdt\rho(y)dy$$
     
\end{proof}

\begin{remark}\label{UNcont}
Observe that in case $|g(t,x) - g(s,y)|$ only depends on $\textbf{d}^{\bar{\gamma},\gamma} ((t,x);(s,y))$ given by (\ref{dmetric})  (e.g. $g$ is uniformly continuous), then condition (\ref{fundASS2}) becomes (\ref{fundASS1}). Moreover, the boundedness condition is not essential. One can extend Theorem \ref{mainTH3} for unbounded functions by using localization arguments.   
\end{remark}

\subsection{Examples}\label{examplesection}
This section presents a class of fundamental examples of functions $g$ which satisfy assumptions (\ref{fundASS1}) and (\ref{fundASS2}) but which are not $\gamma$-H\"older continuous functions for \textit{any} $\gamma > \frac{1}{2H}-1$. For simplicity of exposition, we treat the autonomous case $g(t,x) = g(x)$. We assume that $g:\mathbb{R}^d\rightarrow \mathbb{R}^d$ is a continuous bounded function with modulus of continuity of the form 

\begin{equation}\label{modulus}
V(x) = x^{\frac{1}{2H}-1}\psi(x); 0 \le x < 1,
\end{equation}
where $\frac{1}{4}< H  < \frac{1}{2}$ and $\psi:\mathbb{R}_+\rightarrow \mathbb{R}_+$ satisfies the following properties: 

\begin{enumerate}
  \item There exists $\eta>0$ such that   
  $$\int_0^\eta \frac{1}{x}\psi^2(x)dx < \infty.$$
  
  \item There exist constants $r>1$ and $A>0$ such that $\psi^{2r}$ is a concave non-decreasing function on $(0,A)$ with $\psi(0)=0$.
  \item $\lim_{x\downarrow 0} \frac{\psi(x)}{x^\delta}=+\infty$ for every $\delta>0$. 
  \item $\psi^2(x^H)\lesssim \psi^2(x)$ for every $ x \in (0,1)$.  
\end{enumerate}

The intent of the decomposition (\ref{modulus}) is to employ slowly varying functions for $\psi$. Recall that a function $\psi$ is slowly varying at $0$ if for any $a>0$ fixed, $\psi$ and $x \mapsto \psi(ax)$ are asymptotically equivalent near $0$ (see e.g. Chapter 1 in \cite{bingham}). If $\psi$ is slowly varying, then the space of functions with $V$ as their modulus of continuity is stricly larger than the space of functions which are $\gamma$-H\"older-continuous for some $\gamma > \frac{1}{2H}-1$. However, Condition (2) above implies that $\psi$ is bounded near 0, and therefore, the functions with $V$ as their modulus of continuity are $(\frac{1}{2H}-1)$-H\"older-continuous. In fact, the integrability condition (1) is slightly stronger than boundedness and continuity at 0, as the examples following Proposition \ref{examPROP} show. Slowly varying functions need not have agreeable monotonicity and concavity assumptions; they can contain oscillatory terms. Since our use of these functions is to defined moduli of continuity, we impose those additional restrictions in the form of Condition (2). In our practical examples presented below, the properties of logarithms that lead to Conditions (2) and (3) also imply Condition (4). Having said this, the decomposition (\ref{modulus}) does not require $\psi$ to be slowly varying. In fact, the entire scale of admissible examples can be generated from (\ref{modulus}) by asking that $\psi$ be regularly varying, which, by the so-called Karamata representation theorem (see e.g. Th 1.3.1 in \cite{bingham}), implies that $\psi$ is the product of a slowly varying function and a power function. If the power is zero, we are back to the case where $\psi$ is slowly varying. If the power is positive, then the functions will be $\gamma$-H\"older-continuous for some $\gamma > \frac{1}{2H}-1$. The case where the power is negative cannot lead to examples for us since it would imply that Condition (1) is violated.

\begin{proposition}\label{examPROP}
If $g:\mathbb{R}^d\rightarrow \mathbb{R}^d$ is a bounded continuous function with modulus of continuity of the form $V(x) = x^{\frac{1}{2H}-1}\psi(x)$ for $x \in [0,1)$. If $\psi$ satisfies properties (1), (2), (3) and (4), then $Y = g(B) \in \mathcal{L}_{R}(\mathbb{R}^d)\cap \mathcal{E}_{p,\rho}$ for some $p>1$. 
\end{proposition}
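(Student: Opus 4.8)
The plan is to verify that a function $g$ with modulus of continuity $V(x)=x^{\frac{1}{2H}-1}\psi(x)$ under hypotheses (1)--(4) satisfies the assumptions \eqref{fundASS1} and \eqref{fundASS2} of Theorem \ref{mainTH3}; the conclusion $Y=g(B)\in\mathcal{L}_R(\mathbb{R}^d)\cap\mathcal{E}_{p,\rho}$ then follows immediately. The first step is to estimate $\|Y_{s,t}\|_{2p}=\|g(B_t)-g(B_s)\|_{2p}$. By the modulus-of-continuity bound, $|g(B_t)-g(B_s)|\le V(|B_{s,t}|)=|B_{s,t}|^{\frac{1}{2H}-1}\psi(|B_{s,t}|)$ on the event $\{|B_{s,t}|<1\}$, and $g$ bounded handles the complementary event. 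Using the Gaussian scaling $B_{s,t}\overset{d}{=}|t-s|^H N$ with $N$ standard normal, one gets, for $1<p$ and after choosing $r>1$ from Condition (2) and applying Hölder's inequality in $\omega$ to split the power $\frac{1}{2H}-1$ from $\psi$,
\[
\|Y_{s,t}\|_{2p}^{2}\ \lesssim\ |t-s|^{2H(\frac{1}{2H}-1)}\,\big(\mathbb{E}\,\psi^{2pr'}(|B_{s,t}|)\big)^{1/(pr')}\ +\ (\text{exponentially small in }|t-s|),
\]
where $r'$ is a conjugate exponent tuned so that $pr'\le r$; by concavity of $\psi^{2r}$ (Condition (2)) and Jensen, $\mathbb{E}\,\psi^{2r}(|B_{s,t}|)\lesssim\psi^{2r}(\mathbb{E}|B_{s,t}|)\lesssim\psi^{2r}(c|t-s|^H)$, and then Condition (4) gives $\psi^{2}(|t-s|^H)\lesssim\psi^2(|t-s|)$. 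The net effect is
\[
\|Y_{s,t}\|_{2p}^{2}\ \lesssim\ |t-s|^{1-2H}\,\psi^{2}(|t-s|)\ =\ V^2(|t-s|)
\]
for $|t-s|$ small, exactly matching the RKHS criterion \eqref{rkhintr}.

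The second step is to feed this pointwise bound into the measure $\rho(dsdt)=|\mu|(dsdt)+\kappa(s,t)\,dsdt$. For the $|\mu|$-part, $\int_{[0,T]^2}V^2(|t-s|)|\mu|(dsdt)<\infty$ is precisely the content of Proposition 6.9 in \cite{krukrusso}, which reduces (after the substitution $y=|t-s|$ and recalling $|\mu|(dsdt)\approx|t-s|^{2H-2}dsdt$) to the convergence of $\int_0^\eta y^{-1}\psi^2(y)\,dy$, i.e.\ Condition (1). For the $\kappa$-part, one checks directly that $\int_{[0,T]^2}V^2(|t-s|)\,\kappa(s,t)\,dsdt<\infty$: with $\kappa(s,t)=(s\wedge t)^{H-1}|t-s|^{H-1}+|t-s|^{2H-1}(s\vee t)^{-1}$ and $V^2(|t-s|)=|t-s|^{1-2H}\psi^2(|t-s|)$, the first term contributes $\int\int (s\wedge t)^{H-1}|t-s|^{-H}\psi^2(|t-s|)\,dsdt$, which is finite because $\psi$ is bounded near $0$ (a consequence of Conditions (1)--(2)), $(s\wedge t)^{H-1}$ is integrable since $H>0$, and $|t-s|^{-H}$ is integrable since $H<1$; the second term is even milder. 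This also settles membership in $\mathcal{E}_{p,\rho}$ since $m$ is a finite positive measure and $g$ is bounded makes $\int_0^T\|Y_t\|_{2p}^2\,m(dt)<\infty$ trivial, so \eqref{mod1} holds.

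The third step is to verify \eqref{fundASS2}. Because $g$ has modulus of continuity $V$, the shifted increment satisfies $|g(B_s-\frac{z}{n})-g(B_t-\frac{z}{n})|\le V(|B_{s,t}|)$ \emph{uniformly in $z$ and $n$}, since translating the argument does not change the difference of arguments. Hence $\sup_{n}\int g(\cdots)\alpha\,dr\,dz\le V^{2p}(|B_{s,t}|)$ pointwise (again up to the bounded-$g$ correction on $\{|B_{s,t}|\ge1\}$), and after taking expectations and the $1/p$ power one is reduced to the very same estimate $\|Y_{s,t}\|_{2p}^2\lesssim V^2(|t-s|)$ established in Step 1. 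Thus \eqref{fundASS2} reduces to \eqref{fundASS1}, exactly as noted in Remark \ref{UNcont} for moduli depending only on $\mathbf{d}^{\bar\gamma,\gamma}$. Theorem \ref{mainTH3} then applies and yields $Y\in\mathcal{L}_R(\mathbb{R}^d)$ together with the bound \eqref{st3}.

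The main obstacle is Step 1: converting the composition $\psi(|B_{s,t}|)$, where $\psi$ is only assumed concave-after-a-power and slowly-growing rather than, say, globally monotone or submultiplicative, into a clean deterministic bound of the form $\psi^2(|t-s|)$. This is precisely why Conditions (2) (concavity of $\psi^{2r}$, giving Jensen control of the Gaussian average), (3) (so that the power $|t-s|^{\frac{1}{2H}-1}$ genuinely dominates and $\psi$ contributes no spurious polynomial decay), and (4) (so that the natural scale $|t-s|^H$ coming from the standard deviation of $B_{s,t}$ can be transferred back to scale $|t-s|$) are imposed; one has to choose the Hölder exponent $r'$ in the product $|B_{s,t}|^{\frac{1}{2H}-1}\cdot\psi(|B_{s,t}|)$ carefully so that the moment $\mathbb{E}\psi^{2pr'}(|B_{s,t}|)$ stays within the concavity window $pr'\le r$ while still leaving an integrable power of $|B_{s,t}|$, and to handle the tail event $\{|B_{s,t}|\ge 1\}$ separately using boundedness of $g$ and Gaussian concentration. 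Once these bookkeeping choices are made, Steps 2 and 3 are routine integrations against $\rho$ driven by Condition (1) and the already-available Proposition 6.9 of \cite{krukrusso}.
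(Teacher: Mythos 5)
Your proposal is correct and follows essentially the same route as the paper's proof: the key moment estimate $\|Y_{s,t}\|_{2p}^2\lesssim V^2(|t-s|)$ is obtained exactly as in the paper via H\"older's inequality to separate the power factor from $\psi$, Jensen's inequality applied to the concave function $\psi^{2r}$ from Condition (2), and Condition (4) to pass from scale $|t-s|^H$ to $|t-s|$, after which Condition (1) gives integrability against $|\mu|$ and Theorem \ref{mainTH3} concludes. You are somewhat more explicit than the paper about the $\kappa$-part of $\rho$ and about reducing \eqref{fundASS2} to \eqref{fundASS1} via translation invariance of the modulus of continuity, but these are details the paper leaves implicit rather than genuine differences of method.
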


\begin{proof}
We may assume that $T=1$. By property (2), there exist $R>1$ and $A >0$ such that $\psi^{2r}$ is concave on $(0,A)$. Recall the reverse Jensen's inequality 

\begin{eqnarray}
\nonumber\mathbb{E} \Big[\psi^{2r} \big( |B_{s,t}|  \mathds{1}_{\{|B_{s,t}|\le A\}}\big)\Big]&\le& \psi^{2r} \Big( \mathbb{E}\big[|B_{s,t}|\mathds{1}_{\{|B_{s,t}|\le A\}}\big]  \Big) \nonumber\\
\nonumber&\le& \psi^{2r} \Big( \mathbb{E}|B_{s,t}|\Big)\\
\label{f4}&=& \psi^{2r}(|t-s|^H).
\end{eqnarray}
Then, we proceed as follows. Let $Y_t = g(B_t)$. For every $1 < p < \infty$, we have 

$$|Y_{s,t}|^{2p}\le \|g\|^{2p}_\infty A^{-2p}  |B_{s,t}|^{2p} + |Y_{s,t}|^{2p}\mathds{1}_{\{|B_{s,t}|\le A\}},$$
and therefore, 

\begin{equation}\label{last1}
\big(\mathbb{E}|Y_{s,t}|^{2p}\big)^{\frac{1}{p}}\lesssim |t-s|^{2H} + \Big( \mathbb{E} |Y_{s,t}|^{2p} \mathds{1}_{\{ |B_{s,t}|\le A\}} \Big)^{\frac{1}{p}}
\end{equation}
By assumption $|Y_{s,t}|\le |B_{s,t}|^{\frac{1}{2H}-1} \psi (|B_{s,t}|)$ and hence H\"older's inequality jointly with (\ref{f4}) and property (4) yield

$$\Big( \mathbb{E} |Y_{s,t}|^{2p} \mathds{1}_{\{ |B_{s,t}|\le A\}} \Big)^{\frac{1}{p}}|t-s|^{2H-2}$$
$$\lesssim |t-s|^{-1} \Big(\mathbb{E}\psi^{2pq}\big(|B_{s,t}|\big)\mathds{1}_{\{|B_{s,t}|\le A\}}\Big)^{\frac{1}{pq}}  $$
$$ = |t-s|^{-1} \Big(\mathbb{E}\big[\psi^{2pq}\big(|B_{s,t}|\mathds{1}_{\{|B_{s,t}|\le A\}}\big)\big]\Big)^{\frac{1}{pq}}$$
$$
\le |t-s|^{-1}\psi^2(|t-s|^H)
$$
\begin{equation}\label{last2}
= H ^{-\beta-1}|t-s|^{-1}\psi^2 (|t-s|),
\end{equation}
for every $(s,t) \in [0,T]^2_\star$, where we choose $p,q>1$ such that $pq=r$. From (\ref{last1}), (\ref{last2}), property (1) and the fact that $\frac{1}{4} < H < \frac{1}{2}$, we have  

\begin{equation}\label{kapparem0}
\int_{[0,T]^2} \| Y_{s,t}\|^2_{2p}|t-s|^{2H-2}dsdt < \infty,
\end{equation}
for some $p >1$. By applying Theorem \ref{mainTH3}, we conclude the proof.

\end{proof}

We now present some concrete examples of functions $\psi$ which satisfy properties (1), (2), (3) and (4). Our examples are well defined on intervals of the form $[0,a]$ for some $a<1$. Rescaling $x$ in these formulas would result in functions which are well defined and bounded on $[0,1]$; this rescaling is omitted, to avoid overloading the notation. 

\begin{example}
For $\alpha > \frac{1}{2}$, 
$$
\psi(x):=\left\{
\begin{array}{rl}
\big(\log \big(\frac{1}{x}\big)\big)^{-\alpha}; & \hbox{if} \  0 < x < 1 \\
0;& \hbox{if} \ x= 0. \\
\end{array}
\right.
$$
Condition (2) and (4) are trivial and condition (3) is easy to check. Details are left to the reader. By making the change of variable $u=-\log(x)$, one checks that Condition (1) is fulfilled.  
\end{example}

Note that the function $\psi$ in the previous example with $\alpha \le {1/2}$ does not satisfy the integrability condition (1). The same comment also applies to the next example. These examples thus explore the edge cases of our framework.

\begin{example}
For $\alpha > \frac{1}{2}$, 
$$
\psi(x):=\left\{
\begin{array}{rl}
\big(\log\big(\frac{1}{x}\big)\big)^{-\frac{1}{2}}\big(\log(\log(\frac{1}{x}))\big)^{-\alpha}; & \hbox{if} \  0 < x < 1/e \\
0;& \hbox{if} \ x= 0. \\
\end{array}
\right.
$$
Condition (2) is trivial and condition (3) is easy; we leave it to the committed reader to check them. Condition (4) is equally straightforward as soon as one notes that for $x$ small enough, 
$$
\frac{1}{2}\log(\log(\frac{1}{x})) \le \log(\log(\frac{1}{x^H})) \le \log(\log(\frac{1}{x})) . 
$$
 By making the change of variable $u=\log(-\log(x))$, we observe that condition (1) is fulfilled.  
\end{example}

As is well-known, one may modify the previous example by letting $\alpha = 1/2$ and including a further factor which is a thrice-iterated logarithm, to a negative power of $\alpha > 1/2$, and so forth with higher iterations of the log, to explore the threshold of functions which barely satisfy the integrability condition (1). At the other end of the spectrum, the following example still yields functions which are not $\gamma$-H\"older-continuous for any $\gamma > \frac{1}{2H}-1$, but which are more regular than any function from Example 6.2. 

\begin{example}
With $\beta >1$, 
$$
\psi(x):=\left\{
\begin{array}{rl}
\exp \left[ -\left( \log \left( \log \left( \frac{1}{x}\right) \right) \right) ^{\beta }\right] ; & \hbox{if} \  0 < x < 1/e \\
0;& \hbox{if} \ x= 0. \\
\end{array}
\right.
$$
Again, Conditions (2) and (3) are straightforward. Condition (4) is treated as in the previous example.
Condition (1) follows from noting that $\psi(x) = o\big(\big(\log (1/x)\big)^{-\alpha}\big)$, and using Condition (1) from Example 6.2. 
\end{example}

\

\noindent \textbf{Acknowledgments:} The authors gratefully acknowledge funding by FAPDF which supports the project ``Rough paths, Malliavin calculus and related topics'' 00193-00000229 2021-21.
The research of FR
was also partially supported by the  ANR-22-CE40-0015-01 project (SDAIM).
AO gratefully acknowledges the hospitality and support of
% the Laboratory UMA - ENSTA Paris Tech and
the Department of Statistics at Rice University. 

\bibliographystyle{acm}
\begin{quote}
\bibliography{../../BIBLIO_FILE_ORV/refORV}%{}
\end{quote}

\end{document}